\colorlet{symbols}{blue!90!black!}
\colorlet{testcolor}{green!60!black}
\tikzset{
	dot/.style={circle,fill=symbols,draw=symbols,inner sep=0pt,minimum size=0.9mm},
	blackdot/.style={circle,fill=black,draw=black,inner sep=0pt,minimum size=0.9mm}
	}
\def\DeclareSymbol#1#2#3{\expandafter\gdef\csname MH@symb@#1\endcsname{\tikz[baseline=#2,scale=0.15,draw=symbols]{#3}}}
\def\<#1>{\csname MH@symb@#1\endcsname}
\newcommand{\noise}{\<0>}
\newcommand{\lolli}{\<1>}
\newcommand{\dumb}{\<11>}
\newcommand{\X}{\textcolor{blue}{X}}
\newcommand{\Xnoise}{\textcolor{blue}{X}\<0>}
\newcommand{\btau}{\textcolor{blue}{\tau}}
\newcommand{\noiseb}{\<0b>}
\newcommand{\lollib}{\<1b>}
\newcommand{\Cnoise}[1]{C_{\textcolor{blue}{1} #1}}
\newcommand{\Cdumb}[1]{C_{\textcolor{blue}{2} #1}}
\newcommand{\tCnoise}[1]{\tilde{C}_{\textcolor{blue}{1} #1}}
\newcommand{\tCdumb}[1]{\tilde{C}_{\textcolor{blue}{2} #1}}
\newcommand{\betanoise}{\beta_{\textcolor{blue}{1}}}
\newcommand{\betadumb}{\beta_{\textcolor{blue}{2}}}
\newcommand{\Mnoise}{M_{\textcolor{blue}{1},2 \nu p}}
\newcommand{\Mdumb}{M_{\textcolor{blue}{2},\nu p}}
\newcommand{\Pnoise}{P_{\textcolor{blue}{1},2 \nu p}}
\newcommand{\Pdumb}{P_{\textcolor{blue}{2},\nu p}}
\newcommand{\blue}[1]{\textcolor{blue}{#1}}
\newcommand{\allsym}{\{\noise,\lolli, \X, \Xnoise,\dumb\}}
\newcommand{\negsym}{\{\noise, \Xnoise,\dumb\}}
\newcommand{\R}{\mathbb R}
\newcommand{\N}{\mathbb N}
\newcommand{\T}{\mathbb T}
\newcommand{\Z}{\mathbb Z}
\def \P{\mathbb P}
\def \E{\mathbb E}
\def \Ftest{\mathfrak F}
\def \heat{(\partial_t - \Delta)}
\def \si{\sigma}
\def \ueps{u^{(\varepsilon)}}
\def \Ceps{C^{(\varepsilon)}}
\def \veps{\varepsilon}
\def \vphi{\varphi}
\def \hxi{\hat{\xi}}
\newcommand{\refchange}[1]{{  #1}}
\newcommand{\seqeps}[1]{#1^{(\varepsilon)}}
\newcommand{\lb}{\langle}
\newcommand{\rb}{\rangle}
\newcommand{\Beta}{\mathrm{B}}
\newcommand{\assign}{:=}
\newcommand{\backassign}{=:}
\newcommand{\cdummy}{\cdot}
\newcommand{\infixand}{\text{ and }}
\newcommand{\mathLaplace}{\Delta}
\newcommand{\mathd}{\mathrm{d}}
\newcommand{\nin}{\not\in}
\newcommand{\nobracket}{}
\newcommand{\tmaffiliation}[1]{\\ #1}
\newcommand{\tmcolor}[2]{{\color{#1}{#2}}}
\newcommand{\tmmathbf}[1]{\ensuremath{\boldsymbol{#1}}}
\newcommand{\tmop}[1]{\ensuremath{\operatorname{#1}}}
\newcommand{\tmtextbf}[1]{\text{{\bfseries{#1}}}}
\newcommand{\tmtextit}[1]{\text{{\itshape{#1}}}}
\newcommand{\tmxspace}{\hspace{1em}}
\newenvironment{enumerateroman}{\begin{enumerate}[i.] }{\end{enumerate}}
\newenvironment{proof}{\noindent\textbf{Proof.\ }}{\hspace*{\fill}$\Box$\medskip}
\newenvironment{proof*}[1]{\noindent\textbf{#1\ }}{\hspace*{\fill}$\Box$\medskip}
\newtheorem{corollary}{Corollary}
\newtheorem{lemma}{Lemma}
\newtheorem{proposition}{Proposition}
{\theorembodyfont{\rmfamily}\newtheorem{remark}{Remark}}
\newtheorem{theorem}{Theorem}
\newtheorem{assumption}{Assumption}
\begin{document}

\title{A priori bounds for the generalised Parabolic Anderson Model}

\author{
  Ajay Chandra$^{\star}$, Guilherme L. Feltes$^{\ast}$ and Hendrik
  Weber$^{\ast}$
  \tmaffiliation{$\star$Department of Mathematics\\
  Purdue University\\
  $\ast$Institut f{\"u}r Analysis und Numerik\\
  Universit{\"a}t M{\"u}nster}
}

\maketitle

\begin{abstract}
  
  We show a priori bounds for solutions to $(\partial_t - \Delta) u =
  \sigma (u) \xi$ in finite volume in the framework of Hairer's Regularity Structures
  [Invent Math 198:269--504, 2014]. We
  assume $\sigma \in C_b^2 (\mathbb{R})$ and that $\xi$ is of negative
  H{\"o}lder regularity of order $- 1 - \kappa$ where $\kappa < \bar{\kappa}$ for an
  explicit $\bar{\kappa}< 1/3$, 
  and that it can be lifted to a model
  in the sense of Regularity Structures. Our main results guarantee
  non-explosion of the solution in finite time and a growth which is at most
  polynomial in $t > 0$.  Our estimates imply global
  well-posedness for the 2-d generalised parabolic Anderson model on
  the torus, as well as for the parabolic quantisation of the
  Sine-Gordon Euclidean Quantum Field Theory (EQFT) on the torus in the regime $\beta^2 \in (4 \pi, (1 +
  \bar{\kappa}) 4 \pi)$. We also consider the parabolic quantisation of a massive Sine-Gordon 
 EQFT and derive estimates that imply the existence of the measure for the same range of $\beta$. 
  Finally, our estimates apply to It\^o SPDEs in the sense of 
  Da Prato-Zabczyk   [\textit{Stochastic Equations in Infinite Dimensions},
  Enc. Math. App., Cambridge Univ. Press, 1992] and imply existence of a stochastic flow beyond the trace-class regime.  
%
\end{abstract}

\section{Introduction}

The aim of this paper is to derive a priori bounds for periodic in space solutions $u : [0,
\infty) \times \R^d \rightarrow \mathbb{R}$ to the equation
\begin{equation}
    (\partial_t - \Delta) u = \sigma (u) \xi ,
    \label{mainequation}
\end{equation}
where $\sigma : \mathbb{R} \rightarrow \mathbb{R}$ is a smooth function and
$\xi \in C^{- 1 - \kappa}$\label{kappaintro}, for $\kappa \in (0, 1 / 3)$, is a (periodic in space) distribution of
negative H{\"o}lder regularity that can be lifted to a model in the sense of
Regularity Structures of {\cite{Hai14}}, see
 \eqref{homogeneityofsymbols}-\eqref{defsmoothdumbwithC}. Equation
\eqref{mainequation} is a prototypical example of a singular Stochastic
Partial Differential Equation (SPDE), which is a class of non-linear Partial
Differential Equations (PDEs), usually driven by a rough noise term, that requires
a renormalisation procedure to make sense. In the case of
\eqref{mainequation}, the (singular) product $\sigma (u)
\xi$ has to be interpreted 
in a renormalised sense if $\kappa > 0$. Since the
groundbreaking works on \tmtextit{Regularity Structures} by
{\cite{Hai14}} and on \tmtextit{Paracontrolled Calculus} by
{\cite{GIP15}}, the \tmtextit{pathwise} study
of the so-called \tmtextit{sub-critical} SPDEs has been developed into
a systematic machinery that yields local solution theory for this class of
equations, see \cite{BCCH20,BHZ19,CH16}.

%

This work is part of a programme whose goal is to
obtain a priori bounds 
to 
go beyond the short time solution theory. This has already been successful
for a number of interesting examples, including $\Phi^4$ theories \cite{MW17plane,
MW173d,AK17,GH19,Moinatandweber2020cpam,CMW23}, the 
Kardar--Parisi--Zhang equation \cite{GP17,PR19,ZZZ22},
and more recently the 2-$d$ Stochastic Navier-Stokes equations \cite{HR23}.

Equations with multiplicative noise, such as \eqref{mainequation}, have been a major focus
in the development of the theory of SPDEs from the beginning: 
 classically, the case of a noise which is white in time, i.e. $\xi = \mathd W$ for an
 infinite-dimensional Wiener process $W$,
has been widely studied in the It{\^o} sense - e.g. \cite{Par75,Wal86,DZ92}.
Equation \eqref{mainequation} was also among the first equations studied in a
pathwise sense \cite{GT10,Hai14,GIP15}.
Despite the prominence of \eqref{mainequation}, pathwise
global in time solutions have only been obtained in  very special cases, namely the linear case 
 $\sigma(u)=u$  \cite{Hai14,GIP15,HL15,HL18}, where global
 existence in finite volume follows immediately from the local theory, and when $\sigma$ has zeroes
 that give trivial super- and subsolutions, see \cite{CFG17}. Here we treat the case of general
 non-linear coefficient $\sigma$ for the first time. 
Our results apply to noise terms $\xi$ where the classical It\^o approach does not apply, including
a noise term $\xi$ which is white in space and constant in time (non-linear parabolic Anderson model)
and - after a transformation - to  the stochastic quantisation equation for the Sine-Gordon model. 
Our method also gives new estimates in the case of ``white-in-time" noise where we obtain the
existence of a stochastic flow in situations where the covariance of the infinite dimensional Wiener
process $W$ is not of trace class.

We introduce some notions from the theory of Regularity Structures to state our main result: 
we denote by  $T \assign \allsym$ a set of symbols with their associated \textit{homogeneities} 
\begin{equation}
  |\noise| = -1 - \kappa , \quad
  |\lolli| = 1 - \kappa , \quad
  |\X| = 1, \quad
  |\Xnoise| = - \kappa , \quad
  |\dumb| = - 2 \kappa ,
  \label{homogeneityofsymbols}
\end{equation}
corresponding to the assumed regularity $\xi \in C^{- 1 - \kappa}$. We furthermore require a base point dependent map $z \mapsto \Pi_z$.
The map
$\Pi$ is called a \textit{lift} of $\xi$ and it
maps $T$ into stochastic objects built from $\xi$ that satisfy certain properties.
\refchange{
We mention right away that the local solution theory using Regularity Structures applies in the so-called full subcritical regime which amounts to choosing $\kappa < 1$.  As soon as $\kappa \geqslant 1/3$ more symbols than 
$\allsym$ become necessary. 
Our global in time theory will only hold for  $\kappa < \bar \kappa < 1/3$ where the threshold $\bar{\kappa}$ seems unrelated to the regularity thresholds that appear in the local theory. 
For this reason, the set of symbols $\allsym$ is enough for our purposes.
} 

As concrete counterparts of the abstract symbols $\noise, \lolli, \X \in T$, we describe the
objects $\noiseb, \lollib, X$\footnote{This notation differs slightly from \cite{Hai14}, where the black symbols correspond to the boldface $\mathbf \Pi$ map that realises the abstract blue symbols into concrete objects.} as follows. $\noiseb$ represents a realisation of the noise $\xi$, while
$\lollib$ denotes a periodic in space solution to the linear equation
\begin{equation}
  (\partial_t - \Delta) \lollib = \noiseb .
  \label{definitionlollipop}
\end{equation}
We point out that the initial condition on the definition
of $\lollib$ in \eqref{definitionlollipop} does not appear directly in our analysis,
and we leave its prescription for the examples we present later. 
For a space-time point $z = (t,x) \in [0,\infty) \times \R^d$, we let $X$ denote its spatial component
$X(z) = x$.

To ease the presentation, we now make the qualitative assumption that $\xi$ is smooth
and present a lift under which we may state our main theorems. For this, we describe how the
map $z \mapsto \Pi_z$ relates the abstract objects $\allsym$ with the concrete objects $\noiseb, \lollib, X$
recentred at a space-time point $z$ as follows
\begin{eqnarray}
  \Pi_z \noise & = & \noiseb
  \label{Pinoise} \\
  \Pi_z \lolli & = & \lollib - \lollib(z)
  \label{Pilolli} \\
  \Pi_z \X & = & X - X(z) 
  \\
  \Pi_z \Xnoise & = & 
  (X - X (z))\noiseb
  \label{defsmoothXnoise} \\
  \Pi_z \dumb & = & 
  (\lollib - \lollib (z)) \noiseb - C .
  \label{defsmoothdumbwithC}
\end{eqnarray}
In applications, we think of $\xi$ as a regularisation of the rough noise 
in \eqref{mainequation},
so that $C \in \R$ in \eqref{defsmoothdumbwithC} is a renormalisation constant that
depends on the regularisation scale and typically diverges as the regularisation is removed.

The homogeneity of the symbols $\allsym$ in \eqref{homogeneityofsymbols} indicates
the targeted regularity of these objects,
allowing for rough noise in \eqref{mainequation}.
We measure regularity as follows.
For $z = (t,x),\bar{z} = (\bar{t},\bar{x})$, 
consider the parabolic distance
\begin{equation}
  d ((t, x), (\bar{t}, \bar{x})) = \max \left\{ \sqrt{| t - \bar{t} |}, | x -
  \bar{x} | \right\}, \label{parabolicdistance}
\end{equation}
where $|x - \bar{x}|$ denotes the Euclidean distance between $x,\bar{x} \in \R^d$.
The parabolic ball centred at $z$ of radius $L > 0$
only looking to
the past is defined as
\begin{equation}
  B (z, L) = \{ \bar{z} = (\bar{t}, \bar{x}) \in [0,\infty) \times \R^d, d (z, \bar{z})
  < L, \bar{t} < t \} . \label{balllookingtopast}
\end{equation}
Let $\mathfrak F$ be the set of all smooth, non-negative functions
$\varphi$, with support in $B (0, 1)$, such that 
$\varphi (t,x) = \varphi (t,-x)$ 
for every $z=(t,x)$, with all of its derivatives of up to order 2
 bounded and with $\int \varphi = 1$. For $\varphi \in \mathfrak F$,
 $z \in [0,\infty) \times \R^d$ and $L>0$, set
 \begin{equation}
  \varphi_z^L (\bar{z}) = \frac{1}{L^{d + 2}} \varphi \left( \frac{\bar{t} -
  t}{L^2}, \frac{\bar{x} - x}{L} \right) .
  \label{defofmollifierkernel}
\end{equation}
%
%
Then, we measure the size of $\Pi$ with the following semi-norms. For any
$B \subset [0,\infty) \times \R^d$, define for the symbols $\btau \in \negsym$
of negative homogeneity,
\begin{equation}
  [\Pi;\btau]_{|\btau|, B} \assign \sup_{\varphi \in \mathfrak F}
  \sup_{z \in B}
  \sup_{L < 1} | \langle \Pi_z \btau, \varphi_z^L \rangle | L^{-|\btau|} ,
  \label{orderbounds}
\end{equation}
and for $\lolli$ of positive homogeneity,
\begin{equation}
  [\Pi;\lolli]_{1 - \kappa, B} \assign \sup_{z \neq w \in B}
  \frac{| (\Pi_z \lolli) (w) |}{d (z, w)^{1 - \kappa}} .
  \label{orderboundlollipop}
\end{equation}
\refchange{
We consider $2\pi$-periodic boundary conditions in $\R^d$ 
and identify 
$(-\pi,\pi]^d$ with the $d$-dimensional torus
$\T^d := (\R/2\pi \Z)^d$.
Let $D := [0,\infty) \times \T^d$ and $D_a^b := [a,b] \times \T^d$ for $0 \leqslant a < b$. \label{spacetimedomain}
For any $n \in \mathbb{N}$, define 
\begin{equation}
    \Cnoise{,n} := \max_{\btau \in \{\noise, \lolli, \Xnoise\}} [\Pi;\btau]_{|\btau|,D_{n - 1}^n}
    \quad \tmop{and} \quad
    \Cdumb{,n} := [\Pi;\dumb]_{- 2 \kappa, D_{n-1}^n} .
    \label{assumptiononstochasticobjects}
\end{equation}
The constant $\Cnoise{,n} > 0$ controls the objects that are \textit{linear} in the noise
over time intervals from $t=n-1$ to $t=n$
and the constant $\Cdumb{,n} > 0$ controls the object that is \text{quadratic} in the noise
over time intervals from $t=n-1$ to $t=n$.}

In view of 
 the definition of $\Pi_z \dumb$ for smooth
noise in \eqref{defsmoothdumbwithC}, we consider $u : D \to \mathbb R$ to be
the (smooth) $2\pi$-periodic in space solution to the renormalised 
version of \eqref{mainequation}
\refchange{
\begin{equation}
  \left\{\begin{array}{rllll}
    \heat u & = & \si (u) \diamond \xi & , & t >  0 \\
    u & = & u_0 & , & t = 0
  \end{array}\right.,
  \label{mainequationrenormalised}
\end{equation}
where we introduce $\si(u) \diamond \xi$ as the
renormalised product of $\si(u)$ and $\xi$, 
\begin{equation*}
    \si (u) \diamond \xi 
    := \si (u) \xi - \si' (u) \si (u) C \; \; ,
\end{equation*}
with the same constant $C$ 
as in \eqref{defsmoothdumbwithC}.
$u_0 : \T^d \to \R$
denotes a bounded initial condition.   
}

In what follows, $\| \cdot \|$ denotes the supremum norm over $\R^d$ or $\R$. \label{supnorm0} 
Similarly, for a $B \subset \R^{d+1}$ we write $\| \cdot \|_{B}$ for the supremum norm over $B$. \label{supnorm1}
We work under the following assumption.

\begin{assumption}
  \label{nonlinearsigma}
  We assume 
  $\sigma \in C_b^2 (\mathbb{R})$ and that there exists a
  constant $C_{\sigma} > 0$, for which
  \begin{equation}
    \max \{ \| \sigma \|, \| \sigma' \|, \| \sigma'' \| \} \leqslant
    C_{\sigma} . \label{assumptionsigmabounded}
  \end{equation}
\refchange{For $\kappa>0$, define $\betadumb$ and fix $\betanoise$
according to}
\begin{equation}
  \betadumb := \frac{2 (1 + \kappa)}{3 - 2 \kappa} 
  \quad \tmop{and} \quad 
    1 > \betanoise > \betadumb + \frac{(1 + \kappa) \kappa}{1 - \kappa} 
   \label{betas} .
\end{equation}
\refchange{We assume} that we take $\kappa < \bar{\kappa}$, where $\bar{\kappa}$ is the solution of
\begin{equation}
 (1 + \kappa) (2 + \kappa - 2 \kappa^2) = (3 - 2 \kappa) (1 - \kappa)
 \label{equationforkappa}
\end{equation}
that belongs to the interval $(0,1/3)$.
\end{assumption}

\refchange{The choice above allows us to choose $\betanoise$ satisfying \eqref{betas}
and such that $\betadumb < \betanoise < 1$}. In particular, this is possible if we take $\kappa < 0.132$. See Section \ref{sectionstrategyofproof} for the role that $\betanoise, \betadumb$ and $\bar{\kappa}$ play in the analysis.

Our first main result is control on $u$ over the time interval $[0, 1]$.
\begin{theorem}
  \label{maintheorem}Let $u$ be the periodic in space solution to \eqref{mainequationrenormalised}. \refchange{Assume that
    $\kappa < \bar{\kappa}$, for $\bar{\kappa} \in (0,1/3)$ solution to
  \eqref{equationforkappa}, as in Assumption \ref{nonlinearsigma}. 
  Then, }
  \begin{equation}
    \| u \|_{D_0^1} \leqslant C (\kappa, d, C_{\sigma}) \max \left\{ \| u_0
    \|, \Cnoise{,1}^{\frac{2}{(1 - \kappa) (1 - \betanoise)}},
    C_{\textcolor{blue}{2},1}^{\frac{1}{(1 - \kappa) (1 - \betadumb)}} \right\},
    \label{mainresultbound}
  \end{equation}
  where $C (\kappa, d, C_{\sigma}) > 0$ is a constant that only depends on
  $\kappa, d$ and $C_{\sigma}$, and the estimate is independent of the renormalisation
  constant $C>0$ in \eqref{defsmoothdumbwithC}-\eqref{mainequationrenormalised}.
  The exponents $0 < \betadumb 
  < \betanoise < 1$ are as in \eqref{betas}.
\end{theorem}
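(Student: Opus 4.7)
The proof would be in the spirit of the Moinat--Weber multiscale approach used for $\Phi^4$ models, adapted to the multiplicative setting. The core idea is to introduce, at each point $z\in D_0^1$, a local expansion of $u$ in terms of the stochastic objects $\Pi_z \lolli$ and $\Pi_z \dumb$, and to estimate the remainder by Schauder theory applied to the heat equation it satisfies. Concretely, I would write
$$
u(z') = u(z) + \sigma(u(z))(\Pi_z\lolli)(z') + \sigma'(u(z))\sigma(u(z))(\Pi_z\dumb)(z') + v(z';z),
$$
and subtract from \eqref{mainequationrenormalised} the heat-equation contribution of the two singular terms. The miracle, as in Hairer's theory, is that the renormalisation constant $C$ in \eqref{defsmoothdumbwithC} is exactly the one needed to match the constant appearing in \eqref{mainequationrenormalised}, so that the inhomogeneity in the equation for $v$ is morally
$$
[\sigma(u)-\sigma(u(z))-\sigma'(u(z))\sigma(u(z))(\Pi_z\lolli)]\,\xi + \text{(finite terms)},
$$
which is strictly more regular than $\sigma(u)\xi$ because the bracket vanishes to higher order as $z'\to z$ (this is where $\sigma\in C_b^2$ and Assumption \ref{nonlinearsigma} enters).

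Step two is a quantitative Schauder estimate: using the heat kernel representation on the parabolic ball $B(z,L)$, together with the seminorms \eqref{orderbounds}--\eqref{orderboundlollipop} that directly give sizes for $\Pi_z\lolli$ and $\Pi_z\dumb$, I would obtain a pointwise bound of the form
$$
|v(z';z)| \;\leq\; C\,L^{2-2\kappa}\,\Bigl(\Cnoise{,1}^{a_1}\,\|u\|_{B(z,2L)}^{b_1} + \Cdumb{,1}^{a_2}\,\|u\|_{B(z,2L)}^{b_2}\Bigr) + (\text{initial data})
$$
for $d(z',z)\leq L$, with the exponents $a_i,b_i$ controlled by the homogeneities \eqref{homogeneityofsymbols} and the polynomial degree in $u$ acquired from applying $\sigma,\sigma',\sigma''$. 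The boundary/initial-time contribution is harmless because $u_0$ is bounded and a heat-flow bound immediately gives $\|u_0\|$ on the right.

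Step three is the multiscale bootstrap. Setting $M := \|u\|_{D_0^1}$, pick $z^\star\in\overline{D_0^1}$ where $|u(z^\star)|$ nearly achieves $M$; then apply the local expansion to bound $|u(z^\star)|$ against the three quantities on the right-hand side of \eqref{mainresultbound}. Inserting the Schauder bound from Step two and trivially estimating $|\sigma(u(z))(\Pi_z\lolli)(z')|\leq C_\sigma \Cnoise{,1}L^{1-\kappa}$ and similarly for the $\dumb$ term yields a self-improving inequality
$$
M \;\leq\; C\,\max\!\Bigl\{\|u_0\|,\; \Cnoise{,1}\,L^{1-\kappa}\,M^{\text{(poly)}},\; \Cdumb{,1}\,L^{2\kappa}\,M^{\text{(poly)}}\Bigr\}.
$$
Optimising the scale $L$ separately against each stochastic contribution and solving for $M$ using the exponents \eqref{betas} produces \eqref{mainresultbound}. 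The constraint $\betanoise,\betadumb<1$, which is exactly what \eqref{equationforkappa} and the bound $\kappa<\bar\kappa$ encode, is precisely what allows solving the polynomial inequality.

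The main obstacle is sharpness in Step two: a naive Schauder bound would produce exponents $\geq 1$ on $\|u\|$, destroying the bootstrap. One must recognise and exploit the modelled-distribution structure of $\sigma(u)$, track carefully how many factors of $u$ are introduced at each stage of the expansion (including a second round of reexpansion to uncover the $\dumb$ counterterm), and verify that the combined polynomial degree coming from the coefficients matches the gain in regularity $L^{2-2\kappa}$ from Schauder. Balancing these ingredients is the algebraic content of \eqref{betas} and \eqref{equationforkappa}.
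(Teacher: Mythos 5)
There is a genuine gap, and it is precisely the one this paper is built to overcome. Your Step three is the ``naive balancing'' bootstrap: test the equation at scale $L$, bound the remainder by $L^{2-2\kappa}$ times a polynomial in $\|u\|$, bound the singular terms by negative powers of $L$, then optimise $L$ and solve for $M$. The paper shows in Remark \ref{remarkwhynaivebalancingdoesnotwork} that this cannot close, no matter how carefully you track the polynomial degree in $u$: once $\gamma>1$ the local description necessarily carries the generalised gradient $u_X$ (your expansion omits the term $u_X(z)\cdot(\Pi_z\X)$ altogether, which hides rather than removes the problem), and the chain rule for $\sigma(U)$ (Lemma \ref{lemmachainrule}) produces a quadratic term $u_X^2$ in the remainder. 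The term $\sigma'(u(z))u_X(z)\cdot\langle\Pi_z\Xnoise,\varphi_z^L\rangle\sim |u_X|\,L^{-\kappa}$ is of homogeneity one in $u$, so after substituting any $L$ that is a negative power of $\|u\|$ the resulting exponent on $C_\star$ is $(1+\kappa)\bigl(1+\tfrac{\gamma_1-1}{1-\kappa}\bigr)>1$ for every $\kappa>0$ and every admissible $\gamma_1$; the self-improving inequality you write down therefore has exponent $\geqslant 1$ on $M$ and cannot be solved. Your stated remedy (``track carefully how many factors of $u$ are introduced'') does not help: Remark \ref{remarkofwhyneedimprovedinteriorestimate} shows that even with the refined two-exponent interior estimate the straightforward balancing fails whenever $\gamma_1=\gamma_2$, for all $\kappa>0$.

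What is missing are the structural inputs that make \eqref{mainresultbound} reachable: (i) the identity $\nabla u_L(z)=u_X(z)+\sigma(u(z))\langle\Pi_z\lolli,\nabla_x\tilde\varphi^L_z\rangle+E_z^L$ of Lemma \ref{lemmarelatinggradients}, which lets one trade the generalised gradient for the actual gradient of the regularised solution; (ii) the explicit multiscale form of the reconstruction error, used in Proposition \ref{regularisedequationwithtransport} to collect \emph{all} occurrences of $u_X$ (including those from the $u_X^2$ term) into a transport term $b_L\cdot\nabla u_L$; (iii) a maximum principle for the resulting equation $(\partial_t-\Delta)u_L=a_L+b_L\cdot\nabla u_L+c_L$ (Theorem \ref{theoremtopostprocessintopolynimial}, via the SDE representation), so that the super-linear coefficient $b_L$ never enters the $L^\infty$ bound and only $a_{\tilde L}+c_{\tilde L}$ must be balanced, which is where the sub-linear exponents $\betanoise,\betadumb$ of \eqref{betas} arise; and (iv) the interior estimate with two different exponents, $\gamma_1$ close to $1+\kappa$ and $\gamma_2=2-2\kappa$ (Corollary \ref{corollaryimprovedinteriorestimate}), without which $\betanoise<1$ is impossible and the threshold $\bar\kappa$ in \eqref{equationforkappa} would not appear. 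Your Steps one and two (freezing of coefficients, Reconstruction, Schauder, and the short-time/initial-data control) do correspond to the paper's Sections 4 and 6, but as written the argument stops exactly where the paper's main difficulty begins.
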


Our second main result is a polynomial growth bound for $\| u \|_{D_0^t}$
in $t$.

\begin{theorem}
  \label{theorempolynomialgrowth}Under the same assumptions of Theorem
  \ref{maintheorem}, for every $n < t \leqslant n + 1$, it holds that
  \begin{equation}
    \| u \|_{D_0^t} \leqslant C' (\kappa, d, C_{\sigma}) \max \left\{ \| u_0
    \|, \max_{1 \leqslant i \leqslant n + 1} \left\{ \Cnoise{,i}^{\frac{2}{1 - \kappa}}, 
    \Cdumb{,i}^{\frac{1}{1 - \kappa}}
    \right\}^{\frac{1}{(1 - \betanoise)^2}} t^{\frac{1}{1 - \betanoise}}
    \right\} \label{polynomialgrowth}
  \end{equation}
  where $C' (\kappa, d, C_{\sigma}) > 0$ is a constant that only depends on
  $\kappa, d$ and $C_{\sigma}$, and the estimate is independent of the renormalisation
  constant $C>0$ in \eqref{defsmoothdumbwithC}-\eqref{mainequationrenormalised}.
  $\betanoise$ is the same as in Theorem
  \ref{maintheorem}.
\end{theorem}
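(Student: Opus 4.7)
The strategy is to iterate Theorem \ref{maintheorem} over successive unit time intervals and to extract from its proof a sharpening which telescopes additively on the $(1-\betanoise)$-power scale. Setting $v_n := \|u(n, \cdot)\|$ and translating Theorem \ref{maintheorem} in time (using $u(n-1, \cdot)$ as the initial condition on $[n-1, n]$) yields
\[
\|u\|_{D_{n-1}^n} \leq C \max\Big\{v_{n-1}, \; \Cnoise{,n}^{\frac{2}{(1-\kappa)(1-\betanoise)}}, \; \Cdumb{,n}^{\frac{1}{(1-\kappa)(1-\betadumb)}}\Big\},
\]
but naive iteration of this multiplicative bound would produce only geometric growth $C^{n}$ in $n$, far too weak for the desired polynomial rate.

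The key step is to revisit the proof of Theorem \ref{maintheorem} and establish, in place of a mere reapplication, the stronger \emph{additive} telescoping inequality
\[
v_n^{1-\betanoise} \leq v_{n-1}^{1-\betanoise} + C\, \tilde K_n, \qquad \tilde K_n := \max\Big\{\Cnoise{,n}^{\frac{2}{(1-\kappa)(1-\betanoise)}}, \; \Cdumb{,n}^{\frac{1}{(1-\kappa)(1-\betanoise)}}\Big\},
\]
in which the coefficient of $v_{n-1}^{1-\betanoise}$ on the right-hand side is exactly one. This is consistent with Theorem \ref{maintheorem} (applying it once recovers the same exponents up to constants), yet strictly stronger, because any multiplicative constant in front of $v_{n-1}^{1-\betanoise}$ arising from a mere rearrangement of Theorem \ref{maintheorem} would propagate to exponential blow-up under iteration.

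Granted this telescoping form, summing from $k = 1$ to $n$ yields
\[
v_n^{1-\betanoise} \leq \|u_0\|^{1-\betanoise} + C\, n \max_{1 \leq i \leq n} \tilde K_i,
\]
and taking $(1-\betanoise)$-th roots together with $n \leq t$ produces the bound of Theorem \ref{theorempolynomialgrowth} on $v_n = \|u(n, \cdot)\|$; the outer exponent $\frac{1}{(1-\betanoise)^2}$ on the stochastic objects arises as the product of the $\frac{1}{1-\betanoise}$ already inside $\tilde K_i$ and the additional $\frac{1}{1-\betanoise}$ from taking the outer root. One further application of Theorem \ref{maintheorem} on the final partial interval $[n, t]$, with $n < t \leq n+1$, then controls $\|u\|_{D_n^t}$ in terms of $v_n$ and the stochastic objects on $[n, n+1]$ and is absorbed into the same expression, completing the estimate on $\|u\|_{D_0^t}$.

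The principal obstacle is producing the additive telescoping form itself. A direct algebraic rearrangement of Theorem \ref{maintheorem} gives only $v_n^{1-\betanoise} \leq C(v_{n-1}^{1-\betanoise} + \tilde K_n^{1-\betanoise})$ with $C > 1$, which telescopes to exponential growth. Eliminating the multiplicative constant in front of $v_{n-1}^{1-\betanoise}$ requires reopening the maximum-principle/comparison argument underlying Theorem \ref{maintheorem} and tracking precisely how the initial-data contribution at time $t = n-1$ enters the self-improving bound on $[n-1, n]$, so that the increment $v_n^{1-\betanoise} - v_{n-1}^{1-\betanoise}$ is controlled purely by the stochastic objects supported on $[n-1, n]$, with no multiplicative amplification of $v_{n-1}$.
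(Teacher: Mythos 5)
Your outline coincides in skeleton with the paper's actual route: iterate the unit-interval result, upgrade it to an additive difference inequality in which the previous value enters with coefficient exactly one, then telescope. Your summation on the $(1-\betanoise)$-power scale is equivalent to the paper's induction (an ODE comparison for $Y_{n+1}\le Y_n+q_nY_n^{\betanoise}$, with $Y_n=\|u\|_{D_{n-1}^n}$) in Appendix \ref{Appendixdifferenceequations}, so that half is sound. The genuine gap is that you have only named, not proved, the decisive ingredient: the unit-coefficient inequality itself. You explicitly flag ``producing the additive telescoping form'' as the principal obstacle and say it requires reopening the maximum-principle argument, but you do not carry that step out; without it the argument collapses to the crude bound $v_n^{1-\betanoise}\le C\bigl(v_{n-1}^{1-\betanoise}+\tilde K_n^{1-\betanoise}\bigr)$ with $C>1$, which, as you yourself observe, only yields exponential growth. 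As written, the proof is therefore incomplete exactly at its load-bearing step.

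For comparison, here is how the paper closes it. First, Theorem \ref{maintheorem} is applied on $D_n^{n+1}$ with $C_{\star,n+1}\gtrsim\max\{\|u_{|t=n}\|,\dots\}$, giving the crude bound \eqref{controlonNtoNplus1}; a multiplicative constant is harmless at this stage. Second, the maximum-principle step (Theorem \ref{theoremtopostprocessintopolynimial} via Proposition \ref{propositiontransportdecomposition}) is rerun over the whole of $D_n^{n+1}$ — possible because for $n\ge1$ the interval stays uniformly away from $t=0$ — and the stochastic representation formula for the transport equation, together with $\|u-u_{\tilde L}\|\lesssim1$, yields \eqref{intermediatepolynomialgrowth}: $\|u\|_{D_n^{n+1}}\le\|u_{\tilde L|t=n}\|+C(\kappa,d,C_\sigma)\bigl(\tCdumb{,n+1}^{\frac{1}{1-\kappa}}C_{\star,n+1}^{\betadumb}\vee\tCnoise{,n+1}^{\frac{2}{1-\kappa}}C_{\star,n+1}^{\betanoise}\bigr)$, with coefficient exactly one on the first term. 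The crude bound on $C_{\star,n+1}$ enters only through the sublinear powers $\betadumb,\betanoise<1$, so its constant never multiplies $Y_n$; and $\|u_{\tilde L|t=n}\|\le Y_n$ with coefficient one because the mollifier only looks into the past. Note, however, that this last point forces the constants $\tCnoise{,n+1}=\Cnoise{,n}\vee\Cnoise{,n+1}$ (and likewise $\tCdumb{,n+1}$) in \eqref{definitionofCtildenoise}: the increment $Y_{n+1}-Y_n$ is controlled by stochastic data on both $[n-1,n]$ and $[n,n+1]$, not ``purely by the stochastic objects supported on $[n-1,n]$'' as you assert — a small but real correction to your bookkeeping, ultimately absorbed by the maximum over $1\le i\le n+1$ in \eqref{polynomialgrowth}.
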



As a direct consequence of Theorem \ref{theorempolynomialgrowth}, we have the
following pathwise growth estimate result.

\refchange{
\begin{corollary}[pathwise growth]
  \label{pathwisegrowth}
  Let the assumptions of Theorem \ref{maintheorem} be in force. Assume that
  \begin{equation}
    \max_{1 \leqslant i \leqslant n} \{ \Cnoise{,i}^2, \Cdumb{,i} \}
    \leqslant C (\xi) n^\delta ,
    \label{weightsinstochasticobjects}
  \end{equation}
  where $\delta \geqslant 0$ and $C (\xi) > 0$ is a constant
  that depends on the realisation of $\xi$. 
  Then, for $\bar \delta
  := \delta (1 - \kappa)^{-1} (1 - \betanoise)^{-2}$,
  \eqref{polynomialgrowth}
  translates to
  \[ \| u \|_{D_0^t} \leqslant C' (\kappa, d, C_{\sigma}) \max \left\{ \| u_0
     \|, C (\xi) (t + 1)^{\frac{1}{1 - \betanoise} + \bar \delta} \right\}, \]
  where $C' (\kappa, d, C_{\sigma}) > 0$ is the same constant as in Theorem
  \ref{theorempolynomialgrowth}
  and the estimate is independent of the renormalisation
  constant $C>0$ in \eqref{defsmoothdumbwithC}-\eqref{mainequationrenormalised}.
\end{corollary}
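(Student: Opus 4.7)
The plan is straightforward: substitute the assumed polynomial growth \eqref{weightsinstochasticobjects} of the stochastic constants into the right-hand side of \eqref{polynomialgrowth}, and reduce the resulting composite exponent to the clean form $\tfrac{1}{1-\betanoise}+\delta$ that appears in the statement. The whole argument is bookkeeping of exponents together with a harmless redefinition of the realisation-dependent constant $C(\xi)$; I do not expect any genuine analytic obstacle beyond Theorem~\ref{theorempolynomialgrowth} itself.

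First I would apply \eqref{weightsinstochasticobjects} with $n$ replaced by $n+1$ (legitimately, possibly after enlarging $C(\xi)$ by a constant factor depending only on $\delta$ and $\kappa$), so as to control $\max_{1\leqslant i\leqslant n+1}\{\Cnoise{,i}^2,\Cdumb{,i}\}$ by $C(\xi)(n+1)^{\delta(1-\kappa)(1-\betanoise)^2}$. Raising to the power $\tfrac{1}{1-\kappa}$ produces a bound for $\max_{1\leqslant i\leqslant n+1}\{\Cnoise{,i}^{2/(1-\kappa)},\Cdumb{,i}^{1/(1-\kappa)}\}$, which is exactly the object fed into the outer exponent $\tfrac{1}{(1-\betanoise)^2}$ in \eqref{polynomialgrowth}; the exponent of $n+1$ reduces cleanly to $\delta(1-\betanoise)^2$.

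Next I would raise this estimate to the power $\tfrac{1}{(1-\betanoise)^2}$, which is designed precisely to collapse the factor $(1-\betanoise)^2$ in the exponent of $(n+1)$, producing a bound of the shape $C(\xi)^{1/((1-\kappa)(1-\betanoise)^2)}(n+1)^{\delta}$. Multiplying by $t^{1/(1-\betanoise)}$ as prescribed by \eqref{polynomialgrowth} and using $n<t\leqslant n+1\leqslant t+1$ to combine the two factors of $t$, one arrives at
\[
C(\xi)^{\frac{1}{(1-\kappa)(1-\betanoise)^2}}(t+1)^{\frac{1}{1-\betanoise}+\delta}.
\]
Absorbing the $\kappa$-dependent power of $C(\xi)$ into a relabelled realisation-dependent constant (still denoted $C(\xi)$) and taking the maximum with $\|u_0\|$ yields the conclusion. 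The only point that deserves any attention is that one must invoke the hypothesis with $n+1$ rather than $n$ in order to cover the index $i=n+1$ that appears in Theorem~\ref{theorempolynomialgrowth}; this is handled by the inflation step just described, and no further work is needed.
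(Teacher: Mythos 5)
Your proposal is correct and is exactly the exponent bookkeeping that the paper leaves implicit when it states the corollary as a direct consequence of Theorem \ref{theorempolynomialgrowth}: substitute \eqref{weightsinstochasticobjects} at index $n+1$, raise to the powers $\tfrac{1}{1-\kappa}$ and $\tfrac{1}{(1-\betanoise)^2}$ so that the exponent of $n+1$ collapses to $\delta$, and use $n<t\leqslant n+1$ to combine with $t^{1/(1-\betanoise)}$ into $(t+1)^{\frac{1}{1-\betanoise}+\delta}$. The only (harmless) liberty is replacing $C(\xi)^{\frac{1}{(1-\kappa)(1-\betanoise)^2}}$ by a relabelled realisation-dependent constant, which is clearly the intended reading of the statement.
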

}

\refchange{
\begin{remark} \label{second.paper.remark.p.moments.imply.controlled.growth.in.n.remark}
	Note that if the constants $\{\Cnoise{,n}, \Cdumb{,n}\}_{n \in \N}$ satisfy
	    \begin{equation}
 		   \sup_{n \geqslant 1} \E[\Cnoise{, n}^{2p}] < \infty
		    \quad \text{and} \quad 
		    \sup_{n \geqslant 1} \E[\Cdumb{, n}^p] < \infty \; \; ,
		    \label{second.paper.remark.p.moments.imply.controlled.growth.in.n}
	  \end{equation}
	  for some $p \geqslant 1$,
	then by setting for any $\delta > 0$
	\begin{equation*}
		C_\delta(\xi) := \sup_{n \in \N} 
		\frac{\max \{ \Cnoise{,n}^2, \Cdumb{,n} \} }{n^\delta} \; \; ,
	\end{equation*}
	we see that if $\delta > p^{-1}$, then
	\begin{equation*}
		\E [|C_\delta(\xi)|^p] 
		\leqslant \E\left[ \sum_{n=1}^\infty \left| \frac{\max \{ \Cnoise{,n}^2, \Cdumb{,n} \} }{n^\delta} \right|^p \right]
		\leqslant \sum_{n=1}^\infty \frac{1}{n^{\delta p}} < \infty \; \; .
	\end{equation*}
	In particular, if \eqref{second.paper.remark.p.moments.imply.controlled.growth.in.n} holds for every $p \geqslant 1$,
	then \eqref{weightsinstochasticobjects} holds for any $\delta>0$ and the random variable $C(\xi) = C_\delta(\xi)$
	has all $p$-moments.
\end{remark}
}

\begin{remark} \label{remarkmultinoise}
  Our main theorems extend to equations as \eqref{mainequation} with $\sigma(u) \xi$
  replaced by $\sum_{i=1}^m \sigma_i(u) \xi_i$, for $m \in \N$, at the expense of adding decorations
  to the symbols $\allsym$. In fact, let $\noiseb_i = \xi_i$ and define 
  $\Pi_z \lolli_i$ and $\Pi_z \Xnoise_i$ according to \eqref{Pinoise}-\eqref{defsmoothXnoise}.
  Then, the second order terms $\Pi_z \dumb_{i,j}$, for $i,j \in \{1,\dots,m\}$, are given by
  \begin{equation}
    z \mapsto \Pi_z \dumb_{i,j} = 
    (\lollib_j - \lollib_j (z)) \noiseb_i - C_{i,j} ,
    \label{decorateddumb}
  \end{equation}
  for $m^2$ renormalisation constants $C_{i,j} \in \R$. 
  In this case, equation \eqref{mainequation} is renormalised as
  \begin{equation}
  (\partial_t - \Delta) u = \sum_{i=1}^m \sigma_i (u) \xi_i - \sum_{i,j=1}^m \sigma_j' (u) \sigma_i (u)C_{i,j} .
  \label{multicompequationrenormalised}
\end{equation}
  The order bounds \eqref{orderbounds} and \eqref{orderboundlollipop} are defined in the same way.
 \end{remark}
 
 \begin{remark} \label{remarkgeneraltheorem}
   The qualitative smoothness assumption on the noise $\xi$ is made for ease of presentation:
   Theorems \ref{maintheorem} and \ref{theorempolynomialgrowth} hold under more general
   assumptions cf. the theory of Regularity Structures. Our definition of $\lollib$ in \eqref{definitionlollipop}
   corresponds to \cite[Def. 5.9]{Hai14}. The formulas for $\Pi_z \Xnoise$ in \eqref{defsmoothXnoise}
   and for $\Pi_z \dumb$ in \eqref{defsmoothdumbwithC} can be relaxed into the change of base
   point (CBP) formulas
   \begin{eqnarray}
     \Pi_z\Xnoise & = & \Pi_w\Xnoise - (\Pi_w \X) (z) \Pi_w \noise
     \label{changeofbasepointXnoise}, \\
     \Pi_z \dumb & = & \Pi_w\dumb - (\Pi_w \lolli) (z) \Pi_w \noise .
     \label{changeofbasepointdumbbell}
    \end{eqnarray}
    In view of the order bounds (OB) in \eqref{orderbounds} and \eqref{orderboundlollipop}, the constants
    $\Cnoise{,n}$ and $\Cdumb{,n}$ in 
  \eqref{assumptiononstochasticobjects} are assumed to be finite for every $n \in \N$. Conditions CBP and OB
    correspond to \cite[eq. (2.15) in Def. 2.17]{Hai14}.
    These conditions are flexible enough to allow for the renormalisation present in \eqref{defsmoothdumbwithC},
    but also are independent of the smoothness of $\xi$.
    
    Finally, the smoothness assumption on the noise can be replaced by the assumption that
    the function $\mathcal U : D \to \operatorname{span} \{ \blue{\mathbf 1}, \lolli , \X\}$ given by 
    \begin{equation*}
       \mathcal U(z) = u(z) \blue{\mathbf 1} + \si(u(z)) \lolli + u_X(z) \cdot \X, 
    \end{equation*}
    is a \textit{modelled distribution} in $\mathcal D^{2-2\kappa}$ 
    w.r.t. $\Pi$, cf. \cite[Def. 3.1]{Hai14}. 
    In our notation
    this corresponds to the assumption that 
    the semi-norm $[U]_{2-2\kappa}$ in \eqref{finitegammanormofU} is finite. 
    Due to Assumption
    \ref{nonlinearsigma} on the regularity of $\si$, the function
    \begin{equation*}
      \si( \mathcal U)(z) = \si(u(z)) \blue{\mathbf 1} + \si'\si(u(z)) \lolli + \si'(u(z))u_X(z) \cdot \X
    \end{equation*}
    is also in $\mathcal D^{2-2\kappa}$ - see \cite[Thm 4.16]{Hai14}. 
    This allows to define the singular product $\si(u) \xi$ in
    \eqref{mainequation} as the reconstruction of the modelled distribution $\si(\mathcal U) \noise$,
    denoted by $\mathcal R(\si(\mathcal U) \noise)$,
    cf. Theorem \ref{reconstructiontheorem}, which is a variant of \cite[Thm 3.10]{Hai14}.   
   
   Then, equation \eqref{mainequation} can be interpreted as
    \begin{equation*}
      \heat u = \mathcal R(\si(\mathcal U) \noise) , 
    \end{equation*}
  regardless of the smoothness of $\xi$. Our analysis goes through in this case without changes.
  In fact, conditions CBP, OB and
  $[U]_{2 - 2\kappa} < \infty$ in \eqref{finitegammanormofU} are the only assumptions we use in our proofs.
 \end{remark}

We shall now discuss some concrete examples that fit into the general form of \eqref{mainequation}.
Since our applications deal with random noise $\xi$, we \refchange{write 
$(\Omega,\P)$ to denote the probability space that supports a specific choice of random distribution $\xi$ in \eqref{mainequation}
  and $\E$ to denote  expectation w.r.t. $\P$. }
The case of the 2-$d$ gPAM corresponds to the choice of $\xi$ as a $2\pi$-periodic spatial white noise over
$\R^2$, which satisfies $\xi \in \bigcap_{\delta>0} C^{-1-\delta}$. $\xi$ can be realised over $\T^2$ via the random
Fourier series (denote $e_k(x) := e^{i k\cdot x}$, for $x \in \T^2$) 
\begin{equation}
  \xi = \sum_{k \in \Z^2} \hat{\xi}_k e_k ,
  \label{fourierserieswhitenoisegpam}
\end{equation}
\refchange{where the}
random coefficients $\hat{\xi}_k$ are independent complex Gaussians (up to $\bar{\hxi}_k = \hxi_{-k}$) with
mean zero and covariance $\E[\hxi_k \hxi_{-l}] = (2\pi)^{-2} \delta_{k l}$, and where $\delta_{kl}$ is the \textit{Kronecker} delta.
For any $z=(t,x)\in D$, we define
\begin{equation}
   \lollib(z) := \hxi_0 t + \tilde{\lollib}(x) ,
   \quad \text{where} \quad
   \tilde{\lollib} := \sum_{k \in \Z^2, k \neq 0} \frac{\hat{\xi}_k}{|k|^2} e_k ,
   \label{fourierserieslolligpam}
\end{equation}
which satisfies \eqref{definitionlollipop} starting from $\lollib_{|t=0} = \tilde{\lollib}$. Even though $\lollib$ is not stationary in time,
its increments are. 
Indeed, $\tilde{\lollib}$ is time-independent, and for every
$p\geqslant1$, we have $\E[|\tilde{\lollib}(z) - \tilde{\lollib}(\bar{z})|^p] \lesssim C_{p,\xi} |x-\bar{x}|^{(1-\delta)p}$ for any $\delta>0$.
Therefore, we obtain that $[\Pi ; \lolli]_{1-\delta,D_{n-1}^n} < \infty$ in \eqref{orderboundlollipop} a.s. for any 
$n \in \N$ and $\delta>0$. If we denote by $\noiseb^{(\veps)}$ and $\lollib^{(\veps)}$ the regularisation at scale
$\veps$ of the objects in \eqref{fourierserieswhitenoisegpam} and \eqref{fourierserieslolligpam} via
ultraviolet cut-off, 
$\Pi_z \dumb$ can be constructed as
\begin{equation*}
  \Pi_z \dumb =
  \lim_{\veps \downarrow 0} \left(\lollib^{(\veps)} - \lollib^{(\veps)}(z)\right) \noiseb^{(\veps)} - C^{(\veps)} ,
\end{equation*}
where
\begin{equation}
  \Ceps = 
  \E[\tilde\lollib^{(\veps)}(0) \bar \noiseb^{(\veps)}(0)] 
  = \frac{1}{(2\pi)^2} \sum_{0<|k| \leqslant 1/\veps} \frac{1}{|k|^2}
  \sim \frac{1}{(2\pi)^2} \log (\veps^{-1}).
  \label{choiceofCepsgpam}
\end{equation}
The arguments in \cite[Sec. 10.4]{Hai14} can be followed to show that 
the OBs
in \eqref{orderbounds}-\eqref{orderboundlollipop} satisfy
  \begin{equation}
    \E[\Cnoise{}^p], \E[\Cdumb{}^p] < \infty
    \quad \text{for} \quad
    \Cnoise{} := \sup_{n \in \N} \Cnoise{,n}
    \quad \text{and} \quad
    \Cdumb{} := \sup_{n \in \N} \Cdumb{,n}
    \label{momentsforgPAM}
  \end{equation}
  uniformly in $\veps>0$, for every $p \geqslant 1$, where
 and $\Cnoise{,n}, \Cdumb{,n} < \infty$ are as in 
\eqref{assumptiononstochasticobjects}.
\refchange{We refer to \cite[Thm 3.3.4]{dLF24} for the precise statement and detailed proof of the estimates for the objects defined above.}

Furthermore, \cite[Sec. 1.5.1]{Hai14} states that the solutions $\ueps$
to \eqref{mainequationrenormalised} w.r.t. $\noiseb^{(\veps)}$ and $\Ceps$
in \eqref{choiceofCepsgpam} converge in a suitable sense as $\varepsilon \downarrow 0$ to
a limiting object $u$, which is interpreted as the solution to \eqref{mainequation}.
See \cite[Sec. 5]{GIP15} for a similar discussion.
Since our estimates
in Theorems \ref{maintheorem} and \ref{theorempolynomialgrowth} are independent of
$C^{(\varepsilon)}$, they pass to the limit $\varepsilon \downarrow 0$. 

\begin{corollary}[2-d gPAM]
  Let $\xi$ be a $2\pi$-periodic spatial white noise over $\R^2$, $\si$ satisfying Assumption
  \ref{nonlinearsigma} and $\nu = (1 - \kappa)^{-1}(1 - \betanoise)^{-2}$.
  Then 
  \refchange{ 
  \eqref{mainequationrenormalised},}
  is globally well-posed for any $t > 0$ 
  and \refchange{the solution $u$} satisfies the moment estimate
  \begin{equation*}
    \E[\| u \|_{D_0^t}^p] \leqslant C' (\kappa, d, C_{\sigma}) \max \left\{ \| u_0\|,
    \E[\Cnoise{}^{2\nu p}] t^{\frac{p}{1 - \betanoise}},
    \E[\Cdumb{}^{\nu p}] t^{\frac{p}{1 - \betanoise}}
    \right\} ,
  \end{equation*}
  where 
  $\E[\Cnoise{}^{2\nu p}], \E[\Cdumb{}^{\nu p}]$ in \eqref{momentsforgPAM} are finite
  for any $p \geqslant 1$ and $\betanoise$ is the same as in Theorem
  \ref{maintheorem}.
\end{corollary}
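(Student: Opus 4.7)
The plan is to combine the pathwise growth estimate of Theorem \ref{theorempolynomialgrowth} with the uniform-in-$\varepsilon$ moment bounds \eqref{momentsforgPAM} on the stochastic data, and then pass to the limit using the convergence $\ueps \to u$ recalled from \cite[Sec. 1.5.1]{Hai14}. Since the bounds in Theorems \ref{maintheorem} and \ref{theorempolynomialgrowth} are independent of the renormalisation constant $\Ceps$ in \eqref{choiceofCepsgpam}, they apply to the smooth approximations $\ueps$ solving \eqref{mainequationrenormalised} with $\xi = \noiseb^{(\veps)}$ and produce bounds depending only on $\|u_0\|$, $\Cnoise{,i}^{(\veps)}$ and $\Cdumb{,i}^{(\veps)}$ for $1 \leqslant i \leqslant n+1$ when $n < t \leqslant n+1$. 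This is the route to both assertions: non-explosion of $\ueps$ uniformly in $\varepsilon$ together with the convergence statement in \cite[Sec. 1.5.1]{Hai14} promotes the local-in-time limit $u$ to a global one, yielding global well-posedness.

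For the moment bound I would first apply Theorem \ref{theorempolynomialgrowth} pathwise to $\ueps$ and replace $\max_{1\leqslant i \leqslant n+1}$ by the suprema $\Cnoise^{(\veps)} := \sup_{n\in\N} \Cnoise{,n}^{(\veps)}$ and $\Cdumb^{(\veps)} := \sup_{n\in\N} \Cdumb{,n}^{(\veps)}$ from \eqref{momentsforgPAM}. This gives
\begin{equation*}
  \| \ueps \|_{D_0^t} \;\leqslant\; C'(\kappa, d, C_\sigma) \max\!\left\{ \|u_0\|,\; \bigl[\Cnoise^{(\veps)}\bigr]^{2\nu}\, t^{\frac{1}{1-\betanoise}},\; \bigl[\Cdumb^{(\veps)}\bigr]^{\nu}\, t^{\frac{1}{1-\betanoise}} \right\},
\end{equation*}
with $\nu = (1-\kappa)^{-1}(1-\betanoise)^{-2}$ exactly as in the statement; the exponents arise by composing $2/(1-\kappa)$ and $1/(1-\kappa)$ from \eqref{polynomialgrowth} with the outer power $1/(1-\betanoise)^2$.

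Next I would raise to the $p$-th power and use $\max\{a,b,c\}^p \leqslant a^p + b^p + c^p$ to split the three contributions, take expectation, and invoke the uniform moment bounds in \eqref{momentsforgPAM} on $\E[(\Cnoise^{(\veps)})^{2\nu p}]$ and $\E[(\Cdumb^{(\veps)})^{\nu p}]$. Reabsorbing the harmless additive factor of $3$ into $C'(\kappa,d,C_\sigma)$ and returning to the $\max$-form yields the stated inequality for $\ueps$.

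Finally I would pass to the limit $\varepsilon \downarrow 0$: by \cite[Sec. 1.5.1]{Hai14} the solutions $\ueps$ converge to $u$ in an appropriate topology that controls the supremum norm, and the stochastic objects satisfy $\Cnoise{,n}^{(\veps)} \to \Cnoise{,n}$ and $\Cdumb{,n}^{(\veps)} \to \Cdumb{,n}$ in every $L^p(\Omega)$ by the uniform moment bounds \eqref{momentsforgPAM}. Fatou's lemma on the left-hand side and dominated convergence for the moments on the right-hand side deliver the estimate for $u$ itself. The only mildly delicate step is ensuring that the pathwise estimate survives the limit on a common full-measure event; this is routine given the uniformity in $\varepsilon$, so the main conceptual content of the proof is really just the observation that the $\varepsilon$-independence of the constants in Theorems \ref{maintheorem} and \ref{theorempolynomialgrowth}, combined with \eqref{momentsforgPAM}, is strong enough to upgrade pathwise control into moment control.
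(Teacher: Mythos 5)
Your proposal is correct and is essentially the argument the paper intends: the corollary is presented as a direct consequence of Theorem \ref{theorempolynomialgrowth} applied to the smooth approximations, combined with the uniform moment bounds \eqref{momentsforgPAM} on $\Cnoise{} = \sup_n \Cnoise{,n}$, $\Cdumb{} = \sup_n \Cdumb{,n}$, and the independence of the estimates from $\Ceps$ which lets them pass to the limit $\veps \downarrow 0$ via the convergence recalled from \cite[Sec. 1.5.1]{Hai14}. Your bookkeeping of the exponents ($2\nu$ and $\nu$ with $\nu = (1-\kappa)^{-1}(1-\betanoise)^{-2}$) and the Fatou/subsequence step match what the paper leaves implicit.
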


Equation \eqref{mainequation} also has connections with additive SPDEs. 
The parabolic
quantisation of the $\Phi_d^4$ Euclidean Quantum Field Theory (EQFT),
which is the singular SPDE formally given by
\[ (\partial_t - \Delta) \Phi = - \Phi^3 + \zeta, \]
where $\zeta$ is d-dimensional space-time white noise for $d \geqslant 2$. 
If we set $u = \Phi - Z$, where
$Z$ solves $(\partial_t - \Delta) Z = \zeta$, then $u$ itself solves
\[ (\partial_t - \Delta) u = - u^3 - u^2 Z - u Z^2 - Z^3. \]
This in turn can be seen as a gPAM in \eqref{mainequation} with extra damping
$- u^3$, where $\sigma (u) \xi$ in \eqref{mainequation} is replaced by
$\sum_{i = 1}^3 \sigma_i (u) \xi_i$ with $\sigma_i (u) = u^{3 - i}$, $i = 1,
2, 3$ and $\xi_i = Z^i$, cf. Remark \ref{remarkmultinoise}.
The global solution analysis in 
\cite{MW17plane,MW173d,AK17,GH19,Moinatandweber2020cpam,CMW23}
 relies heavily on this transformation and the strong damping term.
Our analysis applies to EQFTs without such strong damping, such as
the parabolic quantisation of the 2-dimensional
Sine-Gordon model (SG). Consider $\Phi$ the
solution to the formal equation
\begin{equation}
  (\partial_t - \Delta) \Phi = \sin (\beta \Phi) + \zeta ,
  \label{equationforSG}
\end{equation}
where $\beta > 0$ is a parameter and
$\zeta$ is a 2-$d$ space-time white noise.
Again we set $u = \Phi - Z$, where 
\refchange{here we take $Z$ to be the solution to
$(\partial_t - \Delta) Z = \zeta$
starting from an average zero $2d$ Gaussian Free Field, i.e., the initial datum $Z_0$ can be realised as
\begin{equation} \label{initialconditionZ}
  Z_0 = 
  \sum_{k \in \Z^2, k \neq 0} \frac{\hat{\xi}_k}{|k|^2} e_k \; ,
  \quad 
  \text{cf. \eqref{fourierserieswhitenoisegpam}-\eqref{fourierserieslolligpam} .} 
\end{equation}
}Therefore, $u = \Phi - Z$ formally solves
\begin{equation}
  (\partial_t - \Delta) u
  = \sin (\beta u)\cos (\beta Z) + \cos (\beta u)\sin (\beta Z) ,
  \label{parabolicSG}
\end{equation}
which can be seen as \eqref{mainequation} with $\sigma (u) \xi$
replaced by $\sum_{i = 1}^2 \sigma_i (u) \xi_i$, where
$\sigma_1(u) = \sin(\beta u)$, $\sigma_2(u) = \cos(\beta u)$,
$\xi_1 = \cos(\beta Z)$ and $\xi_2 = \sin(\beta Z)$, 
cf. Remark \ref{remarkmultinoise}. In {\cite{HS16}}-{\cite{CHS18}}, the local theory for equation
\eqref{parabolicSG} in the full subcritical regime $\beta^2 \in (0,8\pi)$ is developed.

We now make this more precise at the level of smooth approximations to state our result.
Consider a $2\pi$-periodic in space
space-time white noise $\zeta$ 
and let $\seqeps{\zeta}$
be a regularisation of $\zeta$ at scale $\veps$. 
In \cite[Sec. 2]{HS16}, the authors
define $\seqeps{\tilde{Z}} := K \ast \seqeps{\zeta}$, where $\ast$ denotes space-time convolution and
$K$ is a post-processed heat kernel, in a way that
$\heat \seqeps{\tilde{Z}} - \seqeps{\zeta} = \seqeps{R}_\zeta$,
where $\seqeps{R}_\zeta \to R_\zeta$ in $C^\gamma$ for any $\gamma \geqslant 0$
in the limit $\veps \downarrow 0$.
For $\beta^2 \in (0,16 \pi /3)$, define
\begin{equation}
  \seqeps{\noiseb} := \veps^{-\frac{\beta^2}{4\pi}} e^{i \beta \seqeps{\tilde{Z}}} ,
  \quad
  \seqeps{\noiseb}_c := \veps^{-\frac{\beta^2}{4\pi}} \cos(\beta \seqeps{\tilde{Z}}) ,
  \quad
  \seqeps{\noiseb}_s := \veps^{-\frac{\beta^2}{4\pi}} \sin(\beta \seqeps{\tilde{Z}}) ,
  \label{defofnoisesSG2}
\end{equation}
so that $\seqeps{\noiseb} = \seqeps{\noiseb}_c + i \seqeps{\noiseb}_s$. With this definition, \cite[Thm 2.1]{HS16}
shows convergence $\seqeps{\noiseb} \to \noiseb$ in
$C^{\alpha}$ for any $\alpha < -\beta^2/4\pi$. 
We provide in Appendix \ref{AppendixstochasticobjectsSG} a discussion on how the objects 
$\seqeps{\lollib}_a, \Pi_z \dumb_{a,b}$, for $a,b \in \{c,s\}$,
can be constructed from $\seqeps{\noiseb}$ in \eqref{defofnoisesSG2} by following the results in
\cite{HS16}. Moreover, the lift $\Pi$
satisfies the OBs
in \eqref{orderbounds}-\eqref{orderboundlollipop} with
$\E[\Cnoise{,n}^p], \E[\Cdumb{,n}^p] < \infty$
uniformly in $\veps>0$, for any $\kappa > \beta^2/4\pi - 1$, $n \in \N$ and $p \geqslant 1$, where
\begin{equation}
  \Cnoise{,n} := 
  \max_{\btau \in \{\noise_a, \lolli_a, \Xnoise_a \; : \; a=c,s\}} [\Pi;\btau]_{|\btau|,D_{n - 1}^n} ,
  \; \; 
  \Cdumb{,n} := \max_{a,b \in \{c,s\}} [\Pi;\dumb_{a,b}]_{- 2 \kappa, D_{n-1}^n}
  \label{defofconstantsOBforSG}
\end{equation}
are as in \eqref{assumptiononstochasticobjects} in view of Remark \ref{remarkmultinoise}.

In \cite[Thm 2.5]{HS16} they show that $\ueps = \seqeps{\Phi} - \seqeps{\tilde{Z}}$, solution to
the renormalised equation
\begin{equation}
  (\partial_t - \Delta) u^{(\varepsilon)} =
  \sin (\beta u^{(\varepsilon)}) \seqeps{\noiseb}_c 
   + \cos (\beta u^{(\varepsilon)}) \seqeps{\noiseb}_s 
   + \seqeps{R} ,
   \label{renormalisedequationforSG}
\end{equation}
converges to a limiting equation $u$, which is interpreted as the solution to \eqref{parabolicSG}.
Moreover, since $\sin(\beta \cdot)$ and $\cos(\beta \cdot)$ satisfy Assumption \ref{nonlinearsigma} and
the term $\seqeps R$ can be treated as a third (smooth) noise with coefficient $\si_3(u) = 1$,
our results apply to this equation. 

\begin{corollary}[2-d Sine-Gordon] \label{corollaryglobalSG}
\refchange{
Fix $\bar{\kappa} > 0$
  as in Theorem \ref{maintheorem}.
  Let $\zeta$ be a ($2\pi$-periodic in space) space-time white noise over $\R^2$ and
  $\beta^2 \in (0, (1 + \bar{\kappa}) 4 \pi)$. Fix the $2\pi$-periodic initial condition $\Phi_0 := Z_0 + u_0$, where 
  $Z_0$ is as in \eqref{initialconditionZ} 
  and $u_0$ is bounded.

Then the equation \eqref{equationforSG} is globally well-posed. 
The solution can be decomposed as $\Phi = Z + u$, where $Z$ is a Gaussian process that satisfies
  $Z \in C^{\alpha}$ for $-1/3 < \alpha < 0$ almost surely w.r.t. $\zeta$,
  and $u$ satisfies }
    \begin{equation*}
      \E[\| u \|_{D_0^t}^p] \leqslant C' (\kappa, d, C_{\sigma}) \max \left\{ \| u_0\|,
      \Pnoise(t) t^{\frac{p}{1 - \betanoise}},
      \Pdumb(t) t^{\frac{p}{1 - \betanoise}}
      \right\} ,
  \end{equation*}
  where for $\nu = (1 - \kappa)^{-1}(1 - \betanoise)^{-2}$, $\Cnoise{,n},\Cdumb{,n}$ in \eqref{defofconstantsOBforSG}
  and $n < t \leqslant n + 1$,
  \begin{equation*}
    \Pnoise(t) := \E\left[ \left( \max_{1 \leqslant i \leqslant n + 1} \Cnoise{,i} \right)^{2\nu p} \right]
    \; \text{and} \; \;
    \Pdumb(t) := \E\left[ \left( \max_{1 \leqslant i \leqslant n + 1} \Cdumb{,i} \right)^{\nu p} \right]
  \end{equation*}
  are finite for any $p \geqslant 1$ and $\betanoise$ is the same as in Theorem
  \ref{maintheorem}.

\end{corollary}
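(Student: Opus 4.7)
The plan is to apply Theorem \ref{theorempolynomialgrowth} to the smooth approximations $u^{(\veps)}$ solving \eqref{renormalisedequationforSG} and then pass to the limit $\veps \downarrow 0$, exploiting that the a priori bound is independent of the renormalisation constants. The condition $\beta^2 < (1+\bar\kappa) 4\pi$ rewrites as $\beta^2/4\pi - 1 < \bar\kappa$, so I would first fix some $\kappa \in (\max\{0,\, \beta^2/4\pi - 1\},\, \bar\kappa)$. For any such $\kappa$, Appendix \ref{AppendixstochasticobjectsSG} supplies uniform-in-$\veps$ moment bounds $\E[\Cnoise{,n}^p], \E[\Cdumb{,n}^p] < \infty$ for all $p \geq 1$ for the multi-component objects in \eqref{defofconstantsOBforSG}, while $\kappa$ simultaneously lies in the admissible range of Theorem \ref{theorempolynomialgrowth}.

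Next I would invoke the multi-noise extension from Remark \ref{remarkmultinoise} with $m=3$ components $\xi_1 = \seqeps{\noiseb}_c$, $\xi_2 = \seqeps{\noiseb}_s$, $\xi_3 = \seqeps{R}$ and nonlinearities $\sigma_1(u) = \sin(\beta u)$, $\sigma_2(u) = \cos(\beta u)$, $\sigma_3 \equiv 1$, each satisfying Assumption \ref{nonlinearsigma} with a $C_\sigma$ depending only on $\beta$. Since $\seqeps{R}$ is smooth and converges in $C^\gamma$ for every $\gamma \geq 0$, it contributes harmlessly to $\Cnoise{,i}$ and produces no new $\Cdumb{}$ term. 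Theorem \ref{theorempolynomialgrowth} then gives, pathwise and uniformly in $\veps>0$,
\[
\|u^{(\veps)}\|_{D_0^t} \leq C'(\kappa,d,C_\sigma)\max\Bigl\{\|u_0\|,\; \max_{1\leq i \leq n+1}\bigl\{\Cnoise{,i}^{2/(1-\kappa)},\Cdumb{,i}^{1/(1-\kappa)}\bigr\}^{1/(1-\betanoise)^2}\, t^{1/(1-\betanoise)}\Bigr\}.
\]
Raising to the $p$-th power, taking expectations, and using $(\max_{1\leq i \leq n+1} a_i)^q \leq \sum_{i=1}^{n+1} a_i^q$ together with the uniform moment bounds from the Appendix yields the corollary's estimate with $\Pnoise(t), \Pdumb(t) < \infty$.

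Finally, I would combine the uniform-in-$\veps$ bound with the convergence $u^{(\veps)} \to u$ established in \cite{HS16,CHS18}: extracting an almost-surely convergent subsequence and applying Fatou's lemma transfers the moment estimate from $u^{(\veps)}$ to $u$. Non-explosion of $u$, together with the a.s.\ $C^\alpha$ regularity of the Gaussian process $Z$ for $\alpha \in (-1/3, 0)$, upgrades the local well-posedness of \cite{HS16} to global well-posedness of $\Phi = Z + u$. The most delicate point is the tight choice of $\kappa$: it must lie strictly above $\beta^2/4\pi - 1$, below which the Sine-Gordon stochastic objects lose finite moments, and strictly below $\bar\kappa$, above which Theorem \ref{theorempolynomialgrowth} ceases to apply. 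The hypothesis $\beta^2 < (1+\bar\kappa)4\pi$ is exactly what makes this window non-empty, and since $\bar\kappa < 1/3$ it keeps $\beta^2$ strictly inside the full subcritical regime $\beta^2 < 8\pi$, so the local theory of \cite{HS16,CHS18} is unconditionally available.
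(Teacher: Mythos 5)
Your proposal is correct and follows essentially the same route as the paper, which deduces the corollary directly from Theorem \ref{theorempolynomialgrowth} applied (via Remark \ref{remarkmultinoise}) to the renormalised equation \eqref{renormalisedequationforSG} with $\sin(\beta\cdot),\cos(\beta\cdot)$ and the smooth remainder $\seqeps{R}$ as a third noise, using the uniform-in-$\veps$ moment bounds of Appendix \ref{AppendixstochasticobjectsSG} for $\kappa>\beta^2/4\pi-1$ and the independence of the estimates from the renormalisation constants to pass to the limit $\veps\downarrow 0$. Your observation that the hypothesis $\beta^2<(1+\bar\kappa)4\pi$ is precisely what makes the window $\kappa\in(\beta^2/4\pi-1,\bar\kappa)$ non-empty is exactly the point implicit in the paper.
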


Consider the parabolic quantisation of the massive 2-d SG, which reads as
\begin{equation}
  (\partial_t - \Delta + m^2) \Phi = \sin (\beta \Phi) + \zeta ,
  \label{themassiveSG}
\end{equation}
where $m^2 > 0$ is the mass parameter. It can be reduced to
\eqref{parabolicSG} with an extra damping term $- m^2 u$ in the r.h.s. 
if we consider $u = \Phi - Z$ where $Z$ solves
$(\partial_t - \Delta + m^2) Z = \zeta$. 
The additional
mass adds confinement to the periodic potential $\sin (\beta u)$ in \eqref{themassiveSG},
hence the massive SG on the torus is expected to have an invariant
measure.

Next is a theorem for equation \eqref{mainequation} with the additional mass
term.

\begin{theorem}[the massive equation]
  \label{corollarythemassiveequation}
  Under the same hypothesis of Theorem
  \ref{theorempolynomialgrowth}, consider equation \eqref{mainequationrenormalised} with
  an extra mass term $- m^2u$ in the r.h.s. 
  Then, for $n < t \leqslant n + 1$, we obtain a
  pathwise growth analogous to \eqref{polynomialgrowth} as
  \begin{equation}
    \| u \|_{D_0^t} \leqslant C (m, \kappa, d, C_{\sigma}) \max \left\{ \| u_0
    \|, \max_{1 \leqslant i \leqslant n + 1} \left\{ \Cnoise{,i}^{\frac{2}{1 - \kappa}}, 
    \Cdumb{,i}^{\frac{1}{1 - \kappa}}
    \right\}^{\frac{1}{(1 - \betanoise)^2}} \right\}
    \label{corollarymassive},
  \end{equation}
  where $C (m, \kappa, d, C_{\sigma}) > 0$ is a constant that only depends on
  $m, \kappa, d$ and $C_{\sigma}$ and the estimate is independent of the renormalisation
  constant $C>0$ in \eqref{defsmoothdumbwithC}-\eqref{mainequationrenormalised}.
  Furthermore, 
   for $\nu = (1 - \kappa)^{-1}(1 - \betanoise)^{-2}$, consider
    \begin{equation}
    \Mnoise := \sup_{n \geqslant 1} \E[\Cnoise{, n}^{2 \nu p}]^{\frac{1}{2 \nu p}}
    \quad \text{and} \quad 
    \Mdumb := \sup_{n \geqslant 1} \E[\Cdumb{, n}^{\nu p}]^{\frac{1}{\nu p}}
    , \label{assumptiononmoments}
  \end{equation}
  and then
  \begin{equation}
    \sup_{n \geqslant 1} \mathbb{E} [\| u \|^p_{D_{n - 1}^n}] \leqslant C (m,
    \kappa, d, p, C_{\sigma}) \max \{ \| u_0 \|, \Mnoise^{2 \nu p},
     \Mdumb^{\nu p} \} \label{growthofmoments},
  \end{equation}
  where 
  $C (m, \kappa, d, p, C_{\sigma}) > 0$ is a constant that only depends
  on $m, \kappa, d, p$ and $C_{\sigma}$ and the estimate is independent of
  $C>0$ in \eqref{defsmoothdumbwithC}-\eqref{mainequationrenormalised}.
\end{theorem}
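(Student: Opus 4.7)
The strategy is to revisit the proof of Theorem~\ref{maintheorem} in the presence of the mass term $-m^2 u$ and then iterate over integer time intervals. The mass replaces the heat operator by $\partial_t - \Delta + m^2$, whose semigroup contracts $L^\infty$-norms at rate $e^{-m^2 t}$; only the linear part of the equation is modified, so I expect the multiplicative/stochastic estimates of Theorem~\ref{maintheorem} to go through essentially unchanged, but with the $\|u_0\|$ contribution now carrying a factor $q_m<1$. This should yield a local bound of the shape
\begin{equation*}
  \|u\|_{D_{n-1}^n} \leqslant C(m,\kappa,d,C_\sigma) \max\left\{ q_m \|u(n-1,\cdot)\|,\; \Cnoise{,n}^{\frac{2}{(1-\kappa)(1-\betanoise)}},\; \Cdumb{,n}^{\frac{1}{(1-\kappa)(1-\betadumb)}} \right\},
\end{equation*}
obtained by applying the adapted Theorem~\ref{maintheorem} on $[n-1,n]$ with $u(n-1,\cdot)$ as initial datum.

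Iterating this max-recursion unfolds to
\begin{equation*}
  \|u\|_{D_{n-1}^n} \leqslant C(m,\kappa,d,C_\sigma) \max\left\{ (C q_m)^n \|u_0\|,\; \max_{1 \leqslant i \leqslant n} \left\{ \Cnoise{,i}^{\frac{2}{1-\kappa}}, \Cdumb{,i}^{\frac{1}{1-\kappa}} \right\}^{\frac{1}{(1-\betanoise)^2}} \right\},
\end{equation*}
and, taking a maximum over $n$ and $t \in (n,n+1]$, gives \eqref{corollarymassive}. Crucially, the factor $t^{1/(1-\betanoise)}$ appearing in Theorem~\ref{theorempolynomialgrowth} disappears: as long as $Cq_m\leqslant 1$, the initial-data contribution is absorbed geometrically and the max over stochastic data does not accumulate in $n$. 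The inequality $Cq_m\leqslant 1$ can always be arranged by restarting over intervals of length $\ell>1$, reducing via parabolic rescaling to the original local bound.

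For the uniform moment estimate \eqref{growthofmoments}, I would pass the recursion to $L^p$ using $\max(a,b)^p \leqslant a^p+b^p$, obtaining
\begin{equation*}
  \E[\|u\|_{D_{n-1}^n}^p] \leqslant C(m)^p q_m^p \, \E[\|u\|_{D_{n-2}^{n-1}}^p] + C(m)^p \left( \E[\Cnoise{,n}^{2\nu p}] + \E[\Cdumb{,n}^{\nu p}] \right),
\end{equation*}
with $\nu=(1-\kappa)^{-1}(1-\betanoise)^{-2}$. Choosing the restart interval so that $C(m)^p q_m^{\ell p}<1$ turns this into a geometric recursion whose fixed point is controlled by $\|u_0\|^p$ together with $\sup_n \E[\Cnoise{,n}^{2\nu p}]$ and $\sup_n \E[\Cdumb{,n}^{\nu p}]$, i.e.\ by $\Mnoise^{2\nu p}$ and $\Mdumb^{\nu p}$. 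The main obstacle is adapting the proof of Theorem~\ref{maintheorem} in a quantitative way that exposes the contraction factor $q_m<1$ rather than using the crude bound $e^{-m^2 t}\leqslant 1$; this likely requires tracking the massive semigroup $L^\infty$ bound through every Schauder/reconstruction step of the original argument, and then verifying that the restart length $\ell$ can be chosen large enough to close the $L^p$ iteration uniformly in $n$.
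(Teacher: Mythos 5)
There is a genuine gap, and it sits exactly where you locate your ``main obstacle''. Your argument hinges on a per-interval bound of the form $\| u \|_{D_{n-1}^n} \leqslant C(m,\kappa,d,C_\sigma)\max\{ q_m \| u(n-1,\cdot)\|, \dots\}$ with a contraction factor $q_m<1$ multiplying the initial slice, together with $Cq_m\leqslant 1$. An adaptation of Theorem \ref{maintheorem} does not produce such a bound: in the small-scale analysis the mass is only absorbed perturbatively (one shrinks $T$ so that $T^{\frac{1-\kappa}{2}}m^2$ is small), the short-time estimate still reads $\| u\|_{D_0^T}\leqslant 2(\|u_0\|\vee 1)$, and the bootstrap forces $C_\star\gtrsim \|u_0\|$, so the initial datum enters the unit-interval estimate with a constant at least $1$, not with a factor $q_m<1$. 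The paper obtains the contraction elsewhere: only at the large-scale step, by writing the regularised solution $u_{\tilde L}$ as a solution of a transport equation with killing rate $m^2$ and using the Feynman--Kac representation on $D_n^{n+1}$ (no short-time splitting is needed once $n\geqslant 1$). This yields the additive inequality $Y_{n+1}\leqslant e^{-m^2}Y_n + r_n Y_n^{\betanoise}$, in which the coefficient of $Y_n$ is exactly $e^{-m^2}$ — no multiplicative constant hits it; the constants only appear in the sublinear term $r_n Y_n^{\betanoise}$, built from $\Cnoise{,n}\vee\Cnoise{,n+1}$ and $\Cdumb{,n}\vee\Cdumb{,n+1}$. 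From this inequality the pathwise bound \eqref{corollarymassive} follows by a fixed-point (or Young-plus-geometric-sum) argument, and the moment bound \eqref{growthofmoments} follows by taking $Z_n=\E[Y_n^p]^{1/p}$, applying H\"older to $\E[r_n^p Y_n^{p\betanoise}]$ (this is precisely where the $2\nu p$ and $\nu p$ moments in \eqref{assumptiononmoments} are needed) and using that $Z_{n+1}\leqslant e^{-m^2}Z_n + M_p Z_n^{\betanoise}$ has a bounded attracting fixed point.

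Your proposed patch — restarting over intervals of length $\ell>1$ so that $Cq_m^{\ell}<1$, ``reducing via parabolic rescaling to the original local bound'' — does not close this gap. Parabolic rescaling changes the torus, rescales the mass and the noise, and alters the per-interval order-bound constants $\Cnoise{,n},\Cdumb{,n}$, so a length-$\ell$ analogue of Theorem \ref{maintheorem} with a constant independent of $\ell$ cannot be deduced by scaling; and if you instead iterate the unit-interval bound $\ell$ times you reintroduce the compounding constant you were trying to beat. Likewise, tracking the massive semigroup through every Schauder/reconstruction step is not how the damping is exploited (and would be delicate, since those estimates are local in scales $\ell\leqslant L\leqslant \sqrt{T_1}/2$ where $e^{-m^2\ell^2}$ gains nothing); the damping is harvested once, at the maximum-principle step, after the equation has been rewritten in transport form. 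Without that mechanism, neither the disappearance of the $t^{1/(1-\betanoise)}$ factor in \eqref{corollarymassive} nor the uniform-in-$n$ moment bound \eqref{growthofmoments} is justified by your outline.
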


The main application of this theorem we consider here is to equation \eqref{themassiveSG}, for which
the post-processed heat kernel $K$ in the definition of $\seqeps{\tilde Z}$ is replaced by the massive
heat kernel post-processed in the same way.

\begin{corollary}[massive 2-d Sine-Gordon]
  Consider the same assumptions of Corollary \ref{corollaryglobalSG} but with respect to
  $2\pi$-periodic in space solutions to \eqref{themassiveSG} instead. Then, we may write its solution
  as $\Phi = Z + u$, where $Z$ is a Gaussian process that satisfies
  $Z \in C^{\alpha}$ for $-1/3 < \alpha < 0$ almost surely w.r.t. $\zeta$,
  and 
%
  \begin{equation}
    \sup_{t \geqslant 0} \mathbb E [ \| u (t, \cdot) \|^p ] \leqslant C (m,
    \kappa, d, p, C_{\sigma}) \max \{ \| u_0 \|, \Mnoise^{2 \nu p},
     \Mdumb^{\nu p} \} ,
    \label{controlpmomentsofu}
  \end{equation}
  where $\Mnoise, \Mdumb$ are as in \eqref{assumptiononmoments} w.r.t. $\Cnoise{,n}, \Cdumb{,n}$
  in \eqref{defofconstantsOBforSG} and are finite for every $p \geqslant 1$. In particular,
  there exists an invariant measure for \eqref{themassiveSG}.
\end{corollary}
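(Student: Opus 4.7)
The plan is to reduce the statement to Theorem \ref{corollarythemassiveequation} by repeating the transformation used for the massless case, then bootstrap the uniform moment bound into an invariant measure via a Krylov-Bogoliubov argument. First I would set $u := \Phi - Z$, where $Z$ is the stationary solution of the linear massive equation $(\heat + m^2) Z = \zeta$. Standard Gaussian computations using the exponential decay of the massive heat kernel show that $Z$ is stationary in time with $Z \in C^\alpha$ almost surely for every $-1/3 < \alpha < 0$. Trigonometric addition formulas then give that $u$ satisfies \eqref{parabolicSG} with the extra damping $-m^2 u$ on the right hand side, which is exactly the setting of Theorem \ref{corollarythemassiveequation} applied in the multi-noise form of Remark \ref{remarkmultinoise} with $\sigma_1(u) = \sin(\beta u)$, $\sigma_2(u) = \cos(\beta u)$, and a third smooth coefficient $\sigma_3 \equiv 1$ paired with the remainder $R$ produced by the post-processed massive kernel.

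Second, I would construct the stochastic objects $\noiseb_a, \lollib_a, \Pi_z \Xnoise_a, \Pi_z \dumb_{a,b}$ ($a,b \in \{c,s\}$) from the Wick exponentials $\seqeps{\noiseb} = \veps^{-\beta^2/4\pi} e^{i\beta \seqeps{\tilde Z}}$ built from the massive kernel by following the recipe of Appendix \ref{AppendixstochasticobjectsSG} line by line. The short-time behaviour of the massive heat kernel agrees with that of the Laplacian kernel up to smooth corrections, so the renormalisation constant and the ultraviolet analysis are unchanged; the only difference is that stationarity in time is now genuine (rather than having a growing zero-mode as in the massless case). Consequently the order-bound constants $\Cnoise{,n}$ and $\Cdumb{,n}$ from \eqref{defofconstantsOBforSG} have moments of all orders that are uniform in $n \in \mathbb{N}$, so $\Mnoise$ and $\Mdumb$ in \eqref{assumptiononmoments} are finite for every $p \geqslant 1$ whenever $\beta^2/4\pi - 1 < \kappa < \bar\kappa$.

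Third, plugging these uniform moment bounds into \eqref{growthofmoments} of Theorem \ref{corollarythemassiveequation} yields $\sup_{n\geqslant 1} \mathbb{E}[\|u\|_{D_{n-1}^n}^p] \leqslant C \max\{\|u_0\|, \Mnoise^{2\nu p}, \Mdumb^{\nu p}\}$, which specialises to \eqref{controlpmomentsofu} upon restricting to time slices. For the invariant measure, I would apply Krylov-Bogoliubov to the Markov semigroup associated with \eqref{themassiveSG}: the uniform-in-time bound \eqref{controlpmomentsofu}, combined with the a priori Hölder regularity of $Z$ and of $u$ (the latter available from the local solution theory of \cite{HS16,CHS18} through the modelled-distribution description of Remark \ref{remarkgeneraltheorem}), gives tightness of the Cesàro averages $\frac{1}{T}\int_0^T \mathrm{Law}(\Phi(t,\cdot))\,\mathrm{d}t$ in a negative-regularity Hölder space. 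Any weak limit point is invariant by the Feller property of the solution map.

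The main obstacle will be the final step: converting the sup-norm moment bound on $u$ and the Hölder moment bound on $Z$ into honest tightness in a topology on which the solution semigroup is Feller, and verifying this Feller property in the presence of the renormalised singular product $\sigma(u)\xi$. The upgrade from $\|u\|$-moments to compactness in a Hölder or Besov space requires combining \eqref{controlpmomentsofu} with a uniform-in-time regularity estimate on $u$, which does not follow from Theorem \ref{corollarythemassiveequation} alone and must be extracted from the reconstruction bounds on the modelled distribution $\mathcal U$; checking that these estimates are also uniform in $n$ is the most delicate point of the argument.
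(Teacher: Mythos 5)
Your proposal follows essentially the same route as the paper: transform $\Phi = Z + u$ with $Z$ solving the linear massive equation, build the Sine--Gordon stochastic objects from the massive post-processed kernel as in Appendix \ref{AppendixstochasticobjectsSG} so that the order-bound constants have moments uniform in $n$, apply \eqref{growthofmoments} of Theorem \ref{corollarythemassiveequation} to get \eqref{controlpmomentsofu}, and conclude existence of an invariant measure by Krylov--Bogoliubov. The tightness/Feller step you flag as the main obstacle is exactly the point the paper delegates to the argument of \cite[Sec. 4]{TW18} rather than reproving, so your outline matches the intended proof.
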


The construction of an invariant measure for \eqref{themassiveSG} can be done via the Krylov--Bogoliubov existence theorem by using the uniform in time bound on the $p$-moments
of $u(t,\cdot)$ in \eqref{controlpmomentsofu}, as in e.g. {\cite[Sec. 4]{TW18}}.
While the construction of this
measure was known before (see e.g. {\cite{DH00}} and the very recent work \cite{GM24}),
the construction via
parabolic quantisation is novel, at least for the rough regime of $\beta^2 \in
[4 \pi, (1 + \bar{\kappa}) 4 \pi)$.



As a third application, we address the construction of stochastic
flows for SPDEs with multiplicative noise.
Consider
\begin{equation}
   \mathd u = \Delta u \mathd t + \sigma (u) \mathd W ,
  \label{randomdynamicalsystem}
\end{equation}
where $W$ is a spatially correlated Wiener process. 
We may take for instance $W$ with 
covariance formally given by $\mathbb{E} [W (t, x) W (s,
y)] = (t \wedge s) (1 - \Delta)^{- (d / 2 - \delta)} (x, y)$. 
%
%
%
More precisely, $W$ can be realised over $[0,\infty)\times \T^d$ as a 
random Fourier series 
via ($e_k(x) = e^{i k \cdot x}$)
\begin{equation}
  W = \sum_{k \in \Z^d} e_k c_k B_k ,
  \quad \text{where} \quad 
  c_k := \frac{1}{(2\pi)^\frac{d}{2}(1 + |k|^2)^{\frac{d}{4} - \frac{\delta}{2}}} ,
  \label{cylindricalwienerRDS}
\end{equation}
and $B_k$ are independent (up to $\bar{B}_k = B_{-k}$) complex-valued Brownian motions.
Note that the operator $(1 - \Delta)^{- (d / 2 - \delta)}$ is not trace-class for any $\delta > 0$ and
therefore for any $t$ the process $W_t$ does not take values in $L^2(\T^d)$. 
The solution theory \refchange{based on the mild formulation and It\^o integration} for this equation is classical and is developed in \cite[Sec. 6.4]{DZ92} for the whole range
$\delta < 1$. 
However,
\refchange{this classical solution theory does not directly allow the construction of stochastic flows
- {\cite[Sec. 9.1.2]{DZ92}}.
Indeed, the set of full probability on which the solution
is constructed depends on the initial condition, and
extracting a single set that works for all initial conditions is
not generally possible - see \cite[Ex. 9.12]{DZ92} for a counter example.
Existence of stochastic flows is non-trivial even in the finite dimensional setting, see e.g. \cite{LS11}.
On the other hand, one can construct stochastic flows
by solving the equation pathwise, as we do here, see
the discussion in \cite[Sec. 9.1.2]{DZ92}
and \cite[Sec. 8.10]{frizandhairer2020}.
}

Equation \eqref{randomdynamicalsystem} can be treated within the framework of Regularity Structures, see e.g. \cite{HP15}.
In this case, \eqref{randomdynamicalsystem}
corresponds to \eqref{mainequation} with 
$\xi = \dot{W} \in C^{- 1 - \kappa}$
\refchange{for $\kappa > \delta$,}
and we refer to Appendix \ref{AppendixstochasticobjectsDZSPDE} where we discuss
smooth approximations to \eqref{randomdynamicalsystem} and how to construct the objects $\lollib, \Pi_z \dumb$
for this case.
\refchange{See also the thesis
\cite[Sec. 3.3.2]{dLF24} for more details
on these stochastic estimates.
We emphasise that the specific choice of noise in
\eqref{cylindricalwienerRDS} was given for concreteness.
In fact, any Wiener process $W$ with the right singularity at the diagonal of the Green's function
of the covariance operator, i.e., 
\begin{equation*}
    |\E[W_t(x)W_t(y)]|  \lesssim t |y-x|^{-2\delta} 
    \quad \text{for} \quad |x-y| \ll 1 \; ,
\end{equation*}
can be treated within our framework, provided that $\delta < \bar \kappa$.
}
\begin{corollary}
For any $0 \leqslant s \leqslant t < \infty$, denote by $u (t ; s, v)$ the
It\^o solution to \eqref{randomdynamicalsystem} with initial condition $u (s ; s, v)
= v$,
where $W$ is the Wiener process in \eqref{cylindricalwienerRDS}
for $\delta < \bar{\kappa}$ with $\bar{\kappa}$ as in Theorem \ref{maintheorem}.
Then, 
$\mathbb{P}$-almost surely $u (t ; r, u (r ; s, v)) = u (t ; s, v)$, and
therefore the map
\[ \begin{array}{rll}
     \phi_{t, s} : L^{\infty} (\mathbb{T}^d) & \longrightarrow & L^{\infty}
     (\mathbb{T}^d)\\
     v & \longmapsto & \phi_{t, s} (v) = u (t ; s, v)
   \end{array} \]
defines a stochastic flow for \eqref{randomdynamicalsystem}, in the sense of
{\cite[Sec. 9.1.2]{DZ92}}. Furthermore, for any $p \geqslant 1$
\begin{equation*}
  \sup_{\| u_0 \| \leqslant 1} \E[\|u\|_{D_0^1}^p] \lesssim C_{p,W} .
\end{equation*}
\end{corollary}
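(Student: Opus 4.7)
The plan is to combine the pathwise a priori bound of Theorem \ref{maintheorem} with the fact that the Regularity Structures solution map is, for each realisation of the noise, defined simultaneously for all initial data. The moment estimate is the easy part: Theorem \ref{maintheorem} gives pathwise
\[
\| u \|_{D_0^1} \leqslant C(\kappa,d,C_\sigma) \max\!\left\{\| u_0 \|,\, \Cnoise{,1}^{\frac{2}{(1-\kappa)(1-\betanoise)}},\, \Cdumb{,1}^{\frac{1}{(1-\kappa)(1-\betadumb)}}\right\},
\]
and, since for $\delta<\bar\kappa$ the random variables $\Cnoise{,1}$ and $\Cdumb{,1}$ have finite moments of all orders (cf.\ Appendix \ref{AppendixstochasticobjectsDZSPDE}), taking $p$-th moments and restricting to $\| u_0 \|\leqslant 1$ yields $\sup_{\| u_0 \|\leqslant 1}\mathbb{E}[\| u \|_{D_0^1}^p]\lesssim C_{p,W}$.

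The flow property is more subtle. The classical It\^o construction produces, for each fixed $v$, a $\mathbb{P}$-null set $N_v$ outside of which $u(\cdot;s,v)$ is uniquely defined; since $L^{\infty}(\mathbb{T}^d)$ is non-separable, one cannot take a union over $v$ to obtain a version of $\phi_{t,s}$ well-defined on all of $L^\infty$ simultaneously --- this is precisely the obstacle identified in \cite[Sec.\ 9.1.2]{DZ92}. I would circumvent it by constructing a single $\mathbb{P}$-null set $N$ outside of which the model $\Pi$ associated with $\dot W$ exists, and then defining $\phi_{t,s}^{\mathrm{RS}}(v)$ pathwise via the short-time fixed-point theory of \cite{Hai14} combined with the continuation criterion supplied by Theorems \ref{maintheorem}--\ref{theorempolynomialgrowth}. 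For every $\omega\notin N$ this produces a global-in-time, deterministic solution map $v\mapsto\phi_{t,s}^{\mathrm{RS}}(v)$, which automatically satisfies the flow identity $\phi_{t,r}^{\mathrm{RS}}\circ\phi_{r,s}^{\mathrm{RS}}=\phi_{t,s}^{\mathrm{RS}}$ by pathwise uniqueness, and is defined simultaneously for every $v\in L^\infty(\mathbb{T}^d)$.

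To transfer this flow property to the It\^o solution $u(t;s,v)$, I would work on smooth approximations $\seqeps{W}$: with the It\^o lift, $\seqeps{\lollib}\seqeps{\noiseb}$ is an It\^o integral with vanishing expectation, so the renormalisation constant $\seqeps{C}$ in \eqref{defsmoothdumbwithC} can be chosen to vanish and equation \eqref{mainequationrenormalised} reduces to the classical It\^o equation at the regularised level. Convergence of the regularised models (Appendix \ref{AppendixstochasticobjectsDZSPDE}) together with local stability of both solution theories then force $\phi_{t,s}^{\mathrm{RS}}(v)=u(t;s,v)$ $\mathbb{P}$-a.s.\ for each fixed $v$; combined with the deterministic flow identity for $\phi_{t,s}^{\mathrm{RS}}$, this yields the claimed cocycle property $u(t;r,u(r;s,v))=u(t;s,v)$ $\mathbb{P}$-a.s. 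The main obstacle is this identification step: it requires checking carefully that the It\^o convention is compatible with the renormalisation used in the Regularity Structures lift, and that the reconstruction of $\sigma(\mathcal{U})\noise$ really agrees with the It\^o term $\sigma(u)\,\mathd W$ after regularisation. Once this consistency is in place, existence of the stochastic flow and the moment bound both follow from the a priori estimates already established in the paper.
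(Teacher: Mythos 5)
Your proposal is correct and follows essentially the same route as the paper: the moment bound is Theorem \ref{maintheorem} combined with the uniform-in-$\veps$ moment estimates on $\Cnoise{,1},\Cdumb{,1}$ from Appendix \ref{AppendixstochasticobjectsDZSPDE}, and the flow property comes from the pathwise Regularity Structures solution map being defined on a single null set for all $v\in L^\infty(\T^d)$ simultaneously, identified with the It\^o solution via the convergence of the renormalised regularised solutions (the paper delegates this identification to \cite{HP15}). One small correction: under the paper's space--time mollification the constant $\seqeps{C}$ does \emph{not} vanish but diverges like the trace, $\seqeps{C}\sim\veps^{-2\delta}$; your "vanishing renormalisation" statement is valid only for the adapted (It\^o) lift without time mollification, but since the a priori bounds are independent of $\seqeps{C}$ and both lifts yield the It\^o solution in the limit, this does not affect the argument.
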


\refchange{
\begin{remark}
    One choice of Gaussian noise $\xi$ of interest is space-time white noise, that is $\xi$ has covariance formally given by
    \begin{equation*}
         \E[\xi(t,x)\xi(s,y)] = \delta(t-s)
    \delta(x-y) \;,
    \end{equation*}
   where above $\delta$ represents a Dirac delta either in space or time. 
   In this case $\xi \in \mathcal C^{-\frac{d+2}{2} - \theta}$ for any $\theta>0$, where $d$ is the spatial
    dimension.
    
Equation \eqref{mainequation}
    is then subcritical only when $d=1$. 
    However, for $d=1$, we have  $\kappa > \frac{1}{2} + \theta > 1/3$ which means a priori bounds for this equation are beyond the reach of our methods. 
\end{remark}
}

\subsection{Organisation of the article}

The rest of this article will be organised as follows. In Section
\ref{sectionstrategyofproof} we give an overview of the proof of the main
theorems, highlighting the main ideas of this work. In Section \ref{sectionsetting}
we introduce the setting in which we are proving our theorems. In Section
\ref{sectioninteriorestimate} we prove an interior estimate Theorem
\ref{theointeriorestimate} and state a control on the $L^\infty$ norm of $u$ for 
short times in Theorem \ref{theoinfinityschauderpostprocessing}. In Section
\ref{sectionmaximumprinciple} we prove the Maximum Principle Theorem 
\ref{theoremtopostprocessintopolynimial}, and as a consequence, we prove the
main Theorem \ref{maintheorem}. This is the most important part of the paper,
and an instrumental result is the decomposition in Proposition
\ref{propositiontransportdecomposition}, whose proof is found in Subsection
\ref{proofofmainproposition}. The proof of Theorem \ref{theorempolynomialgrowth}
is carried in Subsection \ref{sectionproofoftheorempolynomial}. In Section
\ref{sectionproofofshorttimescontrol} we prove Theorem
\ref{theoinfinityschauderpostprocessing} by following \cite[Prop.
6.9]{Hai14}. In Section \ref{sectionthemassiveequation}
the analysis of the massive equation is developed and the proof
of Corollary \ref{corollarythemassiveequation} is obtained. In Appendix
\ref{appendixreconstructionandintegration} we provide the Reconstruction
Theorem \ref{reconstructiontheorem} and the Schauder estimate Lemma
\ref{lemmaschauderestimate}, both following closely \cite{Moinatandweber2020cpam}.
Appendix \ref{AppendixB} contains auxiliary calculations for some proofs.
\section{Strategy of the proof} \label{sectionstrategyofproof}
 
In this section we briefly discuss the strategy of the proof of the main theorems.
We compare the cases of $\sigma$ linear and non-linear, motivating why
the non-linear case is harder and how we overcome 
this. 

First, we review the local theory for \eqref{mainequation} and highlight the
main difference between the linear case $\si(u) = u$
and the case of non-linear $\si$. 
This is done by formulating the method for local
solutions in \cite{Hai14} into our setting, and amounts to
\begin{itemize}
\item[1.] obtain a good ansatz for the local description of $u$ (and of $\si(u)$);
\item[2.] make sense of the ill-defined product in the r.h.s. of \eqref{mainequation};
\item[3.] use Schauder theory to translate r.h.s. back to local description of $u$.
\end{itemize}
Let us think of equation \eqref{mainequation}
(interpreted as \eqref{mainequationrenormalised})
with the rough noise
$\xi \in C^{- 1 - \kappa}$ and with $\si(u) = u$. By Schauder theory we expect $u$ to have at most
$(1 - \kappa)$-H{\"o}lder regularity, which is not enough to define the product
$u \xi$ classically. 
A main idea in the theory of Regularity Structures is to look for local approximation
in terms of perturbative expansions.
One possible approach is the ``freezing of coefficients".
For a fixed point $z \in D$, we subtract $u(z) \Pi_z \noise$ from both sides of 
\eqref{mainequationrenormalised},
obtaining
\begin{equation}
  (\partial_t - \Delta)[u - u(z) \Pi_z \lolli] 
  = u \diamond \noiseb - u (z)
  \Pi_z \noise ,
  \label{frozenequationstrategy}
\end{equation}
which is equivalent to \eqref{mainequationrenormalised}
as $(\partial_t - \Delta)$ only acts on the implicit variable and not on the frozen $z$.
Note that the increment $u - u(z)$ satisfies $|u - u(z)| \lesssim d(\cdot, z)^{1-\kappa}$.
This suggests that $u$ can be \textit{modelled} after $\lollib$, cf. \cite[Def. 3.1]{Hai14},
in the sense that a good local description of $u$ satisfies
\begin{equation}
  |u (w) - u (z) - u (z) (\Pi_z \lolli) (w) - u_X(z)\cdot(\Pi_z \X)(w)| \lesssim
  d(w,z)^\gamma
  \label{localdescriptionustrategy}
\end{equation}
for $1 < \gamma \leqslant 2-2\kappa$, where $u_X(z) \in \R^d$ is a generalised gradient \label{page_generalised_grad1} of $u$. I
n the context of \textit{rough paths}, this corresponds to the
notion of $u$ being \textit{controlled} by $\lollib$, see \cite{gubinelli2004}.
We let $[U]_\gamma$ denote the best constant in \eqref{localdescriptionustrategy}.
%
%

Since we have postulated that the objects $\dumb$ and $\Xnoise$ play the role of multiplication
of the objects $\lolli, \X$ in \eqref{localdescriptionustrategy} with the noise $\noise$ and satisfy the correct
CBP and OB, we may write the r.h.s. of \eqref{frozenequationstrategy} as
\begin{equation}
  u \diamond \noiseb - u (z) \Pi_z \noise = u (z) \Pi_z \dumb +
  u_X (z) \cdot \Pi_z \Xnoise + R_z ,
  \label{errorreconstructionstrategy}
\end{equation}
\refchange{
where the first two terms in the r.h.s. above satisfy
\begin{eqnarray}
  |\langle u (z) \Pi_z \dumb, \varphi_z^L \rangle|
  & \lesssim & L^{-2\kappa} |u(z)| [\Pi;\dumb]_{-2\kappa} \label{reconstructionstrategy.term1}\\
  |\langle u_X (z) \cdot \Pi_z \Xnoise, \varphi_z^L \rangle|
  & \lesssim &
  L^{-\kappa} |u_X(z)| [\Pi;\Xnoise]_{-\kappa} \; . \label{reconstructionstrategy.term2}
\end{eqnarray}
}The expansion \eqref{errorreconstructionstrategy} allows us to \refchange{use a variant of the 
Reconstruction Theorem \cite[Thm 3.10]{Hai14} - 
see Lemma \ref{reconstructiontheorem} -}
to make sense of $u \diamond \noiseb$.
%
%
%
%
This gives, to leading order, 
\begin{equation}
  |\langle R_z, \varphi_z^L \rangle| \lesssim L^{\gamma - 1 - \kappa}
  ([U]_\gamma [\Pi ; \noise]_{-1-\kappa} + \cdots ) , \label{reconstructionstrategy}
\end{equation}
as long as $\gamma > 1 + \kappa$
- see Lemma \ref{lemmacontrolonrhsmainequation} to see what ``$\cdots$" includes
and the precise estimate for general $\si$.

The final step in the local theory is integration
\refchange{
for which we use a variant of the Schauder estimate in
\cite[Thm 5.12, Prop. 6.16]{Hai14} - see 
Lemma \ref{lemmaschauderestimate}.
}
%
%
%
%
Since we consider initial conditions $u_0$ which are only in $L^\infty(\T^d)$,
we need to deal with some blow up of higher regularity norms near $t=0$. 
The semi-norm
$[U]_{\gamma,0,[a,b]}$ 
\refchange{adds} a weight at the initial time $t = a$ to account
for such blow up, defined in \eqref{normofUblowup}, c.f. \cite[Def. 6.2]{Hai14}.
For $0 < T \leqslant t$, 
Lemma \ref{lemmaschauderestimate}
\refchange{
combined with \eqref{reconstructionstrategy.term1}, \eqref{reconstructionstrategy.term2}
and \eqref{reconstructionstrategy}
}
gives,\refchange{to leading order,}
\begin{align}
  [U]_{\gamma,0,[t-T,t]} & \lesssim \|u\|_{D_{t-T}^t} [\Pi;\dumb]_{-2\kappa}
  + T^{\frac{\kappa}{2}} \| u_X\|_{D_{t-T}^t} [\Pi ; \Xnoise]_{-\kappa} \nonumber \\
  & + T^{\frac{1-\kappa}{2}} ([U]_{\gamma,0,[t-T,t]} [\Pi ; \noise]_{-1-\kappa} + \cdots ) ,
  \label{reducesigmatoUstrategy}
\end{align}
so that if $T^{\frac{1-\kappa}{2}}[\Pi ; \noise]_{-1-\kappa} \ll 1$, 
then
\begin{equation}
  [U]_{\gamma,0,[t-T,t]} \lesssim \|u\|_{D_{t-T}^t} [\Pi;\dumb]_{-2\kappa}
  + T^{\frac{\kappa}{2}} \|u_X\|_{D_{t-T}^t} [\Pi ; \Xnoise]_{-\kappa} + \cdots ,
  \label{interiorestimateforlinearcase}
\end{equation}
where only lower regularity norms of $u$ appear in the r.h.s.


Since the 
absorption condition $T^{\frac{1-\kappa}{2}}[\Pi ; \noise]_{-1-\kappa} \ll 1$ is independent of $u$,
this can be iterated and it is enough to obtain non-explosion in $t \gg 1$ for $u$.
This is why the global theory in finite volume for the linear case $\si(u)=u$ follows from the
local theory in \cite{Hai14}-\cite{GIP15}.

However, this argument breaks down completely in the case of non-linear $\si$.
In fact, even if $\si$ and all of its derivatives are bounded, we still pick up
super-linear terms in $u$ in the error estimate from the local description of $\si(u)$
to the local description of $u$. 
Since this point is central
to our analysis, we go through it in some detail.

\refchange
{
First, we consider the standard chain rule from calculus
to illustrate the problem:
for smooth $f,\sigma : \R \to \R$, we let for
$1 < \gamma < 2$,
$$[f]_\gamma := \sup_{x \neq y}
\frac{|f(y) - f(x) - f'(x)(y-x)|}{|y-x|^\gamma} \; ,$$
so the goal is to reduce $[\si(f)]_\gamma$ in terms
of $[f]_\gamma$. For $\eta := f'(x)(y-x)$, we have
\begin{eqnarray*}
  | \si(f(y)) - \si(f(x)) - \si'(f(x))\eta |
  & \leqslant & | \sigma (f (y)) - \sigma (f (x) + \eta) | \\
  & + & | \sigma (f (x) + \eta) - \sigma (f(x))
  - \sigma' (f (x))\eta | \; ,
\end{eqnarray*}
where the first term in the r.h.s. can be bounded by
$\|\sigma'\|[f]_\gamma |y-x|^\gamma$, while the second by
\begin{equation*}
    \|\sigma''\|\eta^2 = \|\sigma''\||f'(x)|^2|y-x|^2 \; ,
\end{equation*}
i.e., it contains the gradient of $f$ squared, which is reminiscent of
the fact that $(\sigma(f))'' = \sigma''(f')^2 + \sigma'f''$.
See \eqref{gradientsquaredinerrorterm} below for the equivalent
statement when the usual gradient is replaced by the generalised
gradient of Regularity Structures.
}

In view of \eqref{frozenequationstrategy}, we now subtract $\si(u(z)) \Pi_z \noise$ from both
sides of \eqref{mainequationrenormalised},
from which the ansatz for the local description of $u$ and $\si(u)$
become
\begin{equation*}
   |u (w) - u (z) - \sigma (u (z)) (\Pi_z \lolli) (w) - u_X(z)\cdot(\Pi_z \X)(w)| \lesssim d(w,z)^\gamma ,
\end{equation*}
\begin{equation*}
  |\sigma(u (w)) - \sigma(u (z)) - \sigma^{\prime}\sigma (u (z)) (\Pi_z \lolli) (w) 
  - \sigma^{\prime}(u(z)) u_X(z)\cdot(\Pi_z \X)(w)| \lesssim d(w,z)^\gamma ,
\end{equation*}
where 
$[U]_\gamma, [\sigma(U)]_\gamma$ denote the best constants in these
estimates. This way, the estimates in \eqref{reconstructionstrategy}-\eqref{reducesigmatoUstrategy}
hold with $[U]_\gamma$ replaced by $[\sigma(U)]_\gamma$ in the r.h.s., and therefore a
chain rule argument is necessary to reduce $[\sigma(U)]_\gamma$ back to 
$[U]_\gamma$ and allow its absorption into the l.h.s..
This can be done via Taylor expansions, as in \cite[Thm. 4.16]{Hai14} and \cite[Lem. 2.7, Lem. C.1]{GIP15}.
Letting $\eta_z(w) := \sigma(u(z))(\Pi_z \lolli)(w) + u_X(z) (\Pi_z \X)(w)$, 
\begin{align}
  \sigma(u(w)) - \sigma(u(z)) & = \sigma(u(w)) - \sigma(u(z) + \eta_z(w)) \label{sigmatoUstrat1}\\
  & + \sigma(u(z) + \eta_z(w)) - \sigma(u(z)) \label{sigmatoUstrat2} .
\end{align}
While the term in \eqref{sigmatoUstrat1} is bounded by $\|\sigma'\|[U]_\gamma d(w,z)^\gamma$,
the one in \eqref{sigmatoUstrat2} can be decomposed as
\begin{equation}
  \sigma'(u(z))\eta_z(w) + E_z(w)
  \quad \tmop{where} \quad
  |E_z(w)| \lesssim \|\sigma''\|\eta_z(w)^2 .
  \label{gradientsquaredinerrorterm}
\end{equation}
This makes 
the quadratic term $u_X(z)^2$ appear in the error estimate of $[\sigma(U)]_\gamma$
- see Lemma \ref{lemmachainrule}
for a refined version of this estimate, where the error is of total homogeneity $\gamma$ in $u$.
Thus, as opposed to the linear case, for general $\si$ the choice of $T$ in \eqref{reducesigmatoUstrategy}
depends on $u$. 

Despite this fact, in the first step
of our analysis, we show that the local theory still provides an interior estimate in intervals
that are $u$-dependent, as
\begin{equation}
  [U]_{2-2\kappa,0,[0,T]} \lesssim \|u\|_{D_0^T} \quad \tmop{if} \quad
  T^\frac{1-\kappa}{2} \ll [\Pi ; \noise]_{-1-\kappa}^{-1} \|u\|_{D_0^T}^{-\kappa - \delta} ,
  \label{strategyinteriorestimate}
\end{equation}
where $\delta > 0$ -
see Theorem \ref{theointeriorestimate} and Corollary \ref{corollaryimprovedinteriorestimate}
for the precise statement.
\refchange{However, in contrast with the linear case (cf. \eqref{interiorestimateforlinearcase}), we cannot iterate
the interior estimate in \eqref{strategyinteriorestimate} to obtain global well-posedness.
}
%
%

This concludes the small scales analysis, and we move to a different strategy to study large scales.
To go beyond short time estimates for \eqref{mainequationrenormalised} in the case of non-linear $\si$, we
look to a regularisation of equation \eqref{mainequationrenormalised}
at scale $L>0$ obtained by testing both sides of the equation against $\varphi_z^L$ for a special choice of mollifier
$\vphi \in \mathfrak F$ in \eqref{choiceofmollifiersemigroup} which is described in
Appendix \ref{appendixreconstructionandintegration} and giving an equation for $u_L(z) := \langle u , \varphi_z^L \rangle$.

This is inspired by
the works 
\cite{Moinatandweber2020ejp,Moinatandweber2020cpam,CMW23}, 
where the authors use the damping term $-u_L^3$ in the r.h.s.
to make use of a strong Maximum Principle, which allows them to reduce any term of order up to
$3$ in the r.h.s. into sub-linear terms - see \cite[Thm 4.4]{Moinatandweber2020ejp}. 
However, we do not
have such a strong damping for our equation, so we must devise a new method. 
Our approach is summarised
in the following steps.

\begin{itemize}
\item[1.] regularise the equation \eqref{mainequationrenormalised}  at space-time scale $L>0$ as described above;
\item[2.] relate the generalised (spatial) gradient $u_X$ with the actual (spatial) gradient $\nabla u_L$;
\item[3.] use the explicit error in the Reconstruction to isolate gradients into transport terms on r.h.s.;
\item[4.] apply a Maximum Principle for $u_L$ that allows us to disregard transport terms. Choose $L$ to balance terms of positive and
negative order in $L$ into a sub-linear in $u$ control, while making use of the interior estimate
in \eqref{strategyinteriorestimate}.
\end{itemize}

\refchange{Our requirement $\kappa \ll 1$ arises in Step 4 to guarantee the estimate is indeed
sub-linearity in $u$. 
In particular, everything
up to \eqref{strategyinteriorestimate} above, i.e.
the interior estimate (or control on small scales), makes
sense for all $\kappa \in (0,1/3)$, as expected.}

Let $0 < L < \sqrt{T}/2$ be a fixed regularisation scale and for every point $z \in D_T^1$,
consider \eqref{mainequation} tested against $\vphi_z^L$, i.e., $u_L(z) := \langle u , \varphi_z^L \rangle$ \label{firstdefinitionofuL}
satisfies
\begin{align*}
  (\partial_t - \Delta)u_L(z)
   & = \langle \sigma(u) \diamond \noiseb , \varphi_z^L \rangle \\
  & = \sigma(u(z)) \langle \Pi_z \noise , \varphi_z^L \rangle
  + \sigma'\sigma(u(z)) \langle \Pi_z \dumb , \varphi_z^L \rangle \\
  & + \sigma'(u(z))u_X(z)\cdot \langle \Pi_z \Xnoise , \varphi_z^L \rangle
  + \langle R_z , \varphi_z^L \rangle .
\end{align*}
Using the OBs for $\noise, \dumb, \Xnoise$ and the Reconstruction for $R_z$ we obtain
\begin{eqnarray}
  | \sigma(u(z)) \langle \Pi_z \noise , \varphi_z^L \rangle| & \leqslant & 
  \|\sigma\|[\Pi ; \noise]_{-1-\kappa} L^{-1-\kappa} \label{strategynoisetermregularised} \\
  |\sigma'\sigma(u(z)) \langle \Pi_z \dumb , \varphi_z^L \rangle | & \leqslant &
  \|\sigma'\sigma\| [\Pi;\dumb]_{-2\kappa} L^{-2\kappa} \nonumber \\
  | \sigma'(u(z))u_X(z)\cdot \langle \Pi_z \Xnoise , \varphi_z^L \rangle | & \leqslant & 
  \|\sigma'\| |u_X(z)|[\Pi ; \Xnoise]_{-\kappa} L^{-\kappa} 
  \label{strategygeneralisedgradientregularised} \\
  | \langle R_z , \varphi_z^L \rangle | & \lesssim &
  L^{1 - 3\kappa} ([\sigma(U)]_{2-2\kappa} [\Pi ; \noise]_{-1-\kappa} + \cdots ) .
   \label{strategyerrorreconstructionregularised}
\end{eqnarray}
On one hand, since the error estimate for $[\si(U)]_\gamma$ is super-linear in $u$ and
$1 - 3\kappa > 0$ in \eqref{strategyerrorreconstructionregularised}, the scale $L$ has 
to be chosen as a negative power of $\|u\|$ in order
to result in a sub-linear in $u$ control of the r.h.s.. On the other hand, as long as $\kappa>0$,
plugging in such a choice for $L$ in \eqref{strategygeneralisedgradientregularised}
can only result in a super-linear in $u$ estimate, since $u_X$ is of homogeneity $1$ in $u$.

Overcoming the challenge above is where Steps 2 and 3 from our summary come in. 
Step 2 entails relating the generalised (spatial) gradient $u_X$ to the
actual (spatial) gradient $\nabla u_L$ using the formula
\begin{equation}
  \nabla u_L (z) = u_X (z) + \sigma (u (z)) \langle \Pi_z \lolli, \nabla_x
  \tilde{\varphi}^L_z \rangle + E_z^L,
  \label{formulaforgradientsstrategy}
\end{equation}
where $\tilde{\varphi}(z) := \varphi(-z)$ and $E_z^L$ satisfies
$| E_z^L | \leqslant [U]_{\gamma, B (z, L)} L^{\gamma - 1}$, for 
any $1 + \kappa < \gamma \leqslant 2-2\kappa$ - see Lemma 
\ref{lemmarelatinggradients}.
\refchange{Again, the advantage of
\eqref{formulaforgradientsstrategy} is that it allows
us to extract part of $u_X$ into a transport term (which
does not influence the control over the $L^\infty$ of $u$ we obtain from our Maximum Principle)
in the next observation. Nevertheless, \eqref{formulaforgradientsstrategy} alone is not sufficient, see
Remark \ref{remarkwhynaivebalancingdoesnotwork}.
}
%

Turning to Step 3, the source of the super-linearity in the
error estimate of $[\si(U)]_\gamma$ comes from the term $u_X^2$. But since the error
$R_z$ in \eqref{strategyerrorreconstructionregularised}
is explicit and can be further decomposed, we may also use formula 
\eqref{formulaforgradientsstrategy} to split $u_X^2$ into transport plus 
error terms. In fact, 
writing
$R_z = \sigma(u) \diamond \noiseb - G_z$, where
\begin{equation*}
  G_z := \sigma (u (z)) \Pi_z \noise + \sigma' \sigma (u (z)) \Pi_z \dumb +
  \sigma' (u (z)) u_X (z) \cdot \Pi_z \Xnoise ,
\end{equation*}
and using the properties of $\varphi$ in \eqref{choiceofmollifiersemigroup} in
the proof of Theorem \ref{reconstructiontheorem}, we obtain
\begin{equation}
  \langle R_z, \varphi_z^L \rangle = 
  \sum_{n = 0}^{\infty} \int \int \left\langle G_{z_2} - G_{z_1},
  \varphi_{z_2}^{\frac{L}{2^{n + 1}}} \right\rangle
  \psi^{\frac{L}{2^{n + 1}}}_{z_1}
  (z_2) \varphi^{L, n}_z (z_1) \mathd z_2 \mathd z_1 .
  \label{multiscalereconstructionstrategy}
\end{equation}
This multi-scale decomposition was obtained in the versions of the Reconstruction
Theorem developed in \cite[Thm 2.8]{Moinatandweber2020cpam}-\cite[Prop. 1]{OSSW18}.
The CBP formulas \eqref{changeofbasepointXnoise} and
\eqref{changeofbasepointdumbbell} allow us to decompose $G_{z_2} - G_{z_1}$ into
several terms, isolating the ones that contain generalised gradients $u_X$. By using
the OBs
\eqref{orderbounds} and \eqref{orderboundlollipop}, we show that each of the terms
gives rise to a smooth and uniformly bounded function with
$L^\infty$ norm proportional to
a power of the scale $L$. Applying formula \eqref{formulaforgradientsstrategy}, we
rewrite the regularised equation as
\begin{equation}
  (\partial_t - \Delta)u_L = a_L + b_L \cdot \nabla u_L + c_L ,
  \label{decompositionabcstrategy}
\end{equation}
where $a_L$ is bounded by positive powers of $L$, $c_L$ by negative
powers of $L$ and $b_L$ contains the worst super-linear terms
coming from $u_X^2$ - see Proposition \ref{regularisedequationwithtransport}.
An adequate choice of $L = \tilde{L}$ (see \eqref{choiceofL}) provides a balanced
control on $a_{\tilde{L}} + c_{\tilde{L}}$, which is sub-linear in $u$
(with exponents $\betanoise$ and $\betadumb$ in \eqref{betas},
\refchange{
which correspond to the elements that
are linear in the noise and of second
order, respectively),
}
provided that $\kappa < \bar{\kappa}$, where $\bar{\kappa} \in (0,1/3)$ is the solution to
\eqref{equationforkappa}. Since the term
$b_{\tilde{L}} \cdot \nabla u_{\tilde{L}}$ is a transport term (this was inspired by \cite{ZZZ22}), 
c.f. Proposition
\ref{propositiontransportdecomposition}, it does not contribute to the
$L^\infty$ norm of the solution in the Maximum Principle we obtain with
Theorem \ref{theoremtopostprocessintopolynimial}. The control on $u_{\tilde{L}}$
can be translated into control on $u$ 
by using \eqref{ineqhL-h}.

A technical limitation of the method just explained is that for any choice of scale $L>0$,
the regularised equation at scale $L$ only makes sense for $z = (t,x)$ such that
$t \geqslant 4 L^2$. Therefore, we need a control for the $L^\infty$ norm of $u$ for short times.
This can be obtained by post-processing the interior estimate obtained from the local theory
 into Theorem \ref{theoinfinityschauderpostprocessing}, which states that
\begin{equation}
  \|u\|_{D_0^T} \lesssim \|u_0\| \vee 1 ,
  \label{shorttimesestimatestrategy}
\end{equation}
where $T$ is as in the interior estimate \eqref{strategyinteriorestimate}.
For this we follow \cite[Prop. 6.9]{Hai14}. This completes the strategy to prove Theorem \ref{maintheorem}.

Finally, we explain how to extend the result in Theorem \ref{maintheorem}
to large times in Theorem \ref{theorempolynomialgrowth} by induction.
Set $Y_n \assign \| u \|_{D_{n - 1}^n}$ for $n \in \mathbb{N}$.
By refining the combined use of Theorem \ref{mainequation} with the Maximum Principle
Theorem \ref{theoremtopostprocessintopolynimial},
we arrive at
\[ \left\{\begin{array}{lll}
     Y_{n + 1} & \leqslant & Y_n + q_n {Y_n^{\betanoise}} \\
     Y_1 & = & \| u \|_{D_0^1}
   \end{array},\right. \]
where $q_n$ are constants that depend on $\Cnoise{, n}, \Cnoise{, n
+ 1}, \Cdumb{, n}$ and $\Cdumb{, n + 1}$. We pick up influences of the noise in
both intervals with $q_n$ since $u_L$ at $t = n$ invades the previous domain
$D_{n - 1}^n$. The difference inequality above 
leads to \eqref{polynomialgrowth} - see Subsection
\ref{Appendixdifferenceequations}.


\section{Setup} \label{sectionsetting}

In this section we provide notation and definitions that
will be used throughout. We will first focus on the first time interval, denoted by
$D_0^1$, corresponding to $n = 1$, and will omit the subindex $n$ from the
elements in \eqref{assumptiononstochasticobjects} to lighten the notation.
Thus we denote $\Cnoise{}  := \Cnoise{,1}$ and $\Cdumb{}  := \Cdumb{,1}$.

We will make the dependence on the constants $\Cnoise{}$ and
$\Cdumb{}$ in \eqref{assumptiononstochasticobjects} explicit in the
bounds throughout this work, but not constants depending on $d, \kappa$ or
$C_{\sigma}$. Henceforth, we denote with $\lesssim$ a bound that holds up to a
multiplicative constant which only depends on $d, \kappa$ or $C_{\sigma}$. 
In this
process, we make repeated use of \eqref{assumptionsigmabounded}.

\refchange{ Given $u: D \rightarrow \R$, we define $U:D\times D \to \R$, written $D \times D \ni (w,z) \mapsto U_{z}(w) = U(w,z)$, as follows: }
\begin{equation}
  U_z (w) \assign u (w) - (u (z) + \sigma (u (z)) (\Pi_z \lolli) (w))\;.
  \label{defofU}
\end{equation}
We also write $\|U\|_B := \sup_{z,w \in B} |U_z(w)|$ and $\|U\| \assign \|U\|_{D}$ \label{supnorm2}.
For $\gamma \in (1, 2)$ and $B \subset D$ we define the semi-norm
\begin{equation}
  [U]_{\gamma, B} \assign \sup_{z \in B} \inf_{\nu (z) \in \mathbb{R}^d}
  \sup_{w \neq z \in B} \frac{| U_z (w) - \nu (z) \cdot (\Pi_z \X) (w) |}{d (z,
  w)^{\gamma}} . \label{finitegammanormofU}
\end{equation}
In addition, if \eqref{finitegammanormofU} is finite, 
the vector field $u_X = (u_{X_1}, \ldots, u_{X_d})$ \label{page_generalised_grad2} given by
\begin{equation}
  z \mapsto u_{X_i} (z) \assign \lim_{h \downarrow 0} \frac{U (z + h e_i, z)}{h} =
  \frac{\partial}{{\partial w_i} } U (w, z) |_{w = z}
  \label{generalisedgradient}
\end{equation}
generalises the gradient of $u$ as the unique
vector field which attains the minimum in \eqref{finitegammanormofU} - see
Lemma \ref{lemmacontrolongradient}.
%
Also, let
$\sigma (U) : D \times D \rightarrow \mathbb{R}$ be given by
\begin{equation}
  \sigma (U)_z (w) \assign \sigma (u (w)) - (\sigma (u (z)) + \sigma' \sigma
  (u (z)) (\Pi_z \lolli) (w)) .
  \label{defsigmaofU}
\end{equation}
The norm $[\sigma (U)]_{\gamma,B}$ is defined according to
\eqref{finitegammanormofU}, where the $\nu (z)$ that attains the infimum in this case is
$\sigma' (u (z)) u_X (z)$, according to \eqref{generalisedgradient}. 

In the same way as \eqref{orderboundlollipop}, for $B \subset D$,
we define the semi-norm of $u$ as
\begin{equation}
  [u]_{1 - \kappa, B} \assign \sup_{z \neq w \in B}
  \frac{| u(w) - u(z) |}{d (z, w)^{1 - \kappa}} .
  \label{semi-normofu}
\end{equation}
The next lemma establishes simple relationships between the norms and semi-norms
of $U$ and $u$.
\begin{lemma}
  Recall the parabolic ball $B(\cdot,\cdot)$ in \eqref{balllookingtopast}.
  Then, for every $z \in D_0^1$ and $L>0$ such that $B(z,L) \subset D_0^1$, 
  \begin{equation}
    [u]_{1 - \kappa, B (z, L)} \lesssim [U]_{\gamma, B (z, L)} L^{\gamma - 1 +
    \kappa} + \Cnoise{} + \| u_X \|_{B (z, L)} L^{\kappa} . \label{unormbyUnorm}
  \end{equation}
  Furthermore, for any $B \subset D_0^1$ and $r>0$,
  \begin{equation}
    \| U \|_{B, r}  := \sup_{\underset{d(z,w) < r}{w,
    z \in B}} | U_z (w) | 
    \lesssim \| u \|_{B}
    + \Cnoise{} r^{1 - \kappa}  \label{linfinityUbyusetting}.
  \end{equation}
\end{lemma}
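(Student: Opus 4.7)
The plan is to obtain both bounds as essentially direct consequences of the definitions in \eqref{defofU}, \eqref{finitegammanormofU}, together with the order bound for $\lolli$ on $D_0^1$, which via \eqref{assumptiononstochasticobjects} is controlled by $\Cnoise{}$.

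For \eqref{unormbyUnorm}, I would start by rewriting the defining identity \eqref{defofU} as
\begin{equation*}
  u(w) - u(z) = \sigma(u(z)) (\Pi_z \lolli)(w) + u_X(z) \cdot (\Pi_z \X)(w) + \bigl( U_z(w) - u_X(z)\cdot (\Pi_z \X)(w) \bigr),
\end{equation*}
where the last parenthesis is exactly the quantity whose supremum over $B(z,L)$ is bounded by $[U]_{\gamma,B(z,L)}\, d(z,w)^{\gamma}$, thanks to the fact, established in Lemma \ref{lemmacontrolongradient}, that $u_X(z)$ realises the infimum in \eqref{finitegammanormofU}. Three standard estimates then finish the job: $|\sigma(u(z))| \leqslant C_\sigma$ by Assumption \ref{nonlinearsigma}; $|(\Pi_z \lolli)(w)| \leqslant \Cnoise{}\, d(z,w)^{1-\kappa}$ by \eqref{orderboundlollipop} and \eqref{assumptiononstochasticobjects}; and $|(\Pi_z \X)(w)| = |x - x(z)| \leqslant d(z,w)$ directly from the definition of $\Pi_z \X$. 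Inserting these into the above identity, dividing by $d(z,w)^{1-\kappa}$, and taking the supremum over $z \neq w$ in $B(z,L)$ (so $d(z,w) \leqslant L$) produces the three claimed summands with the correct $L$-powers.

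For \eqref{linfinityUbyusetting}, I would go straight back to \eqref{defofU}, estimate
\begin{equation*}
  |U_z(w)| \leqslant |u(w)| + |u(z)| + |\sigma(u(z))|\,|(\Pi_z \lolli)(w)| \leqslant 2 \|u\|_B + C_\sigma\, \Cnoise{}\, d(z,w)^{1-\kappa},
\end{equation*}
and then use $d(z,w) < r$ and take the supremum over such pairs in $B$.

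No part of this is a real obstacle: both bounds are purely algebraic rearrangements of the definitions, and the only structural input is that $u_X$ is the minimiser in \eqref{finitegammanormofU}, which is deferred to Lemma \ref{lemmacontrolongradient}. The role of this lemma in the paper is not to be deep but to fix notation; the estimates it gives will be used repeatedly afterwards, so writing the proof cleanly with the $L$ and $r$ dependence made explicit is the main point.
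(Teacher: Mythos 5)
Your proposal is correct and follows essentially the same route as the paper: decompose $u(w)-u(z)$ via \eqref{defofU}, bound the remainder by $[U]_{\gamma,B(z,L)}\,d(z,w)^{\gamma}$ using that $u_X$ attains the infimum in \eqref{finitegammanormofU}, control the $\lolli$ and $\X$ terms by \eqref{assumptionsigmabounded}, \eqref{orderboundlollipop} and \eqref{assumptiononstochasticobjects}, and for \eqref{linfinityUbyusetting} bound $|U_z(w)|$ directly from \eqref{defofU} and restrict to $d(z,w)<r$. Nothing is missing.
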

\begin{proof}
 In fact, for every $w \in B(z,L)$,
  \begin{eqnarray*}
    | u (w) - u (z) | & \leqslant & | U_z (w) - u_X (z) \cdot (\Pi_z \X) (w) |\\
    & + & | \sigma (u (z)) | | (\Pi_z \lolli) (w)  | + | u_X (z) | | (\Pi_z \X) (w) |\\
    & \leqslant & [U]_{\gamma} d(z,w)^{\gamma} + \| \sigma \| [\Pi ; \lolli]_{1 -
    \kappa} d(z,w)^{1 - \kappa} + | u_X (z) | d(z,w) ,
  \end{eqnarray*}
  so \eqref{unormbyUnorm} follows from \eqref{assumptionsigmabounded}
  and \eqref{assumptiononstochasticobjects}. By definition \eqref{defofU},
  \begin{equation*}
    | U_z (w) | \leqslant |u(w)| + |u(z)| + \| \sigma \| [\Pi ; \lolli]_{1 - \kappa}
    d(z,w)^{1-\kappa} ,
  \end{equation*}
  and \eqref{linfinityUbyusetting} follows from taking the supremum over the set
  $\{z,w \in B : d(z,w) < r\}$.
\end{proof}

Furthermore, we recall some scaling estimates from
\cite[eqs. (2.10), (2.11)]{Moinatandweber2020cpam}.
For $\varphi^L$ as in \eqref{defofmollifierkernel} with $L>0$ and any $\vphi \in \Ftest$,
\begin{eqnarray}
  \int_D | \varphi^L (z - \bar{z}) | d (z, \bar{z})^{1-\kappa} \mathd \bar{z} &
  \leqslant & L^{1-\kappa},\nonumber \\
  \int_D | \nabla_x \varphi^L (z - \bar{z}) | d (z, \bar{z})^{1-\kappa} \mathd
  \bar{z} & \lesssim & L^{-\kappa}, \label{boundongradphiL}
\end{eqnarray}
where 
$\nabla_x$ acts on $x$ in $z=(t,x)$. 
The estimates above imply that for any $B \subset D_0^1$ and $L>0$
such that $B(z,L) \subset D_0^1$ for every $z \in B$,
\begin{equation}
  \| u_L - u \|_B \leqslant L^{1-\kappa} \sup_{z \in B}  [u]_{1-\kappa, B (z, L)}
  . \label{ineqhL-h}
\end{equation}

\section{Interior estimate}\label{sectioninteriorestimate}

The goal of this section is to bound 
the semi-norm $[U]_{\gamma}$ in \eqref{finitegammanormofU} in terms of lower
order regularity norms of the lift $\Pi$ in \eqref{orderbounds}-\eqref{orderboundlollipop}
and $\|u\|$. We divide the argument into three steps. First, we employ a local
expansion on the r.h.s. of
\begin{equation}
  (\partial_t - \mathLaplace) U_z = \sigma (u) \diamond \noiseb - \sigma (u (z)) \Pi_z \noise . 
  \label{equationforU}
\end{equation}
to match the local description of the product
$\sigma(u) \diamond \noiseb$ around a base
point $z \in D_0^{T_\star}$. 
Then, the Reconstruction Theorem \ref{reconstructiontheorem}
gives precise meaning to $\sigma (u) \diamond \noiseb$ and provides control on the error
with respect to the local description. Secondly,
we reduce the semi-norms appearing in the output of the reconstruction to
semi-norms and norms of the function $U$. Finally, we use the Schauder estimate in
Lemma \ref{lemmaschauderestimate}
and control the $\gamma$-H{\"o}lder semi-norm of $U$ by 
$\|u\|$. This procedure is divided into three subsections.

\refchange{
More specifically, the Schauder estimate in Lemma
\ref{lemmaschauderestimate}, states that if for
scales $0 < \ell \leqslant L$ and $1 < \gamma < 2$, if
\begin{equation*}
    \| (\partial_t - \Delta) (U_z)_{\ell} \|_{B (z, L)} \leqslant
    M^{(1)} \sum_{\beta \in \Beta} \ell^{\beta - 2} L^{\gamma
    - \beta} \; ,
  \end{equation*}
  and the ``three-point
  continuity'' condition
  \begin{equation*}
    | U_{z_0} (z_2) - U_{z_0} (z_1) - U_{z_1} (z_2) | \leqslant
    M^{(2)} \sum_{\beta \in \Beta} | z_1 - z_0
    |^{\beta} d(z_1,z_2)^{\gamma - \beta} 
  \end{equation*}
  hold true, then
there exists a constant $C^{\Beta, \gamma}_{\text{Schauder}} > 0$ which only depends on
  $\gamma$ and $\Beta$ such that
  \begin{equation*}
    [U]_{\gamma, 0, [a, b]} \leqslant 
    C^{\Beta, \gamma}_{\text{Schauder}}
    \left[ \sup_{\tau \in (a, b]}
    d_{\tau}^{\gamma} \left( M^{(1)} +
    M^{(2)} \right) +
    \sup_{\tau \in (a, b]} \| U \|_{D_{\tau}^b, d_{\tau}} \right],
  \end{equation*}
  where $\| U \|_{B, d}$ denotes the $L^{\infty}$ norm of $U$ restricted to
  the set $\{ z, w \in B : d(w,z) \leqslant d \}$. That is, 
  we must verify the two conditions above with a good control
  on the constants $M^{(1)}$ and $M^{(2)}$, which are estimated
  in Lemmas \ref{lemmacheckingorderboundschauder} and
  \ref{lemmachecking3ptcontinuityschauder}, and then
  reduce all quantities in the r.h.s. to
  $[U]_\gamma$ and $\|u\|$ in Lemma
  \ref{lemmachainrule}.
}

The proof of the main Theorem \ref{maintheorem} will follow
a bootstrapping argument. For a given initial condition $u_0 \in L^\infty$,
consider a constant $C_{\star} > \| u_0 \| \vee 1$, where we introduce the
notation $a \vee b := \max\{a,b\}$. Define
the following stopping time
\begin{equation}
  T_{\star} \assign \inf \{ t \in [0, 1] : \| u \|_{D_0^t} \geqslant C_{\star}
  \}, \label{definitionstoppingtime}
\end{equation}
with the convention that $T_{\star} = 1$ if $\| u \|_{D_0^t} \leqslant
C_{\star}$ for every $t \in [0, 1]$. The local solution theory provides that
there exists a $T_{\star} = T_{\star} (C_{\star}) > 0$, given by
\eqref{definitionstoppingtime} for which
\begin{equation}
  \| u \|_{D_0^{T_{\star}}} \leqslant C_{\star} .
  \label{trickwithlinfinity} 
\end{equation}
The ultimate goal is then to show that there is a way to choose $C_{\star} >
0$ large enough so that a better bound than \eqref{trickwithlinfinity} is
obtained for $\| u \|_{D_0^{T_{\star}}}$. This way, $T_{\star} = T_{\star}
(C_{\star})$ for this choice has to be equal to 1, and the value of
$C_{\star}$ immediately prescribes the final bound for $\| u \|_{D_0^1}$ in
\eqref{mainresultbound}.

The main result of this section is as follows.

\begin{theorem}[Interior estimate]
  \label{theointeriorestimate}
  For any $\kappa \in \left( 0, \frac{1}{3}
  \right)$, consider $\gamma$ satisfying $1 + \kappa < \gamma \leqslant 2 - 2
  \kappa$ and $T:=T(\gamma)$ satisfying 
  \begin{equation}
    T(\gamma) \leqslant \varepsilon \Big( \Cdumb{}^{- \frac{1}{1 - \kappa}} \wedge \Cnoise{}^{-
    \frac{2}{1 - \kappa}} C_{\star}^{- e(\gamma)} \Big),
    \quad \tmop{where} \quad
    e(\gamma) \assign \frac{2 (\gamma - 1)}{1 -
    \kappa} \label{conditionforT}\; ,
  \end{equation}
  and where $\varepsilon>0$ is small enough.
  Then, for every $t \in [T, T_{\star}]$, it holds that
  \begin{equation}
    [U]_{\gamma, 0, [t - T, t]} \lesssim 1 \vee \| u \|_{D_{t - T}^t}
    \leqslant C_{\star}, \label{theinteriorestimate}
  \end{equation}
  where we recall that $T_{\star}$ and $C_{\star}$ are defined
  in \eqref{definitionstoppingtime}-\eqref{trickwithlinfinity} and $\lesssim$
  holds up to a multiplicative constant that only depends on $\kappa, d$ and
  $C_{\sigma}$ in \eqref{assumptionsigmabounded}.
\end{theorem}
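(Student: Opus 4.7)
The plan is to realize the three-step local theory outlined in Section \ref{sectionstrategyofproof}: Reconstruction of the singular product $\sigma(u)\noiseb$, a chain-rule reduction from $[\sigma(U)]_\gamma$ to $[U]_\gamma$, and a Schauder estimate, closed by an absorption argument made possible by the choice of $T$ in \eqref{conditionforT}. First I would substitute the modelled-distribution expansion
\[
\sigma(u)\noiseb - \sigma(u(z))\Pi_z\noise = \sigma'\sigma(u(z))\Pi_z\dumb + \sigma'(u(z))u_X(z)\cdot\Pi_z\Xnoise + R_z
\]
into the frozen equation \eqref{equationforU}. Theorem \ref{reconstructiontheorem} quantifies the error as $|\langle R_z,\varphi_z^L\rangle| \lesssim L^{\gamma-1-\kappa}([\sigma(U)]_\gamma \Cnoise{} + \text{l.o.t.})$, and Lemma \ref{lemmaschauderestimate} applied to \eqref{equationforU} on the slab $D_{t-T}^t$ yields schematically
\[
[U]_{\gamma,0,[t-T,t]} \lesssim \|u\|_{D_{t-T}^t}\Cdumb{} + T^{\kappa/2}\|u_X\|_{D_{t-T}^t}\Cnoise{} + T^{\frac{1-\kappa}{2}}\bigl([\sigma(U)]_{\gamma,0,[t-T,t]}\Cnoise{} + \text{l.o.t.}\bigr).
\]

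The next step is to reduce $[\sigma(U)]_\gamma$ back to $[U]_\gamma$ by Taylor-expanding $\sigma(u(w))$ around $u(z)$ as in \eqref{sigmatoUstrat1}--\eqref{sigmatoUstrat2}. Combining this with the a priori increment estimate $|u(w)-u(z)| \lesssim |u_X(z)|d(z,w) + \Cnoise{} d(z,w)^{1-\kappa} + [U]_\gamma d(z,w)^\gamma$ and $\|\sigma''\| \leq C_\sigma$ produces
\[
[\sigma(U)]_{\gamma,0,[t-T,t]} \lesssim [U]_{\gamma,0,[t-T,t]} + \|u_X\|_{D_{t-T}^t}^2 + (\text{l.o.t.\ in }\Cnoise{},\|u\|).
\]
Controlling $\|u_X\|$ by an interpolation of $\|u\|$, $\Cnoise{}$ and $[U]_\gamma$ through Lemma \ref{lemmacontrolongradient} and inserting back into the Schauder bound yields a self-consistent inequality of the form
\[
[U]_{\gamma,0,[t-T,t]} \lesssim A + B \cdot [U]_{\gamma,0,[t-T,t]},
\]
where $A \lesssim 1 \vee \|u\|_{D_{t-T}^t}$ (using $T \lesssim \Cdumb{}^{-1/(1-\kappa)}$ to absorb the $\|u\|\Cdumb{}$ contribution, together with the interpolation of $T^{\kappa/2}\|u_X\|\Cnoise{}$), and $B$ collects the linear Schauder factor $T^{(1-\kappa)/2}\Cnoise{}$ together with the $C_\star$-power inherited from the $\|u_X\|^2$ term.

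The two-part condition \eqref{conditionforT} is tailored precisely so that $B \leq 1/2$: the factor $\Cnoise{}^{-2/(1-\kappa)}$ enforces the linear absorption $T^{(1-\kappa)/2}\Cnoise{} \ll 1$, while the factor $C_\star^{-e(\gamma)}$ with $e(\gamma) = 2(\gamma-1)/(1-\kappa)$ exactly compensates the $C_\star^{\gamma-1}$ growth inherited from the quadratic $\|u_X\|^2$ contribution of the chain rule after optimal interpolation at scale $\sqrt{T}$. Absorbing $B[U]_\gamma$ into the left-hand side then gives \eqref{theinteriorestimate}, and the final bound $\|u\|_{D_{t-T}^t} \leq C_\star$ is immediate from \eqref{trickwithlinfinity}. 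The main obstacle, and the source of the departure from the linear case $\sigma(u)=u$ treated in \cite{Hai14,GIP15}, is precisely this super-linear $\|u_X\|^2$ term that forces $T$ to depend on a negative power of $C_\star$; the remaining work consists of careful book-keeping to ensure every lower-order term from both the Reconstruction and the chain rule fits under the single two-factor condition \eqref{conditionforT}.
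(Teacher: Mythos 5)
Your overall architecture is the paper's: freeze coefficients, reconstruct the product via Theorem \ref{reconstructiontheorem}, reduce $[\sigma(U)]_\gamma$ by a chain rule, apply the Schauder estimate of Lemma \ref{lemmaschauderestimate}, and absorb using the smallness of $T$. The gap is in the exponent bookkeeping that is the actual content of the theorem, namely that the stated $e(\gamma)=\frac{2(\gamma-1)}{1-\kappa}$ suffices. You record the chain-rule error as $\|u_X\|^2$ and then claim that after optimal interpolation it contributes a factor $C_\star^{\gamma-1}$ multiplying $[U]_\gamma$, which \eqref{conditionforT} "exactly compensates". That does not follow. With a genuinely quadratic remainder the relevant term (with $L\sim d_\tau$) is $d_\tau^{\gamma+1-\kappa}\Cnoise{}\,\|u_X\|^2 d_\tau^{2-\gamma}$, and the interpolation $\|u_X\|\leqslant 2[U]_{\gamma}^{1/\gamma}\|U\|^{1-1/\gamma}$ of Lemma \ref{lemmacontrolongradient} turns it into $T^{\frac{1-\kappa}{2}}\Cnoise{}\,[U]_{\gamma,0,[a,b]}^{2/\gamma}\|U\|^{2-2/\gamma}$, which is \emph{superlinear} in $[U]_\gamma$ (exponent $2/\gamma>1$ since $\gamma<2$). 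Closing such an inequality at the target size $[U]\lesssim C_\star$ forces $T^{\frac{1-\kappa}{2}}\Cnoise{}C_\star\lesssim 1$, i.e. effectively $e=\frac{2}{1-\kappa}$, which is strictly stronger than $e(\gamma)$ for every admissible $\gamma$. This is not a cosmetic loss: the whole point of the later argument (the choice $\gamma_1$ close to $1+\kappa$ in Corollary \ref{corollaryimprovedinteriorestimate}, the balancing in the Maximum Principle, and the condition $\betanoise<1$, cf. Remark \ref{remarkofwhyneedimprovedinteriorestimate}) hinges on $e(\gamma)$ being exactly $\frac{2(\gamma-1)}{1-\kappa}$, so your version of the interior estimate would not feed correctly into Theorem \ref{maintheorem}.

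The missing idea is the refinement in the paper's Lemma \ref{lemmachainrule}: the Taylor remainder in $\eta_2=u_X(z)\cdot(\Pi_z\X)(w)$ is estimated by interpolating between the first-order bound $2\|\sigma'\||\eta_2|$ and the second-order bound $\|\sigma''\||\eta_2|^2$, so that the error has total homogeneity $\gamma$, i.e. it contributes $\|u_X\|^{\gamma}_{B(z,L)}$ rather than $\|u_X\|^2 L^{2-\gamma}$. Then \eqref{interpolationboundforgradient} gives $\|u_X\|^{\gamma}\leqslant 2^{\gamma}[U]_{\gamma}\,\|U\|^{\gamma-1}$, which is \emph{linear} in $[U]_\gamma$ with coefficient $\|U\|^{\gamma-1}\lesssim C_\star^{\gamma-1}$ (after \eqref{linfinityUbyusetting} and \eqref{trickwithlinfinity}); the resulting absorption condition is precisely $T^{\frac{1-\kappa}{2}}\Cnoise{}C_\star^{\gamma-1}\leqslant \tfrac14$ as in \eqref{absorbtioncondition}, which is what \eqref{conditionforT} delivers. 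You would also need to handle the degenerate case where the optimal interpolation radius $r$ in \eqref{choiceofr} exceeds $d_\tau$ (then $d_\tau^{\gamma}[U]_{\gamma}\leqslant\|U\|$ and \eqref{theinteriorestimate} follows directly), and to control the $[u_X]_{\gamma-1}$ contribution via the three-point continuity bound \eqref{boundongammanormgradient}; these are book-keeping, but the homogeneity-$\gamma$ interpolation is essential and cannot be replaced by the quadratic estimate you use.
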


\refchange{The seminorm $[U]_{\gamma, 0, [t - T, t]}$ appearing in \eqref{theinteriorestimate} is defined in \eqref{normofUblowup}.}
By post-processing Theorem~\ref{theointeriorestimate} we obtain the following.

\begin{corollary}
  \label{corollaryimprovedinteriorestimate}
  For any $\gamma_1$ which satisfies
  the conditions of Theorem \ref{theointeriorestimate}, consider $T_1 := T(\gamma_1)$ 
  in \eqref{conditionforT}.
  Then, for any $\gamma_1 \leqslant \gamma_2 \leqslant 2 - 2 \kappa$ and $t
  \in [T_1, T_{\star}]$, it holds that
  \begin{equation}
    [U]_{\gamma_2, 0, [t - T_1, t]} \lesssim 1 \vee \| u \|_{D_{t - T_1}^t}
    \leqslant C_{\star} . \label{interiorestimateimproved}
  \end{equation}
\end{corollary}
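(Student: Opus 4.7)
\begin{proof*}{Proof proposal.}
The strategy is a simple covering argument built on two elementary observations. First, since $e(\gamma) = 2(\gamma-1)/(1-\kappa)$ is monotone increasing in $\gamma$ and $C_\star \geq 1$, the prescribed time $T_2 := T(\gamma_2)$ in \eqref{conditionforT} satisfies $T_2 \leq T_1$. Second, Theorem \ref{theointeriorestimate} applied with exponent $\gamma_2$ is valid on any sub-interval of length $T_2$ inside $[T_2, T_\star]$, yielding the same output $C_\star$ on the right-hand side.

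The plan is therefore to cover $[t-T_1, t]$ by shifted copies of the admissible time window of length $T_2$. For every $s \in [t - T_1 + T_2, t]$, I would apply Theorem \ref{theointeriorestimate} with time scale $T_2$ and exponent $\gamma_2$ on the window $[s-T_2, s] \subset [t-T_1, t]$, obtaining
\begin{equation*}
  [U]_{\gamma_2, 0, [s - T_2, s]} \lesssim 1 \vee \| u \|_{D_{s-T_2}^s} \leq C_\star.
\end{equation*}
For the initial sliver $[t-T_1, t-T_1 + T_2]$, Theorem \ref{theointeriorestimate} applies directly with weight anchored at $t-T_1$.

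The main obstacle, and the reason this is stated as a separate corollary rather than being trivial, is reconciling the weighted semi-norm $[U]_{\gamma_2, 0, [t-T_1, t]}$, which has its blow-up anchored at the single left endpoint $t-T_1$, with the family of local estimates each of which carries its own blow-up weight at its respective left endpoint $s-T_2$. The reconciliation splits into two regimes: points $z = (s, \cdot)$ with $s$ close to $t - T_1$ (say $s - (t - T_1) \leq T_2$) are controlled by the first sub-interval estimate, whose weight matches the target weight at $t-T_1$; points with $s - (t-T_1) \geq T_2$ are bounded away from the singular endpoint of the global norm, so the local unweighted estimate (obtained by evaluating at points at distance $\gtrsim T_2$ from $s-T_2$) transfers to the global weighted one, with the loss being a bounded power of the ratio $T_2 / (s - (t - T_1))$. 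Since this ratio is bounded by $1$, no dependence on $C_\star$ is introduced beyond what \eqref{theinteriorestimate} already gives.

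Combining the two regimes and taking the supremum in $s$ gives \eqref{interiorestimateimproved}. I expect the bookkeeping for the weighted norm at points far from $t-T_1$ to be the only non-routine part, and it is controlled entirely by the inequality $T_2 \leq T_1$ together with the fact that the definition \eqref{normofUblowup} of the weighted semi-norm only involves polynomial powers of the distance to the initial boundary.
\end{proof*}
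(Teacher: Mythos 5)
There is a genuine gap, and it sits exactly at the step you flag as ``bookkeeping''. Your covering produces, on each window $[s-T_2,s]$ with $T_2:=T(\gamma_2)$, the bound $[U]_{\gamma_2,0,[s-T_2,s]}\lesssim C_\star$, which by \eqref{normofUblowup} only controls the $\gamma_2$-semi-norm over pairs at mutual distance $\lesssim \sqrt{T_2}$, with constant of size $T_2^{-\gamma_2/2}C_\star$, and this constant does not improve as $s$ moves away from $t-T_1$. The target norm $[U]_{\gamma_2,0,[t-T_1,t]}$, however, demands at a time $\tau$ with $d_\tau^2=\tau-(t-T_1)\gg T_2$ the bound $[U]_{\gamma_2,D_\tau^t,d_\tau}\lesssim d_\tau^{-\gamma_2}C_\star$, i.e.\ a constant that \emph{decreases} like $d_\tau^{-\gamma_2}$ and a semi-norm over pairs at the larger distance $d_\tau\leqslant\sqrt{T_1}$. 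Patching your window estimates therefore costs a factor of order $(d_\tau^2/T_2)^{\gamma_2/2}\leqslant (T_1/T_2)^{\gamma_2/2}\sim C_\star^{\gamma_2(e(\gamma_2)-e(\gamma_1))/2}$; your claim that the loss is a power of $T_2/(s-(t-T_1))\leqslant 1$ has the ratio upside down. Since the implicit constant in \eqref{interiorestimateimproved} must depend only on $\kappa,d,C_\sigma$, this extra positive power of $C_\star$ means the statement is not proved; it also destroys the only use of the corollary, because \eqref{comparisonofnorms} and the choice of $\tilde L$ rely precisely on paying $T_1^{-\gamma_2/2}$ rather than $T_2^{-\gamma_2/2}$ (Remark \ref{remarkofwhyneedimprovedinteriorestimate} shows that the $T(\gamma_2)$-scaling your windows provide is never good enough for any $\kappa>0$). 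Handling the large-distance pairs by the crude bounds $\|U\|$, $\|u_X\|$ does not rescue it either: interpolation gives $\|u_X\|\lesssim T_2^{-1/2}C_\star$, and $\|u_X\|d_\tau$ again carries the unbounded factor $(T_1/T_2)^{1/2}$.

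The paper's proof is not a covering argument at all: it reruns the proof of Theorem \ref{theointeriorestimate} once over the \emph{full} interval of length $T_1$, applying the Reconstruction/chain-rule estimates at level $\gamma_1$ (so the super-linear absorption term only requires $T_1^{\frac{1-\kappa}{2}}\Cnoise{}C_\star^{\gamma_1-1}\leqslant\frac14$, which is exactly \eqref{conditionforT} for $\gamma_1$) while applying the Schauder Lemma \ref{lemmaschauderestimate} at the higher level $\gamma_2$; this decoupling is legitimate because the worst order appearing on the right-hand side of $(\partial_t-\Delta)(U_z)_\ell$ is $\ell^{-2\kappa}$, independent of $\gamma$, and $\gamma_2\leqslant 2-2\kappa$. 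That is the mechanism that yields the improved constant at scales up to $\sqrt{T_1}$, and it is what your window-of-size-$T_2$ construction cannot reproduce.
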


These are the main results we prove in this section. An application of them
provides control on $\|u\|$ for short times which is independent of $C_{\star}$. 
This is the content of the next theorem, whose proof is
postponed to Section \ref{sectionproofofshorttimescontrol}.

\begin{theorem}
  \label{theoinfinityschauderpostprocessing}Under the assumptions of Theorem
  \ref{theointeriorestimate},
  \begin{equation}
    \| u \|_{D_0^{T}} \leqslant 2 (\| u_0 \| \vee 1) ,
    \label{infinityschauderpostprocessing}
  \end{equation}
  where $T := T(\gamma)$ satisfies \eqref{conditionforT}.
\end{theorem}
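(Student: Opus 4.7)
The approach is a bootstrap/continuity argument in the spirit of \cite[Prop. 6.9]{Hai14}. Fix $C_{\star} \assign 2(\|u_0\| \vee 1)$; this determines the window length $T = T(\gamma)$ through \eqref{conditionforT}, and we let $T_\star = T_\star(C_\star)$ be the stopping time \eqref{definitionstoppingtime}. By continuity of $t \mapsto \|u\|_{D_0^t}$, if \eqref{infinityschauderpostprocessing} were to fail then there would exist $t_0 \in (0, T]$ with $\|u\|_{D_0^{t_0}} = C_\star$, so that $T_\star \leqslant t_0 \leqslant T$. I will derive a contradiction by showing $\|u\|_{D_0^{T_\star}} < C_\star$.

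First apply the interior estimate (Theorem \ref{theointeriorestimate}) on a window of length $T_\star$: since \eqref{conditionforT} is an upper bound on the admissible window length, it remains valid for any length $\leqslant T$, and evaluating at $t = T_\star$ yields $[U]_{\gamma, 0, [0, T_\star]} \lesssim C_\star$. Corollary \ref{corollaryimprovedinteriorestimate} then upgrades this to $[U]_{2 - 2\kappa, 0, [0, T_\star]} \lesssim C_\star$.

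Next, write $u$ via Duhamel as $u(t, \cdot) = P_t u_0 + \int_0^t P_{t-s} f(s, \cdot)\,ds$, where $P_t = e^{t\Delta}$ is the heat semigroup on $\T^d$ and $f$ is the renormalised right-hand side of \eqref{mainequationrenormalised}, interpreted via reconstruction as $\mathcal R(\sigma(\mathcal U) \noise) - \sigma'\sigma(u)\,C$. The maximum principle gives $\|P_t u_0\| \leqslant \|u_0\|$. For the Duhamel integral, feed the interior bound on $[U]_{2 - 2\kappa}$ into the reconstruction estimate (Theorem \ref{reconstructiontheorem}, cf.\ \eqref{reconstructionstrategy} and the local expansion \eqref{errorreconstructionstrategy}) to control $f$ in a negative H\"older-type semi-norm, and then apply the weighted Schauder estimate Lemma \ref{lemmaschauderestimate} to get a bound of the schematic form
\[
   \bigl\| u - P_{\cdot} u_0 \bigr\|_{D_0^{T_\star}} \lesssim T_\star^{(1-\kappa)/2}\,\bigl(\Cnoise\, C_\star^{\gamma - 1} + \Cdumb\, C_\star\bigr).
\]
The choice \eqref{conditionforT} is tailored so that each term on the right-hand side is bounded by a universal constant $c = c(\kappa, d, C_\sigma)$: indeed $T_\star^{(1-\kappa)/2} \leqslant T^{(1-\kappa)/2} \lesssim \Cnoise^{-1} C_\star^{-(\gamma-1)} \wedge \Cdumb^{-1/2}$. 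Choosing the implicit constant in \eqref{conditionforT} small enough to force $c \leqslant \tfrac12 (\|u_0\| \vee 1)$ yields
\[
   \|u\|_{D_0^{T_\star}} \leqslant \|u_0\| + c < 2(\|u_0\| \vee 1) = C_\star,
\]
contradicting $\|u\|_{D_0^{T_\star}} = C_\star$. Hence \eqref{infinityschauderpostprocessing} holds.

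The main technical difficulty is the initial-time singularity: since $u_0$ is only bounded, the modelled distribution $\mathcal U$ does not have full H\"older regularity up to $t = 0$, and its H\"older-type semi-norms must be measured with the weight at the initial time of \cite[Def.\ 6.2]{Hai14}. Matching this weight with the weighted Schauder estimate Lemma \ref{lemmaschauderestimate} is what produces the decisive $T_\star^{(1-\kappa)/2}$ factor; once that is in place, the remaining work is a careful bookkeeping of the exponents of $\Cnoise$, $\Cdumb$ and $C_\star$ appearing in the reconstruction bound to verify that the scaling prescribed by \eqref{conditionforT} does collapse everything into a universal constant.
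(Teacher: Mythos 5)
Your overall architecture --- peel off the heat flow of $u_0$, bound the remainder (which vanishes at $t=0$) by a small multiple of $C_\star$ using the interior estimate plus reconstruction, and close by a continuity/absorption argument --- is in the spirit of the paper, and your schematic final bound has the right shape. The genuine gap is the mechanism you propose for that bound: you claim that feeding the reconstruction estimate into the weighted Schauder estimate of Lemma \ref{lemmaschauderestimate} yields $\| u - P_{\cdot}u_0\|_{D_0^{T_\star}} \lesssim T_\star^{(1-\kappa)/2}(\Cnoise{}\,C_\star^{\gamma-1}+\Cdumb{}\,C_\star)$, and that ``matching the weights'' with that lemma is what produces the factor $T_\star^{(1-\kappa)/2}$. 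Lemma \ref{lemmaschauderestimate} cannot do this: it controls only the weighted H\"older seminorm $[U]_{\gamma,0,[a,b]}$ (not a sup norm of the Duhamel part), and its output \eqref{outputofschauder} carries the term $\sup_{\tau}\|U\|_{D_\tau^b,d_\tau}$ with an $O(1)$ constant, i.e.\ essentially $\|u\|$ itself with no small prefactor --- exactly the last line of \eqref{finalboundbeforeabsorption} that the paper points out must be removed for this theorem. So whatever bookkeeping of exponents you do, this route returns an estimate of the form $\|u\|\lesssim \|u_0\| + (\text{small})\cdot\|u\| + \|u\|$, which cannot be absorbed and never yields smallness in $C_\star$.

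What is actually needed (and what the paper does in Section \ref{sectionproofofshorttimescontrol}) is an integration estimate with no additive sup-norm term and no initial-time weight, and to access it one must first extend the reconstructed right-hand side $\sigma(u)\noiseb$ by zero to negative times. That extension is the real content of the argument: it is the claim modelled on \cite[Prop.~6.9]{Hai14}, proved via a dyadic partition of unity near $t=0$ and using that the total regularity $-1-\kappa>-2$ is compatible with the blow-up exponents of the weighted norms $\|\sigma(u)\noiseb-G_{\cdot}\|_{\gamma-1-\kappa,-1-\kappa,[0,T]}$ etc., which your proposal never estimates beyond ``feed in the interior bound''. Once the extension to $D_{-\infty}^{T}$ is in place, the boundary-free Schauder estimate of \cite[Lem.~B.1]{Moinatandweber2020ejp} gives $[u_2]_{1-\kappa}\lesssim [f]_{-1-\kappa}$ for the zero-initial-datum part $u_2$, and only then does $u_2(0,\cdot)=0$ convert the H\"older bound into the decisive factor $T^{(1-\kappa)/2}$. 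Your continuity/contradiction framing with $C_\star=2(\|u_0\|\vee 1)$ is acceptable (and suffices a fortiori for the smaller windows attached to larger $C_\star$), but without the extension step and the unweighted Schauder estimate the proof does not close.
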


We proceed with the strategy to prove Theorem \ref{theointeriorestimate}
and Corollary \ref{corollaryimprovedinteriorestimate}.

\subsection{Bound on the right hand side (Reconstruction)}\label{sectionReconstruction}

In this subsection we control the r.h.s. of \eqref{equationforU} and we assume throughout the
hypothesis of Theorem \ref{theointeriorestimate}. Recall from Section 
\ref{sectionstrategyofproof} that the local description of the product $\sigma(u) \diamond \noiseb$
is given by
\begin{equation}
  G_z \assign \sigma (u (z)) \Pi_z \noise + \sigma' \sigma (u (z)) \Pi_z \dumb +
  \sigma' (u (z)) u_X (z) \cdot \Pi_z \Xnoise, 
  \label{localapproxG}
\end{equation}
where 
$\Pi$ satisfies \eqref{orderbounds}-\eqref{orderboundlollipop} and
\eqref{changeofbasepointXnoise}-\eqref{changeofbasepointdumbbell}.
In addition, the definition of $\sigma(U)$ in \eqref{defsigmaofU}
and of $[\sigma (U)]_{\gamma}$ according to \eqref{finitegammanormofU}
in Section \ref{sectionsetting}. 
\refchange{
For a set $B \subset D_0^{T_\star}$,
the semi-norms 
$[\sigma' \sigma (u)]_{\gamma - 1 + \kappa,B} :=
[f_1]_{\alpha_1, B}$
and $[\sigma' (u) u_X]_{\gamma - 1,B} := [f_2]_{\alpha_2, B}$
denote the usual H\"older norms as in \eqref{orderboundlollipop},
i.e.
}
\begin{equation*}
  [f_i]_{\alpha_i, B} := \sup_{z \neq w \in B}
  \frac{|f_i(w) - f_i(z)|}{d(z,w)^{\alpha_i}} 
  \quad i=1,2 ,
\end{equation*}
where 
$f_2$ is vector-valued and hence for $i=2$ $|\cdot|$ denotes the
Euclidean norm over $\R^d$. 
The main result of this subsection is the following.

\begin{lemma} \label{lemmacontrolonrhsmainequation}
  Recall that $1 + \kappa < \gamma \leqslant 2 - 2\kappa$. Assume further that
  $[\sigma (U)]_{\gamma,B}$, $[\sigma' \sigma (u)]_{\gamma - 1 + \kappa,B}$ and 
  $[\sigma' (u) u_X]_{\gamma - 1,B}$ are finite for any $B \subset D_0^{T_\star}$.
  Then, we may reconstruct $\sigma (u) \diamond \noiseb$
  and obtain, for any $z \in D_0^{T_\star}$
  and $L > 0$ such that $B(z,L) \subset D_0^{T_\star}$,
  that the error in the local description satisfies
  \begin{equation}
    | \langle \sigma (u) \diamond \noiseb - G_z, \varphi_z^L \rangle | \leqslant
    C_{\mathcal{R}} L^{\gamma - 1 - \kappa} 
    (C^{\sigma,\noise}_{z,L} +
    C^{\sigma,\dumb}_{z,L} + 
    C^{\sigma,\Xnoise}_{z,L}),
    \label{outputreconstruction}
  \end{equation}
  where $C_{\mathcal R}>0$ comes from Theorem \ref{reconstructiontheorem} and
\label{page_modelled_coeff_bounds}
  \begin{eqnarray}
    C^{\sigma,\noise}_{z,L} & : = & [\sigma (U)]_{\gamma, B (z, L)} [\Pi ; \noise]_{- 1
    - \kappa} 
    \label{defCsigmanoise}\\
    C^{\sigma,\dumb}_{z,L} & := & [\sigma' \sigma (u)]_{\gamma - 1 +
    \kappa, B (z, L)} [\Pi ; \dumb]_{- 2 \kappa}
    \label{defCsigmadumb}\\
    C^{\sigma,\Xnoise}_{z,L} & := & [\sigma' (u) u_X]_{\gamma - 1, B (z, L)}
    [\Pi ; \Xnoise]_{- \kappa} .
    \label{defCsigmaXnoise}
  \end{eqnarray}
  Moreover, if $F_z := \sigma (u) \diamond \noiseb - \sigma (u (z)) \Pi_z \noise$ denotes the
  r.h.s. of \eqref{equationforU}, then
  \begin{equation}
    | \langle F_z, \varphi_z^L \rangle | \lesssim L^{\gamma - 1 - \kappa}
    (C^{\sigma,\noise}_{z,L} + C^{\sigma,\dumb}_{z,L} +
    C^{\sigma,\Xnoise}_{z,L}) + \Cdumb{} L^{- 2 \kappa} +
    \Cnoise{} | u_X (z) | L^{- \kappa}
    \label{finalbounddiagonal} .
  \end{equation}

\end{lemma}

\begin{proof}
  Using \eqref{localapproxG} we 
  write
  \begin{eqnarray*}
       F_z = \sigma (u) \diamond \noiseb - \sigma (u (z)) \Pi_z \noise & = & \sigma (u) \diamond \noiseb - G_z\\
       & + & \sigma' \sigma (u (z)) \Pi_z \dumb + \sigma' (u (z)) u_X
       (z) \Pi_z \Xnoise ,
  \end{eqnarray*}
  so that for any $z \in D_0^{T_\star}$ and $L > 0$ such that $B(z,L) \subset D_0^{T_\star}$,
  \begin{eqnarray}
    | \langle F_z, \varphi_z^L \rangle | & \leqslant & | \langle \sigma (u) \diamond \noiseb
    - G_z, \varphi_z^L \rangle | \nonumber\\
    & + & \| \sigma' \sigma \| [\Pi ; \dumb]_{- 2 \kappa} L^{- 2 \kappa} + \|
    \sigma' \| | u_X (z) | [\Pi ; \Xnoise]_{- \kappa} L^{- \kappa} . 
    \label{righthandsideFz}
  \end{eqnarray}
  To control $| \langle \sigma (u) \diamond \noiseb - G_z, \varphi_z^L \rangle |$, we check
  that $\{ G_z \}_z$ satisfies the assumptions of Theorem
  \ref{reconstructiontheorem}. 
  By using \eqref{changeofbasepointXnoise} and
  \eqref{changeofbasepointdumbbell}, we see that
  \begin{eqnarray*}
    G_{z_1} - G_{z_2} & = & (\sigma (U)_{z_2} (z_1) - \sigma' (u (z_2)) u_X
    (z_2) (\Pi_{z_2} \X)(z_1)) \Pi_{z_1} \noise\\
    & + & (\sigma' \sigma (u (z_1)) - \sigma' \sigma (u (z_2))) \Pi_{z_1} \dumb\\
    & + & (\sigma' (u (z_1)) u_X (z_1) - \sigma' (u (z_2)) u_X (z_2)) \cdot
    \Pi_{z_1} \Xnoise,
  \end{eqnarray*}
  so that for every $1 + \kappa < \gamma \leqslant 2 - 2\kappa$,
  $\ell \in (0, L)$ and $z_1, z_2 \in B (z,
  L - \ell)$, we have
  \begin{eqnarray*}
    | \langle G_{z_1} - G_{z_2}, \varphi_{z_1}^{\ell} \rangle | & \leqslant &
    [\sigma (U)]_{\gamma, B (z, L)} d(z_2,z_1)^{\gamma} [\Pi ; \noise]_{- 1 - \kappa}
    \ell^{- 1 - \kappa}\\
    & + & [\sigma' \sigma (u)]_{\gamma - 1 + \kappa, B (z, L)} d(z_2,z_1)
    ^{\gamma - 1 + \kappa} [\Pi ; \dumb]_{- 2 \kappa} \ell^{- 2
    \kappa}\\
    & + & [\sigma' (u) u_X]_{\gamma - 1, B (z, L)} d(z_2,z_1)^{\gamma - 1}
    [\Pi ; \Xnoise]_{- \kappa} \ell^{- \kappa} .
  \end{eqnarray*}
  Thus, condition \eqref{conditionforreconstruction} is 
  satisfied for every $\theta \in \{ - 1 - \kappa, - 2 \kappa, - \kappa \}$ and
  $\gamma_{\theta} = \tilde{\theta} = \gamma - 1 - \kappa > 0$. Therefore,
  a direct application of Theorem \ref{reconstructiontheorem} gives
  \eqref{outputreconstruction}.
  Finally, \eqref{righthandsideFz} together with \eqref{orderbounds}, 
  \eqref{assumptionsigmabounded} and \eqref{outputreconstruction} gives
  \eqref{finalbounddiagonal} and concludes the proof.
\end{proof}


\subsection{Reducing $\sigma (U)$ in terms of $U$ (Chain
rule)}\label{reduction}

In this subsection we control the quantities $C^{\sigma,\noise}_{z,L}$,
$C^{\sigma,\dumb}_{z,L}$ and $C^{\sigma,\Xnoise}_{z,L}$ in
\eqref{finalbounddiagonal} in terms of $\| U \|$, $[U]_{\gamma}$, $\| u_X \|$
and $[u_X]_{\gamma - 1}$. By Lemma \ref{lemmacontrolongradient}, the control
on $\| u_X \|_{B (z, L)}$ and $[u_X]_{\gamma - 1, B (z, L)}$ will again be
reduced to $\| U \|$ and $[U]_{\gamma}$ in Section \ref{sectionchoiceofT0}.

The next lemma is the main result of this section.

\begin{lemma} \label{lemmachainrule}
  Assume the hypothesis of Theorem \ref{theointeriorestimate}. For any $z \in D_0^{T_\star}$
  and $L>0$ such that $B(z,L) \subset D_0^{T_\star}$, we have the following estimates
  \begin{eqnarray}
    &  & [\sigma (U)]_{\gamma, B (z, L)} \nonumber\\
    & \lesssim & [U]_{\gamma, B (z, L)} + \| u_X \|^{\gamma}_{B (z, L)} +
    \Cnoise{}^2 L^{2 - 2 \kappa - \gamma} + \Cnoise{} \| u_X \|_{B (z, L)} L^{2 -
    \kappa - \gamma} ;  \label{mainreductionI}\\
    & & \nonumber \\
    &  & [\sigma' \sigma (u)]_{\gamma - 1 + \kappa, B (z, L)} \nonumber\\
    & \lesssim & [U]_{\gamma, B (z, L)} L^{1 - \kappa} + \Cnoise{} L^{2 - 2
    \kappa - \gamma} + \| u_X \|_{B (z, L)} L^{2 - \kappa - \gamma} ; 
    \label{mainreductionIV}\\
    & & \nonumber \\
    &  & [\sigma' (u) u_X]_{\gamma - 1, B (z, L)} \nonumber\\
    & \lesssim & \| u_X \|_{B (z, L)} [U]_{\gamma, B (z, L)}^{\frac{\gamma -
    1}{\gamma}} + \| u_X \|^{\gamma}_{B (z, L)} + \Cnoise{} \| u_X \|_{B (z, L)}
    L^{2 - \kappa - \gamma}  \label{mainreductionII}\\
    & + & [u_X]_{\gamma - 1, B (z, L)} .  \label{mainreductionIII}
  \end{eqnarray}
\end{lemma}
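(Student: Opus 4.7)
The plan is a classical ``chain rule for modelled distributions'', i.e.\ Taylor expansion of $\sigma$, $\sigma'\sigma$, and $\sigma'$ about $u(z)$, substituting the modelled local description of $u(w)-u(z)$, and then bounding the resulting remainders via the order bounds \eqref{orderbounds}--\eqref{orderboundlollipop} and elementary interpolation.

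\textbf{Estimate \eqref{mainreductionI} on $[\sigma(U)]_{\gamma,B(z,L)}$.} Taylor expand
$\sigma(u(w)) - \sigma(u(z)) = \sigma'(u(z))(u(w)-u(z)) + r_z(w)$
with $|r_z(w)| \lesssim (u(w)-u(z))^2$, and substitute
$u(w) - u(z) = \sigma(u(z))(\Pi_z\lolli)(w) + u_X(z)\!\cdot\!(\Pi_z\X)(w) + (U_z(w) - u_X(z)\!\cdot\!(\Pi_z\X)(w))$.
Matching the linear terms of the Taylor expansion reproduces $\sigma'\sigma(u(z))(\Pi_z\lolli)(w) + \sigma'(u(z)) u_X(z)\!\cdot\!(\Pi_z\X)(w)$ modulo $\|\sigma'\|[U]_{\gamma}d(z,w)^\gamma$, identifying the natural minimiser $\nu(z)=\sigma'(u(z))u_X(z)$ in \eqref{finitegammanormofU}. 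For the quadratic remainder use the order bounds to get
$(u(w)-u(z))^2 \lesssim C_{\blue{1}}^2 d^{2-2\kappa} + |u_X(z)|^2 d^2 + C_{\blue{1}}|u_X(z)| d^{2-\kappa}$
plus cross terms involving $[U]_\gamma$, divide by $d(z,w)^\gamma$, and take the supremum over $w\in B(z,L)$. Powers of $d(z,w)$ with non-negative exponent (guaranteed by $\gamma\leq 2-2\kappa$) are absorbed into the corresponding power of $L$, and the pure quadratic $\|u_X\|^2 L^{2-\gamma}$ is reshaped by the Young-type identity $\|u_X\|^2 L^{2-\gamma} = \|u_X\|^\gamma (\|u_X\|L)^{2-\gamma}$ into $\|u_X\|^\gamma$; the $[U]_\gamma$-quadratic term is absorbed into the claimed linear $[U]_\gamma$ contribution.

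\textbf{Estimate \eqref{mainreductionIV} on $[\sigma'\sigma(u)]_{\gamma-1+\kappa,B(z,L)}$.} This one is essentially Lipschitz: Assumption~\ref{nonlinearsigma} gives $\|(\sigma'\sigma)'\|\lesssim 1$, hence
$|\sigma'\sigma(u(w)) - \sigma'\sigma(u(z))| \lesssim [u]_{1-\kappa,B(z,L)}\,d(z,w)^{1-\kappa}$.
Insert \eqref{unormbyUnorm} for $[u]_{1-\kappa,B(z,L)}$, divide by $d(z,w)^{\gamma-1+\kappa}$, and observe that the remaining positive exponent $d(z,w)^{2-2\kappa-\gamma}\leq L^{2-2\kappa-\gamma}$ converts the three contributions of \eqref{unormbyUnorm} precisely into the three terms of \eqref{mainreductionIV}.

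\textbf{Estimate \eqref{mainreductionII}--\eqref{mainreductionIII} on $[\sigma'(u)u_X]_{\gamma-1,B(z,L)}$.} Split
$\sigma'(u(w)) u_X(w) - \sigma'(u(z)) u_X(z) = \sigma'(u(w))(u_X(w)-u_X(z)) + (\sigma'(u(w))-\sigma'(u(z))) u_X(z)$.
The first piece contributes $\|\sigma'\| [u_X]_{\gamma-1,B(z,L)}$ directly, giving \eqref{mainreductionIII}. The second piece is bounded by $\|\sigma''\| |u(w)-u(z)| |u_X(z)|$, and substituting the modelled expansion of $u(w)-u(z)$ and dividing by $d(z,w)^{\gamma-1}$ produces three pieces scaling as $|u_X|^2 d^{2-\gamma}$, $C_{\blue{1}}|u_X|d^{2-\kappa-\gamma}$, and $|u_X|[U]_\gamma d$. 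The middle piece gives the claimed $C_{\blue{1}}\|u_X\|L^{2-\kappa-\gamma}$. The first is again reshaped by $|u_X|^2 d^{2-\gamma} = |u_X|^\gamma(|u_X|d)^{2-\gamma}$ into $\|u_X\|^\gamma$. The last piece $\|u_X\|[U]_\gamma L$ is interpolated using $L \lesssim [U]_\gamma^{-1/\gamma}$ (which is the natural scale at which $[U]_\gamma d^\gamma \sim 1$) to yield $\|u_X\|[U]_\gamma^{(\gamma-1)/\gamma}$.

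\textbf{Main obstacle.} The computation itself is elementary once one commits to the Taylor plus model substitution scheme, but the delicate part is the reshaping of the remainders generated by $\sigma''$: these are genuinely quadratic in $u_X$ and in $[U]_\gamma$, and it is only through interpolation exponents tied to $\gamma$ that they appear in the clean sub-quadratic forms $\|u_X\|^\gamma$ and $\|u_X\|[U]_\gamma^{(\gamma-1)/\gamma}$. This is precisely the super-linear obstruction flagged in Section~\ref{sectionstrategyofproof} and is what prevents a direct iteration of the local theory; the way these non-linear remainders are propagated through \eqref{finalbounddiagonal} is ultimately what dictates the bootstrap threshold $T(\gamma)$ in \eqref{conditionforT}.
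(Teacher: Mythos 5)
Your overall scheme (Taylor expansion plus substitution of the modelled description, then order bounds) is the same one the paper uses, and your treatment of \eqref{mainreductionIV} coincides with the paper's. However, there are two genuine gaps in the way you handle the super-linear remainders, and they are precisely the delicate points of this lemma. First, in \eqref{mainreductionI} you expand $\sigma(u(w))$ around $u(z)$ and bound the remainder by $(u(w)-u(z))^2$; this generates, besides the terms you list, the cross and square contributions $[U]_{\gamma,B(z,L)}^2 L^{\gamma}$, $[U]_{\gamma,B(z,L)}\Cnoise{}L^{1-\kappa}$ and $[U]_{\gamma,B(z,L)}\|u_X\|_{B(z,L)}L$, and your claim that these can be ``absorbed into the claimed linear $[U]_\gamma$ contribution'' requires smallness of $[U]_\gamma L^\gamma$, $\Cnoise{}L^{1-\kappa}$ or $\|u_X\|L$, none of which is available at this stage (the lemma must hold for all admissible $L$, with constants depending only on $\kappa,d,C_\sigma$). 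The paper avoids these terms structurally: with $\eta=\sigma(u(z))(\Pi_z\lolli)(w)+u_X(z)\cdot(\Pi_z\X)(w)$ it first peels off $|\sigma(u(w))-\sigma(u(z)+\eta)|\leqslant\|\sigma'\|\,|U_z(w)-u_X(z)\cdot(\Pi_z\X)(w)|$ by a plain Lipschitz bound, so $[U]_\gamma$ only ever enters linearly, and the quadratic remainder involves only $\eta_1,\eta_2$.

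Second, your conversion of the pure quadratic $\|u_X\|^2_{B(z,L)}L^{2-\gamma}$ into $\|u_X\|^\gamma_{B(z,L)}$ via the identity $\|u_X\|^2L^{2-\gamma}=\|u_X\|^\gamma(\|u_X\|L)^{2-\gamma}$ is not a valid bound unless $\|u_X\|_{B(z,L)}L\lesssim 1$, which is exactly what one cannot assume (the gradient may be large). The correct mechanism, used in the paper, is to keep \emph{two} bounds on the same quantity --- the uniform one $2\|\sigma'\|\,|\eta_2|$ coming from boundedness of $\sigma'$, and the quadratic one $\|\sigma''\|\,|\eta_2|^2$ --- and interpolate pointwise in $d(z,w)$ via $\min\{a,a^2\}\leqslant a^\gamma$, which yields $|u_X(z)|^\gamma d(z,w)^\gamma$ with no constraint relating $\|u_X\|$ and $L$; this is where the boundedness of $\sigma'$ itself (and not only of $\sigma''$) is essential. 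The same issue recurs in your \eqref{mainreductionII}: the step ``interpolate using $L\lesssim[U]_\gamma^{-1/\gamma}$'' invokes a relation between $L$ and $[U]_\gamma$ that is not a hypothesis; the paper instead interpolates $\min\{2\|\sigma'\|\|u_X\|,\ \|\sigma''\|\|u_X\|[U]_\gamma d(z,w)^\gamma\}\lesssim\|u_X\|[U]_\gamma^{(\gamma-1)/\gamma}d(z,w)^{\gamma-1}$, again requiring nothing about $L$. With the decomposition rearranged as in the paper and the min-interpolation replacing your reshaping steps, your argument goes through; as written, it does not prove the stated estimates.
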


\begin{proof} 
We start with $[\sigma (U)]_{\gamma, B (z, L)}$. For every $w \in B (z, L)$, let
\begin{equation}
  \eta_1 : = \sigma (u (z)) (\Pi_z \lolli) (w), \quad \eta_2 \assign u_X (z)
  \cdot (\Pi_z \X) (w) \quad \tmop{and} \quad \eta \assign \eta_1 + \eta_2
  \label{auxiliareta}
\end{equation}
then
\begin{eqnarray}
  &  & | \sigma (U)_z (w) - \sigma' (u (z)) u_X (z) \cdot (\Pi_z \X) (w) |
  \nonumber\\
  & = & | \sigma (u (w)) - (\sigma (u (z)) + \sigma' (u (z)) \eta) |
  \nonumber\\
  & \leqslant & | \sigma (u (w)) - \sigma (u (z) + \eta) | 
  \label{sigmaUpartI}\\
  & + & | \sigma (u (z) + \eta) - \sigma (u (z) + \eta_2) - \sigma' (u (z))
  \eta_1 |  \label{sigmaUpartII}\\
  & + & | \sigma (u (z) + \eta_2) - \sigma (u (z)) - \sigma' (u (z)) \eta_2 |
  \label{sigmaUpartIII}
\end{eqnarray}
and we control each term of the sum separately. For the first, we have
\begin{eqnarray*}
  \eqref{sigmaUpartI} & \leqslant & \| \sigma' \| | u (w) - u (z) - \sigma (u
  (z)) (\Pi_z \lolli) (w) - u_X (z) \cdot (\Pi_z \X) (w) |\\
  & \lesssim & [U]_{\gamma, \refchange{B(z, L)}} d(z,w)^{\gamma} .
\end{eqnarray*}
For the second, we get
\begin{eqnarray*}
  \eqref{sigmaUpartII} & = & \left| \int_0^1 \frac{\mathd}{\mathd \lambda}
  (\sigma (u (z) + \eta_2 + \lambda \eta_1)) \mathd \lambda - \sigma' (u (z))
  \eta_1 \right|\\
  & = & \left| \eta_1 \int_0^1 (\sigma' (u (z) + \eta_2 + \lambda \eta_1) -
  \sigma' (u (z))) \mathd \lambda \right|\\
  & \leqslant & \| \sigma'' \| (| \eta_1 |^2 + | \eta_1 | | \eta_2 |)\\
  & \leqslant & \| \sigma'' \| (\| \sigma \|^2 [\Pi ; \lolli]^2_{1 - \kappa} d(z,w)^{2 - 2 \kappa}
  + \| \sigma \|  [\Pi ; \lolli]_{1 - \kappa} | u_X (z) | d(z,w)^{2 - \kappa})\\
  & \lesssim & (\Cnoise{}^2 L^{2 - 2 \kappa - \gamma} + \Cnoise{} | u_X (z) |
  L^{2 - \kappa - \gamma}) d(z,w)^{\gamma}
\end{eqnarray*}
And for the third, it holds that
\begin{eqnarray*}
  \eqref{sigmaUpartIII} & = & \left| \int_0^1 \frac{\mathd}{\mathd \lambda}
  (\sigma (u (z) + \lambda \eta_2)) \mathd \lambda - \sigma' (u (z)) \eta_2
  \right|\\
  & = & \left| \eta_2 \int_0^1 (\sigma' (u (z) + \lambda \eta_2) - \sigma' (u
  (z))) \mathd \lambda \right|\\
  & \leqslant & \left\{\begin{array}{l}
    2 \| \sigma' \| | \eta_2 |\\
    \| \sigma'' \| | \eta_2 |^2
  \end{array}\right. .
\end{eqnarray*}
Therefore, by interpolating between the two, we obtain
\[ \eqref{sigmaUpartIII} \lesssim | u_X (z) |^{\gamma} d(z,w)^{\gamma} , \]
which leads to \eqref{mainreductionI}. 

Next, we treat $[\sigma' \sigma (u)]_{\gamma - 1 + \kappa, B (z, L)}$. It
follows that
\[ | \sigma' \sigma (u (w)) - \sigma' \sigma (u (z)) | \leqslant \| \sigma''
   \sigma + (\sigma')^2 \| [u]_{1 - \kappa, B (z, L)} d(z,w)^{1 - \kappa},
\]
and thus since $\gamma \leqslant 2 - 2 \kappa \Longrightarrow \gamma - 1 +
\kappa \leqslant 1 - \kappa$,
\begin{eqnarray}
  &  & [\sigma' \sigma (u)]_{\gamma - 1 + \kappa, B (z, L)} \nonumber\\
  & \leqslant & 2 C_{\sigma}^2 L^{2 - 2 \kappa - \gamma} [u]_{1 - \kappa, B
  (z, L)} \nonumber\\
  & \lesssim & [U]_{\gamma, B (z, L)} L^{1 - \kappa} + \Cnoise{} L^{2 - 2
  \kappa - \gamma} + \| u_X \|_{B (z, L)} L^{2 - \kappa - \gamma} ,
  \label{normsigmasigmaprime}
\end{eqnarray}
where we have used \eqref{unormbyUnorm}. This is \eqref{mainreductionIV}
Note in particular that assuming that
$\| u_X \|_{B (z, L)} \geqslant 1$, the estimate in
\eqref{normsigmasigmaprime} is dominated by \eqref{mainreductionI},
since $\gamma > 1$. Finally, we move to $[\sigma' (u) u_X]_{\gamma - 1, B (z,
L)}$. Using \eqref{auxiliareta},
\begin{eqnarray}
  &  & | \sigma' (u (w)) u_X (w) - \sigma' (u (z \nobracket) u_X (z) |
  \nonumber\\
  & \leqslant & | \sigma' (u (z)) | | u_X (w) - u_X (z) | 
  \label{sigmaprimegraduII}\\
  & + & | \sigma' (u (w)) - \sigma' (u (z) + \eta) | | u_X (w) | 
  \label{sigmaprimegraduI}\\
  & + & \left| \sigma' (u (z) + \eta) - \sigma' \left( u (z) {+ \eta_1} 
  \right) \right| | u_X (w) |  \label{sigmaprimegradu3}\\
  & + & | \sigma' (u (z) + \eta_1) - \sigma' (u (z)) | | u_X (w) | 
  \label{sigmaprimegradu4}
\end{eqnarray}
and we proceed to treat each term individually. The first term is simpler:
\[ \eqref{sigmaprimegraduII} \leqslant \| \sigma' \| [u_X]_{\gamma - 1, B (z,
   L)} d(z,w)^{\gamma - 1} \lesssim [u_X]_{\gamma - 1, B (z, L)} d(z,w)^{\gamma - 1} . \]
For \eqref{sigmaprimegraduI}, it holds that
\[ \eqref{sigmaprimegraduI} \leqslant \left\{\begin{array}{l}
     2 \| \sigma' \| \| u_X \|_{B (z, L)}\\
     \| \sigma'' \| \| u_X \|_{B (z, L)} [U]_{\gamma, B (z, L)} d(z,w)^{\gamma}
   \end{array}\right., \]
so by interpolating between the two, we obtain
\[ \eqref{sigmaprimegraduI} \lesssim \| u_X \|_{B (z, L)} [U]_{\gamma, B (z,
   L)}^{\frac{\gamma - 1}{\gamma}} d(z,w)^{\gamma - 1} . \]
Moving to \eqref{sigmaprimegradu3}, we get
\[ \eqref{sigmaprimegradu3} \leqslant \left\{\begin{array}{l}
     2 \| \sigma' \| \| u_X \|_{B (z, L)}\\
     \| \sigma'' \| \| u_X \|^2_{B (z, L)} d(z,w)
   \end{array}\right., \]
so by interpolating between the two, we obtain
\[ \eqref{sigmaprimegradu3} \lesssim \| u_X \|^{\gamma}_{B (z, L)} d(z,w)^{\gamma - 1} . \]
Lastly, we treat \eqref{sigmaprimegradu4}, for which we obtain
\[ \eqref{sigmaprimegradu4} \leqslant \| \sigma'' \| \| \sigma \| [\Pi ; \lolli]_{1
   - \kappa} | u_X (w) | d(z,w)^{1 - \kappa} \lesssim \Cnoise{} \| u_X \|_{B
   (z, L)} L^{2 - \kappa - \gamma} d(z,w)^{\gamma - 1} . \]
Thus, collecting the estimates obtained for
\eqref{sigmaprimegraduII}-\eqref{sigmaprimegradu4}
gives \eqref{mainreductionII}-\eqref{mainreductionIII} and concludes the proof.
\end{proof}
\begin{remark}
  We keep all of the terms in Lemma \ref{lemmachainrule} to make sure we
  carry on the correct dependence of
  $\Cnoise{}$ and $\Cdumb{}$ in the final estimate. Nevertheless, we point out
  that for the heart of the
  argument the dominant term is $\| u_X \|^{\gamma}_{B (z, L)}$,
  \refchange{which comes from the estimate of the term
  \eqref{sigmaprimegradu3}, corresponding to the error $E_z$
  in \eqref{gradientsquaredinerrorterm}.
  }.
\end{remark}

\subsection{Integration}\label{sectionchoiceofT0}

In this step, we control $[U]_{\gamma}$ by inverting the heat operator
$(\partial_t - \Delta)$ and using the estimate on the r.h.s. obtained by the
previous step. 
Since we measure a higher regularity for positive times than what we
assume for the initial data,
the semi-norm $[U]_{\gamma, B}$ is expected to blow up as we allow for the set $B$ to contain
space-time points with time close to $t = 0$. This motivates the following.
Recall that for $[a, b] \subset [0, \infty)$, $D_a^b = [a, b] \times
\mathbb{T}^d$. For $1 + \kappa < \gamma \leqslant 2 - 2 \kappa$ and $\eta
\leqslant \gamma$, let
\begin{equation}
  [U]_{\gamma, \eta, [a, b]} \assign \sup_{\tau \in (a, b]} d_{\tau}^{\gamma -
  \eta} [U]_{\gamma, D_{\tau}^b, d_{\tau}},
  \label{normofUblowup}
\end{equation}
where $d_{\tau} \assign \sqrt{\tau - a}$\label{dist_to_bdry} denotes the minimum distance from the
parabolic boundary $t = a$ of $D_a^b$ and $[U]_{\gamma, D_{\tau}^b, d_{\tau}}$
denotes the $\gamma$-H{\"o}lder norm of $U$ in \eqref{finitegammanormofU}
restricted to the set $\{ z, w \in D_{\tau}^b : d(w,z) \leqslant d_{\tau}
\}$. The notation $[\cdummy]_{\gamma, \eta, [a, b]}$ indicates the regularity
$\gamma > 0$, and the blow up of order $\eta - \gamma$ at $t \downarrow a$ and
the time interval $[a, b]$; it is consistent with the notation in
{\cite[Def. 6.2]{Hai14}}.

To apply Lemma \ref{lemmaschauderestimate} to \eqref{equationforU}, we
need to verify conditions \eqref{orderboundcondition} and
\eqref{3ptcontinuitycondition}. Recall from Lemma
\ref{lemmacontrolonrhsmainequation} that
$F_z := \sigma (u) \diamond \noiseb - \sigma (u (z)) \Pi_z \noise$ denotes the r.h.s. of
\eqref{equationforU}. The conditions are verified in the next two lemmas.

\begin{lemma} \label{lemmacheckingorderboundschauder}
  Condition \eqref{orderboundcondition} in Lemma \ref{lemmaschauderestimate}
  is satisfied for every $1 + \kappa < \gamma \leqslant 2 - 2\kappa$ and any
  $D_a^b \subset D_0^{T_{\star}}$. 
  Moreover,
    \refchange{if we consider two exponents
    $1 + \kappa < \gamma_1 \leqslant \gamma_2 \leqslant 2 - 2\kappa$,
  the estimate \eqref{outputofschauder} holds with $M^{(1)}_{D_{\tau}^b, \frac{d_{\tau}}{2}}
  = M^{(1)}_{D_{\tau}^b, \frac{d_{\tau}}{2}}(\gamma_1,\gamma_2)$
  such that
  \begin{equation}
    \renewcommand{\arraystretch}{1.5}
    \begin{array}{cl}
      &
      d_{\tau}^{\gamma_2}M^{(1)}_{D_{\tau}^b, \frac{d_{\tau}}{2}} \\
      \lesssim &
      \Cnoise{} \left([U]_{\gamma_1, D_{\tau}^b, \frac{d_{\tau}}{2}}
      + \| u_X \|^{\gamma_1}_{D_{\tau}^b}
      + \| u_X \|_{D_{\tau}^b} 
      [U]_{\gamma_1, D_{\tau}^b, \frac{d_{\tau}}{2}}^{\frac{\gamma_1 - 1}{\gamma_1}}
      + [u_X]_{\gamma_1 - 1, D_{\tau}^b, \frac{d_{\tau}}{2}}
      \right) d_{\tau}^{\gamma_1 + 1 - \kappa}
      \\
      + &
      \left(
      \Cnoise{}^2 + \Cdumb{} 
      \right)
      \| u_X \|_{D_{\tau}^b}d_{\tau}^{3 - 2\kappa}
      +
      \left(\Cnoise{}^3 + \Cdumb{}\Cnoise{}\right)
      d_{\tau}^{3 - 3\kappa}
      +
      \Cdumb{} [U]_{\gamma_1, D_{\tau}^b, \frac{d_{\tau}}{2}}
      d_{\tau}^{\gamma_1 +2 - 2\kappa}
      \\
      + &
      (\Cnoise{}^2 + \Cdumb{}) d_{\tau}^{2 - 2\kappa}
      + \Cnoise{} \| u_X \|_{D_{\tau}^b} d_{\tau}^{2 - \kappa} \; .
    \end{array}
    \label{M1schauder}
  \end{equation}  
i.e., it holds w.r.t. $\gamma_2$ while containing only
semi-norms of order $\gamma_1$ in the r.h.s.. 
}
\end{lemma}
\refchange{
\begin{proof}
Recall the definition of $U : D_a^b \times D_a^b \rightarrow
\mathbb{R}$ in \eqref{defofU}. Clearly $U_z (z) = 0$ for any $z \in
D_a^b$. For every $\tau \in (a, b]$, let $L \leqslant d_{\tau} / 4$. For all
base points $z \in D_{\tau + L}^b$, $w \in B (z, L)$ and scales $\ell
\leqslant L$, it holds that
\begin{equation}
  | (\partial_t - \mathLaplace) (U_z)_{\ell} (w) | = | \langle F_z,
  \varphi_w^{\ell} \rangle | \leqslant | \langle F_w, \varphi_w^{\ell} \rangle
  | + | \langle F_z - F_w, \varphi_w^{\ell} \rangle |\;. \label{whatisUzlw}
\end{equation}
The off-diagonal 
term $| \langle F_z - F_w, \varphi_w^{\ell} \rangle |$ satisfies 
(recall $\Pi_z \noise = \Pi_w \noise$)
\begin{equation*}
  F_z - F_w = (\sigma (u (w)) - \sigma (u (z))) \Pi_z \noise \; ,
\end{equation*}
and thus by \eqref{orderbounds}
and \eqref{unormbyUnorm}
\begin{eqnarray}
  | \langle F_z - F_w, \varphi_w^{\ell} \rangle | 
  & \leqslant &
  \|\si'\| |u (w) - u (z)|
  \Cnoise{} \ell^{-1-\kappa} \nonumber\\
  & \lesssim &
  \Cnoise{} \ell^{- 1 - \kappa}
  \left([U]_{\gamma_1, B (z, L)}
  L^{\gamma_1} + \Cnoise{} L^{1 - \kappa}
  + | u_X (z) | L\right).
  \label{boundforchangeofbasepoint}
\end{eqnarray}
Moving to the diagonal term 
$| \langle F_w, \varphi_w^{\ell} \rangle |$,
since $\gamma_1 > 1 + \kappa$,  \eqref{finalbounddiagonal} gives
\begin{equation}
    | \langle F_w, \varphi_w^\ell \rangle |
    \lesssim \ell^{\gamma - 1 - \kappa}
    (C^{\sigma,\noise}_{w,\ell} + C^{\sigma,\dumb}_{w,\ell} +
    C^{\sigma,\Xnoise}_{w,\ell}) 
    + \Cdumb{} \ell^{- 2 \kappa} +
    \Cnoise{} | u_X (w) | \ell^{- \kappa}
    \label{controlondiagonalpretreatment}
\end{equation}
and we proceed to treat each term individually, making use of
Lemma \ref{lemmachainrule}. By \eqref{defCsigmanoise} and
\eqref{mainreductionI}, we learn that
\begin{eqnarray}
    & & \ell^{\gamma_1 - 1 - \kappa}C^{\sigma,\noise}_{w,\ell} 
    \nonumber\\
    & \lesssim &
    \Cnoise{}
    \left([U]_{\gamma_1, B (w, \ell)} +
    \| u_X \|^{\gamma_1}_{B (w, \ell)}\right) \ell^{\gamma_1 - 1 - \kappa}
    + \Cnoise{}^3 \ell^{1 - 3\kappa} + 
    \Cnoise{}^2 \| u_X \|_{B (w, \ell)} \ell^{1 - 2\kappa}
    \nonumber
\end{eqnarray}
while by \eqref{defCsigmadumb} and
\eqref{mainreductionIV}, we get
\begin{equation*}
    \ell^{\gamma_1 - 1 - \kappa}C^{\sigma,\dumb}_{w,\ell} 
    \lesssim
    \Cdumb{}\left(
    [U]_{\gamma_1, B (w, \ell)} \ell^{\gamma_1 - 2\kappa}
    + \Cnoise{} \ell^{1 - 3 \kappa}
    + \| u_X \|_{B (w, \ell)} \ell^{1 - 2\kappa}
    \right)
\end{equation*}
and finally by \eqref{defCsigmaXnoise} and
\eqref{mainreductionII}-\eqref{mainreductionIII} that
\begin{eqnarray}
    & & \ell^{\gamma_1 - 1 - \kappa}C^{\sigma,\Xnoise}_{w,\ell} 
    \nonumber\\
    & \lesssim &
    \Cnoise{} \left(
    \| u_X \|_{B (w, \ell)} 
    [U]_{\gamma_1, B (w, \ell)}^{\frac{\gamma_1 - 1}{\gamma_1}}
    + \| u_X \|^{\gamma_1}_{B (w, \ell)}
    + [u_X]_{\gamma_1 - 1, B (w, \ell)}
    \right) \ell^{\gamma_1 - 1 - \kappa}
    \nonumber\\
    & + &
    \Cnoise{}^2 \| u_X \|_{B (w, \ell)} \ell^{1 - 2\kappa} 
    \nonumber.
\end{eqnarray}
Putting \eqref{whatisUzlw}, \eqref{boundforchangeofbasepoint} and
\eqref{controlondiagonalpretreatment} together, grouping the terms
by powers of $\ell, L$, and taking the supremum
over $w \in B (z, L)$ we get
 \begin{equation}
    \renewcommand{\arraystretch}{1.5}
    \begin{array}{cl}
      &
      \| (\partial_t - \mathLaplace) (U_z)_{\ell} \|_{B (z, L)} \\
      \lesssim &
      \Cnoise{} \left([U]_{\gamma_1, D_{\tau}^b, \frac{d_{\tau}}{2}}
      + \| u_X \|^{\gamma_1}_{D_{\tau}^b}
      + \| u_X \|_{D_{\tau}^b} 
      [U]_{\gamma_1, D_{\tau}^b, \frac{d_{\tau}}{2}}^{\frac{\gamma_1 - 1}{\gamma_1}}
      + [u_X]_{\gamma_1 - 1, D_{\tau}^b, \frac{d_{\tau}}{2}}
      \right) \ell^{\gamma_1 - 1 - \kappa}
      \\
      + &
      \left(
      \Cnoise{}^2
      + \Cdumb{} 
      \right)
      \| u_X \|_{D_{\tau}^b}\ell^{1 - 2\kappa}
      +
      \left(\Cnoise{}^3 + \Cdumb{}\Cnoise{}\right)
      \ell^{1 - 3\kappa}
      +
      \Cdumb{} [U]_{\gamma_1, D_{\tau}^b, \frac{d_{\tau}}{2}}
      \ell^{\gamma_1 - 2\kappa}
      \\
      + &
      \Cdumb{} \ell^{- 2 \kappa} + \Cnoise{} \| u_X \|_{D_{\tau}^b} \ell^{- \kappa}
      \\
      + &
      \Cnoise{}
      \left([U]_{\gamma_1, D_{\tau}^b, \frac{d_{\tau}}{2}}
      L^{\gamma_1} + \Cnoise{} L^{1 - \kappa} + \| u_X \|_{D_{\tau}^b} L
      \right) \ell^{- 1 - \kappa}
      .
    \end{array}
    \label{controlonUzlw}
  \end{equation}  
  Since $0 < \ell \leqslant L \leqslant d_\tau/4$ and $\gamma \leqslant 2 - 2\kappa$,
  we show that \eqref{controlonUzlw} satisfies \eqref{orderboundcondition}
  with some powers of $d_\tau$ to spare. \refchange{Below} 
  we indicate to which
  $\beta$ each term above corresponds to,
  \refchange{
  which allows us to identify the set $\Beta$ as 
  $\Beta = \{0,1 - \kappa\}$:
  }
  \begin{equation}
    \begin{array}{rclcl}
      \displaystyle
      \ell^{\gamma_1 - 1 - \kappa} & \leqslant & 
      d_\tau^{\gamma_1-\gamma_2 + 1-\kappa} \ell^{-2} L^{\gamma_2}
      & \tmop{and} &\beta = 0 \\
      \displaystyle
      \ell^{1 - 2\kappa} & \leqslant & 
      d_\tau^{3-2\kappa -\gamma_2} \ell^{-2} L^{\gamma_2}
      & \tmop{and} &\beta = 0 \\
      \displaystyle
      \ell^{1 - 3\kappa} & \leqslant & 
      d_\tau^{3-3\kappa -\gamma_2} \ell^{-2} L^{\gamma_2}
      & \tmop{and} &\beta = 0 \\
      \displaystyle
      \ell^{\gamma_1 - 2\kappa} & \leqslant & 
      d_\tau^{\gamma_1-\gamma_2 + 2-2\kappa} \ell^{-2} L^{\gamma_2}
      & \tmop{and} &\beta = 0 \\
      \displaystyle
      \ell^{-2\kappa} & \leqslant & d_\tau^{2 - 2\kappa - \gamma_2} \ell^{-2} L^{\gamma_2}
      & \tmop{and} &\beta = 0 \\
      \displaystyle
      \ell^{-\kappa} & \leqslant & d_\tau^{2 - \kappa
      -\gamma_2} \ell^{-2} L^{\gamma_2}
      & \tmop{and} &\beta = 0 \\
      \displaystyle
      \ell^{-1 - \kappa}L^{\gamma_1} & \leqslant & 
      d_\tau^{\gamma_1-\gamma_2 + 1 - \kappa} \ell^{-1 - \kappa} L^{\gamma_2 - 1 + \kappa}
      & \tmop{and} &\beta = 1 - \kappa \\
      \displaystyle
      \ell^{-1 - \kappa}L^{1 - \kappa} & \leqslant & d_\tau^{2 - 2\kappa -\gamma_2}
      \ell^{-1 - \kappa} L^{\gamma_2 - 1 + \kappa}
      & \tmop{and} &\beta = 1 - \kappa \\
      \displaystyle
      \ell^{-1 - \kappa}L^{1} & \leqslant & d_\tau^{2 - \kappa - \gamma_2}
      \ell^{-1 - \kappa} L^{\gamma_2 - 1 + \kappa}
      & \tmop{and} &\beta = 1 - \kappa .
    \end{array}
    \label{checkingorderboundschauder}
  \end{equation}
  To identify the constant $M^{(1)}_{D_{\tau}^b, \frac{d_{\tau}}{2}}$ in \eqref{outputofschauder} 
  satisfying \eqref{M1schauder}, put \eqref{controlonUzlw} together with the powers of $d_\tau$
  in \eqref{checkingorderboundschauder}
  and note that $\gamma_1-\gamma_2 + 1 - \kappa > 0$. This concludes the proof.
%
%
%
%
\end{proof}
}

\refchange{
\begin{lemma} \label{lemmachecking3ptcontinuityschauder}
  Condition \eqref{3ptcontinuitycondition} in Lemma \ref{lemmaschauderestimate}
  is satisfied for every $1 + \kappa < \gamma \leqslant 2 - 2\kappa$ and any
  $D_a^b \subset D_0^{T_{\star}}$. Furthermore,
  if we consider two exponents
    $1 + \kappa < \gamma_1 \leqslant \gamma_2 \leqslant 2 - 2\kappa$,
  estimate \eqref{outputofschauder} holds with 
  $M^{(2)}_{D_{\tau}^b, \frac{d_{\tau}}{2}, \frac{d_{\tau}}{4}}
  = M^{(2)}_{D_{\tau}^b, \frac{d_{\tau}}{2}, \frac{d_{\tau}}{4}}(\gamma_1,\gamma_2)$ such that
  \begin{equation}
    d_{\tau}^{\gamma_2}M^{(2)}_{D_{\tau}^b, \frac{d_{\tau}}{2}, \frac{d_{\tau}}{4}} \lesssim
    \Cnoise{} \left( [U]_{\gamma_1, D_{\tau}^b, \frac{d_{\tau}}{2}} d_{\tau}^{\gamma_1 + 1 -
    \kappa} + \Cnoise{} d_{\tau}^{2 - 2 \kappa} + \| u_X \|_{D_{\tau}^b}
    d_{\tau}^{2 - \kappa} \right) .
  \label{controlonM2}
  \end{equation}
\end{lemma}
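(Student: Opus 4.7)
\begin{proof*}{Proof sketch.}
The plan is to mirror the strategy of Lemma \ref{lemmacheckingorderboundschauder}, but this time estimating a two-base-point increment of $U$ rather than the distributional time derivative of a single $U_z$. The three-point continuity condition \eqref{3ptcontinuitycondition} asks for a bound on an expression of the form
\[
U_z(w) - U_{\bar z}(w) - (\text{admissible linear piece in }w),
\]
so the first step is to compute this difference explicitly. Using the definition \eqref{defofU} and the identity $(\Pi_z \lolli)(w) = (\Pi_{\bar z} \lolli)(w) - (\Pi_{\bar z} \lolli)(z)$, which follows from $\Pi_z \lolli = \lollib - \lollib(z)$, I would write
\[
U_z(w) - U_{\bar z}(w) = \bigl(u(\bar z) - u(z) + \sigma(u(z))(\Pi_{\bar z}\lolli)(z)\bigr) + \bigl(\sigma(u(\bar z)) - \sigma(u(z))\bigr)(\Pi_{\bar z}\lolli)(w).
\]
The first bracket is a constant in $w$ and can be absorbed into the allowed recentering in \eqref{3ptcontinuitycondition}; only the second term, genuinely depending on $w$, has to be controlled.

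Next, I would estimate this surviving contribution on a set of the form $z,\bar z \in D_\tau^b$, $w \in B(z, d_\tau/4)$ (so in particular $w \in B(\bar z, d_\tau/2)$) using Assumption~\ref{nonlinearsigma} and the order bound \eqref{orderboundlollipop}:
\[
\bigl| (\sigma(u(\bar z)) - \sigma(u(z)))(\Pi_{\bar z}\lolli)(w) \bigr|
\;\leq\; \|\sigma'\|\,[u]_{1-\kappa, D_\tau^b}\, d(z,\bar z)^{1-\kappa}\,\Cnoise\, d(\bar z, w)^{1-\kappa}.
\]
To convert this into the $(\tilde\gamma,\beta)$ scaling demanded by \eqref{3ptcontinuitycondition} in Lemma~\ref{lemmaschauderestimate}, I would then invoke \eqref{unormbyUnorm} on the ball $B(z,d_\tau/2)$ to trade $[u]_{1-\kappa}$ for
\[
[U]_{\gamma, D_\tau^b, d_\tau/2}\, d_\tau^{\gamma - 1 + \kappa} + \Cnoise + \|u_X\|_{D_\tau^b}\, d_\tau^{\kappa},
\]
and rearrange the resulting three contributions so that each carries the correct power of $d_\tau$ required by \eqref{3ptcontinuitycondition}, analogous to the table \eqref{checkingorderboundschauder}. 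The three resulting terms, each multiplied by the overall $\Cnoise$ coming from the order bound on $\lolli$, match precisely the three summands of \eqref{controlonM2}.

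I expect the main obstacle to be bookkeeping rather than hard analysis: one has to verify that the scaling $d(z,\bar z)^{1-\kappa} d(\bar z, w)^{1-\kappa}$, after trading $[u]_{1-\kappa}$ for the three pieces above, fits the $(\tilde\gamma,\beta)$ template of \eqref{3ptcontinuitycondition} for every admissible $\gamma\in(1+\kappa, 2-2\kappa]$, with the surplus distances consistently converted into powers of $d_\tau$ using $d(z,\bar z), d(\bar z,w)\leq d_\tau/2$. No further inputs are needed beyond Assumption~\ref{nonlinearsigma}, the order bounds \eqref{orderbounds}--\eqref{orderboundlollipop}, and the elementary estimate \eqref{unormbyUnorm}; the constant-in-$w$ piece disappears under recentering, so Lemma~\ref{lemmaschauderestimate} then applies directly and yields \eqref{controlonM2}.
\end{proof*}
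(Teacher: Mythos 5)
Your route is essentially the paper's: the whole content of the lemma is the algebraic observation that, by \eqref{defofU} and $\Pi_z \lolli = \lollib - \lollib(z)$, the three-point combination collapses to a product of a $\sigma(u)$-increment with a $\lollib$-increment, which is then estimated through $\|\sigma'\|\,[u]_{1-\kappa}\,[\Pi;\lolli]_{1-\kappa}$ and converted via \eqref{unormbyUnorm} into the three summands of \eqref{controlonM2}. One step of yours needs tightening, however. Condition \eqref{3ptcontinuitycondition} contains no free ``recentering'': it bounds the specific quantity $U_{z_0}(z_2)-U_{z_0}(z_1)-U_{z_1}(z_2)$. Your constant-in-$w$ bracket is therefore not discarded by any such freedom; it combines with the subtracted term $U_{z_0}(z_1)$, and carrying this combination out gives the exact identity
\[
U_{z_0}(z_2)-U_{z_0}(z_1)-U_{z_1}(z_2)=\bigl(\sigma(u(z_1))-\sigma(u(z_0))\bigr)\bigl(\lollib(z_2)-\lollib(z_1)\bigr),
\]
so the surviving $\lollib$-increment is over the \emph{small} distance $d(z_1,z_2)$, not over $d(\bar z,w)=d(z_0,z_2)$ as in your written estimate. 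This matters for matching the template $\sum_{\beta}|z_1-z_0|^{\beta}d(z_1,z_2)^{\gamma-\beta}$: with the exact identity one gets $d(z_0,z_1)^{1-\kappa}d(z_1,z_2)^{1-\kappa}\leqslant d_{\tau}^{2-2\kappa-\gamma}\,d(z_0,z_1)^{1-\kappa}d(z_1,z_2)^{\gamma-1+\kappa}$, i.e.\ $\beta=1-\kappa$, which is precisely the paper's bound and immediately yields \eqref{controlonM2}. With your factor $d(z_0,z_2)^{1-\kappa}$ you cannot produce a positive power of $d(z_1,z_2)$ without a further split $d(z_0,z_2)\leqslant d(z_0,z_1)+d(z_1,z_2)$ and the inclusion of $\beta=\gamma$ in $\Beta$ to absorb the piece $d(z_0,z_1)^{2-2\kappa}\leqslant d_{\tau}^{2-2\kappa-\gamma}d(z_0,z_1)^{\gamma}$; this can be made to work and gives an $M^{(2)}$ of the same form, but the exact cancellation is both shorter and what the paper does, and you should state it rather than appeal to a recentering that the condition does not grant.
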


\begin{proof}
For every $z_0 \in D_{\tau + \frac{d_{\tau}}{2}}^b,
z_1 \in B \left( z_0, \frac{d_{\tau}}{2} \right)$ and $z_2 \in B \left( z_1,
\frac{d_{\tau}}{4} \right)$, by \eqref{unormbyUnorm} we have
\begin{eqnarray*}
  &  & | U_{z_0} (z_2) - U_{z_0} (z_1) - U_{z_1} (z_2) |\\
  & = & | (\sigma (u (z_1)) - \sigma (u (z_0))) (\lollib (z_2) - \lollib (z_1))
  |\\
  & \leqslant & \| \sigma' \| [u]_{1 - \kappa, B (z_0, d_{\tau})} d(z_0,z_1)^{1 - \kappa}
  [\Pi ; \lolli]_{1 - \kappa} d(z_1,z_2)^{1 - \kappa}\\
  & \lesssim & \Cnoise{} ([U]_{\gamma_1, B (z_0, d_{\tau})} d_{\tau}^{\gamma_1 - 1
  + \kappa} + \Cnoise{} + \| u_X \|_{B (z_0, d_{\tau})} d_{\tau}^{\kappa}) d(z_0,z_1)^{1 - \kappa}
  d(z_1,z_2)^{1 - \kappa}\\
  & \lesssim &
    \Cnoise{} ([U]_{\gamma_1, B (z_0, d_{\tau})} d_{\tau}^{\gamma_1 - 1 + \kappa} +
    \Cnoise{} + \| u_X \|_{B (z_0, d_{\tau})} d_{\tau}^{\kappa}) d_{\tau}^{2 - 2
    \kappa - \gamma_2} \\
    & & \times d(z_0,z_1)^{1 - \kappa} d(z_1,z_2)^{\gamma_2 - 1 + \kappa} ,
\end{eqnarray*}
since $\gamma_2 \leqslant 2 - 2\kappa$. Therefore condition \eqref{3ptcontinuitycondition}
is satisfied with $\beta = 1 - \kappa$. From the estimate above, we readily identify
$M^{(2)}_{D_{\tau}^b, \frac{d_{\tau}}{2}, \frac{d_{\tau}}{4}}$ in \eqref{outputofschauder}
satisfying \eqref{controlonM2}.
\end{proof}
}

We are ready to prove the main theorem of this section.

\subsection{Proof of Theorem \ref{theointeriorestimate}
and Corollary \ref{corollaryimprovedinteriorestimate}} \label{sectionproofofinteriorestimate}

In this subsection we put together all estimates obtained in Section \ref{sectioninteriorestimate}
and prove the interior estimate in Theorem \ref{theointeriorestimate} 
\refchange{
and its post-processed estimate in Corollary \ref{corollaryimprovedinteriorestimate}.
We start with the main
analytical step, which is a combination of integration and reconstruction:

\begin{proposition} \label{gamma1gamma2proposition}
    For any $\kappa \in (0,\frac{1}{3})$, consider 
    $\gamma_1$ and $\gamma_2$ such that
    $$1 + \kappa < \gamma_1 \leqslant \gamma_2 \leqslant 2 - 2\kappa \; .$$
    Then, for any $[a,b] \subset [0,T_{\star}]$,
    \begin{equation}
     \begin{array}{lll}
    [U]_{\gamma_2, 0, [a, b]} & \lesssim & \displaystyle T^{\frac{1 - \kappa}{2}} \Cnoise{} \|
    u \|_{D_a^b}^{\gamma_1 - 1}  [U]_{\gamma_1, 0, [a, b]}\\
    & + & \displaystyle \left[ T^{\frac{1 - \kappa}{2}} \Cnoise{} + T^{1 - \kappa} (\Cnoise{}^2
    + \Cdumb{}) \right] [U]_{\gamma_1, 0, [a, b]}\\
    & + & \displaystyle \left[ T^{\frac{1 - \kappa}{2}} \Cnoise{} + T^{1 - \kappa} (\Cnoise{}^2
    + \Cdumb{}) \right] \| u \|_{D_a^b}\\
    & + & \displaystyle T^{\frac{(1 - \kappa) \gamma_1}{2}} \Cnoise{}^{\gamma_1} + T^{1 - \kappa}
    (\Cnoise{}^2 + \Cdumb{}) + T^{\frac{3 - 3 \kappa}{2}} (\Cdumb{}
    \Cnoise{} + \Cnoise{}^3)\\
    & + & \displaystyle T^{\frac{1 - \kappa}{2}} \Cnoise{} + \| u \|_{D_a^b} \; ,
  \end{array} \label{finalboundbeforeabsorption}
\end{equation}
    where $T := b-a$ denotes the length of the time interval $[a,b]$.
\end{proposition}

\begin{proof}
    Recall that for $\tau
\in [a, b]$, $d_{\tau} = \sqrt{\tau - a}$.
Lemma \ref{lemmacheckingorderboundschauder}
and Lemma \ref{lemmachecking3ptcontinuityschauder} allow us to apply
Lemma \ref{lemmaschauderestimate} with
\eqref{M1schauder} and \eqref{controlonM2} to obtain
\begin{eqnarray}
  & & [U]_{\gamma_2, 0, [a, b]} \nonumber \\
  & \lesssim & \hspace{-2mm} 
  \begin{array}{rl}
    \displaystyle \sup_{\tau \in (a, b]} d_{\tau}^{\gamma_1 + 1 - \kappa}
    \Cnoise{} 
    & \left([U]_{\gamma_1, D_{\tau}^b, \frac{d_{\tau}}{2}}
      + \| u_X \|^{\gamma_1}_{D_{\tau}^b}
    \right.\\
    & \displaystyle \left.
    + \| u_X \|_{D_{\tau}^b} 
      [U]_{\gamma_1, D_{\tau}^b, \frac{d_{\tau}}{2}}^{\frac{\gamma_1 - 1}{\gamma_1}}
      + [u_X]_{\gamma_1 - 1, D_{\tau}^b, \frac{d_{\tau}}{2}}
    \right)
  \end{array}  \label{schauderRHSII}
  \\
  & + & \sup_{\tau \in (a, b]} 
  [d_{\tau}^{3 - 3 \kappa} (\Cnoise{}^3 + \Cdumb{}\Cnoise{})
  + d_{\tau}^{3 - 2\kappa} (\Cnoise{}^2 + \Cdumb{}) \| u_X \|_{D_{\tau}^b}]
  \label{schauderRHSIV}
  \\
  & + & \sup_{\tau \in (a, b]} [d_{\tau}^{2 - 2 \kappa} (\Cnoise{}^2 + \Cdumb{})
  + d_{\tau}^{2 - \kappa} \Cnoise{} \| u_X \|_{D_{\tau}^b}]
  \label{schauderRHSI}
  \\
  & + & \sup_{\tau \in (a, b]} \left[ d_{\tau}^{\gamma_1 + 2 - 2\kappa} 
  \Cdumb{} [U]_{\gamma_1, D_{\tau}^b, \frac{d_{\tau}}{2}} 
  + \| U \|_{D_{\tau}^b, d_{\tau}} \right] ,
  \label{schauderRHSIII}
\end{eqnarray}
We proceed to treat each term separately. 
Starting with the first term in \eqref{schauderRHSIV} and the
first term in \eqref{schauderRHSI}, we see that
\begin{eqnarray}
  & & \sup_{\tau \in (a, b]} \left[
  d_{\tau}^{2 - 2 \kappa} (\Cnoise{}^2 + \Cdumb{})
  + d_{\tau}^{3 - 3 \kappa} (\Cnoise{}^3 + \Cdumb{}\Cnoise{})
  \right] \nonumber\\
  & \leqslant & 
  T^{1 - \kappa} (\Cnoise{}^2 + \Cdumb{})
  + T^{\frac{3 - 3\kappa}{2}} (\Cnoise{}^3 + \Cdumb{}\Cnoise{}).
  \label{newsecondcontrolafterschauder}
\end{eqnarray}
Next, we treat the
second term in \eqref{schauderRHSIV} and the
second term in \eqref{schauderRHSI} ($\|u_X\|$ terms): we apply
\eqref{boundonlinfinitygradient} for every $\tau \in (a, b]$ and $r =
d_{\tau}$ to obtain the control
\begin{eqnarray}
  & & \sup_{\tau \in (a, b]} 
  \left[
  \left(
  \Cnoise{} d_{\tau}^{1 - \kappa}
  + (\Cnoise{}^2 + \Cdumb{}) d_{\tau}^{2 - 2\kappa}
  \right)
  \left( d_{\tau}^{\gamma_1}
  [U]_{\gamma_1, D_{\tau}^b, \frac{d_{\tau}}{2}} + \| U \|_{D_{\tau}^b, d_{\tau}} \right) 
  \right] \nonumber\\
  & \leqslant &
  \left(
  T^{\frac{1 - \kappa}{2}} \Cnoise{}
  + T^{1 - \kappa} (\Cnoise{}^2 + \Cdumb{})
  \right)
  \left(
  [U]_{\gamma_1, 0, [a, b]}
  + \sup_{\tau \in (a, b]} \| U \|_{D_{\tau}^b, d_{\tau}} \right) \; .
  \label{firstcontrolafterschauder}
\end{eqnarray}
The first term in \eqref{schauderRHSII} and the first term in
\eqref{schauderRHSIII} 
produce terms involving $[U]_{\gamma_1, 0, [a, b]}$ which are equal to the
ones contained in
\eqref{firstcontrolafterschauder}.
Moving to the second term in \eqref{schauderRHSII} - the first super-linear
in $u$ term - we
note that for every $\tau \in (a, b]$ and $r
\in [0, d_{\tau}/2]$, by \eqref{boundonlinfinitygradient} we have that
\[ \| u_X \|_{D_{\tau}^b} \leqslant r^{\gamma_1 - 1} [U]_{\gamma_1, D_{\tau}^b,
   \frac{d_{\tau}}{2}} + r^{- 1} \| U \|_{D_{\tau}^b, r} \leqslant r^{\gamma_1 -
   1} [U]_{\gamma_1, D_{\tau}^b, \frac{d_{\tau}}{2}} + r^{- 1} \| U
   \|_{D_{\tau}^b, d_{\tau}}. \]
If we choose $r$ as
\begin{equation}
  r = \left( \frac{\| U \|_{D_{\tau}^b, \frac{d_{\tau}}{2}}}{[U]_{\gamma_1,
  D_{\tau}^b, \frac{d_{\tau}}{2}}} \right)^{\frac{1}{\gamma_1}}
  , \label{choiceofr}
\end{equation}
there are two cases: either $r \in \left[0, \frac{d_{\tau}}{2}\right]$,
for which we arrive at
\begin{equation}
  \| u_X \|_{D_{\tau}^b} \leqslant 2 [U]^{\frac{1}{\gamma_1}}_{\gamma_1,
  D_{\tau}^b, \frac{d_{\tau}}{2}} \| U \|^{1 - \frac{1}{\gamma_1}}_{D_{\tau}^b,
  d_{\tau}} , \label{interpolationboundforgradient}
\end{equation}
which in turn yields
\begin{equation}
  \Cnoise{} \sup_{\tau \in (a, b]} \tmcolor{red}{} d_{\tau}^{\gamma_1 + 1 -
  \kappa} \| u_X \|^{\gamma_1}_{D_{\tau}^b} \lesssim T^{\frac{1 -
  \kappa}{2}} \Cnoise{} [U]_{\gamma_1, 0, [a, b]} \sup_{\tau \in (a, b]} \| U
  \|^{\gamma_1 - 1}_{D_{\tau}^b, \frac{d_{\tau}}{2}} . \label{mainterm}
\end{equation}
Or 
$r$ in \eqref{choiceofr} satisfies $r > d_{\tau}/2$, which by
\eqref{linfinityUbyusetting}  implies that
\[ d_{\tau}^{\gamma_1} [U]_{\gamma_1, D_{\tau}^b, \frac{d_{\tau}}{2}} \lesssim
   \| U \|_{D_{\tau}^b, \frac{d_{\tau}}{2}} \Longrightarrow [U]_{\gamma_1, 0,
   [a, b]} \lesssim \| u \|_{D_a^b}. \]
For the third term in \eqref{schauderRHSII} - the second and last super-linear
in $u$ term - we apply
\eqref{interpolationboundforgradient} to get the bound
\begin{equation}
  \Cnoise{} \sup_{\tau \in (a, b]} \tmcolor{red}{} d_{\tau}^{\gamma_1 + 1 -
  \kappa} [U]^{\frac{\gamma_1 - 1}{\gamma_1} + \frac{1}{\gamma_1}}_{\gamma_1,
  D_{\tau}^b, \frac{d_{\tau}}{2}} \| U \|^{1 - \frac{1}{\gamma_1}}_{D_{\tau}^b,
  \frac{d_{\tau}}{2}} = T^{\frac{1 - \kappa}{2}} \Cnoise{} [U]_{\gamma_1, 0, [a,
  b]} \sup_{\tau \in (a, b]} \| U \|^{1 - \frac{1}{\gamma_1}}_{D_{\tau}^b,
  \frac{d_{\tau}}{2}},
\end{equation}
which is controlled by the one in \eqref{mainterm},
since $1-1/\gamma < \gamma - 1$. 
Finally, we
treat the fourth term in \eqref{schauderRHSII}:
from
\eqref{boundongammanormgradient} we get that it is bounded by
\begin{equation*}
    \Cnoise{} \sup_{\tau \in (a, b]} \tmcolor{red}{}
  d_{\tau}^{\gamma_1 + 1 -
  \kappa}\left([U]_{\gamma_1, D_{\tau}^b, d_{\tau}} + M_{D_{\tau}^b,
  \frac{d_{\tau}}{4}, \frac{d_{\tau}}{4}}^{(2)} \right) \; .
\end{equation*}
The first term above produces a term as in \eqref{firstcontrolafterschauder},
while for the second we use \eqref{controlonM2}
to get that it is bounded by
\begin{equation}
    \Cnoise{} \sup_{\tau \in (a, b]} [\tmcolor{red}{} d_{\tau}^{\gamma_1 + 2
  - 2 \kappa} [U]_{\gamma_1, D_{\tau}^b, d_{\tau}} + d_{\tau}^{3 - 3 \kappa}
  \Cnoise{} + \tmcolor{red}{} d_{\tau}^{3 - 2 \kappa} \| u_X \|_{D_{\tau}^b}] \; ,
\end{equation}
which in turn produces terms contained in \eqref{newsecondcontrolafterschauder}
and also in \eqref{firstcontrolafterschauder} - which come from
\eqref{schauderRHSIV}-\eqref{schauderRHSI}.

Putting everything together 
(i.e.
\eqref{newsecondcontrolafterschauder}, \eqref{firstcontrolafterschauder},
\eqref{mainterm} and the last term in \eqref{schauderRHSIII}), we arrive to
\[ \begin{array}{lll}
     {}[U]_{\gamma_2, 0, [a, b]} & \lesssim & \displaystyle T^{\frac{1 - \kappa}{2}} \Cnoise{}
     \displaystyle \sup_{\tau \in (a, b]} \| U \|^{\gamma_1 - 1}_{D_{\tau}^b,
     \frac{d_{\tau}}{2}}  [U]_{\gamma_1, 0, [a, b]}\\
     & + & \displaystyle \left[ T^{\frac{1 - \kappa}{2}} \Cnoise{} + T^{1 - \kappa}
     (\Cnoise{}^2 + \Cdumb{}) \right] [U]_{\gamma_1, 0, [a, b]}\\
     & + & \displaystyle \left[ T^{\frac{1 - \kappa}{2}} \Cnoise{} + T^{1 - \kappa}
     (\Cnoise{}^2 + \Cdumb{}) \right] \sup_{\tau \in (a, b]} \| U
     \|_{D_{\tau}^b, d_{\tau}}\\
     & + & \displaystyle T^{1 - \kappa} (\Cnoise{}^2 + \Cdumb{}) + T^{\frac{3 - 3
     \kappa}{2}} (\Cdumb{} \Cnoise{} + \Cnoise{}^3)\\
     & + & \displaystyle \sup_{\tau \in (a, b]} {\| U \|_{D_{\tau}^b, d_{\tau}}}  .
   \end{array} \]
One important observation here is that, if we write $\alpha = \frac{1 -
\kappa}{2}$, then, in the previous estimate,  
$\Cnoise{}$ comes with $T^\alpha$, 
both $\Cnoise{}^2$ and $\Cdumb{}$ come with $T^{2\alpha}$, and  
both $\Cnoise{}^3$ and $\Cdumb{} \Cnoise{}$ come with $T^{3 \alpha}$.
The final step is to reduce $\| U \|$ to $\| u \|$, 
which is obtained with the use of \eqref{linfinityUbyusetting}
\begin{equation}
  {\| U \|_{D_{\tau}^b, d_{\tau}}}  
  \lesssim \| u \|_{D_{\tau}^b}
  + \Cnoise{} d_{\tau}^{1 - \kappa}  \label{linfinityUbyu},
\end{equation}
so the control on $[U]_{\gamma_1, 0, [a, b]}$ becomes the desired
\eqref{finalboundbeforeabsorption},
where we used that $\Cnoise{}$ in \eqref{linfinityUbyu} comes with the correct
power of $d_{\tau}$ and that $\gamma_1 - 1 \in (0, 1)$ implies $\| U \|^{\gamma_1
- 1}_{D_{\tau}^b, \frac{d_{\tau}}{2}} \lesssim \| u \|^{\gamma_1 -
1}_{D_{\tau}^b} + (\Cnoise{} d_{\tau}^{1 - \kappa})^{\gamma_1 - 1}$, so
$\Cnoise{}^{\gamma_1}$ also comes with the correct power of $T$ in
\eqref{finalboundbeforeabsorption}.
\end{proof}

}

\vspace{2mm}
\begin{proof*}{Proof of Theorem \ref{theointeriorestimate}.} 
\refchange{
Fix $\gamma_1 = \gamma_2 = \gamma$ satisfying the assumptions of
Proposition \ref{gamma1gamma2proposition}.
Recall that 
$[a, b] \subset [0, T_{\star}]$, so \eqref{trickwithlinfinity} gives
\[ T^{\frac{1 - \kappa}{2}} \Cnoise{} \| u \|_{D_a^b}^{\gamma - 1} \leqslant
   T^{\frac{1 - \kappa}{2}} \Cnoise{} C_{\star}^{\gamma - 1} , \]
and the choice of $T$ in \eqref{conditionforT} guarantees the absorption condition
\begin{equation}
  T^{\frac{1 - \kappa}{2}} \Cnoise{} C_{\star}^{\gamma - 1} \leqslant
  \frac{1}{4}
  \quad \tmop{and} \quad
  T^{1 - \kappa} \Cdumb{} \leqslant \frac{1}{16}
  \label{absorbtioncondition}
\end{equation}
Since $C_{\star} > 1$, we have in particular that $T^{\frac{1 -
\kappa}{2}} \Cnoise{} \leqslant \frac{1}{4}$ and $T^{\frac{1 - \kappa}{2}}
\Cnoise{} + T^{1 - \kappa} \Cnoise{}^2 + T^{1 - \kappa} \Cdumb{} \leqslant
\frac{1}{2}$, so that we may absorb $[U]_{\gamma, 0, [a, b]}$ into the l.h.s.
of \eqref{finalboundbeforeabsorption} to obtain finally that
\[ [U]_{\gamma, 0, [a, b]} \lesssim 1 \vee \| u \|_{D_a^b} \leqslant
   C_{\star}, \]
since the sum of the powers of $T^{\frac{1 - \kappa}{2}} \Cnoise{}$ and $T^{1 -
\kappa} \Cdumb{}$ are bounded by $1$. This gives \eqref{theinteriorestimate}
and concludes the proof.
}
\end{proof*}

We conclude this section with the proof of Corollary \ref{corollaryimprovedinteriorestimate}.

\vspace{2mm}
\begin{proof*}{Proof of Corollary \ref{corollaryimprovedinteriorestimate}.}
    \refchange{
    Use Proposition \ref{gamma1gamma2proposition} with
    $\gamma_1 \leqslant \gamma_2$ and use Theorem \ref{theointeriorestimate} with $\gamma = \gamma_1$.
    }
\end{proof*}

\section{Maximum principle}\label{sectionmaximumprinciple}

In this section we obtain a control on the $L^{\infty}$ norm
of $u$ by dealing with a regularisation of equation \eqref{mainequationrenormalised} at a
specially chosen scale $\tilde{L} > 0$. 
Recall
that for $L > 0$ and a set $B \subset D_0^1$ such that $B (z, L) \subset
D_0^1$ for every $z \in B$, the regularisation of $u$ at scale $L$ is denoted
by $u_L : B \rightarrow \mathbb{R}$ and given by
\begin{equation}
  u_L (z) = \langle u, \varphi^L_z \rangle = \int_D u (\bar{z}) \varphi^L
  (\bar{z} - z) \mathd \bar{z} = \int_D u (\bar{z}) \varphi^L (z - \bar{z})
  \mathd \bar{z} = u \ast \varphi^L (z) .
  \label{definitionofuL}
\end{equation}
Here, $\varphi^L_z$ is as in 
\eqref{defofmollifierkernel} for the choice of mollifier
$\vphi \in \mathfrak F$ in \eqref{choiceofmollifiersemigroup}, described in detail in
Appendix \ref{appendixreconstructionandintegration}.
Since $u_L$ is
smooth and $(\partial_t - \Delta)$ commutes with regularisation $u \mapsto
u_L$, it satisfies the following equation
\begin{equation}
  (\partial_t - \Delta) u_L (z) 
  = \langle \sigma (u) \diamond
  \noiseb, \varphi_z^L \rangle \label{regularisedequationbeginning} .
\end{equation}
However, for any $z = (t, x) \in
D_0^1$, we may only look to regularisations at scales $L < \sqrt{t} / 2$, so
in order for the estimates to hold uniformly over $z \in B$ for some choice
of $\tilde{L} > 0$, the domain $B$ must stay away from $t = 0$. Recall that Theorem
\ref{theoinfinityschauderpostprocessing} gives control on
the solution $u$ to \eqref{mainequationrenormalised} over $D_0^{T_1}$, where $T_1$ can be taken as
\begin{equation}
  T_1 \sim \Cdumb{}^{- \frac{1}{1 - \kappa}} \wedge \Cnoise{}^{-
     \frac{2}{1 - \kappa}} C_{\star}^{- \frac{2 (\kappa + \delta)}{1 -
     \kappa}} ,
     \label{choiceofT1maxprinc}
\end{equation}
since it satisfies \eqref{conditionforT} for $T_1 = T(\gamma_1)$ with
 $\gamma = 1 + \kappa + \delta$ in Theorem
\ref{theointeriorestimate}. So if we estimate the solution $u_L$ to
\eqref{regularisedequationbeginning} over $D^{T_{\star}}_{T_1}$ for some scale
$L > 0$ that satisfies $L < \sqrt{T_1}/2$, we may use \eqref{ineqhL-h}
to translate it back to a control of the solution $u$ to \eqref{mainequationrenormalised} over
$D^{T_{\star}}_{T_1}$.

The following is the main theorem of this section and it is 
the last ingredient to prove
our main result 
Theorem \ref{maintheorem}.

\begin{theorem}[Maximum Principle]
  \label{theoremtopostprocessintopolynimial}
  Take $\gamma_1 = 1 + \kappa +
  \delta$, for $\delta > 0$, $\gamma_2 = 2 - 2 \kappa$ and
  $T_1$ as in \eqref{choiceofT1maxprinc}, which satisfy the
  assumptions of Corollary \ref{corollaryimprovedinteriorestimate}. 
  Then, we have that
  \begin{equation}
    \| u \|_{D_{T_1}^{T_{\star}}} \leqslant \| u \|_{D_0^{T_1}} + C (\kappa,
    d, C_{\sigma}) \left( \Cdumb{}^{\frac{1}{1 - \kappa}}
    C^{\betadumb}_{\star} \vee \Cnoise{}^{\frac{2}{1 - \kappa}}
    {C^{\betanoise}_{\star}}  \right) , \label{controlawayfrom0}
  \end{equation}
  where 
  \begin{equation}
    \betanoise \assign \frac{2 (1 + \kappa)}{3 - 2 \kappa} + \frac{(1 + \kappa)
    (\kappa + \delta)}{1 - \kappa}
    \label{betanoiseofdelta}
  \end{equation}
  and $\betadumb$ satisfy \eqref{betas},
  $C (\kappa, d, C_{\sigma}) > 0$ denotes a constant
  which only depends on
  $\kappa, d$ and $C_{\sigma}$, and $T_\star, C_{\star}$ are defined in
  \eqref{definitionstoppingtime}-\eqref{trickwithlinfinity}.
\end{theorem}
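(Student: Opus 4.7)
The plan is to follow the four-step outline of Section \ref{sectionstrategyofproof}, i.e.\ to regularise the equation at a specially chosen scale $\tilde L$, decompose the regularised r.h.s.\ into a ``good'' part plus a transport term, apply a maximum principle that kills the transport term, and translate the resulting control of $u_{\tilde L}$ back to $u$. I would fix a scale $L \in (0, \sqrt{T_1}/2)$ (to be chosen at the end) and for each $z \in D_{T_1}^{T_\star}$ test \eqref{mainequation} against $\varphi_z^L$. Since $(\partial_t - \Delta)$ commutes with convolution this gives \eqref{regularisedequationbeginning}, and the local approximation $G_z$ of Section \ref{sectionReconstruction}, combined with the order bounds \eqref{orderbounds}, splits the r.h.s.\ into the four pieces displayed in \eqref{strategynoisetermregularised}--\eqref{strategyerrorreconstructionregularised}: two diagonal terms carrying negative powers $L^{-1-\kappa}$ (with prefactor $\Cnoise{}$) and $L^{-2\kappa}$ (with prefactor $\Cdumb{}$); one term involving $u_X$; and a reconstruction error of order $L^{\gamma-1-\kappa}$ whose prefactor is super-linear in $u$ via the chain-rule bound of Lemma \ref{lemmachainrule}.

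The next step is to eliminate the generalised gradient $u_X$ using \eqref{formulaforgradientsstrategy}: in the $\Xnoise$ term this trades $u_X$ for $\nabla u_L$ plus an error of size $[U]_\gamma L^{\gamma-1}$, producing the first transport-type contribution. The more delicate issue is the $\|u_X\|^\gamma$ piece of $[\sigma(U)]_\gamma$ which feeds into the reconstruction error and spoils sub-linearity. Here I would use the explicit multi-scale representation \eqref{multiscalereconstructionstrategy} of $\langle R_z, \varphi_z^L\rangle$ and apply the CBP formulas \eqref{changeofbasepointXnoise}--\eqref{changeofbasepointdumbbell} within each shell to exhibit the offending $u_X$-factor, and then invoke \eqref{formulaforgradientsstrategy} once more to re-express it as $\nabla u_L$ plus a controlled remainder. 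After these manipulations one arrives at the decomposition of Proposition \ref{regularisedequationwithtransport},
\[
(\partial_t - \Delta) u_L = a_L + b_L \cdot \nabla u_L + c_L,
\]
where $\|a_L\|$ is a positive power of $L$ times a polynomial in $C_\star$, $\|c_L\| \lesssim \Cnoise{} L^{-1-\kappa} + \Cdumb{} L^{-2\kappa}$, and $b_L$ need only be pointwise bounded.

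To supply the correct size of the $[U]$-norms I would apply Corollary \ref{corollaryimprovedinteriorestimate} with $\gamma_1 = 1+\kappa+\delta$ and $\gamma_2 = 2-2\kappa$ on every window $[t-T_1, t] \subset [0, T_\star]$: this gives $[U]_{\gamma_2, 0, [t-T_1, t]} \lesssim C_\star$, and via Lemma \ref{lemmachainrule} and the gradient bound from Lemma \ref{lemmacontrolongradient} (announced in Section \ref{sectionsetting}) controls every semi-norm of $U$, $\sigma(U)$ and $u_X$ appearing in $a_L, c_L$ by a power of $C_\star$. Proposition \ref{propositiontransportdecomposition} then yields a maximum principle in which the transport term $b_L\cdot\nabla u_L$ does not contribute, leaving
\[
\|u_L\|_{D_{T_1}^{T_\star}} \le \|u_L(T_1,\cdot)\|_\infty + (T_\star - T_1)\bigl(\|a_L\|_\infty + \|c_L\|_\infty\bigr).
\]
Choosing $\tilde L$ as in \eqref{choiceofL} to separately balance $\|a_L\|$ against each of the two diagonal pieces of $\|c_L\|$ converts the r.h.s.\ into the maximum $\Cnoise{}^{2/(1-\kappa)} C_\star^{\betanoise} \vee \Cdumb{}^{1/(1-\kappa)} C_\star^{\betadumb}$ with $\betanoise, \betadumb$ as in \eqref{betanoiseofdelta} and \eqref{betas}. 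Finally, \eqref{ineqhL-h} together with \eqref{unormbyUnorm} upgrades the control of $u_{\tilde L}$ to a control of $u$ (with the extra terms absorbed into the same estimate), and the bound $\|u_L(T_1,\cdot)\|_\infty \le \|u\|_{D_0^{T_1}}$ yields \eqref{controlawayfrom0}.

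The main obstacle will be the second step: the reconstruction error must be decomposed so that \emph{every} occurrence of $u_X$ that is not already of sub-linear homogeneity ends up inside $b_L$, since anything leaking into $a_L$ or $c_L$ would destroy the sub-linear growth $C_\star^{\betanoise}$ and with it the bootstrap. Verifying that the balanced exponents $\betanoise, \betadumb$ can be arranged strictly below $1$ (for some $\delta > 0$ small) is exactly the algebraic content of \eqref{equationforkappa} and explains the restriction $\kappa < \bar{\kappa}$.
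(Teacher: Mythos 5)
Your proposal follows essentially the same route as the paper: regularise at the scale $\tilde L$ of \eqref{choiceofL}, use Lemma \ref{lemmarelatinggradients} and the multi-scale reconstruction formula with the CBP identities to push every dangerous occurrence of $u_X$ into the transport coefficient (this is exactly Propositions \ref{regularisedequationwithtransport} and \ref{propositiontransportdecomposition}), feed in Corollary \ref{corollaryimprovedinteriorestimate} with $\gamma_1=1+\kappa+\delta$, $\gamma_2=2-2\kappa$, and then translate $u_{\tilde L}$ back to $u$ via \eqref{ineqhL-h}. The only implementation detail you leave implicit is how the transport term is discarded — the paper does this through the stochastic-characteristics representation \eqref{representationformula}, giving precisely the bound $\|v\|\leqslant\|v_{T_1}\|+(T_\star-T_1)\|f\|$ that you state — so your argument matches the paper's proof.
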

\refchange{Equation~\eqref{betanoiseofdelta} above and the requirement that $\betanoise < 1$ determine the regime of roughness (that is, how large we can choose $\kappa$) for which we can prove our main result Theorem~\ref{maintheorem}. 
The choice \eqref{betanoiseofdelta} arises from careful power counting and balancing of the terms appearing in the estimates given in Proposition~\ref{regularisedequationwithtransport} - this balancing is explained in the proof of Proposition~\ref{propositiontransportdecomposition} on p.~\pageref{pageref:proof_of_prop2}.}
We now provide the proof of Theorem~\ref{maintheorem}.

\vspace{2mm}
\begin{proof*}{Proof of Theorem \ref{maintheorem}.}
  Putting \eqref{controlawayfrom0} together with
  \eqref{infinityschauderpostprocessing}, we note that
  \begin{eqnarray*}
    \| u \|_{D_0^{T_{\star}}} & \leqslant & \max \left\{ \| u \|_{D_0^{T_1}},
    \| u \|_{D_{T_1}^{T_{\star}}} \right\}\\
    & \leqslant & \| u \|_{D_0^{T_1}} + C (\kappa, d, C_{\sigma}) \left(
    \Cdumb{}^{\frac{1}{1 - \kappa}} C^{\betadumb}_{\star} \vee
    \Cnoise{}^{\frac{2}{1 - \kappa}} {C^{\betanoise}_{\star}}  \right)\\
    & \leqslant & 2 \| u_0 \| + C (\kappa, d, C_{\sigma}) \left( \Cdumb{}
    ^{\frac{1}{1 - \kappa}} C^{\betadumb}_{\star} \vee
    \Cnoise{}^{\frac{2}{1 - \kappa}} {C^{\betanoise}_{\star}}  \right) ,
  \end{eqnarray*}
  which allows us to conclude the bootstrapping argument introduced at the 
  beginning of Section \ref{sectioninteriorestimate}.
  In fact, comparing the bound
  above with \eqref{trickwithlinfinity}, we conclude that
  \begin{eqnarray*}
    2 \| u_0 \| + C (\kappa, d, C_{\sigma}) \left( \Cdumb{}
    ^{\frac{1}{1 - \kappa}} C^{\betadumb}_{\star} \vee \Cnoise{}
    ^{\frac{2}{1 - \kappa}} {C^{\betanoise}_{\star}}  \right) & \leqslant &
    C_{\star}\\
    & \Updownarrow & \\
    \frac{2 \| u_0 \|}{C_{\star}} + C (\kappa, d, C_{\sigma}) \left(
    \frac{\Cdumb{}^{\frac{1}{1 - \kappa}}}{C^{1 - \betadumb
    }_{\star}} \vee \frac{\Cnoise{}^{\frac{2}{1 - \kappa}}}{{C^{1 -
    \betanoise}_{\star}} } \right) & \leqslant & 1.
  \end{eqnarray*}
  Since we assume that $\kappa < \bar{\kappa}$, for $\bar{\kappa} \in (0,1/3)$
  solution to \eqref{equationforkappa}, this implies that for a sufficiently small
  $\delta > 0$ the exponent $\betanoise$ in \eqref{betanoiseofdelta}
  satisfies $\betadumb<\betanoise < 1$. This can be seen from the fact that
  \[ \delta \mapsto \frac{(1 + \kappa) (\kappa + \delta)}{1 - \kappa} \]
  is continuous at $\delta = 0$ and increasing for $\delta \geqslant 0$, and
  $\betanoise < 1$ is equivalent to
  \[ 2 (1 + \kappa) (1 - \kappa) + (1 + \kappa) (\kappa + \delta) (3 - 2 \kappa)
     < (3 - 2 \kappa) (1 - \kappa), \]
  from which we deduce that $\bar{\kappa}$ is
  the relevant root of \eqref{equationforkappa}.
  Therefore, as $C (\kappa, d, C_{\sigma})$ and $\betadumb <
  \betanoise < 1$ are independent of $C_{\star}$, we may choose $C_{\star}$
  to satisfy
  \begin{equation}
    \begin{array}{lll}
      C_{\star} & \geqslant & \max \left\{ 4 \| u_0 \|, (2 C (\kappa, d,
      C_{\sigma}))^{\frac{1}{1 - \betadumb}} \Cdumb{}
      ^{\frac{1}{(1 - \kappa) (1 - \betadumb)}} \right.,\\
      &  & \left. (2 C (\kappa, d, C_{\sigma}))^{\frac{1}{1 - \betanoise}}
      \Cnoise{}^{\frac{2}{(1 - \kappa) (1 - \betanoise)}} \right\}
    \end{array} \label{choiceofCstar}
  \end{equation}
  which completes the proof. The constant $C (\kappa, d, C_{\sigma})$ in
  \eqref{mainresultbound} differs from the one above, but only depends on
  $\kappa, d$ and $C_{\sigma}$.
\end{proof*}

In order to prove Theorem \ref{theoremtopostprocessintopolynimial}, 
one important step is to use the interior estimate 
obtained in the last section. The following lemma is a consequence of Corollary
\ref{corollaryimprovedinteriorestimate} and will be used repeatedly in this section.

\begin{lemma}
  \label{lemmaofblowups}Under the assumptions of Theorem
  \ref{theoremtopostprocessintopolynimial}, the following hold true for any
  $z \in D_{T_1}^{T_{\star}}$ and $0 < L < \sqrt{T_1} / 2$
  \begin{enumerateroman}
    \item
    \begin{equation}
      [U]_{\gamma_2, B (z, L)} \lesssim C_{\star} (\Cdumb{} \vee
      \Cnoise{}^2 C_{\star}^{2 \kappa + 2 \delta}) ; \label{comparisonofnorms}
    \end{equation}
    \item
    \begin{equation}
      \| u_X \|_{B (z, L)} \lesssim \left( C_{\star} + \Cnoise{}^{\frac{1 - 2
      \kappa}{2 (1 - \kappa)}} L^{\frac{1}{2} - \kappa} C^{\frac{1}{2 (1 -
      \kappa)}}_{\star} \right) (\Cdumb{} \vee \Cnoise{}^2 C_{\star}^{2
      \kappa + 2 \delta})^{\frac{1}{2 (1 - \kappa)}} ;
      \label{reductionofLinfinityofgradient}
    \end{equation}
    \item
    \begin{equation}
      \begin{array}{lll}
        &  & [u_X]_{\gamma_2 - 1, B (z, L)}\\
        & \lesssim & (1 + \Cnoise{} L^{1 - \kappa}) C_{\star} (\Cdumb{}
        \vee \Cnoise{}^2 C_{\star}^{2 \kappa + 2 \delta}) + \Cnoise{}^2\\
        & + & \left( \Cnoise{} L^{\kappa} C_{\star} + \Cnoise{}^{1 + \frac{1 - 2
        \kappa}{2 (1 - \kappa)}} L^{\frac{1}{2}} C^{\frac{1}{2 (1 -
        \kappa)}}_{\star} \right) (\Cdumb{} \vee \Cnoise{}^2 C_{\star}^{2
        \kappa + 2 \delta})^{\frac{1}{2 (1 - \kappa)}} .
      \end{array} \label{reductionofgammaofgradient}
    \end{equation}
  \end{enumerateroman}
\end{lemma}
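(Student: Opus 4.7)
The plan is to deduce all three bounds from the interior estimate of Corollary \ref{corollaryimprovedinteriorestimate}, applied on the interval $[t-T_1, t]$ where $z=(t,x) \in D_{T_1}^{T_\star}$, combined with the standard gradient interpolation inequalities relating $u_X$ to $U$. The essential algebraic input is the identity encoded by the calibration of $T_1$ in \eqref{choiceofT1maxprinc}: by taking the minimum in \eqref{choiceofT1maxprinc},
$T_1^{-(1-\kappa)} \lesssim \Cdumb{} \vee \Cnoise{}^2 C_\star^{2(\kappa+\delta)} =: M$,
so any blow-up factor $d_\tau^{-(2-2\kappa)}$ with $d_\tau \sim \sqrt{T_1}$ converts directly into the sub-linear-in-$C_\star$ quantity $M$ that is visible on the right-hand sides of \eqref{comparisonofnorms}--\eqref{reductionofgammaofgradient}.

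For \eqref{comparisonofnorms}, pick $\tau^{\ast}$ with $d_{\tau^\ast} \sim \sqrt{T_1}$ (e.g.\ $\tau^\ast = t - T_1/2$). Since $L < \sqrt{T_1}/2$, up to an absorbable multiplicative constant $B(z,L) \subset D_{\tau^\ast}^t$ and any two points of $B(z,L)$ lie at parabolic distance at most $d_{\tau^\ast}$; hence by the definition \eqref{normofUblowup} of the weighted semi-norm together with Corollary \ref{corollaryimprovedinteriorestimate},
$[U]_{\gamma_2, B(z,L)} \leq d_{\tau^\ast}^{-\gamma_2} [U]_{\gamma_2, 0, [t-T_1, t]} \lesssim C_\star T_1^{-(1-\kappa)} \lesssim C_\star M.$

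For \eqref{reductionofLinfinityofgradient}, apply on $B(z,L)$ the gradient interpolation underlying \eqref{interpolationboundforgradient} with $\gamma = \gamma_2 = 2-2\kappa$, i.e.
$\|u_X\|_{B(z,L)} \lesssim [U]_{\gamma_2, B(z,L)}^{1/\gamma_2} \|U\|_{B(z,L), L}^{1-1/\gamma_2}.$
Insert the bound from \eqref{comparisonofnorms} into the first factor and \eqref{linfinityUbyusetting} (giving $\|U\|_{B(z,L), L} \lesssim C_\star + \Cnoise{} L^{1-\kappa}$) into the second; the identities $1/\gamma_2 = 1/(2(1-\kappa))$, $1 - 1/\gamma_2 = (1-2\kappa)/(2(1-\kappa))$ and $(1-\kappa)(1-2\kappa)/(2(1-\kappa)) = 1/2 - \kappa$, together with sub-additivity $(a+b)^p \leq a^p + b^p$ for $p \in (0,1]$, reproduce \eqref{reductionofLinfinityofgradient} exactly.

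For \eqref{reductionofgammaofgradient}, rerun the 3-point continuity computation of Lemma \ref{lemmachecking3ptcontinuityschauder} on the ball $B(z,L)$ (with $d_\tau$ replaced by $L$) and combine it with the standard bound of $[u_X]_{\gamma-1,B}$ by $[U]_{\gamma, B}$ plus the 3-point defect, so that after simplifying the exponents $2-2\kappa-\gamma_2 = 0$ and $2-\kappa-\gamma_2 = \kappa$ one obtains
$[u_X]_{\gamma_2-1, B(z,L)} \lesssim (1+\Cnoise{}L^{1-\kappa})[U]_{\gamma_2, B(z,L)} + \Cnoise{}^2 + \Cnoise{} L^\kappa \|u_X\|_{B(z,L)}.$
Substituting \eqref{comparisonofnorms} and \eqref{reductionofLinfinityofgradient} and checking that $\kappa + (1-2\kappa)/2 = 1/2$ and $1 + (1-2\kappa)/(2(1-\kappa))$ are exactly the exponents of $L$ and $\Cnoise{}$ in \eqref{reductionofgammaofgradient} completes the proof. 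The main obstacle is the exponent bookkeeping in this last step: one must track carefully which combinations carry the full factor $M$ versus only $M^{1/(2(1-\kappa))}$, and verify that the sub-linear-in-$C_\star$ budget gained from the interior estimate survives the multiplication by $L^\kappa \|u_X\|_{B(z,L)}$ coming from the 3-point defect.
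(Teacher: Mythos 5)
Your proposal is correct and follows essentially the same route as the paper: the unweighted semi-norm on $B(z,L)$ is compared to the weighted one at cost $T_1^{-\gamma_2/2}$ and combined with Corollary \ref{corollaryimprovedinteriorestimate} and the calibration of $T_1$ in \eqref{choiceofT1maxprinc}; part (ii) uses the interpolation bound \eqref{interpolationboundforgradient} with $\|U\|\lesssim C_\star+\Cnoise{}L^{1-\kappa}$; and part (iii) uses $[u_X]_{\gamma_2-1}\leq [U]_{\gamma_2}+M^{(2)}$ via \eqref{boundongammanormgradient} and \eqref{controlonM2}, exactly as in the paper's proof. The exponent bookkeeping you outline matches the stated right-hand sides.
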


\begin{proof}
  We start with \eqref{comparisonofnorms}. Recall that $d_{\tau} \assign
  \sqrt{\tau - t + T_1}$ and that $L \leqslant \sqrt{T_1}/2$. By
  definition \eqref{normofUblowup}, for every $z = (t, x) \in
  D_{T_1}^{T_{\star}}$, we have
  \begin{eqnarray}
    {}[U]_{\gamma_2, 0, [t - T_1, t]} & = & \sup_{\tau \in (t - T_1, t]} 
    {(\tau - t + T_1)^{\frac{\gamma_2}{2}}}  [U]_{\gamma_2, D_{\tau}^t,
    d_{\tau}} \nonumber\\
    & \geqslant & \sup_{\tau \in (t - T_1, t - T_1 / 2]}  {(\tau - t +
    T_1)^{\frac{\gamma_2}{2}}}  [U]_{\gamma_2, D_{\tau}^t, d_{\tau}}
    \nonumber\\
    & \geqslant & \sup_{\tau \in (t - T_1, t - T_1 / 2]}  {(\tau - t +
    T_1)^{\frac{\gamma_2}{2}}}  [U]_{\gamma_2, B (z, d_{\tau})} \nonumber\\
    & = & 2^{- \frac{\gamma_2}{2}} {T_1^{\frac{\gamma_2}{2}}}  [U]_{\gamma_2,
    B \left( z, \frac{\sqrt{T_1}}{2} \right)} 
    \gtrsim {T_1^{\frac{\gamma_2}{2}}}  [U]_{\gamma_2, B (z, L)} 
    \label{noblowupthenblowup}
  \end{eqnarray}
  since $B (z, d_{\tau}) \subset \{ w_1, w_2 \in D_{\tau}^t : d(w_2,w_1) <
  d_{\tau} \}$ for every $\tau \in (t - T_1, t - T_1 / 2]$. This in turn
  implies that by choosing $T_1 \sim \Cdumb{}^{- \frac{1}{1 - \kappa}}
  \wedge \Cnoise{}^{- \frac{2}{1 - \kappa}} C_{\star}^{- e (\gamma_1)}$ in
  \eqref{choiceofT1maxprinc}, and using \eqref{interiorestimateimproved} in
  Corollary \ref{corollaryimprovedinteriorestimate}, we obtain
  \begin{eqnarray*}
    {}[U]_{\gamma_2, B (z, L)} & \lesssim & {T_1^{- \frac{\gamma_2}{2}}} 
    [U]_{\gamma_2, 0, [t - T_1, t]}\\
    & \lesssim & \left( \Cdumb{}^{\frac{\gamma_2}{2 (1 - \kappa)}} \vee
    \Cnoise{}^{\frac{\gamma_2}{1 - \kappa}} C_{\star}^{\frac{\gamma_2}{2} e
    (\gamma_1)} \right) C_{\star}\\
    & = & \Cdumb{}^{\frac{\gamma_2}{2 (1 - \kappa)}} C_{\star} \vee
    \Cnoise{}^{\frac{\gamma_2}{1 - \kappa}} C_{\star}^{1 + \frac{\gamma_2}{2} e
    (\gamma_1)} .
  \end{eqnarray*}
  We move to \eqref{reductionofLinfinityofgradient}, for which we use
  \eqref{interpolationboundforgradient}, \eqref{linfinityUbyu},
  \eqref{trickwithlinfinity} and \eqref{comparisonofnorms} to get
  \begin{eqnarray*}
    \| u_X \|_{B (z, L)} & \leqslant & 2 [U]^{\frac{1}{\gamma_2}}_{\gamma_2, B
    (z, L)} \| U \|^{1 - \frac{1}{\gamma_2}}_{B (z, L)}\\
    & \lesssim & \left( \Cdumb{}^{\frac{1}{2 (1 - \kappa)}}
    C^{\frac{1}{\gamma_2}}_{\star} \vee \Cnoise{}^{\frac{1}{1 - \kappa}}
    C_{\star}^{\frac{1}{\gamma_2} + \frac{1}{2} e (\gamma_1)} \right)
    (C_{\star} + \Cnoise{} L^{1 - \kappa})^{1 - \frac{1}{\gamma_2}}\\
    & \leqslant & \Cdumb{}^{\frac{1}{2 (1 - \kappa)}} C_{\star} \vee
    \Cnoise{}^{\frac{1}{1 - \kappa}} C_{\star}^{1 + \frac{1}{2} e (\gamma_1)}\\
    & + & \left( \Cdumb{}^{\frac{1}{2 (1 - \kappa)}}
    C^{\frac{1}{\gamma_2}}_{\star} \vee \Cnoise{}^{\frac{1}{1 - \kappa}}
    C_{\star}^{\frac{1}{\gamma_2} + \frac{1}{2} e (\gamma_1)} \right)
    \Cnoise{}^{\frac{\gamma_2 - 1}{\gamma_2}} L^{\frac{(1 - \kappa) (\gamma_2 -
    1)}{\gamma_2}} .
  \end{eqnarray*}
  For \eqref{reductionofgammaofgradient}, we use
  \eqref{boundongammanormgradient}, \eqref{controlonM2},
  \eqref{comparisonofnorms} and \eqref{reductionofLinfinityofgradient} to
  derive
  \begin{eqnarray*}
    &  & [u_X]_{\gamma_2 - 1, B (z, L)}\\
    & \leqslant & [U]_{\gamma_2, B (z, L)} + M_{D_{\tau}^b,
    \frac{d_{\tau}}{4}, \frac{d_{\tau}}{4}}^{(2)}\\
    & \leqslant & [U]_{\gamma_2, B (z, L)} + \Cnoise{} ([U]_{\gamma_2, B (z,
    L)} L^{1 - \kappa} + \Cnoise{} L^{2 - 2 \kappa - \gamma_2} + \| u_X \|_{B
    (z, L)} L^{2 - \kappa - \gamma_2})\\
    & \lesssim & (1 + \Cnoise{} L^{1 - \kappa}) \left( \Cdumb{}
    ^{\frac{\gamma_2}{2 (1 - \kappa)}} C_{\star} \vee
    \Cnoise{}^{\frac{\gamma_2}{1 - \kappa}} C_{\star}^{1 + \frac{\gamma_2}{2} e
    (\gamma_1)} \right) + \Cnoise{}^2 L^{2 - 2 \kappa - \gamma_2}\\
    & + & \left( \Cdumb{}^{\frac{1}{2 (1 - \kappa)}} C_{\star} \vee
    \Cnoise{}^{\frac{1}{1 - \kappa}} C_{\star}^{1 + \frac{1}{2} e (\gamma_1)}
    \right) \Cnoise{} L^{2 - \kappa - \gamma_2}\\
    & + & \left( \Cdumb{}^{\frac{1}{2 (1 - \kappa)}}
    C^{\frac{1}{\gamma_2}}_{\star} \vee \Cnoise{}^{\frac{1}{1 - \kappa}}
    C_{\star}^{\frac{1}{\gamma_2} + \frac{1}{2} e (\gamma_1)} \right)
    \Cnoise{}^{1 + \frac{\gamma_2 - 1}{\gamma_2}} L^{\frac{(1 - \kappa)
    (\gamma_2 - 1)}{\gamma_2} + 2 - \kappa - \gamma_2} .
  \end{eqnarray*}
  The proof is concluded by taking $\gamma_1 = 1 + \kappa + \delta$ and
  $\gamma_2 = 2 - 2 \kappa$.
\end{proof}

We now try
to obtain a good control on the r.h.s. of
\eqref{regularisedequationbeginning}. 
However, the same treatment as in
Section \ref{sectioninteriorestimate}
is not enough to accomplish this, 
see the following remark.
\refchange{
This makes rigorous the point made in Section 
\ref{sectionstrategyofproof} right below display
\eqref{formulaforgradientsstrategy},
which shows the need of a further treatment
of the error terms, accomplished here by Propositions
\ref{propositiontransportdecomposition} and
\ref{regularisedequationwithtransport}.
}

\begin{remark} \label{remarkwhynaivebalancingdoesnotwork}
Consider any $1 + \kappa < \gamma_1 \leqslant \gamma_2 \leqslant 2 - 2 \kappa$
and take $T_1:=T(\gamma_1)$ in \eqref{conditionforT}.
Recall the definition of $[U]_{\gamma, 0, [a, b]}$ in \eqref{normofUblowup}.
Then, for every $L < \sqrt{T_1}/2$ and $z = (t, x) {\in
D^{T_{\star}}_{T_1}} $, \eqref{noblowupthenblowup} implies that
\begin{equation}
  [U]_{\gamma_2, B (z, L)} \lesssim T_1^{- \frac{\gamma_2}{2}} [U]_{\gamma_2,
  0, [t - T_1, t]} \leqslant \left( \Cdumb{}^{\frac{\gamma_2}{2 (1 -
  \kappa)}} \vee \Cnoise{}^{\frac{\gamma_2}{1 - \kappa}}
  C_{\star}^{\frac{\gamma_2 e (\gamma_1)}{2}} \right) [U]_{\gamma_2, 0, [t -
  T_1, t]}. \label{noblowupthenblowupmotivation}
\end{equation}
If we perform the same estimates
as in Section \ref{sectioninteriorestimate} to try to
 control the r.h.s. of
\eqref{regularisedequationbeginning} we obtain, in view of
\eqref{finalbounddiagonal}, \eqref{mainreductionI},
\eqref{interpolationboundforgradient} and \eqref{linfinityUbyu},
\[ | (\sigma (u) \diamond \noiseb)_L (z) | \lesssim L^{\gamma_2 - 1 - \kappa} [\Pi ; \noise]_{- 1 -
   \kappa} [U]_{\gamma_2, B (z, L)} \| u \|^{\gamma_2 - 1}_{B (z, L)} +
   [\Pi ; \noise]_{- 1 - \kappa} L^{- 1 - \kappa} + l.o.t., \]
where $l.o.t.$ corresponds to lower order terms present in the expansion
\eqref{regularisedequationbeginning}. Now, use \eqref{trickwithlinfinity},
\eqref{interiorestimateimproved} and \eqref{noblowupthenblowupmotivation} to
get
\begin{equation}
  | (\sigma (u) \diamond \noiseb)_L (z) | \lesssim L^{\gamma_2 - 1 - \kappa} \Cnoise{} \left(
  \Cdumb{}^{\frac{\gamma_2}{2 (1 - \kappa)}} \vee
  \Cnoise{}^{\frac{\gamma_2}{1 - \kappa}} C_{\star}^{\frac{\gamma_2 e
  (\gamma_1)}{2}} \right) C^{\gamma_2}_{\star} + \Cnoise{} L^{- 1 - \kappa} +
  l.o.t., \label{leadingorderbeforetransport}
\end{equation}
where the choice of $L > 0$ which balances both leading order terms above
corresponds to
\begin{eqnarray*}
  \Cnoise{} L^{- 1 - \kappa} & = & L^{\gamma_2 - 1 - \kappa} \Cnoise{} \left(
  \Cdumb{}^{\frac{\gamma_2}{2 (1 - \kappa)}} \vee
  \Cnoise{}^{\frac{\gamma_2}{1 - \kappa}} C_{\star}^{\frac{\gamma_2 e
  (\gamma_1)}{2}} \right) C^{\gamma_2}_{\star}\\
  & \Updownarrow & \\
  L = \tilde{L} & \assign & \left( \Cdumb{}^{\frac{1}{2 (1 - \kappa)}}
  \vee \Cnoise{}^{\frac{1}{1 - \kappa}} C_{\star}^{\frac{e (\gamma_1)}{2}}
  \right)^{- 1} C^{- 1}_{\star} .
\end{eqnarray*}
This in turn leads to the final estimate 
\begin{eqnarray}
  | (\sigma (u) \diamond \noiseb)_L (z) | & \lesssim & \Cnoise{} \left( \left( \Cdumb{}
  ^{\frac{1}{2 (1 - \kappa)}} \vee \Cnoise{}^{\frac{1}{1 - \kappa}}
  C_{\star}^{\frac{e (\gamma_1)}{2}} \right)^{- 1} C^{- 1}_{\star} \right)^{-
  1 - \kappa} \nonumber\\
  & = & \Cnoise{} C^{1 + \kappa}_{\star} \left( \Cdumb{}^{\frac{1 +
  \kappa}{2 (1 - \kappa)}} \vee \Cnoise{}^{\frac{1 + \kappa}{1 - \kappa}}
  C_{\star}^{\frac{(\gamma_1 - 1) (1 + \kappa)}{1 - \kappa}} \right) , 
  \label{whytheneedoftransport1}
\end{eqnarray}
which is indeed not good enough, since
\begin{equation}
  \left( 1 + \frac{\gamma_1 - 1}{1 - \kappa} \right) (1 + \kappa) > 1 + \kappa
  > 1 \label{whytheneedoftransport2}
\end{equation}
for any $\gamma_1 > 1 + \kappa$ and $\kappa > 0$. Note in particular that this
is independent of $\gamma_2$. In fact, everything so far holds identically for
any $1 + \kappa < \gamma = \gamma_1 \leqslant 2 - 2 \kappa$ and $T = T
(\gamma)$ satisfying the assumptions of Theorem \ref{theointeriorestimate}.
\end{remark}

The need for the treatment with two different $\gamma_1 < \gamma_2$ in
Corollary \ref{corollaryimprovedinteriorestimate} and the choice in
\eqref{choiceofT1maxprinc} arises in the proof of the
next proposition, which is how we circumvent
\eqref{whytheneedoftransport1} and \eqref{whytheneedoftransport2}, at least
for $0 < \kappa < \bar{\kappa}$ - see also Remark
\ref{remarkofwhyneedimprovedinteriorestimate}.

Write $\nabla u_L$ for the spatial gradient of the (smooth) solution $u_L$ to
\eqref{regularisedequationbeginning}. The following proposition is enough to prove
Theorem \ref{theoremtopostprocessintopolynimial}.

\begin{proposition} \label{propositiontransportdecomposition}
  Let the assumptions of Theorem \ref{theoremtopostprocessintopolynimial}
  be in force. Consider the solution $v:=u_{\tilde{L}}$ to
  \eqref{regularisedequationbeginning} at scale $L=\tilde{L}$ starting from
  $u_{\tilde{L} | t = T_1}$, where
  \begin{equation}
    \tilde{L} := C^{- \frac{2}{3 - 2 \kappa}}_{\star} (\Cdumb{}
     \vee \Cnoise{}^2 C_{\star}^{2 \kappa + 2 \delta})^{- \frac{1}{2 (1 -
    \kappa)}} .  \label{choiceofL}
  \end{equation}
  Then, $\tilde{L}$ satisfies $\tilde{L} < \sqrt{T_1}/2$ and $v:D_{T_1}^{T_\star} \to \R$ solves
  \begin{equation}
    \left\{\begin{array}{rll}
      (\partial_t - \Delta) v & = & b \cdot \nabla v + f\\
      v_{| t = T_1} & = & u_{\tilde{L} | t = T_1}
    \end{array}\right.,
    \label{equationforv}
  \end{equation}
  where $b, f \in C^{\infty} (D_{T_1}^{T_{\star}})$ are bounded and in
  particular $f$ satisfies
  \begin{equation}
  \|f\|_{D_{T_1}^{T_\star}} \lesssim \Cdumb{}^{\frac{1}{1 - \kappa}} C_{\star}^{\betadumb}
   \vee \Cnoise{}^{\frac{2}{1 - \kappa}}
  C_{\star}^{\betanoise}   \label{controlonfRHSofv} .
  \end{equation}
\end{proposition}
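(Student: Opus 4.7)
The plan is to realise the decomposition \eqref{decompositionabcstrategy} sketched in the strategy. Starting from \eqref{regularisedequationbeginning}, I would first write
\[
(\partial_t - \Delta) v(z) = \langle G_z, \varphi_z^{\tilde L} \rangle + \langle \sigma(u)\noiseb - G_z, \varphi_z^{\tilde L} \rangle,
\]
with $G_z$ the local approximation \eqref{localapproxG}. The three pieces of $\langle G_z, \varphi_z^{\tilde L} \rangle$ are estimated directly by the OBs on $\noise$, $\dumb$, $\Xnoise$: the first two give the negative powers of $\tilde L$ already seen in \eqref{strategynoisetermregularised}, while the $\Xnoise$ term retains an explicit linear factor $u_X(z)$ that, once rewritten via \eqref{formulaforgradientsstrategy}, contributes partly to $b$ and partly to $f$.

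For the reconstruction error I would use the multi-scale representation \eqref{multiscalereconstructionstrategy} from the proof of Theorem \ref{reconstructiontheorem}, combined with the change-of-base-point formulas \eqref{changeofbasepointXnoise}--\eqref{changeofbasepointdumbbell}, to decompose $G_{z_2} - G_{z_1}$ into three groups of terms whose coefficients are respectively increments of $\sigma(U)$, $\sigma'\sigma(u)$ and $\sigma'(u) u_X$. Only this last group carries the $u_X^2$-type super-linearity visible in Lemma \ref{lemmachainrule}, so I would isolate it and, inside it, substitute one copy of $u_X$ via \eqref{formulaforgradientsstrategy}, thereby turning that copy into a $\nabla v$ factor (which joins $b$) and leaving the other copy as a smooth, uniformly bounded coefficient. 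The Reconstruction in Theorem \ref{reconstructiontheorem} applied to the remaining two groups produces genuine positive-power-of-$\tilde L$ bounds via Lemmas \ref{lemmacontrolonrhsmainequation}--\ref{lemmachainrule}.

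After this reorganisation, the regularised equation acquires the form \eqref{equationforv}, with $b$ smooth and bounded (its precise size is irrelevant since it multiplies $\nabla v$), and with $f$ composed of: the negative-power-of-$\tilde L$ bounds from $\Pi_z \noise$ and $\Pi_z \dumb$; the positive-power-of-$\tilde L$ bounds from the Reconstruction; and the smooth remainders generated by substituting $u_X \mapsto \nabla v - \sigma(u)\langle \Pi_z \lolli, \nabla_x \tilde\varphi_z^{\tilde L}\rangle - E_z^{\tilde L}$. To bound these I would invoke Lemma \ref{lemmaofblowups} to control $[U]_{\gamma_2}$, $\|u_X\|$ and $[u_X]_{\gamma_2 - 1}$ on $B(z,\tilde L)$ by powers of $C_\star$, $\Cnoise{}$ and $\Cdumb{}$. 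The specific choice $\tilde L$ in \eqref{choiceofL} is then exactly the one that balances the dominant $L^{-1-\kappa}$ and $L^{\gamma_2 - 1 - \kappa}$ contributions, producing the exponents $\betanoise$ and $\betadumb$ of \eqref{betas} in \eqref{controlonfRHSofv}.

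The main obstacle I expect is bookkeeping rather than any new idea: one must ensure that every $u_X(z)$ that remains of super-linear homogeneity after reduction to $U$ is channelled into $b$, since even a single bare super-linear factor surviving in $f$ would destroy the sub-linear bound. A secondary, purely algebraic, check is $\tilde L < \sqrt{T_1}/2$: comparing \eqref{choiceofL} with \eqref{choiceofT1maxprinc} using $e(\gamma_1) = 2(\kappa+\delta)/(1-\kappa)$ and $C_\star \geq 1$ reduces this to an exponent inequality that holds precisely because $\kappa < \bar\kappa$ in \eqref{equationforkappa}; this constraint on $\kappa$ is also what makes $\betanoise < 1$ in \eqref{betanoiseofdelta} and thus what ultimately powers the Maximum Principle in the proof of Theorem \ref{theoremtopostprocessintopolynimial}.
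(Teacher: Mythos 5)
Your overall architecture does match the paper's: split off $\langle G_z,\varphi_z^{\tilde L}\rangle$ (whose $\noise$- and $\dumb$-parts go into $c_L$/$f$ and whose $\Xnoise$-part is rewritten with \eqref{formulaforgradientsstrategy}), treat the remainder via the multi-scale identity \eqref{multiscalereconstructionstrategy} and the CBP formulas, and finally balance with the choice \eqref{choiceofL} plus Young's inequality. However, there is a genuine gap in how you distribute the transport extraction. You claim that among the three groups in $G_{z_2}-G_{z_1}$ only the $\sigma'(u)u_X$-increment group (the coefficient of $\Pi\Xnoise$) carries the $u_X^2$-type super-linearity, and you propose to handle the $\Pi\noise$- and $\Pi\dumb$-groups by Reconstruction plus the chain rule of Lemmas \ref{lemmacontrolonrhsmainequation}--\ref{lemmachainrule}. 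This is false: the $\Pi\noise$-group is controlled by $[\sigma(U)]_{\gamma}$, and the estimate \eqref{mainreductionI} for $[\sigma(U)]_{\gamma}$ contains $\|u_X\|^{\gamma}_{B(z,L)}$, coming from the second-order Taylor remainder $\eta^2\sigma''$ with $\eta \ni u_X(z_1)\cdot(\Pi_{z_1}\X)$. Indeed the paper emphasises that this is \emph{the} source of the super-linearity, and the whole point of Lemma \ref{splittinglemma1} is to expand that remainder, add and subtract $u_X(z)$ at the frozen base point $z$, and convert the factor $u_X(z)$ into $\nabla u_L(z)$ via \eqref{formulaforgradient}; an analogous (milder) extraction is done in the $\dumb$-group in Lemma \ref{splittinglemma2}.

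If you instead keep the $\Pi\noise$-group bounded by $\Cnoise{}[\sigma(U)]_{\gamma_2,B(z,L)}L^{\gamma_2-1-\kappa}$ and reduce with Lemmas \ref{lemmachainrule} and \ref{lemmaofblowups}, the surviving term is of order $\Cnoise{}\,L^{1-3\kappa}\,C_{\star}^{2-2\kappa}\,(\Cdumb{}\vee\Cnoise{}^2C_{\star}^{2\kappa+2\delta})$, i.e.\ with $C_{\star}^{\gamma_2}$ in place of the $C_{\star}^{1}$ appearing in \eqref{controlALdominant1}. Plugging in the choice \eqref{choiceofL} gives a contribution to $f$ proportional to $C_{\star}^{(4-4\kappa+4\kappa^2)/(3-2\kappa)}$, whose exponent exceeds $1$ for every $\kappa<1/2$ (it equals $4/3$ at $\kappa=0$); and balancing this term directly against $\Cnoise{}L^{-1-\kappa}$ forces $L\sim C_{\star}^{-1}(\cdots)^{-1/(2-2\kappa)}$ and a bound of order $C_{\star}^{1+\kappa}$, which is exactly the failure recorded in \eqref{whytheneedoftransport1}--\eqref{whytheneedoftransport2} of Remark \ref{remarkwhynaivebalancingdoesnotwork}. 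So a bare super-linear factor would survive in $f$, the bound \eqref{controlonfRHSofv} would fail, and the bootstrap in the proof of Theorem \ref{maintheorem} would collapse. The fix is precisely the paper's: perform the base-point recentring $u_X(z_1)=(u_X(z_1)-u_X(z))+u_X(z)$ \emph{inside} the $\Pi\noise$-group (and the $\dumb$-group) as well, so that every dangerous $u_X$-factor is either paired with a $[u_X]_{\gamma_2-1}$-increment (controlled by Lemma \ref{lemmaofblowups}) or pulled out at the base point and traded for $\nabla u_L(z)$, joining $b$. As a minor further remark, the verification $\tilde L<\sqrt{T_1}/2$ uses only $C_{\star}>1$ and the comparison of exponents with \eqref{choiceofT1maxprinc}; the constraint $\kappa<\bar{\kappa}$ enters later, to make $\betanoise<1$, not at this step.
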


\vspace{2mm}
\begin{proof*}{Proof of Theorem \ref{theoremtopostprocessintopolynimial}.}
  For every $t \in [T_1, T_{\star}], x \in \mathbb{T}^d$, let $(X^{t, x}_s)_{s
  \in [T_1, t]}$ be the solution to the following It{\^o}'s SDE
  \[ \left\{\begin{array}{rll}
       \mathd X^{t, x}_s & = & b (t - s, X^{t, x}_s) \mathd s + \sqrt{2} \mathd
       B_s\\
       X^{t, x}_{T_1} & = & x
     \end{array}\right., \]
  where $(B_t)_{t \geqslant 0}$ is a Brownian motion under the probability
  measure $\mathbb{P}$. Then, denoting by $\mathbb{E}_x$ the expectation
  w.r.t. $\mathbb{P}$ conditioned to $X^{t, x}_{T_1} = x$, we get that
  $D_{T_1}^{T_{\star}} \ni (t, x) \mapsto v_t (x)$ given by
  \begin{equation}
    v_t (x) =\mathbb{E}_x [v_{T_1} (X^{t, x}_t)] +  \mathbb{E}_x \left[
    \int_{T_1}^t f (t - s, X^{t, x}_s) \tmop{ds} \right]
    \label{representationformula}
  \end{equation}
  solves \eqref{equationforv}. Therefore, by \eqref{controlonfRHSofv} we
  obtain 
  \begin{eqnarray}
    \| v \|_{D_{T_1}^{T_{\star}}} & = & \sup_{T_1 \leqslant t \leqslant
    T_{\star}} \| v_t \| \nonumber\\
    & \leqslant & \| v_{T_1} \| + (T_{\star} - T_1) \| f
    \|_{D_{T_1}^{T_{\star}}} \nonumber\\
    & \leqslant & \| v_{T_1} \| + C (\kappa, d, C_{\sigma}) \left( \Cdumb{}
    ^{\frac{1}{1 - \kappa}} C^{\betadumb}_{\star} \vee
    \Cnoise{}^{\frac{2}{1 - \kappa}} {C^{\betanoise}_{\star}}  \right) 
    \label{maximumprinciple}
  \end{eqnarray}
  where $\betanoise, \betadumb$ are given by \eqref{betanoiseofdelta}
  and \eqref{betas}. Now, recall that $v = u_{\tilde{L}}$ and that
  $\gamma_2 = 2 - 2 \kappa$, so by \eqref{ineqhL-h},
  \eqref{unormbyUnorm}, \eqref{comparisonofnorms},
  \eqref{reductionofLinfinityofgradient}, 
  we obtain for $u-v$
  \begin{eqnarray*}
    &  & \| u - v \|_{D_{T_1}^{T_{\star}}}\\
    & \leqslant & \sup_{z \in D_{T_1}^{T_{\star}}} [u]_{1 - \kappa, B (z,
    \tilde{L})} \tilde{L}^{1 - \kappa}\\
    & \leqslant & \sup_{z \in D_{T_1}^{T_{\star}}} ([U]_{\gamma_2, B (z,
    \tilde{L})} \tilde{L}^{\gamma_2 - 1 + \kappa} + \| \sigma \| [\Pi ; \lolli]_{1 -
    \kappa} + \| u_X \|_{B (z, \tilde{L})} \tilde{L}^{\kappa}) \tilde{L}^{1 -
    \kappa}\\
    & \lesssim & \sup_{z \in D_{T_1}^{T_{\star}}} ([U]_{\gamma_2, B (z,
    \tilde{L})} \tilde{L}^{\gamma_2} + \Cnoise{} \tilde{L}^{1 - \kappa} + \| u_X
    \|_{B (z, \tilde{L})} \tilde{L})\\
    & \lesssim & C_{\star} (\Cdumb{} \vee \Cnoise{}^2 C_{\star}^{2 \kappa
    + 2 \delta}) \tilde{L}^{2 - 2 \kappa} + \Cnoise{} \tilde{L}^{1 - \kappa}\\
    & + & \left( C_{\star} \tilde{L} + \Cnoise{}^{\frac{1 - 2 \kappa}{2 (1 -
    \kappa)}} \tilde{L}^{\frac{3}{2} - \kappa} C^{\frac{1}{2 (1 -
    \kappa)}}_{\star} \right) (\Cdumb{} \vee \Cnoise{}^2 C_{\star}^{2
    \kappa + 2 \delta})^{\frac{1}{2 (1 - \kappa)}}\\
    & \lesssim & C_{\star}^{- \frac{1 - 2 \kappa}{3 - 2 \kappa}} + \Cnoise{}
    C^{- \frac{2 (1 - \kappa)}{3 - 2 \kappa}}_{\star} \left(
    \Cdumb{}^{\frac{1}{2}} \vee \Cnoise{} C_{\star}^{\kappa + \delta}
    \right)^{- 1}\\
    & + & C^{- \frac{1 - 2 \kappa}{2 (1 - \kappa)}}_{\star} \left( 1 +
    \Cnoise{}^{\frac{1 - 2 \kappa}{2 (1 - \kappa)}} \left. (\Cdumb{} \vee
    \Cnoise{}^2 C_{\star}^{2 \kappa + 2 \delta})^{- \frac{1 - 2 \kappa}{4 (1 -
    \kappa)}} \right) \right.\\
    & \lesssim & 1,
  \end{eqnarray*}
  where the second to last inequality comes from taking $\tilde{L}$ as in
  \eqref{choiceofL}. 
  Thus, we may conclude that
  \begin{equation}
    \| u \|_{D_{T_1}^{T_{\star}}} \leqslant \| v \|_{D_{T_1}^{T_{\star}}} + \|
    u - v \|_{D_{T_1}^{T_{\star}}} \leqslant \| v_{T_1} \| + C (\kappa, d,
    C_{\sigma}) \left( \Cdumb{}^{\frac{1}{1 - \kappa}} C^{\betadumb}
    _{\star} \vee \Cnoise{}^{\frac{2}{1 - \kappa}}
    {C^{\betanoise}_{\star}}  \right) \label{torepeatinthepolynomialgrowth}
  \end{equation}
  from where \eqref{controlawayfrom0} follows by noting that $\| v_{T_1} \| =
  \| u_{\tilde{L}} (T_1, \cdot) \| \leqslant {\| u \|_{D_0^{T_1}}} $.
\end{proof*}

The proof of Proposition \ref{propositiontransportdecomposition} is the main
part of this work and is given in Subsection \ref{proofofmainproposition}.
It is the inspection of a careful (and long) decomposition of the local
description of the singular product $\sigma (u) \xi$ that singles out the terms that contain
generalised gradients $u_X$. As mentioned in Section \ref{sectionstrategyofproof},
there are two main ideas. One of which is the use of the multi-scale formula
\eqref{multiscalereconstructionstrategy} in the proof of Theorem \ref{reconstructiontheorem}.
The other is the relationship between $u_X$ and $\nabla u_L$ in
\eqref{formulaforgradientsstrategy}, which is the content of the next lemma.

\begin{lemma} \label{lemmarelatinggradients}
  For every $z \in D_0^1$ and $L>0$ such that $B(z,L) \subset D_0^1$,
  \refchange{
  there exists an error function $E_z^L$ such that
  }
\label{page_gradientdifference}
  \begin{equation}
    \nabla u_L (z) = u_X (z) + \sigma (u (z)) \langle \Pi_z \lolli, \nabla_x
    \tilde{\varphi}^L_z \rangle + E_z^L, \label{formulaforgradient}
  \end{equation}
  where $\tilde{\varphi}(z) := \varphi(-z)$. Furthermore, we have the 
  estimates
  \begin{equation}
    |\sigma (u (z)) \langle \Pi_z \lolli, \nabla_x \tilde{\varphi}^L_z \rangle|
    \lesssim \Cnoise{} L^{- \kappa} ,
    \label{controllolligradphi}
  \end{equation}
  and
  \begin{equation}
    | E_z^L | 
    \leqslant [U]_{\gamma, B (z, L)} L^{\gamma - 1} ,
    \label{controlonremaindergradient}
  \end{equation}
  for any $1 + \kappa < \gamma \leqslant 2 - 2\kappa$.
\end{lemma}
\begin{proof}
  Recall
  that by \eqref{finitegammanormofU} and \eqref{generalisedgradient}, we may
  write for $\bar{z} \in B(z,L)$,
  \[ u (\bar{z}) = u (z) + \sigma (u (z)) (\Pi_z \lolli) (\bar{z}) + u_X (z) \cdot
     (\Pi_z \X) (\bar{z}) + R_z (\bar{z}), \]
  where the remainder $R_z (\bar{z}) \assign U_z (\bar{z}) - u_X (z) (\Pi_z \X) (\bar{z})$
  satisfies
  \[ | R_z (\bar{z}) | \leqslant [U]_{\gamma, B(z,L)} d(z,\bar{z})^{\gamma} . \]
  Therefore, by \eqref{definitionofuL},
  for $t=(t,x)$
  \begin{eqnarray}
    &  & \nabla u_L (z) \nonumber\\
    & = & \int_D \nabla_x \varphi^L (z - \bar{z}) u (\bar{z}) \mathd \bar{z}
    \nonumber\\
    & = & \int_D \nabla_x \varphi^L (z - \bar{z}) (u (z) + \sigma (u (z)) (\Pi_z
    \lolli) (\bar{z}) + u_X (z) \cdot (\Pi_z \X) (\bar{z}) + R_z (\bar{z})) \mathd
    \bar{z} \nonumber\\
    & = & \sigma (u (z)) \int_D \nabla_x \varphi^L (z - \bar{z}) (\Pi_z \lolli)
    (\bar{z}) \mathd \bar{z}  \label{termwithlollipop}\\
    & + & \int_D \nabla_x \varphi^L (z - \bar{z}) u_X (z) \cdot (\Pi_z \X)
    (\bar{z}) \mathd \bar{z}  \label{termwithgradient}\\
    & + & \int_D \nabla_x \varphi^L (z - \bar{z}) R_z (\bar{z}) \mathd \bar{z},
    \label{termwitherror}
  \end{eqnarray}
  as the first term disappears since 
  $\varphi (t,-x) = \varphi (t,x)$ implies
  $\int_D \nabla_x \varphi^L (z - \bar{z}) \mathd \bar{z} = - \int_D \nabla_x
  \varphi^L (z - \bar{z}) \mathd \bar{z} = 0$. Here $\nabla_x$ is the spatial
  gradient with respect to the variable $x \in \mathbb{T}^d$ in $z = (t, x)$.
  The term in \eqref{termwithlollipop} is equal to $\sigma (u (z)) \langle \Pi_z
  \lolli, \nabla_x \tilde{\varphi}^L_z \rangle$, 
  and it satisfies \eqref{controllolligradphi}, since by \eqref{boundongradphiL}
  \[ | \sigma (u (z)) \langle \Pi_z \lolli, \nabla_x \tilde{\varphi}^L_z \rangle
     | \leqslant \| \sigma \| [\Pi ; \lolli]_{1 - \kappa} L^{- \kappa} \lesssim
     \Cnoise{} L^{- \kappa} . \]
  In coordinates $i = 1, \ldots, d$, \eqref{termwithgradient}
  simplifies to (for $z = (t, x), \bar{z} = (\bar{t}, \bar{x})$)
  \begin{eqnarray*}
    &  & \sum_{j = 1}^d \int_D \partial_{x_i} \varphi^L (z - \bar{z}) u_{X_j}
    (z) (\bar{x}_j - x_j) \mathd \bar{z}\\
    & = & - \sum_{j = 1}^d \int_D \partial_{\bar{x}_i} \varphi^L (z - \bar{z})
    u_{X_j} (z) (\bar{x}_j - x_j) \mathd \bar{z}\\
    & = & \sum_{j = 1}^d \int_D \varphi^L (z - \bar{z}) u_{X_j} (z)
    \partial_{\bar{x}_i} (\bar{x}_j - x_j) \mathd \bar{z}\\
    & = & u_{X_i} (z) \int_D \varphi^L (z - \bar{z}) \mathd \bar{z}\\
    & = & u_{X_i} (z)
  \end{eqnarray*}
  by integration by parts, so $\eqref{termwithgradient} = u_X (z)$.
  By \eqref{boundongradphiL} again, $E_z^L \assign \eqref{termwitherror}$ satisfies
  \begin{equation}
    | E_z^L | = \left| \int_D \nabla_x \varphi^L (z - \bar{z}) R_z (\bar{z})
    \mathd \bar{z} \right| \leqslant [U]_{\gamma, B (z, L)} L^{\gamma - 1} .
  \end{equation}
\end{proof}

\subsection{Proof of Proposition \ref{propositiontransportdecomposition}} \label{proofofmainproposition}

The goal is to use formula \eqref{formulaforgradient} together with
\eqref{onescaletotheother}, \eqref{definitionofreconstruction} and
\eqref{proofofreconstruction} in the proof of the Reconstruction Theorem 
\ref{reconstructiontheorem} to
write equation \eqref{regularisedequationbeginning} as 
\begin{equation}
  (\partial_t - \mathLaplace) u_L (z) = a_L (z) + b_L (z) \cdot \nabla u_L (z)
  + c_L (z), \label{regularisedequation}
\end{equation}
for $z \in D_{T_1}^{T_\star}$. The functions $a_L, b_L, c_L$ above are all smooth and bounded
over $D$. In this decomposition, $b_L$ is the coefficient of the transport
term, $a_L$ contains non-linear terms in $u$ paired with positive powers of
$L$ and $c_L$ contains negative powers of $L$ but all independent from $u$.
For the specially chosen scale $L = \tilde{L}$ in \eqref{choiceofL},
$f := a_{\tilde{L}} + c_{\tilde{L}}$ and $b := b_{\tilde{L}}$ in \eqref{equationforv}.


The counter-effect of decomposition \eqref{regularisedequation} is that the
dominant term in $a_L$ is no longer the one with power $L^{\gamma_2 - 1 -
\kappa}$ in \eqref{leadingorderbeforetransport}, but instead something else
that leads to a choice of $L$ which does not cancel out the exponent
$\gamma_2$ - see \eqref{gamma2doesnotcancelout} in Remark
\ref{remarkofwhyneedimprovedinteriorestimate}. The full potential of
\eqref{interiorestimateimproved} in Corollary
\ref{corollaryimprovedinteriorestimate} is therefore required, and it is
used by taking $\gamma_1$ close to $1 + \kappa$ 
while taking $\gamma_2$ equal to $2 - 2 \kappa$.

The decomposition \eqref{regularisedequation} is obtained in the following
proposition.

\begin{proposition} \label{regularisedequationwithtransport}
  Let the assumptions of Theorem \ref{theoremtopostprocessintopolynimial}
  be in force.
  Then, for
  every $L > 0$ such that $L < \sqrt{T_1} / 2$ 
  and every $z \in D_{T_1}^{T_{\star}}$, the
  regularised equation \eqref{regularisedequationbeginning}
  can be
  rewritten as \eqref{regularisedequation}. Furthermore, the coefficients
  $a_L, b_L, c_L$ are smooth and $a_L$ satisfies
  \begin{eqnarray}
    &  & | a_L (z) | \nonumber\\
    & \lesssim & 
    (\Cnoise{} L^{1 - 3 \kappa} + (\Cnoise{}^2 + \Cdumb{})
    L^{2 - 4 \kappa} + (\Cnoise{}^2 + \Cdumb{}) \Cnoise{} L^{3 - 5 \kappa}) \nonumber \\
    & & \times C_{\star} (\Cdumb{} \vee \Cnoise{}^2 C_{\star}^{2 \kappa + 2 \delta}) 
    \label{controlALdominant1} \\
    & + & (\Cnoise{} L^{3 - 5 \kappa} + \Cnoise{}^2 L^{4 - 6 \kappa}) C^2_{\star}
    (\Cdumb{} \vee \Cnoise{}^2 C_{\star}^{2 \kappa + 2 \delta})^2 
    \label{controlALdominant2}\\
    & + & (1 + \Cnoise{} L^{1 - \kappa}) \Cnoise{} L^{2 - 3 \kappa} C^2_{\star}
    (\Cdumb{} \vee \Cnoise{}^2 C_{\star}^{2 \kappa + 2 \delta})^{1 +
    \frac{1}{2 (1 - \kappa)}}  \label{controlALdominant3}\\
    & + & (\Cnoise{} + (\Cnoise{}^2 + \Cdumb{}) L^{1 - \kappa}) \Cnoise{}
    L^{1 - 2 \kappa} C_{\star} (\Cdumb{} \vee \Cnoise{}^2 C_{\star}^{2
    \kappa + 2 \delta})^{\frac{1}{2 (1 - \kappa)}}  \label{controlALmixed1}\\
    & + & \hspace{-1.7mm} \begin{array}{l}
    (\Cnoise{} + (\Cnoise{}^2 + \Cdumb{}) L^{1 - \kappa}) \Cnoise{}^{1
    + \frac{1 - 2 \kappa}{2 (1 - \kappa)}} L^{\frac{3}{2} - 3 \kappa}\\
    \times C^{\frac{1}{2 (1 - \kappa)}}_{\star} (\Cdumb{} \vee \Cnoise{}^2
    C_{\star}^{2 \kappa + 2 \delta})^{\frac{1}{2 (1 - \kappa)}}
    \end{array}
    \label{controlALmixed2}\\
    & + & \hspace{-1.7mm} \begin{array}{l}
    \displaystyle (1 + \Cnoise{} L^{1 - \kappa}) \Cnoise{}^{1 + \frac{1 - 2 \kappa}{2 (1
    - \kappa)}} L^{\frac{5}{2} - 4 \kappa} C^{1 + \frac{1}{2 (1 -
    \kappa)}}_{\star} (\Cdumb{} \vee \Cnoise{}^2 C_{\star}^{2 \kappa + 2
    \delta})^{1 + \frac{1}{2 (1 - \kappa)}} 
    \end{array}
    \label{controlALmixed4}\\
    & + &
    \left( \Cnoise{}^2 L^{2 - 2 \kappa} C^2_{\star} + \Cnoise{}^{2 +
    \frac{1 - 2 \kappa}{2 (1 - \kappa)}} L^{\frac{5}{2} - 3 \kappa} C^{1 +
    \frac{1}{2 (1 - \kappa)}}_{\star} \right) \nonumber \\
    & & \times (\Cdumb{} \vee \Cnoise{}^2
    C_{\star}^{2 \kappa + 2 \delta})^{\frac{1}{1 - \kappa}} 
    \label{controlALmixed5} \\
    & + & \Cnoise{}^{2 + \frac{1 - 2 \kappa}{1 - \kappa}} L^{3 - 4 \kappa}
    C^{\frac{1}{1 - \kappa}}_{\star} (\Cdumb{} \vee \Cnoise{}^2
    C_{\star}^{2 \kappa + 2 \delta})^{\frac{1}{1 - \kappa}} 
    \label{controlALmixed3}\\
    & + & ((\Cnoise{}^2 + \Cdumb{}) + (\Cnoise{}^2 + \Cdumb{})
    \Cnoise{} L^{1 - \kappa}) \Cnoise{} L^{1 - 3 \kappa}  \label{controlALnoise} ,
  \end{eqnarray}
  while $c_L$ satisfies
  \begin{equation}
    | c_L (z) |
    \lesssim [\Cnoise{} + (\Cdumb{} + \Cnoise{}^2) L^{1 - \kappa}] L^{-
    1 - \kappa} ,
    \label{controlonCL}
  \end{equation}
  both uniformly over $z \in D_{T_1}^{T_{\star}}$. The inequalities with $\lesssim$
  hold up to a constant which only depends on $d, \kappa$ and $C_{\sigma}$.
  The constants $\Cnoise{} = \Cnoise{,1}, \Cdumb{} = \Cdumb{,1} > 0$
  are given by \eqref{assumptiononstochasticobjects} and $C_{\star}$ by
  \eqref{definitionstoppingtime}-\eqref{trickwithlinfinity}.
\end{proposition}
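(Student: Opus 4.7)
The plan is to start from the identity
\[
  (\partial_t - \Delta) u_L(z) = \langle G_z, \varphi_z^L\rangle + \langle \sigma(u)\noiseb - G_z, \varphi_z^L\rangle,
\]
with $G_z$ as in \eqref{localapproxG}. The first summand is already explicit and contains one copy of $u_X(z)$ (through the $\Pi_z\Xnoise$ term). The second summand is the reconstruction residual, which I will expand using the multi-scale formula \eqref{multiscalereconstructionstrategy}; when the change-of-base-point identities \eqref{changeofbasepointXnoise}--\eqref{changeofbasepointdumbbell} are substituted into $G_{z_2} - G_{z_1}$, \emph{every} term that carries a factor of $u_X$ will be singled out. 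These are the terms responsible for the super-linear $u_X^2$ obstruction in Remark \ref{remarkwhynaivebalancingdoesnotwork}, and they will be the ones I convert into a transport term.

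For the conversion I use Lemma \ref{lemmarelatinggradients}, which rewrites $u_X(z) = \nabla u_L(z) - \sigma(u(z))\langle \Pi_z\lolli, \nabla_x \tilde\varphi_z^L\rangle - E_z^L$. Each occurrence of $u_X$ that has been isolated above is then split into three pieces: a factor proportional to $\nabla u_L(z)$, which I absorb into the definition of $b_L$; a factor involving $\sigma(u(z))\langle\Pi_z\lolli,\nabla_x\tilde\varphi_z^L\rangle$, estimated by \eqref{controllolligradphi}; and an error $E_z^L$, estimated by \eqref{controlonremaindergradient} together with \eqref{comparisonofnorms}. After this surgery, the transport coefficient $b_L(z)$ is built from $\sigma'(u(z))\langle \Pi_z\Xnoise,\varphi_z^L\rangle$ plus the $u_X$-carrying pieces produced by the multi-scale sum. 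The remaining pieces split cleanly into $a_L$ (which picks up all the $C_\star$-dependent, positive powers of $L$ coming from $[U]_{\gamma_2}$, $\|u_X\|$, $[u_X]_{\gamma_2-1}$ via Lemma \ref{lemmaofblowups}) and $c_L$ (which collects the purely stochastic, $u$-independent negative powers $L^{-1-\kappa}$, $L^{-2\kappa}$, etc.\ coming from the order bounds).

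For the actual bookkeeping I would: (i)\ for the diagonal term $\langle G_z, \varphi_z^L\rangle$, apply \eqref{orderbounds} to get $\Cnoise L^{-1-\kappa}$, $\Cdumb L^{-2\kappa}$ and a $u_X(z)\cdot\langle\Pi_z\Xnoise,\varphi_z^L\rangle$ piece whose $u_X(z)$ is split as above; (ii)\ for the multi-scale part, apply the order bounds to each of $\Pi_{z_1}\noise,\Pi_{z_1}\dumb,\Pi_{z_1}\Xnoise$ at scale $L/2^{n+1}$, and the H\"older bounds on the prefactors $\sigma(U)_{z_2}(z_1)$, $\sigma'\sigma(u(z_1))-\sigma'\sigma(u(z_2))$, $\sigma'(u(z_1))u_X(z_1)-\sigma'(u(z_2))u_X(z_2)$ obtained from Lemma \ref{lemmachainrule}; then sum the geometric series in $n$. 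All occurrences of $u_X$ at the base point are converted via \eqref{formulaforgradient}; all occurrences of $u_X(z_1)$ inside the integrand are controlled in $L^\infty$ through \eqref{reductionofLinfinityofgradient}. Finally, semi-norms of $U$ and $u_X$ are replaced by their $C_\star$-dependent bounds \eqref{comparisonofnorms}--\eqref{reductionofgammaofgradient}. The ten lines \eqref{controlALdominant1}--\eqref{controlALnoise} in the statement correspond, one-to-one, to these contributions; \eqref{controlonCL} collects only the diagonal $\Pi_z\noise$ term and the multi-scale pieces in which no H\"older semi-norm of $U$, $u$ or $u_X$ appears.

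The main obstacle is combinatorial rather than analytical: the change-of-base-point formulas create a proliferation of cross-terms (e.g.\ products of $u_X$ with the H\"older remainder of $\sigma(U)$), and one has to verify that every product of two $u_X$ factors is isolated into $b_L\cdot\nabla u_L$ so that $a_L$ contains at most one generalised-gradient factor. Verifying this in detail, and checking that no hidden term escapes the bounds \eqref{controlALdominant1}--\eqref{controlALnoise}, is what makes the proof long; the analytic input, by contrast, is just the order bounds, \eqref{formulaforgradient}, and Lemma \ref{lemmaofblowups}.
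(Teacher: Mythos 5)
Your overall architecture (diagonal term plus multi-scale reconstruction residual, CBP substitution, conversion of $u_X(z)$ via Lemma \ref{lemmarelatinggradients}, bookkeeping through Lemma \ref{lemmaofblowups}) matches the paper, and your treatment of the diagonal piece $\sigma'(u(z))u_X(z)\cdot\langle\Pi_z\Xnoise,\varphi_z^L\rangle$ is exactly right. But there is a genuine gap in how you handle the generalised gradients \emph{inside} the multi-scale sum. Formula \eqref{formulaforgradient} only converts the base-point value $u_X(z)$ into $\nabla u_L(z)$; the integrand of \eqref{multiscalereconstructionstrategy} naturally produces $u_X(z_1)$ (and $u_X(z_2)$), and you give no mechanism for turning these into base-point quantities. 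Your two explicit recipes both fail: (a) bounding ``all occurrences of $u_X(z_1)$ inside the integrand in $L^\infty$ through \eqref{reductionofLinfinityofgradient}'' leaves the genuinely quadratic contributions (e.g.\ the $(u_X(z_1)\cdot(\Pi_{z_1}\X)(z_2))^2$ part of the Taylor remainder of $\sigma$) bounded by $\Cnoise{} L^{1-\kappa}\|u_X\|^2_{B(z,L)}\sim \Cnoise{} L^{1-\kappa}C_\star^2(\Cdumb{}\vee\Cnoise{}^2C_\star^{2\kappa+2\delta})^{\frac{1}{1-\kappa}}$, and balancing this against $c_L\sim\Cnoise{} L^{-1-\kappa}$ forces $L\sim C_\star^{-1}(\cdots)^{-\frac{1}{2(1-\kappa)}}$, giving $\|f\|\gtrsim C_\star^{1+\kappa+\cdots}$, i.e.\ a super-linear power of $C_\star$ — this is precisely the obstruction of Remark \ref{remarkwhynaivebalancingdoesnotwork}, not one of the lines \eqref{controlALdominant1}--\eqref{controlALnoise}; (b) bounding the prefactors $A^{\noise},A^{\dumb},A^{\Xnoise}$ by the H\"older estimates of Lemma \ref{lemmachainrule} reintroduces $\|u_X\|^{\gamma_2}$ and $\|u_X\|[U]^{(\gamma_2-1)/\gamma_2}$ and hence the same super-linearity, and moreover leaves nothing explicit to convert into a transport term, contradicting your own stated plan.

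The missing idea, which is the heart of the paper's proof (Lemmas \ref{splittinglemma1}--\ref{splittinglemma3}), is to redo the Taylor expansions of the prefactors as exact identities and then recentre exactly one gradient factor in each problematic term at the base point, writing $u_X(z_1)=u_X(z)+(u_X(z_1)-u_X(z))$. The increment is controlled by $[u_X]_{\gamma_2-1,B(z,L)}\,d(z,z_1)^{\gamma_2-1}$, which gains a positive power $L^{\gamma_2-1}$ and, via \eqref{reductionofgammaofgradient}, lands in the admissible lines of the statement; the $u_X(z)$ factor is constant in the integration variables, so it can be pulled out of the multi-scale sum to form $b_L(z)\cdot u_X(z)$ (with $b_L$ uniformly bounded because each remaining kernel keeps a summable positive power of $L/2^{n+1}$), and only \emph{then} is \eqref{formulaforgradient} applied, with the correction terms $\sigma(u(z))\langle\Pi_z\lolli,\nabla_x\tilde\varphi^L_z\rangle$ and $E^L_z$ estimated by \eqref{controllolligradphi}, \eqref{controlonremaindergradient} and \eqref{comparisonofnorms}. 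Without this add-and-subtract step your decomposition cannot produce the transport structure $b_L\cdot\nabla u_L$ from the reconstruction residual, and the resulting $a_L$ would not satisfy the stated bounds.
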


\begin{remark}
  The control on $b_L$ is not necessary for our analysis. Nevertheless, it holds that
  \begin{eqnarray}
    &  & | b_L (z) | \nonumber\\
    & \lesssim & \Cnoise{} L^{2 - 3 \kappa} C_{\star} (\Cdumb{} \vee
    \Cnoise{}^2 C_{\star}^{2 \kappa + 2 \delta})  \label{controlonBL3}\\
    & + & \left( \Cnoise{} L^{1 - \kappa} C_{\star} + \Cnoise{}^{1 + \frac{1 - 2
    \kappa}{2 (1 - \kappa)}} L^{\frac{3}{2} - 2 \kappa} C^{\frac{1}{2 (1 -
    \kappa)}}_{\star} \right) (\Cdumb{} \vee \Cnoise{}^2 C_{\star}^{2
    \kappa + 2 \delta})^{\frac{1}{2 (1 - \kappa)}}  \label{controlonBL2}\\
    & + & (\Cnoise{} + (\Cdumb{} + \Cnoise{}^2) L^{1 - \kappa}) L^{-
    \kappa} . \label{controlonBL}
  \end{eqnarray}
\end{remark}


Before proving Proposition \ref{regularisedequationwithtransport},
we first show how it gives Proposition
\ref{propositiontransportdecomposition}.

\vspace{2mm} \label{pageref:proof_of_prop2}
\begin{proof*}{Proof of Proposition \ref{propositiontransportdecomposition}.}
  We choose $L$ to balance
  expressions \eqref{controlALdominant1}-\eqref{controlALnoise} and
  \eqref{controlonCL}. For that, 
  we consider
  this choice for each term in \eqref{controlALdominant1}-\eqref{controlALnoise}
  separately and pick the worst at the end. We illustrate this here for the
  first three terms in \eqref{controlALdominant1}-\eqref{controlALdominant3},
  which are the dominant ones, and refer to Appendix \ref{AppendixB} for a
  detailed discussion for the terms in
  \eqref{controlALmixed1}-\eqref{controlALnoise}. The choice for
  \eqref{controlALdominant1} is
  \begin{eqnarray*}
    \Cnoise{} L^{- 1 - \kappa} & = & \Cnoise{} L^{1 - 3 \kappa} C_{\star} (\Cdumb{}
     \vee \Cnoise{}^2 C_{\star}^{2 \kappa + 2 \delta})\\
    & \Updownarrow & \\
    L = \tilde{L}_1 & \assign & (C_{\star} (\Cdumb{} \vee \Cnoise{}^2
    C_{\star}^{2 \kappa + 2 \delta}))^{- \frac{1}{2 (1 - \kappa)}},
  \end{eqnarray*}
  while the choice for the term in \eqref{controlALdominant2} is
  \begin{eqnarray*}
    \Cnoise{} L^{- 1 - \kappa} & = & \Cnoise{} L^{3 - 5 \kappa} C^2_{\star}
    (\Cdumb{} \vee \Cnoise{}^2 C_{\star}^{2 \kappa + 2 \delta})^2\\
    & \Updownarrow & \\
    L = \tilde{L}_2 & \assign & (C_{\star} (\Cdumb{} \vee \Cnoise{}^2
    C_{\star}^{2 \kappa + 2 \delta}))^{- \frac{1}{2 (1 - \kappa)}},
  \end{eqnarray*}
  which satisfies $\tilde{L}_1 = \tilde{L}_2$. The choice for the term in
  \eqref{controlALdominant3} is however
  \begin{eqnarray*}
    \Cnoise{} L^{- 1 - \kappa} & = & \Cnoise{} L^{2 - 3 \kappa} C^2_{\star}
    (\Cdumb{} \vee \Cnoise{}^2 C_{\star}^{2 \kappa + 2 \delta})^{1 +
    \frac{1}{2 (1 - \kappa)}} \nonumber\\
    & \Updownarrow &  \nonumber\\
    L = \tilde{L}_3 & \assign & C^{- \frac{2}{3 - 2 \kappa}}_{\star} (\Cdumb{}
    \vee \Cnoise{}^2 C_{\star}^{2 \kappa + 2 \delta})^{- \frac{1}{2 (1 -
    \kappa)}} .  
  \end{eqnarray*}
  Since the final bound for both $a_L$ and $c_L$ is a negative power of this
  choice, we conclude that the dominant term corresponds to the choice of
  $\tilde{L} = \tilde{L}_3$, which is \eqref{choiceofL}.
  In fact, comparing the exponents of $C_{\star}$ in
  $\tilde{L}_1 = \tilde{L}_2$ and $\tilde{L}_3$, this is equivalent to
  \[ \frac{1}{2 - 2 \kappa} \leqslant \frac{2}{3 - 2 \kappa} \Longleftrightarrow
     3 - 2 \kappa \leqslant 4 - 4 \kappa \Longleftrightarrow 2 \kappa \leqslant
     1,
   \]
  which is true for our regime $0 < \kappa < 1 / 3$. 
  Condition $\tilde{L} < \sqrt{T_1} / 2$ is respected for this choice, since
  $C_{\star} > 1$ implies that (recall $T_1$ in \eqref{choiceofT1maxprinc})
  \[ C^{- \frac{2}{3 - 2 \kappa}}_{\star}
    (\Cdumb{} \vee \Cnoise{}^2 C_{\star}^{2 \kappa + 2 \delta})^{-
     \frac{1}{2 (1 - \kappa)}} \leqslant \Cdumb{}^{- \frac{1}{2 (1 - \kappa)}}
     \vee \Cnoise{}^{- \frac{1}{1 - \kappa}} C_{\star}^{- \frac{\kappa + \delta}{1
     - \kappa}} \sim \sqrt{T_1} .
  \]

  As a consequence of Proposition \ref{regularisedequationwithtransport} 
  we obtain that $v \assign
  u_{\tilde{L}}$ solves \eqref{equationforv},
  where we identify 
  $b := b_{\tilde{L}}$ and  $f := a_{\tilde{L}} + c_{\tilde{L}}$. Moreover,
  \begin{eqnarray}
    \| f \|_{D_{T_1}^{T_{\star}}} 
    & \lesssim & \Cnoise{} {\tilde L}^{-(1+\kappa)}
    \nonumber \\ & = & 
    \Cnoise{} \left( C^{- \frac{2}{3 -
    2 \kappa}}_{\star} (\Cdumb{} \vee \Cnoise{}^2 C_{\star}^{2 \kappa + 2
    \delta})^{- \frac{1}{2 (1 - \kappa)}} \right)^{- (1 + \kappa)} \nonumber\\
    & = & \Cnoise{} C^{\frac{2 (1 + \kappa)}{3 - 2 \kappa}}_{\star} (\Cdumb{}
     \vee \Cnoise{}^2 C_{\star}^{2 \kappa + 2 \delta})^{\frac{1 + \kappa}{2 (1
    - \kappa)}}  \label{controlofRHSofvbeforeyoung}\\
    & = & \Cnoise{} \Cdumb{}^{\frac{1 + \kappa}{2 (1 - \kappa)}} C^{\frac{2
    (1 + \kappa)}{3 - 2 \kappa}}_{\star} \vee \Cnoise{}^{\frac{2}{1 - \kappa}}
    {C^{\frac{2 (1 + \kappa)}{3 - 2 \kappa} + \frac{(1 + \kappa) (\kappa +
    \delta)}{1 - \kappa}}_{\star}}  \nonumber\\
    & \lesssim & \Cdumb{}^{\frac{1}{1 - \kappa}} C^{\frac{2 (1 +
    \kappa)}{3 - 2 \kappa}}_{\star} \vee \Cnoise{}^{\frac{2}{1 - \kappa}}
    {C^{\frac{2 (1 + \kappa)}{3 - 2 \kappa} + \frac{(1 + \kappa) (\kappa +
    \delta)}{1 - \kappa}}_{\star}} ,   
  \end{eqnarray}
  where the last inequality is a consequence of Young's inequality applied as
  \[ \Cnoise{} \Cdumb{}^{\frac{1 + \kappa}{2 (1 - \kappa)}} \leqslant
     \frac{1 - \kappa}{2} \Cnoise{}^{\frac{2}{1 - \kappa}} + \frac{1 + \kappa}{2}
     \Cdumb{}^{\frac{1}{1 - \kappa}} . \]
  This gives \eqref{controlonfRHSofv}.
  
  Since we treat $b \cdot \nabla v$
  in \eqref{equationforv} as a transport term, the precise control
  on $\| b \|_{D_{T_1}^{T_{\star}}}$ is irrelevant. Nonetheless, it corresponds
  to \eqref{controlonBL3}-\eqref{controlonBL} with the choice of $\tilde{L}$
  in \eqref{choiceofL}. 
\end{proof*}

\begin{remark}
  \label{remarkofwhyneedimprovedinteriorestimate}
  We highlight the need
  of Corollary \ref{corollaryimprovedinteriorestimate} in view of
  \eqref{equationforkappa} and the condition $\betanoise < 1$. For simplicity,
  consider the case where $\Cdumb{} \leqslant \Cnoise{}^2 C_{\star}^{2 \kappa + 2 \delta}$
  and ignore the constant $\Cnoise{}$.  
  If we write the
  equivalent of the control in \eqref{controlALdominant3} in terms of $1 +
  \kappa < \gamma_1 \leqslant \gamma_2 \leqslant 2 - 2 \kappa$ and $T = T
  (\gamma_1)$ satisfying the assumptions of Corollary
  \ref{corollaryimprovedinteriorestimate}, we obtain
  \[ \eqref{controlALdominant3} \lesssim L^{\gamma_2 - \kappa} C_{\star}^{2 +
     \frac{\gamma_2 + 1}{2} e (\gamma_1)} = L^{\gamma_2 - \kappa} C_{\star}^{2
     + \frac{(\gamma_2 + 1) (\gamma_1 - 1)}{1 - \kappa}} . \]
  Then, the choice of $L$ corresponds to
  \begin{eqnarray}
    L^{\gamma_2 - \kappa} C_{\star}^{2 + \frac{(\gamma_2 + 1) (\gamma_1 -
    1)}{1 - \kappa}} & = & L^{- 1 - \kappa} \nonumber\\
    & \Updownarrow &  \nonumber\\
    L = \tilde{L}_{\gamma} & \assign & C_{\star}^{- \left( \frac{2}{\gamma_2 +
    1} + \frac{(\gamma_1 - 1)}{1 - \kappa} \right)}, 
    \label{gamma2doesnotcancelout}
  \end{eqnarray}
  which leads to (in view of \eqref{controlonfRHSofv})
  \[ \| f \|_{D_{T_1}^{T_{\star}}} \lesssim \left( C_{\star}^{- \left(
     \frac{2}{\gamma + 1} + \frac{(\gamma - 1)}{1 - \kappa} \right)}
     \right)^{- (1 + \kappa)} = C_{\star}^{\frac{2 + 2 \kappa}{\gamma_2 + 1} +
     \frac{(\gamma_1 - 1) (1 + \kappa)}{1 - \kappa}} . \]
  We then want to say that
  \begin{equation}
    \frac{2 + 2 \kappa}{\gamma_2 + 1} + \frac{(\gamma_1 - 1) (1 + \kappa)}{1 -
    \kappa} < 1, \label{exponentoffinalcontrolforg1g2}
  \end{equation}
  for which we need at the same time $\gamma_2$ to be as large as possible to
  control the first term in the l.h.s. above and $\gamma_1$ to be as close to
  $1$ as possible to control the second term. If $\gamma_1 = \gamma_2 =
  \gamma$, then condition \eqref{exponentoffinalcontrolforg1g2} is never
  satisfied for any $\kappa > 0$ and $1 + \kappa < \gamma \leqslant 2 - 2
  \kappa$, since for $\kappa = 0$ we see that
  \[ \frac{2}{\gamma + 1} + \gamma - 1 < 1 \Longleftrightarrow 2 + \gamma^2 <
     \gamma + 2 \Longleftrightarrow \gamma < 1. \]
\end{remark}

Moving to Proposition \ref{regularisedequationwithtransport}, we focus on the r.h.s. of \eqref{regularisedequationbeginning}.
Recall from
\eqref{localapproxG} in Section \ref{sectionReconstruction} that
\[ G_z = \sigma (u (z)) \Pi_z \noise + \sigma' \sigma (u (z)) \Pi_z \dumb +
   \sigma' (u (z)) u_X (z) \cdot \Pi_z \Xnoise \]
and expand the r.h.s. of \eqref{regularisedequationbeginning} as $\langle
\sigma (u) \diamond \noiseb, \varphi_z^L \rangle \pm \langle G_z, \varphi_z^L \rangle$. By setting
\label{page_reconstructionterms}
\begin{eqnarray*}
  \mathcal{R}_L (z) & \assign & \langle \sigma (u) \diamond \noiseb - G_z, \varphi_z^L
  \rangle\\
  \tilde{B}_L (z) & \assign & \sigma' (u (z)) u_X (z) \cdot \langle \Pi_z \Xnoise
  , \varphi_z^L \rangle,\\
  \Xi_L (z) & \assign & \sigma' \sigma (u (z)) \langle \Pi_z \dumb,
  \varphi_z^L \rangle + \sigma (u (z)) \langle \Pi_z \noise, \varphi_z^L \rangle,
\end{eqnarray*}
we get
\[ (\partial_t - \Delta) u_L (z) =\mathcal{R}_L (z) + \tilde{B}_L (z) + \Xi_L
   (z) . \] 
\refchange{
At this point, we need to extract from the terms $\mathcal{R}_L$ and $\tilde{B}_L$ the worst super-linear error terms into transport terms. While for
$\tilde{B}_L$ this is achieved directly with the
use of \eqref{formulaforgradient}, the 
reconstruction error $\mathcal{R}_L$
requires further treatment:
we track down the gradient terms,
localise them at the base point and convert
$u_X(z)$ into $\nabla u_L(z)$. The term
$\Xi_L$ is controlled by
model norms and negative powers of $L$ and no
further treatment is required. 
}

$\mathcal{R}_L$ 
can be written as $\mathcal{R}_L (z) = \langle \mathcal{R}G - G_z, \varphi_z^L \rangle$,
where $\mathcal{R}G$ denotes the reconstruction of 
$G = \sigma (U) \noise$, defined
as the limit \eqref{definitionofreconstruction} in Theorem
\ref{reconstructiontheorem}. 
Therefore, 
$\mathcal{R}_L (z) = \lim_{N \to \infty} \Lambda_{N, L} [G ] (z)$, where 
\label{page_LambdaNL}
\begin{equation*}
  \Lambda_{N, L} [G ] (z) \assign
  \left\langle \left\langle
   G_{\cdot}, \varphi_{\cdot}^{\frac{L}{2^N}} \right\rangle, \varphi^{L, N}_z
   \right\rangle - \nobracket \langle G_z \nobracket, \varphi^L_z \rangle ,
\end{equation*}
where by \eqref{proofofreconstruction}, for every $N \in \mathbb{N}$,
\begin{equation}
  \Lambda_{N, L} [G ] (z) = 
  \sum_{n = 0}^{N-1} \int \int \left\langle G_{z_2} - G_{z_1},
  \varphi_{z_2}^{\frac{L}{2^{n + 1}}} \right\rangle
  \psi^{\frac{L}{2^{n + 1}}}_{z_1}
  (z_2) \varphi^{L, n}_z (z_1) \mathd z_2 \mathd z_1 ,
  \label{sumofdifferentscales}
\end{equation}
cf. \eqref{multiscalereconstructionstrategy} in Section \ref{sectionstrategyofproof}.
We recall that
\begin{equation}
  G_{z_2} - G_{z_1} = A_{z_1}^{\noise} (z_2) \Pi_{z_2} \noise + A_{z_1}^{\dumb} (z_2)
  \Pi_{z_2} \dumb + A_{z_1}^{\Xnoise} (z_2) \cdot \Pi_{z_2} \Xnoise,
  \label{reductionofG}
\end{equation}
where
\begin{eqnarray*}
  A_{z_1}^{\noise} (z_2) & \assign & \sigma (u (z_2)) - \sigma (u (z_1)) -
  \sigma' \sigma (u (z_1)) (\Pi_{z_1} \lolli) (z_2)\\
  & - & \sigma' (u (z_1)) u_X (z_1) \cdot (\Pi_{z_1} \X) (z_2)\\
  A_{z_1}^{\dumb} (z_2) & \assign & \sigma' \sigma (u (z_2)) - \sigma'
  \sigma (u (z_1))\\
  A_{z_1}^{\Xnoise} (z_2) & \assign & \sigma' (u (z_2)) u_X (z_2) - \sigma' (u
  (z_1)) u_X (z_1) .
\end{eqnarray*}
\refchange{The goal is then to treat each of
the terms in}
\[ \mathcal{R}_L (z) = \lim_{N \rightarrow \infty} \Lambda_{N, L} [G ] (z) =
   \lim_{N \rightarrow \infty} [\Lambda_{N, L} [A^{\noise}] (z) + \Lambda_{N, L}
   [A^{\dumb}] (z) + \Lambda_{N, L} [A^{\Xnoise}] (z)] \]
separately, similarly as in Section \ref{reduction}. The difference
is that now we only write equalities, 
making sure that each term has the correct order bound to
make the sum in \eqref{sumofdifferentscales} well defined as $N \rightarrow
\infty$. We separate the analysis of \eqref{reductionofG}
into 
Lemmas \ref{splittinglemma1}, \ref{splittinglemma2} and \ref{splittinglemma3}.
Their proofs are very similar and the proof
of the first contains all the key ingredients, so we prove the first and postpone the proofs of the other two to
Appendix \ref{proofofdecompositionlemmas}. We will make repeated use of
Lemmas \ref{lemmaofblowups} and \ref{lemmarelatinggradients} throughout this
section.

\begin{lemma}
  \label{splittinglemma1}Let the assumptions of Proposition
  \ref{regularisedequationwithtransport} be in force. Then, it holds that
  \[ \lim_{N \rightarrow \infty} \Lambda_{N, L} [A^{\noise} \Pi \noise] (z) = a^{\noise}_L
     (z) + b^{\noise}_L (z) \cdot \nabla u_L (z), \]
  where the functions $a^{\noise}_L, b^{\noise}_L$ are smooth and the convergence is
  uniform over $z \in D_{T_1}^{T_{\star}}$. Moreover, $a^{\noise}_L$
  satisfies \eqref{controlALdominant1}-\eqref{controlALnoise} and $b^{\noise}_L$
  satisfies \eqref{controlonBL2}-\eqref{controlonBL}.
\end{lemma}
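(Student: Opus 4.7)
The plan is to track how the super-linear terms in $u_X$ arising from the Taylor expansion of $A^{\noise}_{z_1}(z_2)$ can be partially absorbed into a transport term $b^{\noise}_L \cdot \nabla u_L$ via the formula of Lemma~\ref{lemmarelatinggradients}, leaving only contributions that can be controlled by the estimates in Lemma~\ref{lemmaofblowups} without producing the obstruction highlighted in Remark~\ref{remarkwhynaivebalancingdoesnotwork}.

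First I would repeat, at the level of $A^{\noise}_{z_1}(z_2)$ rather than $\sigma(U)$, the Taylor decomposition used in the proof of Lemma~\ref{lemmachainrule}. Setting
\[
  \eta_1 := \sigma(u(z_1))(\Pi_{z_1}\lolli)(z_2), \qquad \eta_2 := u_X(z_1)\cdot (\Pi_{z_1}\X)(z_2),
\]
I would write $A^{\noise}_{z_1}(z_2)$ as the sum of three brackets: $[\sigma(u(z_2)) - \sigma(u(z_1)+\eta_1+\eta_2)]$, $[\sigma(u(z_1)+\eta_1+\eta_2) - \sigma(u(z_1)+\eta_2) - \sigma'(u(z_1))\eta_1]$, and $[\sigma(u(z_1)+\eta_2) - \sigma(u(z_1)) - \sigma'(u(z_1))\eta_2]$. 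The first bracket is $O([U]_{\gamma_2, B(z_1,L)} d(z_1,z_2)^{\gamma_2})$; the second bracket is $O(|\eta_1|^2 + |\eta_1||\eta_2|)$; the third bracket gives, to leading order, $\tfrac{1}{2}\sigma''(u(z_1))\eta_2^2$ plus a cubic remainder. The only structurally super-linear-in-$u$ contribution after applying $\Pi_{z_2}\noise$ is the $\eta_2^2$ piece, which carries $u_X(z_1)^2$.

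Next, and this is the central idea, I would isolate one factor of $u_X$ in $\eta_2^2$ by writing $u_X(z_1)^2 = u_X(z)\,u_X(z_1) + [u_X(z_1)-u_X(z)]\,u_X(z_1)$, and then substitute
\[
  u_X(z) = \nabla u_L(z) - \sigma(u(z))\langle \Pi_z \lolli, \nabla_x \tilde{\varphi}^L_z\rangle - E_z^L
\]
from Lemma~\ref{lemmarelatinggradients}. The $\nabla u_L(z)$ piece is what eventually forms $b^{\noise}_L(z)\cdot \nabla u_L(z)$, with $b^{\noise}_L(z)$ being the (smooth, $z$-dependent) coefficient obtained by integrating the remaining factor $u_X(z_1)(\Pi_{z_1}\X)(z_2)^2$ against the multi-scale kernels in \eqref{sumofdifferentscales}. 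All other pieces -- the Schauder-type remainder $E_z^L$, the $\langle \Pi_z\lolli, \nabla_x\tilde{\varphi}^L_z\rangle$ correction, the increment $u_X(z_1)-u_X(z)$ (which is controlled by $[u_X]_{\gamma_2-1}$ and $d(z_1,z)^{\gamma_2-1}$), and the non-$u_X^2$ pieces from the first two brackets -- are collected into $a^{\noise}_L(z)$. The explicit form of $\Lambda_{N,L}[A^{\noise}\Pi\noise]$ in \eqref{sumofdifferentscales} combined with the OBs on $\Pi\noise$ (giving $\ell^{-1-\kappa}$ at scale $\ell=L/2^{n+1}$) shows that each summand carries a factor $\ell^{\alpha}$ with $\alpha>0$, which gives geometric summability in $n$ and identifies the limit with a smooth function of $z$ uniformly on $D_{T_1}^{T_\star}$.

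Finally, to obtain the bounds \eqref{controlALdominant1}--\eqref{controlALnoise} for $a^{\noise}_L$ and \eqref{controlonBL2}--\eqref{controlonBL} for $b^{\noise}_L$, I would estimate each of the (finitely many) pieces produced in the previous step by combining the OBs for $\noise,\lolli,\X$ with the reductions in Lemma~\ref{lemmaofblowups}: the factors of $[U]_{\gamma_2, B(z,L)}$ become $C_\star(\Cdumb{}\vee \Cnoise{}^2 C_\star^{2\kappa+2\delta})$, the factors of $\|u_X\|_{B(z,L)}$ become the expression in \eqref{reductionofLinfinityofgradient}, and $[u_X]_{\gamma_2-1,B(z,L)}$ becomes \eqref{reductionofgammaofgradient}, multiplied by the appropriate powers of $L$ coming from integrating against $\varphi_{z_2}^\ell\psi_{z_1}^\ell\varphi_z^{L,n}$ and summing in $n$. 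The main obstacle is the bookkeeping: each of lines \eqref{controlALdominant1}--\eqref{controlALnoise} must be matched by exactly one source term in the decomposition, with the correct pairing of stochastic constants, of $C_\star$-exponents, and of powers of $L$. In particular, the leading contributions in \eqref{controlALdominant1}--\eqref{controlALdominant3} trace back respectively to the first bracket above, to its interaction with $\Pi_{z_2}\dumb$-type cross-terms arising after CBP, and to the $u_X(z)\,u_X(z_1)$ piece on which the gradient formula has been applied; verifying that no other rearrangement produces a stronger super-linearity is what forces $\gamma_1$ close to $1+\kappa$ and $\gamma_2 = 2-2\kappa$, and ultimately the threshold $\bar\kappa$ of \eqref{equationforkappa}.
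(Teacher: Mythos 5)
Your proposal follows essentially the same route as the paper's proof: the same Taylor decomposition of $A^{\noise}_{z_1}(z_2)$ into a $[U]_{\gamma_2}$-remainder plus a quadratic error in $\eta$, the replacement of one factor $u_X(z_1)$ by $u_X(z)$ plus an increment controlled by $[u_X]_{\gamma_2-1,B(z,L)}$, the conversion of $u_X(z)$ into $\nabla u_L(z)$ via Lemma~\ref{lemmarelatinggradients} (with the $E_z^L$ and $\langle \Pi_z\lolli,\nabla_x\tilde\varphi_z^L\rangle$ corrections absorbed into $a^{\noise}_L$), and scale-by-scale summability in \eqref{sumofdifferentscales} combined with the reductions of Lemma~\ref{lemmaofblowups}. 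The only (harmless) deviations are that you estimate the cross term $\eta_1\eta_2$ directly through $\|u_X\|_{B(z,L)}$, landing in \eqref{controlALmixed1}--\eqref{controlALmixed2}, whereas the paper also routes it through the transport term, and your attribution of \eqref{controlALdominant2} to $\dumb$-type cross terms is inaccurate for this lemma — but since $a^{\noise}_L$ only needs to be bounded by the sum of \eqref{controlALdominant1}--\eqref{controlALnoise} and $b^{\noise}_L$ by \eqref{controlonBL2}--\eqref{controlonBL}, neither point affects correctness.
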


\begin{proof}
  Recall that for any function $H_{z_1} (z_2)$ such that $| H_{z_1} (z_2) |
  \leqslant [H]_{\lambda} d(z_1,z_2)^{\lambda}$, we have by
  \refchange{Theorem \ref{reconstructiontheorem}
  (cf. \eqref{sumofdifferentscales})}
  that the limit $\Lambda_L [H  \Pi \noise] (z) \assign
  \lim_{N \rightarrow \infty} \Lambda_{N, L} [H \Pi \noise] (z) $ is well defined and
  it satisfies
  \[ | \Lambda_L [H \Pi \noise] (z) | \leqslant [\Pi ; \noise]_{- 1 - \kappa} [H]_{\lambda, B
     (z, L)} L^{\lambda - 1 - \kappa} \]
  as long as $\lambda - 1 - \kappa > 0$. We start by expanding
  $A_{z_1}^{\noise} (z_2) = A^{\noise, 1}_{z_1} (z_2) + Q^{\noise}_{z_1} (z_2)$, where,
  for
  \[ \eta_{z_1} (z_2) : = \sigma (u (z_1)) (\Pi_{z_1} \lolli) (z_2) + u_X (z_1)
     \cdot (\Pi_{z_1} \X) (z_2), \]
  we let
  \begin{eqnarray*}
    A_{z_1}^{\noise, 1} (z_2) & = & \sigma (u (z_2)) - \sigma (u (z_1) +
    \eta_{z_1} (z_2))\\
    Q^{\noise}_{z_1} (z_2) & = & \sigma (u (z_1) + \eta_{z_1} (z_2)) - \sigma (u
    (z_1)) - \sigma' (u (z_1)) \eta_{z_1} (z_2) .
  \end{eqnarray*}
  For $A_{z_1}^{\noise, 1} (z_2)$, we obtain
  \begin{eqnarray*}
    A_{z_1}^{\noise, 1} (z_2) & = & \int_0^1 \frac{\mathd}{\mathd \lambda} \sigma
    (u (z_2) \lambda + (1 - \lambda) \sigma (u (z_1) + \eta_{z_1} (z_2)))
    \mathd \lambda\\
    & = & 
     (U_{z_1} (z_2) - u_X (z_1) \cdot (\Pi_{z_1} \X) (z_2))\\
     & & \times \int_0^1 \sigma' (u (z_2) \lambda + (1 - \lambda) (u (z_1) +
      \eta_{z_1} (z_2))) \mathd \lambda \\
    & \backassign & I_1 (\sigma', z_1, z_2) (U_{z_1} (z_2) - u_X (z_1) \cdot
    (\Pi_{z_1} \X) (z_2)),
  \end{eqnarray*}
  which satisfies
  \[ | A_{z_1}^{\noise, 1} (z_2) | \leqslant \| \sigma' \| [U]_{\gamma_2, B (z,
     L)} L^{\gamma_2} . \]
  In turn, $a^{\noise, 1}_L (z) \assign \Lambda_L [A_{z_1}^{\noise, 1} (z_2) \Pi \noise]
  (z)$ is well defined and it satisfies \eqref{controlALdominant1}, since by
  \eqref{comparisonofnorms}
  \[ | a^{\noise, 1}_L (z) | \leqslant \| \sigma' \| [\Pi ; \noise]_{- 1 - \kappa}
     [U]_{\gamma_2, B (z, L)} L^{\gamma_2 - 1 - \kappa} \lesssim \Cnoise{} L^{1
     - 3 \kappa} C_{\star} (\Cdumb{} \vee \Cnoise{}^2 C_{\star}^{2 \kappa
     + 2 \delta}) . \]
  To treat $Q^{\noise}_{z_1} (z_2)$, we proceed as follows
  \begin{eqnarray*}
    Q^{\noise}_{z_1} (z_2) & = & \int_0^1 \frac{\mathd}{\mathd \lambda} \sigma (u
    (z_1) + \lambda \eta_{z_1} (z_2)) \mathd \lambda - \sigma' (u (z_1))
    \eta_{z_1} (z_2)\\
    & = & \eta_{z_1} (z_2) \int_0^1 \sigma' (u (z_1) + \lambda \eta_{z_1}
    (z_2)) - \sigma' (u (z_1)) \mathd \lambda\\
    & = & \eta_{z_1} (z_2) \int_0^1 \int_0^1 \frac{\mathd}{\mathd
    \bar{\lambda}} \sigma' (u (z_1) + \bar{\lambda} \lambda \eta_{z_1} (z_2))
    \mathd \bar{\lambda} \mathd \lambda\\
    & = & \eta_{z_1} (z_2)^2 \int_0^1 \int_0^1 \lambda \sigma'' (u (z_1) +
    \bar{\lambda} \lambda \eta_{z_1} (z_2)) \mathd \bar{\lambda} \mathd
    \lambda\\
    & = : & \eta_{z_1} (z_2)^2 I_2 (\sigma'', z_1, z_2) .
  \end{eqnarray*}
  Adding a dependence on the point $z$ to $A^{\noise, 3}$, $A^{\noise, 4},
  \tilde{B}^{\noise, 1}$ and $\tilde{B}^{\noise, 2}$, we write $\eta_{z_1} (z_2)^2
  I_2 (\sigma'', z_1, z_2) = A^{\noise, 2}_{z_1} (z_2) + \tilde{B}^{\noise, 1}_{z_1}
  (z ; z_2) + A^{\noise, 3}_{z_1} (z ; z_2) + \tilde{B}^{\noise, 2}_{z_1} (z ; z_2)
  + A^{\noise, 4}_{z_1} (z ; z_2)$, where
  \begin{eqnarray*}
    A^{\noise, 2}_{z_1} (z_2) & = & I_2 (\sigma'', z_1, z_2) (\sigma (u (z_1))
    (\Pi_{z_1} \lolli) (z_2))^2\\
    \tilde{B}^{\noise, 1}_{z_1} (z ; z_2) & = & I_2 (\sigma'', z_1, z_2) [\sigma
    (u (z_1)) (\Pi_{z_1} \lolli) (z_2) (\Pi_{z_1} \X) (z_2)] \cdot u_X (z )\\
    A^{\noise, 3}_{z_1} (z ; z_2) & = & I_2 (\sigma'', z_1, z_2) \sigma (u (z_1))
    (\Pi_{z_1} \lolli) (z_2) (\Pi_{z_1} \X) (z_2) \cdot \left( u_X \left( {z_1} 
    \right) - u_X (z ) \right)\\
    \tilde{B}^{\noise, 2}_{z_1} (z ; z_2) & = & I_2 (\sigma'', z_1, z_2) [u_X
    (z_1) \cdot (\Pi_{z_1} \X) (z_2) (\Pi_{z_1} \X) (z_2)] \cdot u_X (z)\\
    A^{\noise, 4}_{z_1} (z ; z_2) & = & I_2 (\sigma'', z_1, z_2) u_X (z_1) \cdot
    (\Pi_{z_1} \X) (z_2) (\Pi_{z_1} \X) (z_2) \cdot (u_X (z_1) - u_X (z)) .
  \end{eqnarray*}
  This is to make the gradient $\nabla u_L (z)$ at the correct base point $z$
  appear later. We have that
  \[ | A^{\noise, 2}_{z_1} (z_2) | \leqslant \| \sigma'' \| \| \sigma \|^2
     [\Pi ; \lolli]_{1 - \kappa}^2 L^{2 - 2 \kappa}, \]
  which gives rise to $A^{\noise, 2}_L (z) \assign \Lambda_L [A_{z_1}^{\noise, 2}
  (z_2) \Pi \noise] (z)$ satisfying \eqref{controlALnoise}, since
  \[ | A^{\noise, 2}_L (z) | \leqslant | A^{\noise, 2}_{z_1} (z_2) | [\Pi ; \noise]_{- 1 -
     \kappa} L^{- 1 - \kappa} \lesssim \Cnoise{}^3 L^{1 - 3 \kappa} . \]
  The terms $A^{\noise, 3}_{z_1} (z ; z_2)$ and $A^{\noise, 4}_{z_1} (z ; z_2)$ need
  to be treated slightly differently, since they depend explicitly on $z$.
  This treatment is going to be recurrent in this section: note that for every
  $n \in \mathbb{N}$,
  \begin{eqnarray*}
    &  & \int \int | A^{\noise, 3}_{z_1} (z ; z_2) | \psi^{\frac{L}{2^{n + 1}}}
    (z_2 - z_1) \varphi^{L, n} (z_1 - z) \mathd z_2 \mathd z_1\\
    & \leqslant &
      \| \sigma'' \| \interleave \sigma \interleave [\Pi ; \lolli]_{1 - \kappa}
      [u_X]_{\gamma_2 - 1, B (z, L)} \\
      & & \times \int \int d(z_1,z_2)^{2 - \kappa} d(z,z_1)^{\gamma_2 - 1}
      \psi^{\frac{L}{2^{n + 1}}} (z_2 - z_1) \varphi^{L, n} (z_1 - z) \mathd
      z_2 \mathd z_1 \\
    & \lesssim & \Cnoise{} [u_X]_{\gamma_2 - 1, B (z, L)} L^{\gamma_2 - 1}
    \left( \frac{L}{2^{n + 1}} \right)^{2 - \kappa},
  \end{eqnarray*}
  so that for every $N \in \mathbb{N}$,
  \begin{eqnarray*}
    | \Lambda_{N, L} [A^{\noise, 3} \Pi \noise] (z) | & \lesssim & \Cnoise{}
    [u_X]_{\gamma_2 - 1, B (z, L)} [\Pi ; \noise]_{- 1 - \kappa} L^{\gamma_{2 - 1}}
    \sum_{n = 0}^{N - 1} \left( \frac{L}{2^{n + 1}} \right)^{1 - 2 \kappa}\\
    & \lesssim & \Cnoise{}^2 [u_X]_{\gamma_2 - 1, B (z, L)} L^{\gamma_{2 - 2
    \kappa}} .
  \end{eqnarray*}
  This implies that $A^{\noise, 3}_L (z) = \Lambda_L [A^{\noise, 3} \Pi \noise] (z)$
  exists. Moreover, we see that $A^{\noise, 3}_L$ satisfies
  \eqref{controlALdominant1}, \eqref{controlALmixed1}, \eqref{controlALmixed2}
  and \eqref{controlALnoise}, since by \eqref{reductionofgammaofgradient},
  \begin{eqnarray*}
    | A^{\noise, 3}_L (z) | & \lesssim & (\Cnoise{}^2 L^{2 - 4 \kappa} + \Cnoise{}^3
    L^{3 - 5 \kappa}) C_{\star} (\Cdumb{} \vee \Cnoise{}^2 C_{\star}^{2
    \kappa + 2 \delta}) + \Cnoise{}^4 L^{2 - 4 \kappa}\\
    & + & \Cnoise{}^2 L^{2 - 3 \kappa} C_{\star} (\Cdumb{} \vee \Cnoise{}^2
    C_{\star}^{2 \kappa + 2 \delta})^{\frac{1}{2 (1 - \kappa)}}\\
    & + & \Cnoise{}^{3 + \frac{1 - 2 \kappa}{2 (1 - \kappa)}} L^{\frac{5}{2} -
    4 \kappa} C^{\frac{1}{2 (1 - \kappa)}}_{\star} (\Cdumb{} \vee
    \Cnoise{}^2 C_{\star}^{2 \kappa + 2 \delta})^{\frac{1}{2 (1 - \kappa)}} .
  \end{eqnarray*}
  We treat $A^{\noise, 4}_{z_1} (z ; z_2)$ in the exact same fashion, and the
  same calculations give rise to the well defined limit $A^{\noise, 4}_L (z) =
  \Lambda_L [A^{\noise, 4} \Pi \noise] (z)$, where it satisfies
  \eqref{controlALdominant3}, \eqref{controlALmixed1},
  \eqref{controlALmixed2}, \eqref{controlALmixed4}, \eqref{controlALmixed5}
  and \eqref{controlALmixed3}, since by \eqref{reductionofLinfinityofgradient}
  and \eqref{reductionofgammaofgradient}
  \begin{eqnarray*}
    &  & | A^{\noise, 4}_L (z) |\\
    & \leqslant & \| \sigma'' \| \| u_X \|_{B (z, L)} [u_X]_{\gamma_2 - 1, B
    (z, L)} [\Pi ; \noise]_{- 1 - \kappa} L^{\gamma_2 - \kappa}\\
    & \lesssim &
      (1 + \Cnoise{} L^{1 - \kappa}) \left( \Cnoise{} L^{2 - 3 \kappa} C^2_{\star}
      + \Cnoise{}^{1 + \frac{1 - 2 \kappa}{2 (1 - \kappa)}} L^{\frac{5}{2} - 4
      \kappa} C^{1 + \frac{1}{2 (1 - \kappa)}}_{\star} \right) \\
      & & \times (\Cdumb{} \vee \Cnoise{}^2 C_{\star}^{2 \kappa + 2
      \delta})^{1 + \frac{1}{2 (1 - \kappa)}} \\
    & + & \left( \Cnoise{}^3 L^{2 - 3 \kappa} C_{\star} + \Cnoise{}^{3 + \frac{1
    - 2 \kappa}{2 (1 - \kappa)}} L^{\frac{5}{2} - 4 \kappa} C^{\frac{1}{2 (1 -
    \kappa)}}_{\star} \right) (\Cdumb{} \vee \Cnoise{}^2 C_{\star}^{2
    \kappa + 2 \delta})^{\frac{1}{2 (1 - \kappa)}}\\
    & + & \left( \Cnoise{}^2 L^{2 - 2 \kappa} C^2_{\star} + \Cnoise{}^{2 +
    \frac{1 - 2 \kappa}{2 (1 - \kappa)}} L^{\frac{5}{2} - 3 \kappa} C^{1 +
    \frac{1}{2 (1 - \kappa)}}_{\star} \right) (\Cdumb{} \vee \Cnoise{}^2
    C_{\star}^{2 \kappa + 2 \delta})^{\frac{1}{1 - \kappa}}\\
    & + &
      \left( \Cnoise{}^{2 + \frac{1 - 2 \kappa}{2 (1 - \kappa)}} L^{\frac{5}{2}
      - 3 \kappa} C^{1 + \frac{1}{2 (1 - \kappa)}}_{\star} + \Cnoise{}^{2 +
      \frac{1 - 2 \kappa}{1 - \kappa}} L^{3 - 4 \kappa} C^{\frac{1}{1 -
      \kappa}}_{\star} \right) \\
      & & \times (\Cdumb{} \vee \Cnoise{}^2 C_{\star}^{2 \kappa + 2
      \delta})^{\frac{1}{1 - \kappa}} .
  \end{eqnarray*}
  Again with the same procedure, we obtain $b^{\noise, 1}_L (z) \cdot u_X (z )
  \assign \Lambda_L [\tilde{B}^{\noise, 1} \Pi \noise] (z)$ and $b^{\noise, 2}_L (z) \cdot
  u_X (z ) \assign \Lambda_L [\tilde{B}^{\noise, 2} \Pi \noise] (z)$, satisfying
  \eqref{controlonBL2} and \eqref{controlonBL}, since
  \[ | b^{\noise, 1}_L (z) | \leqslant \| \sigma'' \| \| \sigma \| [\Pi ; \lolli]_{1 -
     \kappa} [\Pi ; \noise]_{- 1 - \kappa} L^{1 - 2 \kappa} \lesssim \Cnoise{}^2 L^{1 - 2
     \kappa} \]
  and by \eqref{reductionofLinfinityofgradient}
  \begin{eqnarray*}
    | b^{\noise, 2}_L (z) | & \leqslant & \| \sigma'' \| \| u_X \|_{B (z, L)}
    [\Pi ; \noise]_{- 1 - \kappa} L^{1 - \kappa}\\
    & \lesssim & \Cnoise{} L^{1 - \kappa} \left( C_{\star} + \Cnoise{}^{\frac{1 -
    2 \kappa}{2 (1 - \kappa)}} L^{\frac{1}{2} - \kappa} C^{\frac{1}{2 (1 -
    \kappa)}}_{\star} \right) (\Cdumb{} \vee \Cnoise{}^2 C_{\star}^{2
    \kappa + 2 \delta})^{\frac{1}{2 (1 - \kappa)}} .
  \end{eqnarray*}
  To conclude, we set $a_L^{\noise, 5} \assign b^{\noise, 1}_L \cdot (u_X - \nabla
  u_L)$ and $a_L^{\noise, 6} \assign b^{\noise, 2}_L \cdot (u_X - \nabla u_L)$,
  which satisfy \eqref{controlALdominant1}, \eqref{controlALdominant3},
  \eqref{controlALmixed1}, \eqref{controlALmixed2}, \eqref{controlALmixed4},
  \eqref{controlALnoise}, since by \eqref{formulaforgradient},
  \eqref{controlonremaindergradient} and \eqref{comparisonofnorms}
  \begin{eqnarray*}
    | a_L^{\noise, 5} (z) | & \leqslant & | b^{\noise, 1}_L (z) | | (\sigma (u (z))
    \langle \Pi_z \lolli, \nabla_x \tilde{\varphi}^L_z \rangle + E_z^L) |\\
    & \leqslant & | b^{\noise, 1}_L (z) | \| \sigma \| ([\Pi ; \lolli]_{1 - \kappa}
    L^{- \kappa} + [U]_{\gamma_2, B (z, L)} L^{\gamma_2 - 1})\\
    & \lesssim & \Cnoise{}^2 L^{2 - 4 \kappa} C_{\star} (\Cdumb{} \vee
    \Cnoise{}^2 C_{\star}^{2 \kappa + 2 \delta}) + \Cnoise{}^3 L^{1 - 3 \kappa}
  \end{eqnarray*}
  and
  \begin{eqnarray*}
    &  & | a_L^{\noise, 6} (z) |\\
    & \leqslant & | b^{\noise, 2}_L (z) | \| \sigma \| ([\Pi ; \lolli]_{1 - \kappa}
    L^{- \kappa} + [U]_{\gamma_2, B (z, L)} L^{\gamma_2 - 1})\\
    & \lesssim & \left( \Cnoise{} L^{2 - 3 \kappa} C^2_{\star} + \Cnoise{}^{1 +
    \frac{1 - 2 \kappa}{2 (1 - \kappa)}} L^{\frac{5}{2} - 4 \kappa} C^{1 +
    \frac{1}{2 (1 - \kappa)}}_{\star} \right) (\Cdumb{} \vee \Cnoise{}^2
    C_{\star}^{2 \kappa + 2 \delta})^{1 + \frac{1}{2 (1 - \kappa)}}\\
    & + & \left( \Cnoise{}^2 L^{1 - 2 \kappa} C_{\star} + \Cnoise{}^{2 + \frac{1
    - 2 \kappa}{2 (1 - \kappa)}} L^{\frac{3}{2} - 3 \kappa} C^{\frac{1}{2 (1 -
    \kappa)}}_{\star} \right) (\Cdumb{} \vee \Cnoise{}^2 C_{\star}^{2
    \kappa + 2 \delta})^{\frac{1}{2 (1 - \kappa)}} .
  \end{eqnarray*}
  We conclude the proof by setting
  \[ a^{\noise}_L (z) \assign \sum_{i = 1}^6 a_L^{\noise, i} (z) \infixand b^{\noise}_L
     (z) \assign \sum_{i = 1}^2 b_L^{\noise, i} (z) . \]
\end{proof}

The next two lemmas are
the analogues to Lemma
\ref{splittinglemma1} for the objects $A_{z_1}^{\dumb} (z_2)$ and
$A_{z_1}^{\Xnoise} (z_2)$ in \eqref{reductionofG}. Their proofs are in Appendix
\ref{proofofdecompositionlemmas}. 

\begin{lemma}
  \label{splittinglemma2}Let the assumptions of Proposition
  \ref{regularisedequationwithtransport} be in force. Then
  \[ \lim_{N \rightarrow \infty} \Lambda_{N, L} [A^{\dumb} \Pi \dumb
     ] (z) = a_L^{\dumb} (z) + b_L^{\dumb} (z) \cdot \nabla u_L
     (z), \]
  where the functions $a^{\dumb}_L, b^{\dumb}_L$ are smooth and the
  convergence is uniform over $z \in D_{T_1}^{T_{\star}}$. Moreover,
  $a_L^{\dumb}$ satisfies
  \eqref{controlALdominant1}-\eqref{controlALnoise} and $b_L^{\dumb}$
  satisfies \eqref{controlonBL2}-\eqref{controlonBL}.
\end{lemma}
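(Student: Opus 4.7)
The proof will follow the same four-step pattern as Lemma \ref{splittinglemma1}, with the key arithmetic difference that the object $\Pi\dumb$ carries homogeneity $-2\kappa$ instead of $-1-\kappa$, so the convergence of the multi-scale sum \eqref{sumofdifferentscales} requires a contribution of order $d(z_1,z_2)^{\lambda}$ with $\lambda>2\kappa$ (rather than $\lambda>1+\kappa$). Concretely, I will decompose $A^{\dumb}_{z_1}(z_2)=\sigma'\sigma(u(z_2))-\sigma'\sigma(u(z_1))$ using the local description of $u$ to expose the pieces that either vanish after reconstruction or generate a transport term.

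First, set $\eta_{z_1}(z_2):=\sigma(u(z_1))(\Pi_{z_1}\lolli)(z_2)+u_X(z_1)\cdot(\Pi_{z_1}\X)(z_2)$ and write
\[
A^{\dumb}_{z_1}(z_2) = A^{\dumb,1}_{z_1}(z_2) + (\sigma'\sigma)'(u(z_1))\,\eta_{z_1}(z_2) + Q^{\dumb}_{z_1}(z_2),
\]
where $A^{\dumb,1}_{z_1}(z_2)=\sigma'\sigma(u(z_2))-\sigma'\sigma(u(z_1)+\eta_{z_1}(z_2))$ is controlled by $[U]_{\gamma_2,B(z,L)}\,d(z_1,z_2)^{\gamma_2}$ (using Assumption \ref{nonlinearsigma} on $\sigma'\sigma$), and $Q^{\dumb}_{z_1}(z_2)=\sigma'\sigma(u(z_1)+\eta_{z_1})-\sigma'\sigma(u(z_1))-(\sigma'\sigma)'(u(z_1))\eta_{z_1}$ is bounded by $C_\sigma^2|\eta_{z_1}(z_2)|^2$, exactly as $Q^{\noise}$ was in Lemma \ref{splittinglemma1}.

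Second, I split the linear piece $(\sigma'\sigma)'(u(z_1))\eta_{z_1}(z_2)$ into its $\lolli$-part and its $\X$-part, the latter providing the prospective transport contribution. Writing $u_X(z_1)=u_X(z)+(u_X(z_1)-u_X(z))$, this gives terms $A^{\dumb,2}_{z_1}(z_2)$ (pure $\lolli$, of order $d(z_1,z_2)^{1-\kappa}$) and $\tilde B^{\dumb,1}_{z_1}(z;z_2)+A^{\dumb,3}_{z_1}(z;z_2)$, where the $\tilde B$-term carries $u_X(z)$ and the $A^{\dumb,3}$-term uses $[u_X]_{\gamma_2-1,B(z,L)}$ to gain an extra $d(z,z_1)^{\gamma_2-1}$. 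The quadratic remainder $Q^{\dumb}$ is expanded by the identity $\eta^2=(\sigma(u(z_1))\Pi_{z_1}\lolli)^2+2\sigma(u(z_1))(\Pi_{z_1}\lolli)(\Pi_{z_1}\X)\cdot u_X(z_1)+(u_X(z_1)\cdot\Pi_{z_1}\X)^2$, and the same $u_X(z_1)=u_X(z)+(u_X(z_1)-u_X(z))$ trick yields further pieces $A^{\dumb,4},A^{\dumb,5},A^{\dumb,6}$ and $\tilde B^{\dumb,2}$, in one-to-one correspondence with $A^{\noise,2},A^{\noise,3},A^{\noise,4}$ and $\tilde B^{\noise,2}$ of Lemma \ref{splittinglemma1}.

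Third, for each $A^{\dumb,i}$, I verify the input bound $|A^{\dumb,i}_{z_1}(z_2)|\leq[A^{\dumb,i}]_{\lambda}\,d(z_1,z_2)^{\lambda}$ with $\lambda-2\kappa>0$ (this is where the smaller homogeneity helps: $\gamma_2-2\kappa=2-4\kappa>0$ and $1-\kappa-2\kappa=1-3\kappa>0$ in our regime $\kappa<1/3$). The telescoping sum then gives $a^{\dumb,i}_L(z):=\Lambda_L[A^{\dumb,i}\Pi\dumb](z)$ with $|a^{\dumb,i}_L(z)|\lesssim\Cdumb{}\,[A^{\dumb,i}]_{\lambda,B(z,L)}L^{\lambda-2\kappa}$. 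For each $\tilde B^{\dumb,j}$, the same estimate produces $b_L^{\dumb,j}(z)\cdot u_X(z)=\Lambda_L[\tilde B^{\dumb,j}\Pi\dumb](z)$. Finally I invoke Lemma \ref{lemmarelatinggradients} to substitute $u_X(z)=\nabla u_L(z)-\sigma(u(z))\langle\Pi_z\lolli,\nabla_x\tilde\vphi^L_z\rangle-E^L_z$, keeping $b_L^{\dumb,j}(z)\cdot\nabla u_L(z)$ inside the transport coefficient and absorbing the remainder into $a^{\dumb}_L$ via $|u_X-\nabla u_L|\lesssim\Cnoise{}L^{-\kappa}+[U]_{\gamma_2,B(z,L)}L^{\gamma_2-1}$.

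The main obstacle is not conceptual but accounting: after substituting the pointwise bounds of Lemma \ref{lemmaofblowups} for $[U]_{\gamma_2,B(z,L)}$, $\|u_X\|_{B(z,L)}$ and $[u_X]_{\gamma_2-1,B(z,L)}$, together with $[\Pi;\dumb]_{-2\kappa}\leq\Cdumb{}$ and $[\Pi;\lolli]_{1-\kappa}\leq\Cnoise{}$, one must check that each summand lands in one of the lines \eqref{controlALdominant1}--\eqref{controlALnoise} of the $a_L$-bound or \eqref{controlonBL2}--\eqref{controlonBL} of the $b_L$-bound. Since the target estimates were designed as a common upper bound for Lemmas \ref{splittinglemma1}--\ref{splittinglemma3} (with the $\Cnoise{}L^{-\kappa}$ factor in $A^{\noise}$ replaced by the qualitatively similar $\Cdumb{}L^{-1+\kappa}$ factor here, matching the extra power gained from reconstructing against $\Pi\dumb$ rather than $\Pi\noise$), each $a_L^{\dumb,i}$ and $b_L^{\dumb,j}$ is majorised by one of the listed lines by direct inspection. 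Summing the finitely many pieces gives the required smoothness, the uniform convergence on $D_{T_1}^{T_\star}$, and the estimates $a_L^{\dumb}\lesssim\eqref{controlALdominant1}+\cdots+\eqref{controlALnoise}$ and $b_L^{\dumb}\lesssim\eqref{controlonBL2}+\eqref{controlonBL}$.
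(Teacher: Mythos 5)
Your skeleton (multi-scale reconstruction of $A^{\dumb}\Pi\dumb$, then substituting $u_X(z)=\nabla u_L(z)-\sigma(u(z))\langle\Pi_z\lolli,\nabla_x\tilde\vphi^L_z\rangle-E_z^L$ via Lemma \ref{lemmarelatinggradients}) is the right one, but your decomposition is heavier than what the paper uses, and this is where the argument has a gap. Because $|\dumb|=-2\kappa$, the paper stops at \emph{first} order: it writes $A^{\dumb}_{z_1}(z_2)=I_1((\sigma'\sigma)',z_1,z_2)\,(u(z_2)-u(z_1))$ with $I_1$ the integral-remainder factor, and splits the increment $u(z_2)-u(z_1)$ via the local description into the four pieces $U_{z_1}(z_2)-u_X(z_1)\cdot(\Pi_{z_1}\X)(z_2)$, $(\Pi_{z_1}\X)(z_2)\cdot u_X(z)$, $(\Pi_{z_1}\X)(z_2)\cdot(u_X(z_1)-u_X(z))$ and $\sigma(u(z_1))(\Pi_{z_1}\lolli)(z_2)$, whose exponents $\gamma_2$, $1$, $\gamma_2$, $1-\kappa$ all exceed $2\kappa$; no quadratic remainder $Q^{\dumb}\sim|\eta|^2$ is needed, and the resulting four $a$-terms plus $b^{\dumb,1}_L\cdot(u_X-\nabla u_L)$ land exactly in the lines \eqref{controlALdominant1}, \eqref{controlALmixed1}, \eqref{controlALmixed2}, \eqref{controlALnoise} and \eqref{controlonBL}.

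By instead mimicking Lemma \ref{splittinglemma1} with a second-order expansion you generate additional terms that the paper's estimate lines were not designed for, and your closing claim that "each summand is majorised by one of the listed lines by direct inspection" does not hold as stated. Concretely, the $(u_X(z_1)\cdot\Pi_{z_1}\X)^2$ piece of $Q^{\dumb}$, paired with $\Pi\dumb$ rather than $\Pi\noise$, produces a transport coefficient of size $\Cdumb{}\,\|u_X\|_{B(z,L)}L^{2-2\kappa}$ and an $a_L$-contribution of size $\Cdumb{}\,\|u_X\|_{B(z,L)}[u_X]_{\gamma_2-1,B(z,L)}L^{\gamma_2+1-2\kappa}$; compared with the corresponding lines (\eqref{controlonBL2}--\eqref{controlonBL3}, resp. \eqref{controlALdominant3}/\eqref{controlALmixed4}) these carry an extra factor of order $\Cdumb{}L^{1-\kappa}/\Cnoise{}$, which is not bounded by $1$ uniformly over admissible $L<\sqrt{T_1}/2$, so they are not dominated by the stated right-hand sides pointwise in $L$ (only an additional balancing argument in the spirit of Appendix \ref{appendixtomaindecomposition}, after inserting $\tilde L$, would rescue them). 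So either you must extend the list of allowed terms and redo that balancing, or -- much simpler and what the paper does -- exploit the $-2\kappa$ homogeneity to stop the Taylor expansion at first order, which makes all quadratic-in-$u_X$ terms disappear and yields the lemma exactly as stated.
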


\begin{lemma}
  \label{splittinglemma3}Let the assumptions of Proposition
  \ref{regularisedequationwithtransport} be in force. Then
  \begin{equation}
    \lim_{N \rightarrow \infty} \Lambda_{N, L} [A^{\Xnoise} \Pi \Xnoise] (z) =
    a^{\Xnoise}_L (z) + b^{\Xnoise}_L (z) \cdot \nabla u_L (z),
  \end{equation}
  where the convergence is uniform over $z \in D_{T_1}^{T_{\star}}$. Furthermore,
  $a^{\Xnoise}_L$ satisfies
  \eqref{controlALdominant1}-\eqref{controlALnoise} and $b^{\Xnoise}_L$ satisfies
  \eqref{controlonBL2}-\eqref{controlonBL}.
\end{lemma}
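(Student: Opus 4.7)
The plan is to mimic the proof of Lemma \ref{splittinglemma1} almost verbatim, treating $A^{\Xnoise}$ in place of $A^{\noise}$ and pairing against $\Pi_{z_2}\Xnoise$ of homogeneity $-\kappa$ rather than $\Pi_{z_2}\noise$ of homogeneity $-1-\kappa$. Recall that for a function $H_{z_1}(z_2)$ satisfying $|H_{z_1}(z_2)| \leqslant [H]_\lambda d(z_1,z_2)^\lambda$, the limit $\Lambda_L[H \Pi \Xnoise](z) = \lim_{N\to\infty}\Lambda_{N,L}[H \Pi \Xnoise](z)$ exists whenever $\lambda - \kappa > 0$, with
\[
|\Lambda_L[H \Pi \Xnoise](z)| \leqslant [\Pi;\Xnoise]_{-\kappa}[H]_{\lambda,B(z,L)} L^{\lambda-\kappa}.
\]
Every piece we produce below will have $\lambda \geqslant \gamma_2 - 1 = 1 - 2\kappa$, which exceeds $\kappa$ under the assumption $\kappa < \bar{\kappa} < 1/3$, so the series in \eqref{sumofdifferentscales} converges and the uniform limit over $z \in D_{T_1}^{T_\star}$ is smooth.

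First, I would split
\[
A^{\Xnoise}_{z_1}(z_2) = \bigl(\sigma'(u(z_2)) - \sigma'(u(z_1))\bigr) u_X(z_2) + \sigma'(u(z_1))\bigl(u_X(z_2) - u_X(z_1)\bigr),
\]
and then, following the scheme of Lemma \ref{splittinglemma1}, decompose the first summand through a Taylor expansion of $\sigma'$ around $u(z_1) + \eta_{z_1}(z_2)$ (with $\eta_{z_1}(z_2) = \sigma(u(z_1))(\Pi_{z_1}\lolli)(z_2) + u_X(z_1)\cdot(\Pi_{z_1}\X)(z_2)$). This produces three kinds of contributions:
\begin{enumerate}[(a)]
\item a ``chain-rule remainder'' $\sigma'(u(z_2)) - \sigma'(u(z_1)+\eta_{z_1}(z_2))$, bounded by $\|\sigma''\|[U]_{\gamma_2,B(z,L)} d(z_1,z_2)^{\gamma_2}$;
\item a linear-in-$\eta$ piece $\sigma''(u(z_1))\eta_{z_1}(z_2)$, which splits into an $\lolli$-part (bounded by $\Cnoise{} d(z_1,z_2)^{1-\kappa}$) and an $\X$-part (giving $|u_X(z_1)|\, d(z_1,z_2)$);
\item a quadratic remainder $O(\eta_{z_1}(z_2)^2)$ via a second integral representation, which is further broken into pure-$\lolli^2$, mixed $\lolli\X$, and pure-$\X^2$ parts.
\end{enumerate}
Multiplying each of these by $u_X(z_2)$, and additionally handling the $\sigma'(u(z_1))(u_X(z_2)-u_X(z_1))$ summand directly via $[u_X]_{\gamma_2-1,B(z,L)} d(z_1,z_2)^{\gamma_2-1}$, I obtain a finite list of pieces, each of the form $H^{(i)}_{z_1}(z_2)\cdot \Pi_{z_2}\Xnoise$, ready for $\Lambda_L$.

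The second, crucial step is to isolate the transport contribution. Wherever a factor $u_X(z_j)$ appears that I want to turn into $\nabla u_L(z)$, I write $u_X(z_j) = u_X(z) + (u_X(z_j) - u_X(z))$, the first summand contributing to $b^{\Xnoise}_L(z)\cdot u_X(z)$ and the second picking up a factor $d(z,z_j)^{\gamma_2-1}$ via $[u_X]_{\gamma_2-1,B(z,L)}$, moved to $a^{\Xnoise}_L$. Then I replace $u_X(z) = \nabla u_L(z) - \sigma(u(z))\langle \Pi_z\lolli, \nabla_x\tilde\varphi^L_z\rangle - E_z^L$ from Lemma \ref{lemmarelatinggradients}, the first term becoming the announced $b^{\Xnoise}_L(z)\cdot\nabla u_L(z)$ and the last two contributions absorbed into $a^{\Xnoise}_L$ using \eqref{controllolligradphi} and \eqref{controlonremaindergradient}.

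Finally, I would invoke Lemmas \ref{lemmaofblowups}(i)--(iii) to bound $[U]_{\gamma_2}$, $\|u_X\|$ and $[u_X]_{\gamma_2-1}$ on $B(z,L)$ in terms of $\Cnoise{}$, $\Cdumb{}$ and $C_\star$, then match each resulting term against the bounds \eqref{controlALdominant1}--\eqref{controlALnoise} and \eqref{controlonBL3}--\eqref{controlonBL}. The only delicate point, and where the argument differs from Lemma \ref{splittinglemma1}, is that since $\Pi_{z_2}\Xnoise$ is of order $L^{-\kappa}$ rather than $L^{-1-\kappa}$, a number of pieces which in Lemma \ref{splittinglemma1} were borderline become comfortable here, while the delicate bookkeeping of the $u_X^2$-type contributions (now coming from $\eta^2 u_X$ and from $(u_X-u_X(z))u_X$ cross-terms) still produces the worst exponent combinations matching \eqref{controlALdominant3} and \eqref{controlALmixed4}. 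The main obstacle is therefore not a new idea but the careful bookkeeping to verify that every piece falls inside one of the displayed bounds, which is essentially a routine adaptation of the proof of Lemma \ref{splittinglemma1}.
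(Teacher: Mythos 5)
Your overall scaffolding (the summability criterion $\lambda-\kappa>0$ for pairing against $\Pi\Xnoise$, the splitting $u_X(z_j)=u_X(z)+(u_X(z_j)-u_X(z))$, the conversion of $u_X(z)$ into $\nabla u_L(z)$ via Lemma \ref{lemmarelatinggradients}, and the final use of Lemma \ref{lemmaofblowups}) matches the paper's proof. But there is a genuine gap in the core decomposition, namely your step (c): writing $\sigma'(u(z_1)+\eta_{z_1}(z_2))-\sigma'(u(z_1))=\sigma''(u(z_1))\eta_{z_1}(z_2)+O(\eta_{z_1}(z_2)^2)$ ``via a second integral representation'' requires differentiating $\sigma''$, i.e.\ a bound on $\sigma'''$ (or a Lipschitz bound on $\sigma''$). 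Assumption \ref{nonlinearsigma} only gives $\sigma\in C_b^2$, so with $\sigma''$ merely bounded the remainder can only be estimated by $2\|\sigma''\|\,|\eta_{z_1}(z_2)|$, and the separation between your pieces (b) and (c) collapses. This is precisely why the paper's proof of this lemma does \emph{not} mimic Lemma \ref{splittinglemma1}: since $\Pi\Xnoise$ has homogeneity $-\kappa$, the multi-scale sum only needs H\"older exponent $\lambda>\kappa$, so the first-order representation
\begin{equation*}
  \sigma'(u(z_2))-\sigma'(u(z_1))=(u(z_2)-u(z_1))\int_0^1\sigma''(\lambda u(z_2)+(1-\lambda)u(z_1))\,\mathd\lambda
\end{equation*}
suffices; one then decomposes $u(z_2)-u(z_1)$ into $U_{z_1}(z_2)-u_X(z_1)\cdot(\Pi_{z_1}\X)(z_2)$, $u_X(z_1)\cdot(\Pi_{z_1}\X)(z_2)$ and $\sigma(u(z_1))(\Pi_{z_1}\lolli)(z_2)$, whose exponents $\gamma_2$, $1$, $1-\kappa$ all exceed $\kappa$. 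The second-order expansion in Lemma \ref{splittinglemma1} was forced by the threshold $1+\kappa$ of the $\noise$ pairing, and crucially it only consumed $\sigma''$ because there the expanded function is $\sigma$, not $\sigma'$.

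A secondary problem would persist even if you strengthened the assumption to $\sigma\in C_b^3$: the quadratic remainder multiplied by $u_X(z_2)$ contains, through the $(\Pi_{z_1}\X)(z_2)^{\otimes 2}$ part of $\eta_{z_1}(z_2)^2$, contributions of the type $\|u_X\|^2_{B(z,L)}[u_X]_{\gamma_2-1,B(z,L)}$ in $a_L^{\Xnoise}$ (cubic in the generalised gradient), which are not in the inventory \eqref{controlALdominant1}--\eqref{controlALnoise} and whose compatibility with the balancing choice of $\tilde L$ in \eqref{choiceofL} would have to be verified separately; the paper's first-order route keeps every term a product of at most one increment factor and one $u_X$ factor, which is what the listed bounds encode. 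The fix is simply to drop the second-order expansion of $\sigma'$ and argue as above; the rest of your plan then goes through as in the paper.
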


We are ready to prove Proposition \ref{regularisedequationwithtransport}.

\vspace{2mm}
\begin{proof*}{Proof of Proposition \ref{regularisedequationwithtransport}.}
We start by
noting that, for $L < 1$, $\Xi_L$ satisfies
\[ | \Xi_L (z) | \leqslant \| \sigma' \| \| \sigma \| [\Pi ; \dumb]_{- 2
   \kappa} L^{- 2 \kappa} + \| \sigma \| [\Pi ; \noise]_{- 1 - \kappa} L^{- 1 - \kappa}
   \lesssim \Cdumb{} L^{- 2 \kappa} + \Cnoise{} L^{- 1 - \kappa}, \]
which 
allows us to include
$\Xi_L$ as part of $c_L$ and satisfy \eqref{controlonCL}. By using
\eqref{formulaforgradient}, we may write
\begin{eqnarray}
  \tilde{B}_L (z) & = & \sigma' (u (z)) \langle \Pi_z \Xnoise, \varphi_z^L
  \rangle \cdot \nabla u_L (z)  \label{negativetransportterm}\\
  & - & \sigma' \sigma (u (z)) \langle \Pi_z \Xnoise, \varphi_z^L \rangle
  \cdot \langle \Pi_z \lolli, \nabla_x \tilde{\varphi}^L_z \rangle 
  \label{extranegativeterm}\\
  & - & \sigma' (u (z)) \langle \Pi_z \Xnoise, \varphi_z^L \rangle \cdot
  E_z^L,  \label{remainderoftrasport}
\end{eqnarray}
from which we see that the term in \eqref{negativetransportterm} can contribute
to $b_L$ in \eqref{regularisedequation} with the negative power of $L$ in
\eqref{controlonBL}, since
\[ | \sigma' (u (z)) \langle \Pi_z \Xnoise, \varphi_z^L \rangle | \leqslant \|
   \sigma' \| [\Pi ; \Xnoise]_{- \kappa} L^{- \kappa} \lesssim \Cnoise{} L^{- \kappa} .
\]
The term in \eqref{extranegativeterm} can be part of $c_L$ in
\eqref{regularisedequation}, as it satisfies
\[ | \sigma' \sigma (u (z)) \langle \Pi_z \Xnoise, \varphi_z^L \rangle \cdot
   \langle \Pi_z \lolli, \nabla_x \tilde{\varphi}^L_z \rangle | \leqslant \|
   \sigma' \| \| \sigma \| [\Pi ; \Xnoise]_{- \kappa} [\Pi ; \lolli]_{1 - \kappa} L^{- 2
   \kappa} \lesssim \Cnoise{}^2 L^{- 2 \kappa}, \]
which respects \eqref{controlonCL}. For the term in
\eqref{remainderoftrasport}, we use \eqref{controlonremaindergradient} with
$\gamma_2$ and \eqref{comparisonofnorms} to obtain
\begin{eqnarray*}
  | \sigma' (u (z)) \langle \Pi_z \Xnoise, \varphi_z^L \rangle \cdot E_z^L | &
  \leqslant & \| \sigma' \| [\Pi ; \Xnoise]_{- \kappa} [U]_{\gamma_2, B (z, L)}
  L^{\gamma_2 - 1 - \kappa}\\
  & \lesssim & \Cnoise{} L^{1 - 3 \kappa} C_{\star} (\Cdumb{} \vee
  \Cnoise{}^2 C_{\star}^{2 \kappa + 2 \delta})
\end{eqnarray*}
which makes it part of $a_L$ in \eqref{regularisedequation} satisfying
\eqref{controlALdominant1}. The proof 
is concluded
by grouping together as $a_L$ all the terms that satisfy
\eqref{controlALdominant1}-\eqref{controlALnoise}, as $b_L$ all that satisfy
\eqref{controlonBL3}-\eqref{controlonBL} and as $c_L$ all that satisfy
\eqref{controlonCL}, including those coming from 
Lemmas \ref{splittinglemma1}, \ref{splittinglemma2} and \ref{splittinglemma3}.

\end{proof*}

\subsection{Proof of Theorem \ref{theorempolynomialgrowth}} \label{sectionproofoftheorempolynomial}

To prove this theorem, we make explicit the dependence on the time interval
  of the constants $\Cnoise{,n}$ and $\Cdumb{,n}$, for every $n \in
  \mathbb{N}$, as in \eqref{assumptiononstochasticobjects}. We point out that
  everything that was done so far can be carried out analogously for each of
  the time intervals $D_{n - 1}^n$, and we will make use of it. In particular,
  Theorem \ref{maintheorem} holds identically for every $n \in \mathbb{N}$ and
  $D_{n - 1}^n$, with $\| u_0 \|$ replaced by $\| u_{| t = n - 1 \nobracket}
  \|$ and $\Cnoise{,1}, \Cdumb{,1}$ replaced by $\Cnoise{,n}, \Cdumb{,n}$,
   accordingly.
  
  Again the constant $C (\kappa, d, C_{\sigma}) > 0$ may change from line to
  line. For every $n \in \mathbb{N}$, let $T_{\star, n} = T_{\star, n}
  (C_{\star, n})$, for a constant $C_{\star, n} \geqslant \left\| {u_{| t = n
  - 1 \nobracket}}  \right\|$, be defined according to
  \eqref{definitionstoppingtime}, i.e.,
  \begin{equation}
    T_{\star, n} \assign \inf \{ t \in [n - 1, n] : \| u \|_{D_{n - 1}^t}
    \geqslant C_{\star, n} \}, \label{definitionofTstarN}
  \end{equation}
  which corresponds to \eqref{definitionstoppingtime} when $n = 1$. Our goal
  is then to look at $Y_n \assign \| u \|_{D_{n - 1}^n}$ and obtain a
  difference inequality for $Y_n$, which can be estimated recursively.

  \vspace{2mm}   
  \begin{proof*}{Proof of Theorem \ref{theorempolynomialgrowth}.}
  We start
  with Theorem \ref{maintheorem}, which provides us with a control in
  \eqref{mainresultbound} that corresponds to $n = 1$, meaning
  \[ Y_1 = \| u \|_{D_0^1} \leqslant C (\kappa, d, C_{\sigma}) \max \left\{ \|
     u_0 \|, \Cdumb{,1}^{\frac{1}{(1 - \kappa) (1 - \betadumb)}},
      \Cnoise{,1}^{\frac{2}{(1 - \kappa) (1 - \betanoise)}} \right\} .
  \]
  Further, assume control on $\| u \|_{D_{n - 1}^n}$ and proceed by induction.
  We may again obtain a control on $\| u \|_{D_n^{n + 1}}$ using Theorem
  \ref{maintheorem}, \ by making the choice
  \[ C_{\star, n + 1} \gtrsim \max \left\{ \| u_{| t = n \nobracket} \|,
     \Cdumb{,n + 1}^{\frac{1}{(1 - \kappa) (1 - \betadumb)}},
     \Cnoise{,n + 1}^{\frac{2}{(1 - \kappa) (1 - \betanoise)}} \right\}, \]
  which implies $T_{\star, n + 1} = n + 1$. Furthermore, this corresponds to
  the control
  \begin{equation}
    \| u \|_{D_n^{n + 1}} \lesssim C_{\star, n + 1} \lesssim Y_n + \Cdumb{, n + 1}^{
    \frac{1}{(1 - \kappa) (1 - \betadumb)}} \vee \Cnoise{,n + 1}^{\frac{2}{(1 - \kappa)
    (1 - \betanoise)}},
    \label{controlonNtoNplus1}
  \end{equation}
  since $\| u_{| t = n \nobracket} \| \leqslant \| u \|_{D_{n - 1}^n} = Y_n$.
  Then, we may repeat the analysis
  \refchange{of the step in the proof of Theorem 
  \ref{theoremtopostprocessintopolynimial}
  which makes use of Proposition
  \ref{propositiontransportdecomposition}
  and leads to
  \eqref{torepeatinthepolynomialgrowth},}
  but with one important
  difference. Recall
  \refchange{from \eqref{definitionofuL},
  \eqref{defofmollifierkernel}
  and \eqref{balllookingtopast}}
  that when looking to the regularised equation centred at
  a point $z = (t, x)$
  \refchange{and at scale $L < \sqrt{T_1/2}$}
  points that are at most $T_1 / 2$ into the past of $t$
  are also considered for the original equation. In the first time around, we
  separate the domain $D_0^1$ into $D_0^{T_1}$ and $D_{T_1}^1$ in order to be
  able to attain uniform bounds on $a_L$ and $c_L$ over $D_{T_1}^1$ in
  Proposition \ref{regularisedequationwithtransport}, as we stay uniformly
  away from $t = 0$ in this domain when looking at most $T_1 / 2$ into the
  past. The splitting was also done for $D_n^{n + 1}$ to obtain
  \eqref{controlonNtoNplus1}, but since $n \geqslant 1$, every point in
  $D_n^{n + 1}$ is away from $t = 0$ when looking at most $T_1 / 2$ into the
  past. 
  This observation allows us to perform step three directly over the
  whole domain $D_n^{n + 1}$, while making use of the estimate in
  \eqref{controlonNtoNplus1}. By carrying this on, the analog to
  \eqref{torepeatinthepolynomialgrowth} becomes
  \begin{equation}
    \| u \|_{D_n^{n + 1}} \leqslant \| u_{\tilde{L} | t = n \nobracket} \| + C
    (\kappa, d, C_{\sigma}) \left( \tCdumb{,n + 1}^{\frac{1}{1 -
    \kappa}} C^{\betadumb}_{\star, n + 1} \vee \tCnoise{, n +
    1}^{\frac{2}{1 - \kappa}} {C^{\betanoise}_{\star, n + 1}}  \right),
    \label{intermediatepolynomialgrowth}
  \end{equation}
  where $\tilde{L} \leqslant \sqrt{T_1}/2$ is exactly as in
  \eqref{choiceofL} - with the correct dependence on $n \in \mathbb{N}$ - and
  $\tCdumb{,n + 1}$, $\tCnoise{,n + 1}$ are defined as
  \begin{equation}
    \tCdumb{,n + 1} \assign \Cdumb{,n} \vee \Cdumb{,n+1} \quad \tmop{and} \quad \tCnoise{,n + 1} \assign \Cnoise{, n} \vee
     \Cnoise{,n + 1} . \label{definitionofCtildenoise}
  \end{equation}
  The new constants $\tCdumb{, n + 1}$ and $\tCnoise{,n + 1}$ are necessary
  because when we look to $u_{\tilde{L}}$
  over the whole domain $D_n^{n + 1}$, we invade the domain $D_{n -
  1}^n$ of the original $u$, since we look to the past of $t = n$. This means
  that we pick up influences of the noise in the previous domain $D_{n - 1}^n$
  - but not earlier since $\tilde{L} < 1$. For the same reason, we see that
  $\| u_{\tilde{L} | t = n \nobracket} \| \leqslant \| u \|_{D_{n - 1}^n} =
  Y_n$, and therefore by plugging \eqref{controlonNtoNplus1} into
  \eqref{intermediatepolynomialgrowth} we arrive at
  \begin{eqnarray*}
    &  & Y_{n + 1} - Y_n\\
    & \lesssim & \tCdumb{,n + 1}^{\frac{1}{1 - \kappa}}
    C^{\betadumb}_{\star, n + 1} \vee \tCnoise{,n + 1}
    ^{\frac{2}{1 - \kappa}} {C^{\betanoise}_{\star, n + 1}} \\
    & \lesssim & \tCdumb{,n + 1}^{\frac{1}{1 - \kappa}}
    Y_n^{\betadumb} \vee \tCnoise{,n + 1}^{\frac{2}{1 -
    \kappa}} {Y_n^{\betanoise}} \\
    & + & \tCdumb{,n + 1}^{\frac{1}{1 - \kappa}} \left(
    \Cdumb{,n + 1}^{\frac{\betadumb}{(1 - \kappa) (1 -
    \betadumb)}} \vee \Cnoise{,n + 1}^{\frac{2 \betadumb}{(1
    - \kappa) (1 - \betanoise)}} \right)\\
    & + & \tCnoise{,n + 1}^{\frac{2}{1 - \kappa}} \left( \Cdumb{,n + 1}
    ^{\frac{\betanoise}{(1 - \kappa) (1 - \betadumb)}} \vee
    \Cnoise{,n + 1}^{\frac{2 \betanoise}{(1 - \kappa) (1 - \betanoise)}}
    \right)\\
    & \lesssim & \tCdumb{,n + 1}^{\frac{1}{1 - \kappa}}
    Y_n^{\betadumb} \vee \tCnoise{,n + 1}^{\frac{2}{1 -
    \kappa}} {Y_n^{\betanoise}} \\
    & + & \tCdumb{,n + 1}^{\frac{1}{(1 - \kappa) (1 -
    \betadumb)}} \vee \tCdumb{,n + 1}^{\frac{1}{1 -
    \kappa}} \tCnoise{,n + 1}^{\frac{2 \betadumb}{(1 - \kappa)
    (1 - \betanoise)}} \vee \tCdumb{,n + 1}
    ^{\frac{\betanoise}{(1 - \kappa) (1 - \betadumb)}}
    \tCnoise{,n + 1}^{\frac{2}{1 - \kappa}} \vee \tCnoise{, n + 1}
    ^{\frac{2}{(1 - \kappa) (1 - \betanoise)}}\\
    & \lesssim & \tCdumb{,n + 1}^{\frac{1}{1 - \kappa}}
    Y_n^{\betadumb} \vee \tCnoise{,n + 1}^{\frac{2}{1 -
    \kappa}} {Y_n^{\betanoise}}  + \tCdumb{,n + 1}^{\frac{1}{(1
    - \kappa) (1 - \betadumb)}} \vee \tCnoise{, n + 1}
    ^{\frac{2}{(1 - \kappa) (1 - \betanoise)}},
  \end{eqnarray*}
  since $\Cnoise{,n + 1} \leqslant \tCnoise{,n + 1}$ and $\Cdumb{,n + 1}
   \leqslant \tCdumb{,n + 1}$. Here as usual $\lesssim$
  hides a constant that only depends on $\kappa, d$ and $C_{\sigma}$. Let us
  rewrite this equation as
  \begin{equation}
    Y_{n + 1} - Y_n \leqslant C (\kappa, d, C_{\sigma}) \left(
    \tCdumb{,n + 1}^{\frac{1}{1 - \kappa}} \vee \tCnoise{,n + 1}
    ^{\frac{2}{1 - \kappa}} \right)^{\frac{1}{1 - \betanoise}}
    {Y_n^{\betanoise}}  \label{differenceequationpolynomial},
  \end{equation}
  for which we refer to Appendix \ref{Appendixdifferenceequations} on how it
  can be estimated recursively to obtain, for every $n < t \leqslant n + 1$,
  \refchange{
  \begin{equation}
    \begin{array}{lll}
      &  & \displaystyle \| u \|_{D_0^t}\\
      & \leqslant & \displaystyle \max_{1 \leqslant i \leqslant n + 1} Y_i\\
      & \leqslant & C (\kappa, d, C_{\sigma}) \max \left\{ \| u \|_{D_0^1},
      \max_{1 \leqslant i \leqslant n + 1} \left\{ \Cdumb{,i}
      ^{\frac{1}{1 - \kappa}}, \Cnoise{,i}^{\frac{2}{1 - \kappa}}
      \right\}^{\frac{1}{(1 - \betanoise)^2}} n^{\frac{1}{1 - \betanoise}}
      \right\}\\
      & \leqslant & C (\kappa, d, C_{\sigma}) \max \left\{ \| u_0 \|, \max_{1
      \leqslant i \leqslant n + 1} \left\{ \Cdumb{,i}^{\frac{1}{1 -
      \kappa}}, \Cnoise{,i}^{\frac{2}{1 - \kappa}} \right\}^{\frac{1}{(1 -
      \betanoise)^2}} t^{\frac{1}{1 - \betanoise}} \right\}
    \end{array} \label{finalpolynomialgrowth}
  \end{equation}
  }which gives \eqref{polynomialgrowth} and concludes the proof of the theorem.
\end{proof*}

\section{Proof of Theorem \ref{theoinfinityschauderpostprocessing}
}\label{sectionproofofshorttimescontrol}

The strategy to prove this theorem is to split the original equation
\eqref{mainequationrenormalised} into two coupled equations \eqref{thesplittingu1} and
\eqref{thesplittingu2}. While \eqref{thesplittingu1} is straightforward to
deal with, \eqref{thesplittingu2} contains the singular product $\sigma (u) \diamond
\xi$. This decomposition \eqref{thesplittingu1}-\eqref{thesplittingu2} helps
us overcome the blow-up near the boundary $t = 0$ in the interior estimate of
Section \ref{sectionchoiceofT0}. Indeed, the zero initial condition in
\eqref{thesplittingu2} allows us to extend the inhomogeneous equation to
negative times and therefore get rid of the boundary.

The extension of $\sigma (u)\diamond \xi$ follows {\cite[Prop. 6.9]{Hai14}}, which requires
control of the blow-ups near $t = 0$
of each of the components in the local description of $\sigma (u) \diamond \xi$. Here,
we will make use of Theorem \ref{theointeriorestimate}, 
so henceforth its assumptions are in force. 
Recall that $D_0^{T} =
[0, T] \times \mathbb{T}^d$, where $T$ is given by \eqref{conditionforT},
and let $D_0 \assign \{ t = 0 \} \times \mathbb{T}^d$ and $D^{T}_+ \assign
D_0^{T} \setminus D_0 = (0, T] \times \mathbb{T}^d$. Consider $u_1 : D
\rightarrow \mathbb{R}$ the solution to
\begin{equation}
  \left\{\begin{array}{rllll}
    (\partial_t - \Delta) u_1 & = & 0 & \tmop{in} & D^{T}_+\\
    u_1 & = & u_0 &
    \tmop{on} & D_0
  \end{array}\right.  \label{thesplittingu1}
\end{equation}
and $u_2 : D_0^{T} \rightarrow \mathbb{R}$ the solution to
\begin{equation}
  \left\{\begin{array}{rllll}
    (\partial_t - \Delta) u_2 & = & \sigma (u_1 + u_2) \diamond \xi & \tmop{in} &
    D^{T}_+\\
    u_2 & = & 0 & \tmop{on}
    & D_0
  \end{array}\right., \label{thesplittingu2}
\end{equation}
so that $u : D_0^{T} \rightarrow \mathbb{R}$ solution to
\eqref{mainequation} can be rewritten as
\begin{equation}
  u = u_1 + u_2 . \label{splittingofu}
\end{equation}
For \eqref{thesplittingu1}, denote by $(P^t)_{t \geqslant 0}$ the heat
semigroup with periodic (spatial) boundary conditions, then
\begin{equation}
  D \ni z = (t, x) \mapsto u_1 (t, x) = P^t u_0 (x) \Longrightarrow \| u_1
  \|_{D_0^{T}} \leqslant \| u_0 \| . \label{controlonu1}
\end{equation}
We move to \eqref{thesplittingu2}, for which we seek to control $\| u_2
\|_{D_0^{T}}$. The goal is to extend the domain $D_0^{T}$ to negative
times, which we denote by
\[ D_{- \infty}^{T} \assign (- \infty, T] \times \mathbb{T}^d = D_- \cup
   D_0 \cup D^{T}_+  \quad \tmop{where} \quad D_- \assign (- \infty, 0)
   \times \mathbb{T}^d . \]
We recall how to define $\sigma (u) \diamond \xi$ over $D^{T}_+$ and to study the
blow up near $t = 0$ of each of the components of its local description. For
every $\tau \in (0, 1]$ let $d_{\tau} = \sqrt{\tau}$ and consider $L <
d_{\tau} / 2$, so that for every $z \in D^{T}_{\tau + L}$, $\varphi_z^L$ is
supported in $D_{\tau}^{T}$. By using \eqref{localapproxG} of Section
\ref{sectionReconstruction}, we may write for $z \in D^{T}_{\tau + L}$
\begin{eqnarray}
  \sigma (u) \diamond \noiseb & = & \sigma (u) \diamond \noiseb - G_z 
  \label{expansionblowupreconstruction}\\
  & + & \sigma (u (z)) \Pi_z \noise + \sigma' \sigma (u (z)) \Pi_z \dumb +
  \sigma' (u (z)) u_X (z) \Pi_z \Xnoise . 
  \label{expansionblowuplocaldescription}
\end{eqnarray}
We introduce norms with blow ups following {\cite[Def.
6.2]{Hai14}}, which generalise \eqref{normofUblowup}. For $\gamma >
0$, $\eta \in \mathbb{R}$, and a base point dependent family of distributions
$(f_z)_{z \in D_+^{T}}$, we let
\[ \| f_{\cdot} \|_{\gamma, \eta, [0, T]} \assign \sup_{\tau \in (0, T]}
   \sup_{L < \frac{d_{\tau}}{2}} \sup_{z \in D_{\tau + L}^{T}}
   d_{\tau}^{\gamma - \eta} | \langle f_z, \varphi_z^L \rangle | L^{- \gamma}
   . \]
We aim to control the norm above with the correct blow up for each of the
quantities in \eqref{expansionblowupreconstruction} and
\eqref{expansionblowuplocaldescription}. Here $1 + \kappa < \gamma \leqslant
2 - 2 \kappa$ as in Theorem \ref{theointeriorestimate}. 
We start with $\sigma (u) \diamond \noiseb - G_z$, which by \eqref{outputreconstruction}
\begin{eqnarray}
  &  & \| \sigma (u) \diamond \noiseb - G_{\cdot} \|_{\gamma - 1 - \kappa, - 1 - \kappa,
  [0, T]} \nonumber\\
  & = & \sup_{\tau \in (0, T]} \sup_{L < \frac{d_{\tau}}{2}} \sup_{z \in
  D_{\tau + L}^{T}} d_{\tau}^{\gamma} | \langle \sigma (u) \diamond \noiseb - G_z,
  \varphi_z^L \rangle | L^{- \gamma + 1 + \kappa} \nonumber\\
  & \lesssim & 
    \sup_{\tau \in (0, T]} \sup_{L < \frac{d_{\tau}}{2}} \sup_{z \in D_{\tau
    + L}^{T}} d_{\tau}^{\gamma} \nonumber \\
    & & \times (\Cnoise{} [\sigma (U)]_{\gamma, B (z, L)} + \Cdumb{} [\sigma'
    \sigma (u)]_{\gamma - 1 + \kappa, B (z, L)} + \Cnoise{} [\sigma' (u)
    u_X]_{\gamma - 1, B (z, L)}) \nonumber \\
    & \leqslant &
    \sup_{\tau \in (0, T]} d_{\tau}^{\gamma} 
    \left( \Cnoise{} [\sigma (U)]_{\gamma, D_{\tau}^{T}, \frac{d_{\tau}}{2}} \right. \nonumber \\
    & & \left. +
    \Cdumb{} [\sigma' \sigma (u)]_{\gamma - 1 + \kappa,
    D_{\tau}^{T}, \frac{d_{\tau}}{2}} + \Cnoise{} [\sigma' (u) u_X]_{\gamma
    - 1, D_{\tau}^{T}, \frac{d_{\tau}}{2}} \right)
  \label{controlbeforelastschauder}
\end{eqnarray}
Note that the blow up of order $\gamma - 1 - \kappa - (- 1 - \kappa) =
\gamma$ corresponds to the gain of regularity from $0$ to $\gamma$ of the
enhancement of $\sigma (u)$, denoted by $\sigma (U)$. The term above
corresponds to the one in \eqref{schauderRHSII}, except for an extra
$d_{\tau}^{1 - \kappa}$ factor. Repeating the arguments of Section
\ref{sectioninteriorestimate} we arrive at
\begin{eqnarray*}
  &  & \| \sigma (u) \diamond \noiseb - G_{\cdot} \|_{\gamma - 1 - \kappa, - 1 - \kappa,
  [0, T]}\\
  & \lesssim & \Cnoise{} \| u \|^{\gamma - 1}_{D_0^{T}}  [U]_{\gamma, 0,
  [0, T]}\\
  & + & \left[ \Cnoise{} + T^{\frac{1 - \kappa}{2}} (\Cnoise{}^2 + \Cdumb{}
  ) \right] [U]_{\gamma, 0, [0, T]}\\
  & + & \left[ \Cnoise{} + T^{\frac{1 - \kappa}{2}} (\Cnoise{}^2 + \Cdumb{}
  ) \right] \| u \|_{D_0^{T}}\\
  & + & T^{\frac{(1 - \kappa) (\gamma - 1)}{2}} \Cnoise{}^{\gamma} +
  T^{\frac{1 - \kappa}{2}} (\Cnoise{}^2 + \Cdumb{}) + T^{1 - \kappa}
  (\Cdumb{} \Cnoise{} + \Cnoise{}^3),
\end{eqnarray*}
which is the same control obtained in \eqref{finalboundbeforeabsorption} but
with a factor $T^{\frac{1 - \kappa}{2}}$ missing and without the last line
$T^{\frac{1 - \kappa}{2}} \Cnoise{} + \| u \|_{D_a^b}$ that comes from the
Schauder estimate Lemma \ref{lemmaschauderestimate}. We move to $\sigma (u
(\cdot)) \Pi \noise$
\[ \| \sigma (u (\cdot)) \Pi \noise \|_{- 1 - \kappa, - 1 - \kappa, [0, T]} =
   \sup_{\tau \in (0, T]} \sup_{L < \frac{d_{\tau}}{2}} \sup_{z \in D_{\tau
   + L}^{T}} d_{\tau}^0 | \sigma (u (z)) \langle \xi, \varphi_z^L \rangle |
   L^{1 + \kappa} \lesssim \Cnoise{} . \]
Next is $\sigma' \sigma (u (z)) \Pi_z \dumb$, for which we obtain
\begin{eqnarray*}
  &  & \| \sigma' \sigma (u (\cdot)) \Pi \dumb \|_{- 2 \kappa,
  - 1 - \kappa, [0, T]}\\
  & = & \sup_{\tau \in (0, T]} \sup_{L < \frac{d_{\tau}}{2}} \sup_{z \in
  D_{\tau + L}^{T}} d_{\tau}^{1 - \kappa} | \sigma' \sigma (u (z)) \langle
  \Pi_z \dumb, \varphi_z^L \rangle | L^{2 \kappa} \lesssim T^{\frac{1
  - \kappa}{2}} \Cdumb{} .
\end{eqnarray*}
At last, we look to $\sigma' (u (\cdot)) u_X (\cdot) \Pi \Xnoise$,
which produces a term like the second term in \eqref{schauderRHSI}, except for
a factor $T^{\frac{1 - \kappa}{2}}$ missing, and thus
\begin{eqnarray*}
  &  & \| \sigma' (u (\cdot)) u_X (\cdot) \Pi \Xnoise \|_{- \kappa, -
  1 - \kappa, [0, T]}\\
  & = & \sup_{\tau \in (0, T]} \sup_{L < \frac{d_{\tau}}{2}} \sup_{z \in
  D_{\tau + L}^{T}} d_{\tau} | \sigma' (u (z)) u_X (z) \langle \Pi_z \Xnoise
  , \varphi_z^L \rangle | L^{\kappa}\\
  & \lesssim & \Cnoise{} \sup_{\tau \in (0, T]} d_{\tau} \| u_X
  \|_{D_{\tau}^{T}}\\
  & \lesssim & \Cnoise{} [U]_{\gamma, 0, [0, T]} + \Cnoise{} \sup_{\tau \in
  (0, T]} \| U \|_{D_{\tau}^{T}, d_{\tau}}\\
  & \lesssim & \Cnoise{} [U]_{\gamma, 0, [0, T]} + \Cnoise{} \| u
  \|_{D_0^{T}} + T^{\frac{1 - \kappa}{2}} \Cnoise{}^2 .
\end{eqnarray*}
With the control on the norms with blow up at hand, we want to define the
distribution $\tilde{f}$ over $D_{- \infty}^{T} \setminus D_0$ by setting, at
least in a formal level, $\tilde{f} = \sigma (u) \diamond \xi$ over $D_+^{T}$ and $\tilde{f}
\equiv 0$ over $D_-$. This can be done rigorously by setting
\begin{equation}
  \left\{\begin{array}{lll}
    \langle \tilde{f}, \varphi_z^L \rangle = \langle \sigma (u) \diamond \noiseb,
    \varphi_z^L \rangle & \tmop{if} & z \in D_+^{T} \quad \tmop{and} \quad
    \tmop{supp} (\varphi_z^L) \subseteq D_+^{T}\\
    \langle \tilde{f}, \varphi_z^L \rangle = 0 & \tmop{if} & z \in D_-  \quad
    \tmop{and} \quad \tmop{supp} (\varphi_z^L) \subseteq D_-
  \end{array}\right. . \label{extendeddistribution}
\end{equation}
In view of
\eqref{expansionblowupreconstruction}-\eqref{expansionblowuplocaldescription},
we let $A \assign \{ \gamma - 1 - \kappa, - 1 - \kappa, - 2 \kappa, - \kappa
\}$ and further split
\begin{equation}
  \langle \tilde{f}, \varphi_z^L \rangle \assign \sum_{\alpha \in A} \langle
  \tilde{f}_{\alpha}, \varphi_z^L \rangle \tmmathbf{1}_{\{ \tmop{supp}
  (\varphi_z^L) \subseteq D_+^{T} \}} (\varphi_z^L),
  \label{furthersplittingofrhs}
\end{equation}
\refchange{
where we identify for $a \in A$,
\begin{eqnarray*}
    f_{\gamma - 1 - \kappa}
    & := & \| \sigma (u) \diamond \noiseb - G_{\cdot} \|_{\gamma - 1 - \kappa, - 1 - \kappa, [0, T]} \\
    f_{1 - \kappa}
    & := & \| \sigma (u (\cdot)) \Pi \noise \|_{- 1 - \kappa, - 1 - \kappa, [0,T]}\\
    f_{- 2\kappa}
    & := & \| \sigma' \sigma (u (\cdot)) \Pi \dumb \|_{- 2 \kappa, - 1 - \kappa, [0, T]}  \\
    f_{- \kappa}
    & := & \| \sigma' (u (\cdot)) u_X (\cdot) \Pi \Xnoise \|_{- \kappa, - 1 - \kappa, [0, T]}
\end{eqnarray*}
so that it is easy to see that by this construction we obtain (recall that
$\varphi$ is supported in a ball that only looks to the past)
\begin{equation}
  \begin{array}{ll}
    \displaystyle
    \interleave \tilde{f} \interleave_{\gamma - 1 - \kappa, - 1 - \kappa,
    (- \infty, 0) \cup (0, T]}
    \assign & \displaystyle \sup_{\tau \in (0, T]} \sup_{L < \frac{d_{\tau}}{2}} \sup_{z
    \in D_{\tau + L}^{T} \cup D_-} \\
    & \displaystyle \sum_{\alpha \in A} d_{\tau}^{\alpha + 1
    + \kappa} | \langle \tilde{f}_{\alpha}, \varphi_z^L \rangle | L^{-
    \alpha} \tmmathbf{1}_{\{ z \in D_+ \}}
    \; .
  \end{array} \label{boundonextendeddistribution}
\end{equation}
}

\refchange{
The following lemma is imported from \cite[Prop. 6.9]{Hai14},
which holds true as $- 1 - \kappa > - 2$. Its proof is left to the end of this
section.
\begin{lemma} \label{importedfromhairer}
There exists a unique
distribution $f$ over $D_{- \infty}^{T}$, such that it satisfies $\langle f,
\varphi_z^L \rangle = \langle \tilde{f}, \varphi_z^L \rangle$ whenever
$\tmop{supp} (\varphi_z^L) \subseteq D_{- \infty}^{T} \setminus D_0$, and in
particular \eqref{extendeddistribution}-\eqref{boundonextendeddistribution}.
Furthermore, $f \in C^{- 1 - \kappa} (D_{- \infty}^{T})$, in the sense that
\begin{equation}
  [f]_{- 1 - \kappa} \assign \sup_{L \leqslant 1} \sup_{z \in D_{-
  \infty}^{T}} | \langle f, \varphi_z^L \rangle | L^{1 + \kappa} \leqslant
  \interleave \tilde{f} \interleave_{\gamma - 1 - \kappa, - 1 - \kappa, (-
  \infty, 0) \cup (0, T]} . \label{controlonextendeddistribution}
\end{equation}
\end{lemma}}
\begin{remark}
  The condition $- 1 - \kappa > - 2$ in particular excludes delta
  distributions in time at $t = 0$, for example to add an initial conditions
  $\tilde{u}_0 : \mathbb{T}^d \rightarrow \mathbb{R}$ as space-time
  distribution $\delta_{\{ t = 0 \}} \tilde{u}_0$ which would have regularity
  $C^{- 2} (D_{- \infty}^{T})$.
\end{remark}

\refchange{
\begin{proof*}{Proof of Theorem \ref{theoinfinityschauderpostprocessing}.}
If we now extend $u_2$ solution to \eqref{thesplittingu2} to negative times by
setting $u_2 \equiv 0$ in $D_-$, then we obtain that $u_2 : D_{- \infty}^{T}
\rightarrow \mathbb{R}$ now solves
\[ (\partial_t - \Delta) u_2 = f \quad \tmop{in} \quad D_{- \infty}^{T} . \]
By applying {\cite[Lem. B.1]{Moinatandweber2020ejp}}, we obtain that for a
constant $C_S > 0$ which only depends on $d$ and $\kappa$,
\begin{equation}
  [u_2]_{1 - \kappa, D_{- \infty}^{T}} \leqslant C_S [f]_{- 1 - \kappa}
  \lesssim \interleave \tilde{f} \interleave_{\gamma - 1 - \kappa, - 1 -
  \kappa, (- \infty, 0) \cup (0, T]}, \label{useofschauderwithoutblowups}
\end{equation}
which readily implies that for every $z = (t, x) \in D_0^{T}$, since $u_2
(0, x) = 0$,
\[ | u_2 (z) | \lesssim T^{\frac{1 - \kappa}{2}} \interleave \tilde{f}
   \interleave_{\gamma - 1 - \kappa, - 1 - \kappa, (- \infty, 0) \cup (0,
   T]}, \]
and by \eqref{boundonextendeddistribution},
\begin{eqnarray*}
  \| u_2 \|_{D_0^{T}} & \lesssim & T^{\frac{1 - \kappa}{2}} \Cnoise{} \| u
  \|^{\gamma - 1}_{D_0^{T}}  [U]_{\gamma, 0, [0, T]}\\
  & + & \left[ T^{\frac{1 - \kappa}{2}} \Cnoise{} + T^{1 - \kappa}
  (\Cnoise{}^2 + \Cdumb{}) \right] [U]_{\gamma, 0, [0, T]}\\
  & + & \left[ T^{\frac{1 - \kappa}{2}} \Cnoise{} + T^{1 - \kappa}
  (\Cnoise{}^2 + \Cdumb{}) \right] \| u \|_{D_0^{T}}\\
  & + & T^{\frac{(1 - \kappa) \gamma}{2}} \Cnoise{}^{\gamma} + T^{1 -
  \kappa} (\Cnoise{}^2 + \Cdumb{}) + T^{\frac{3 - 3 \kappa}{2}}
  (\Cdumb{} \Cnoise{} + \Cnoise{}^3),
\end{eqnarray*}
which is precisely \eqref{finalboundbeforeabsorption}, but without the last
line $T^{\frac{1 - \kappa}{2}} \Cnoise{} + \| u \|_{D_a^b}$ that comes from the
Schauder estimate Lemma \ref{lemmaschauderestimate}. Since we have chosen $T
= T (\gamma)$ and $\gamma$ according to Theorem
\ref{theointeriorestimate}, we have at the same time that $[U]_{\gamma, 0,
[0, T]} \lesssim 1 \vee \| u \|_{D_0^{T}}$ and that we can make a possibly
smaller choice of $T$ in \eqref{conditionforT} such that the sum of
all the prefactors is less than $\frac{1}{2}$. Hence, by \eqref{splittingofu}
and \eqref{controlonu1}
\[ \| u \|_{D_0^{T}} \leqslant \| u_1 \|_{D_0^{T}} + \| u_2 \|_{D_0^{T}}
   \leqslant \| u_0 \| + \frac{1}{2} \| u \|_{D_0^{T}} + \frac{1}{2}, \]
which gives \eqref{infinityschauderpostprocessing} and concludes the proof of
Theorem \ref{theoinfinityschauderpostprocessing}.
\end{proof*}

The proof of Lemma \ref{importedfromhairer} follows the argument of {\cite[Prop.
6.9]{Hai14}} exactly, but we reproduce it here for completeness.
}

\vspace{2mm}
\begin{proof*}{Proof of Lemma \ref{importedfromhairer}}
  The control obtained in \eqref{boundonextendeddistribution} as a result of
  the Reconstruction Theorem~\ref{reconstructiontheorem} only holds for test
  functions satisfying \eqref{propertyofkernel}. However, by {\cite[Lem.
  A.3]{Ottoandweber2019}}, it actually holds for any choice of smooth,
  non-negative function $\Psi$, with support in $B (0, 1)$, and such that
  $\Psi (z) = \Psi (- z)$ for every $z \in D$ and $\int \Psi (z) \mathd z = 1$
  - see also {\cite{caravennaandzambotti2021}}. That said, we aim to construct
  a suitable partition of unity in $D_{- \infty}^{T} \setminus D_0$.
  Consider a smooth cut-off function $\rho : \mathbb{R}_+ \rightarrow [0, 1]$
  with $\rho (r) = 0$ if $r \nin [1 / 2, 2]$ and
  \[ \sum_{k \in \mathbb{Z}} \rho (2^k r) = 1, \quad \tmop{for} \tmop{every}
     \quad r > 0 . \]
  Compare with e.g. the construction of Littlewood-Paley blocks in
  {\cite[Sec. 2]{bahouri2011fourier}}. Let $\varrho : \mathbb{R}
  \rightarrow [0, 1]$ be a smooth function with $\tmop{supp} (\varrho) \subset
  [- 1, 1]$ and with
  \[ \sum_{k \in \mathbb{Z}} \varrho (x + k) = 1, \quad \tmop{for}
     \tmop{every} \quad x \in \mathbb{R}. \]
  For every $n \in \mathbb{Z}$, let $\Xi_0^n \subset D_0$ be defined as
  \[ \Xi_0^n = \left\{ z = (t, x) \in D_{- \infty}^{T} \quad : \quad t = 0
     \infixand x_i \in 2^{- n} \mathbb{Z}, \quad i = 1, 2, \ldots, d \right\}
     . \]
  Now for $n \in \mathbb{N}$ and $z = (0, x) \in \Xi_0^n$, define another
  cut-off function $\rho_{n, z} : D_{- \infty}^{T} \setminus D_0 \rightarrow
  [0, 1]$, such that for every $\bar{z} = (\bar{t}, \bar{x})$,
  \[ \rho_{n, z} (\bar{z}) = \rho \left( 2^n | \bar{t} |^{\frac{1}{2}} \right)
     \varrho (2^n (\bar{x}_1 - x_1)) \cdots \varrho (2^n (\bar{x}_d - x_d)) .
  \]
  Observe that it satisfies
  \[ \sum_{n \in \mathbb{Z}} \sum_{z \in \Xi_0^n} \rho_{n, z} (\bar{z}) = 1,
     \quad \tmop{for} \tmop{every} \quad \bar{z} \in D_{- \infty}^{T}
     \setminus D_0 . \]
  Finally, for any $N \in \mathbb{N}$, let $\rho_N = \sum_{n \leqslant N}
  \sum_{z \in \Xi_0^n} \rho_{n, z}$. Then, for every distribution $\zeta \in
  C^{- \alpha} (D_{- \infty}^{T})$ with $\alpha < 2$ and every smooth
  compactly supported test function $\Psi$, it holds that
  \[ \lim_{N \rightarrow \infty} \langle \zeta, \Psi (1 - \rho_N) \rangle = 0
     . \]
  Indeed, for every $N \in \mathbb{N}$, one has
  \begin{eqnarray*}
    | \langle \zeta, \Psi (1 - \rho_N) \rangle | & \leqslant & \sum_{n > N}
    \sum_{z \in \Xi_0^n} | \langle \zeta, \Psi \rho_{n, z} \rangle |\\
    & = & \sum_{n > N} 2^{- (d + 2) n} \sum_{z \in \Xi_0^n} 2^{(d + 2) n} |
    \langle \zeta, \Psi \rho_{n, z} \rangle |\\
    & \lesssim & \sum_{n > N} 2^{- (d + 2) n} 2^{n d} 2^{n a} = \sum_{n > N}
    2^{- n (2 - \alpha)} \lesssim 2^{- N},
  \end{eqnarray*}
  where the second inequality comes from the fact that there are approximately
  $2^{n d}$ terms that contribute as $2^{n \alpha}$ in the sum over $z \in
  \Xi_0^n$ and $\lesssim$ hides a constant that depends on $\Psi$. This means
  we may define $f$ over $D_{- \infty}^{T}$ by setting $\langle f, \Psi
  \rangle \assign \lim_{N \rightarrow \infty} \langle \tilde{f}, \Psi \rho_N
  \rangle$, provided that we show that the limit $\langle \tilde{f}, \Psi
  \rho_N \rangle$ exists for any smooth compactly supported test function
  $\Psi$.
  
  Recall that for any fixed $\bar{z} = (\bar{t}, \bar{x}) \in D_+^{T}$, by
  the definition of $(f_{\alpha})_{\alpha \in A}$ in
  \eqref{boundonextendeddistribution}, the following holds true whenever
  $\bar{L} < \sqrt{\bar{t}}/2$ for any $\alpha \in A = \{ \gamma - 1
  - \kappa, - 1 - \kappa, - 2 \kappa, - \kappa \}$
  \begin{eqnarray*}
    \| \tilde{f}_{\alpha} \|_{\alpha, - 1 - \kappa, [0, T]} & = & \sup_{\tau
    \in (0, T]} \sup_{L < \frac{d_{\tau}}{2}} \sup_{z \in D_{\tau +
    L}^{T}} d_{\tau}^{- 1 - \kappa - \alpha} | \langle
    \tilde{f}_{\alpha}, \varphi_z^L \rangle | L^{- \alpha}\\
    & \geqslant & \bar{t}^{\frac{\alpha + 1 + \kappa}{2}} | \langle
    \tilde{f}_{\alpha}, \varphi_{\bar{z}}^{\bar{L}} \rangle | \bar{L}^{-
    \alpha}
  \end{eqnarray*}
  which implies
  \begin{equation}
    | \langle \tilde{f}_{\alpha}, \Psi_{\bar{z}}^{\bar{L}} \rangle |
    \lesssim \bar{t}^{- \frac{\alpha + 1 + \kappa}{2}} \| \tilde{f}_{\alpha}
    \|_{\alpha, - 1 - \kappa, [0, T]} \bar{L}^{\alpha} 
    \label{insertingblowupdistributions}
  \end{equation}
  where $\lesssim$ denotes the constant coming from $\Psi$, a compactly
  supported in $B (0, 1)$ test function, and $\Psi_{\bar{z}}^{\bar{L}}$ is its
  translated and rescaled version as in \eqref{defofmollifierkernel}. We want
  to show that
  \begin{equation}
    | \langle \tilde{f}, \Psi_z^L \rangle | \leqslant \interleave \tilde{f}
    \interleave_{\gamma - 1 - \kappa, - 1 - \kappa, (- \infty, 0) \cup (0,
    T]} L^{- 1 - \kappa} \label{finalcontrolonextendeddistribution}
  \end{equation}
  for any $z \in D_{- \infty}^{T}$ and $L > 0$. For that, fix any $z = (t,
  x) \in D_{- \infty}^{T}$ and $L > 0$. Consider two cases.
  
  \tmtextbf{Case 1:} $\sqrt{t} \geqslant 2 L$. Then, by
  \eqref{furthersplittingofrhs} and \eqref{insertingblowupdistributions} we
  arrive at
  \begin{equation}
    | \langle \tilde{f}, \Psi_z^L \rangle | \lesssim \sum_{\alpha \in A} t^{-
    \frac{\alpha + 1 + \kappa}{2}} \| \tilde{f}_{\alpha} \|_{\alpha, - 1 -
    \kappa, [0, T]} L^{\alpha} \lesssim L^{- 1 - \kappa} \sum_{\alpha \in A}
    \| \tilde{f}_{\alpha} \|_{\alpha, - 1 - \kappa, [0, T]},
    \label{controlonextendeddistributioneasycase}
  \end{equation}
  which by \eqref{boundonextendeddistribution} gives
  \eqref{finalcontrolonextendeddistribution}.
  
  \tmtextbf{Case 2:} $\sqrt{t} < 2 L$. We make use of the partition of unity
  constructed before. Rewrite $\Psi_z^L$ as
  \[ \Psi_z^L = \sum_{n \geqslant n_0} \sum_{w \in \Xi_0^n} \Psi_z^L \rho_{n,
     w}, \]
  where $n_0$ is the greatest integer such that $2^{- n_0} \geqslant 3 L$, and
  also set
  \[ \chi_{n, z w} \assign L^{d + 2} 2^{n (d + 2)} \Psi_z^L \rho_{n, w}, \]
  so that we may write
  \begin{equation}
    \langle \tilde{f}, \Psi_z^L \rho_N \rangle = \sum^N_{n \geqslant n_0} L^{-
    (d + 2)} 2^{- n (d + 2)} \sum_{w \in \Xi_0^n} \langle \tilde{f}, \chi_{n,
    z w} \rangle . \label{approximatingnearboundary}
  \end{equation}
  Note that each point in the support of $\chi_{n, z w}$ is at an equal
  (parabolic) distance to $t = 0$, which is of order $2^{- n}$ and also
  $\chi_{n, z w} \equiv 0$ whenever $d(w,z) \gtrsim L$, so that the number
  of terms that contribute to the sum over $w \in \Xi_0^n$ in
  \eqref{approximatingnearboundary} is bounded by $\lesssim (2^n L)^d$, so
  that a similar treatment as in \eqref{controlonextendeddistributioneasycase}
  yields
  \begin{eqnarray*}
    &  & | \langle \tilde{f}, \Psi_z^L \rho_N \rangle |\\
    & \lesssim & \sum^N_{n \geqslant n_0} L^{- (d + 2)} 2^{- n (d + 2)}
    \sum_{w \in \Xi_0^n} \langle \tilde{f}, \chi_{n, z w} \rangle\\
    & \lesssim & \sum^N_{n \geqslant n_0} L^{- (d + 2)} 2^{- n (d + 2)} (2^n
    L)^d \sum_{\alpha \in A} 2^{n (\alpha + 1 + \kappa)} 2^{- n \alpha} \|
    \tilde{f}_{\alpha} \|_{\alpha, - 1 - \kappa, [0, T]}\\
    & = & L^{- 2} \sum_{\alpha \in A} \| \tilde{f}_{\alpha} \|_{\alpha, - 1 -
    \kappa, [0, T]} \sum^N_{n \geqslant n_0} 2^{- n (2 - 1 - \kappa)}\\
    & \lesssim & \interleave \tilde{f} \interleave_{\gamma - 1 - \kappa, -
    1 - \kappa, (- \infty, 0) \cup (0, T]} L^{- 2} L^{2 - 1 - \kappa},
  \end{eqnarray*}
  since $2^{- n_0} \geqslant 3 L$ and $1 + \kappa < 2$. Since this control is
  independent of $N$, the proof is concluded.
\end{proof*}

\section{The massive equation}\label{sectionthemassiveequation}

In this section we look at \eqref{mainequation} with an extra mass term $m >
0$, which reads as
\begin{equation}
  \left\{\begin{array}{rll}
    (\partial_t + m^2 - \Delta) u & = & \sigma (u) \diamond \xi\\
    u_{| \nobracket t = 0} & = & u_0
  \end{array}\right. \label{massiveequation} .
\end{equation}
The goal of this section is to show that the extra $- m^2 u$ term in the
r.h.s. does not influence the small scale analysis, i.e. the interior
estimates of Section \ref{sectioninteriorestimate}, and helps in the large
scale estimates, introducing an exponential damping to the Maximum principle
in Section \ref{sectionmaximumprinciple}.

We discuss how the five steps in the introduction can be handled for
\eqref{massiveequation}. We first work on the domain $D_0^1$ and for that
assume $\Cnoise{,1} \backassign \Cnoise{}$ and $\Cdumb{,1} \backassign
\Cdumb{}$ as in \eqref{assumptiononstochasticobjects}. Also, consider
$T_{\star} = T_{\star} (C_{\star})$ is defined according to
\eqref{definitionstoppingtime}. In view of \eqref{defofU}, for any $z \in
D_0^1$, an equation equivalent to \eqref{equationforU} in the context of
\eqref{massiveequation} is
\[ (\partial_t - \mathLaplace) U_z = \sigma (u) \diamond \noiseb - \sigma (u (z)) \Pi_z \noise - m^2 u . \]
We show Theorem \ref{theointeriorestimate} and Corollary
\ref{corollaryimprovedinteriorestimate} still hold true in the context of
\eqref{massiveequation}. Recall that in Subsection \ref{sectionchoiceofT0} we
showed that for every $\tau \in (a, b]$, $L \leqslant d_{\tau}/4$, all
base points $z \in D_{\tau + L}^b$, all $w \in B (z, L)$ and scales $\ell
\leqslant L$, we have good control on $| (\partial_t - \mathLaplace)
(U_z)_{\ell} (w) |$. In view of \eqref{whatisUzlw}, we now have
\[ | (\partial_t - \mathLaplace) (U_z)_{\ell} (w) | \leqslant | \langle F_w,
   \varphi_w^{\ell} \rangle | + | \langle F_z - F_w, \varphi_w^{\ell} \rangle
   | + m^2 | \langle u, \varphi_w^{\ell} \rangle | \]
where the first two terms are controlled by \eqref{controlonUzlw}. For the new
term, we get
\[ m^2 | \langle u, \varphi_w^{\ell} \rangle | \leqslant m^2 \| u \|_{B (w,
   \ell)}, \]
which then implies
\begin{eqnarray*}
  &  & \| (\partial_t - \mathLaplace) (U_z)_{\ell} \|_{B (z, L)}\\
  & \lesssim & \Cnoise{} [\sigma (U)]_{\gamma, D_{\tau}^b, \frac{d_{\tau}}{2}}
  \ell^{\gamma - 1 - \kappa} + \Cdumb{} [\sigma' \sigma (u)]_{\gamma - 1
  + \kappa, D_{\tau}^b, \frac{d_{\tau}}{2}} \ell^{\gamma - 1 - \kappa}\\
  & + & \Cnoise{} [\sigma' (u) u_X]_{\gamma - 1, D_{\tau}^b,
  \frac{d_{\tau}}{2}} \ell^{\gamma - 1 - \kappa} + \Cdumb{} \ell^{- 2
  \kappa} + \Cnoise{} \| u_X \|_{D_{\tau}^b} \ell^{- \kappa}\\
  & + & \Cnoise{} ([\sigma (U)]_{\gamma, B (z, L)} \ell^{- 1 - \kappa}
  L^{\gamma} + \Cnoise{} \ell^{- 1 - \kappa} L^{1 - \kappa} + \| u_X \|_{B (z,
  L)} \ell^{- 1 - \kappa} L)\\
  & + & m^2 \| u \|_{D_{\tau}^b} .
\end{eqnarray*}
After applying the Schauder estimate Lemma \ref{lemmaschauderestimate} and
arguing as before, we arrive at
\begin{eqnarray*}
  {}[U]_{\gamma, 0, [a, b]} & \lesssim & T^{\frac{1 - \kappa}{2}} \Cnoise{} \| u
  \|_{D_a^b}^{\gamma - 1}  [U]_{\gamma, 0, [a, b]}\\
  & + & \left[ T^{\frac{1 - \kappa}{2}} \Cnoise{} + T^{1 - \kappa} (\Cnoise{}^2 +
  \Cdumb{}) \right] [U]_{\gamma, 0, [a, b]}\\
  & + & \left[ T^{\frac{1 - \kappa}{2}} \Cnoise{} + T^{1 - \kappa} (\Cnoise{}^2 +
  \Cdumb{}) \right] \| u \|_{D_a^b}\\
  & + & T^{\frac{(1 - \kappa) \gamma}{2}} \Cnoise{}^{\gamma} + T^{1 - \kappa}
  (\Cnoise{}^2 + \Cdumb{}) + T^{\frac{3 - 3 \kappa}{2}} (\Cdumb{}
  \Cnoise{} + \Cnoise{}^3)\\
  & + & T^{\frac{1 - \kappa}{2}} \Cnoise{} + [1 + T m^2] \| u \|_{D_a^b},
\end{eqnarray*}
since we have gained a factor $\ell^2$ after Schauder, which turns into $T$.
This is the same control as \eqref{finalboundbeforeabsorption} \refchange{plus 
$T m^2 \| u \|_{D_a^b} \lesssim
T^{\frac{1 - \kappa}{2}}m^2 \| u \|_{D_a^b}$
in the last line}.
If we now make a choice of $T$ that, in addition to
\eqref{absorbtioncondition}, 
also satisfies
\[ T^{\frac{1 - \kappa}{2}} m^2 \leqslant \frac{1}{16}, 
   \]
then the choice analogous to \eqref{conditionforT} will be
\begin{equation}
  T \lesssim m^{- \frac{1}{1 - \kappa}} \wedge \Cdumb{}^{- \frac{1}{1 -
  \kappa}} \wedge \Cnoise{}^{- \frac{2}{1 - \kappa}} C_{\star}^{- e (\gamma)},
  \quad \tmxspace \tmop{where} \quad e (\gamma) \assign \frac{2 (\gamma -
  1)}{1 - \kappa} . \label{choiceofTtoabsorbmass}
\end{equation}
This way, we conclude that whenever $t \in [T, T_{\star}]$, it holds that
\[ [U]_{\gamma, 0, [t - T, t]} \lesssim \| u \|_{D_{t - T}^t} \vee 1 \lesssim
   C_{\star}, \]
as in Theorem \ref{theointeriorestimate}. The same post-processing can be done
to obtain
\begin{equation}
  [U]_{\gamma_2, 0, [t - T_1, t]} \lesssim \| u \|_{D_{t - T}^t} \vee 1
  \lesssim C_{\star},
\end{equation}
as in Corollary \ref{corollaryimprovedinteriorestimate}, where $T_1 := T(\gamma_1)$
in \eqref{conditionforT}
for any $1 + \kappa < \gamma_1 \leqslant \gamma_2 \leqslant 2 - 2\kappa$.

The second step is to prove an analog of Theorem
\ref{theoinfinityschauderpostprocessing} for \eqref{massiveequation}. The
splitting $u = u_{1 } + u_2$ in Section \ref{sectionproofofshorttimescontrol}
for the case of \eqref{massiveequation} is given by $u_1$ solving
\eqref{thesplittingu1} and replacing \eqref{thesplittingu2} for $u_2$ by
\[ \left\{\begin{array}{rllll}
     (\partial_t - \Delta) u_2 & = & \sigma (u) \diamond \xi - m^2 u & \tmop{in} &
     D^{T}_+\\
     u_2 & = & 0 & \tmop{on}
     & D_0
   \end{array}\right. . \]
Extending $- m^2 u$ to negative times in $L^{\infty}$ by setting it to be
equal to zero for negative times is straightforward. The extension of $\sigma
(u) \xi$ follows as before. Therefore, in the same way as for the interior
estimate, the new mass term only adds a term $m^2 \| u \|_{D_0^{T_1}}$ to the
control of $\interleave \tilde{f} \interleave_{\gamma - 1 - \kappa, - 1 -
\kappa, (- \infty, 0) \cup (0, T]}$ in \eqref{boundonextendeddistribution}.
Again, using \eqref{useofschauderwithoutblowups}, we arrive at
\[ \begin{array}{lll}
     \| u_2 \|_{D_0^{T}} & \lesssim & \displaystyle T^{\frac{1 - \kappa}{2}} \Cnoise{} \|
     u \|^{\gamma_1 - 1}_{D_0^{T}}  [U]_{\gamma, 0, [0, T]}\\
     & + & \displaystyle \left[ T^{\frac{1 - \kappa}{2}} \Cnoise{} + T^{1 - \kappa}
     (\Cnoise{}^2 + \Cdumb{}) \right] [U]_{\gamma, 0, [0, T]}\\
     & + & \displaystyle \left[ T^{\frac{1 - \kappa}{2}} m^2 + T^{\frac{1 - \kappa}{2}}
     \Cnoise{} + T^{1 - \kappa} (\Cnoise{}^2 + \Cdumb{}) \right] \| u
     \|_{D_0^{T}}\\
     & + & \displaystyle T^{\frac{(1 - \kappa) \gamma}{2}} \Cnoise{}^{\gamma} + T^{1
     - \kappa} (\Cnoise{}^2 + \Cdumb{}) + T^{\frac{3 - 3 \kappa}{2}}
     (\Cdumb{} \Cnoise{} + \Cnoise{}^3),
   \end{array} \]
and by the same argument as in the proof of Theorem
\ref{theoinfinityschauderpostprocessing}, but now with the choice in
\eqref{choiceofTtoabsorbmass}, which allows for the absorption of the mass
contribution, we get
\[ \| u \|_{D_0^{T}} \leqslant \| u_1 \|_{D_0^{T}} + \| u_2 \|_{D_0^{T}}
   \leqslant \| u_0 \| + \frac{1}{2} \| u \|_{D_0^{T}} + \frac{1}{2} \]
which readily gives
\[ \| u \|_{D_0^{T}} \leqslant 2 (\| u_0 \| \vee 1) . \]

Now we are ready to tackle the Maximum principle step of Section
\ref{sectionmaximumprinciple}. Recall that we want to take $\gamma_1 = 1 +
\kappa + \delta$, for $\delta > 0$, $\gamma_2 = 2 - 2 \kappa$ and
\[ T_1 \sim m^{- \frac{1}{1 - \kappa}} \vee \Cdumb{}^{- \frac{1}{1 -
   \kappa}} \vee \Cnoise{}^{- \frac{2}{1 - \kappa}} C_{\star}^{- \frac{2 (\kappa
   + \delta)}{1 - \kappa}}, \]
which satisfies \eqref{choiceofTtoabsorbmass}. Then, repeating the arguments
in Section \ref{sectionmaximumprinciple}, we may rewrite the regularised
equation \eqref{massiveequation} for every $z \in D_{T_1}^{T_{\star}}$ and
scale $L \leqslant \sqrt{T_1}/2$ as
\[ (\partial_t - \mathLaplace) u_L (z) = a_L (z) + b_L (z) \cdot \nabla u_L
   (z) + c_L (z) - m^2 u_L (z), \]
where $a_L, b_L, c_L$ satisfy \eqref{controlALdominant1}-\eqref{controlonCL}
in Proposition \ref{regularisedequationwithtransport} with $\Cdumb{}
\vee \Cnoise{}^2 C_{\star}^{2 \kappa + 2 \delta}$ replaced by $m^2 \vee
\Cdumb{} \vee \Cnoise{}^2 C_{\star}^{2 \kappa + 2 \delta}$, due to the new
choice of $T_1$ in \eqref{choiceofTtoabsorbmass}. This leads to a new choice
of scale $\tilde{L}$, analogous to \eqref{choiceofL}, which is now equal to
\[ \tilde{L} = C^{- \frac{2}{3 - 2 \kappa}}_{\star} (m^2 \vee \Cdumb{}
   \vee \Cnoise{}^2 C_{\star}^{2 \kappa + 2 \delta})^{- \frac{1}{2 (1 -
   \kappa)}} \]
that again respects $\tilde{L} \leqslant \sqrt{T_1} / 2$. Analogously to
\eqref{equationforv}, we obtain that $v \assign u_{\tilde{L}}$ now solves
\begin{equation}
  \left\{\begin{array}{rll}
    (\partial_t - \Delta) v & = & b \cdot \nabla v - m^2 v + f\\
    v_{| t = T_1 \nobracket} & = & u_{\tilde{L} | t = T_1 \nobracket}
  \end{array}\right., \label{massivetransportequation}
\end{equation}
where $f$ satisfies a control analogous to \eqref{controlonfRHSofv}, given by
\[ \| f \|_{D_{T_1}^{T_{\star}}} \lesssim \left( m^{\frac{1 + \kappa}{1 -
   \kappa}} \Cnoise{} \vee \Cdumb{}^{\frac{1}{1 - \kappa}} \right)
   C^{\betadumb}_{\star} \vee \Cnoise{}^{\frac{2}{1 - \kappa}}
   {C^{\betanoise}_{\star}} , \]
where $\betanoise, \betadumb$ are given by \eqref{betanoiseofdelta} and
\eqref{betas}. Then, for every $t \in [T_1, T_{\star}], x \in
\mathbb{T}^d$, let $(X^{t, x}_s)_{s \in [T_1, t]}$ be the solution to the same
It{\^o}'s SDE as before,
\[ \left\{\begin{array}{l}
     \mathd X^{t, x}_s = b (t - s, X^{t, x}_s) \mathd s + \sqrt{2} \mathd
     B_s\\
     X^{t, x}_{T_1} = x
   \end{array}\right., \]
where $(B_t)_{t \geqslant 0}$ is a Brownian motion under the probability
measure $\mathbb{P}$. Then, denoting by $\mathbb{E}_x$ the expectation w.r.t.
$\mathbb{P}$ conditioned to $X^{t, x}_{T_1} = x$, we get that
$D_{T_1}^{T_{\star}} \ni (t, x) \mapsto v_t (x)$ given by
\[ v_t (x) =\mathbb{E}_x [e^{- m^2 (t - T_1)} v_{T_1} (X^{t, x}_t)] + 
   \mathbb{E}_x \left[ \int_{T_1}^t e^{- m^2 (s - T_1)} f (t - s, X^{t, x}_s)
   \tmop{ds} \right] \]
solves \eqref{massivetransportequation}. Therefore, we obtain
\begin{eqnarray*}
  \| v \|_{D_{T_1}^{T_{\star}}} & = & \sup_{T_1 \leqslant t \leqslant
  T_{\star}} \| v_t \|\\
  & \leqslant & e^{- m^2 (T_{\star} - T_1)} \| v_{T_1} \| + \frac{1}{m^2} (1
  - e^{- m^2 (T_{\star} - T_1)}) \| f \|_{D_{T_1}^{T_{\star}}}\\
  & \leqslant & e^{- m^2 (T_{\star} - T_1)} \| v_{T_1} \| + \frac{1}{m^2} \|
  f \|_{D_{T_1}^{T_{\star}}}\\
  & \leqslant & e^{- m^2 (T_{\star} - T_1)} \| v_{T_1} \|\\
  & + & C (\kappa, d, C_{\sigma}) \left[ \left( m^{\frac{1 + \kappa}{1 -
  \kappa}} \Cnoise{} \vee \Cdumb{}^{\frac{1}{1 - \kappa}} \right)
  C^{\betadumb}_{\star} \vee \Cnoise{}^{\frac{2}{1 - \kappa}}
  {C^{\betanoise}_{\star}}  \right]
\end{eqnarray*}
for a constant $C (\kappa, d, C_{\sigma}) > 0$ that only depends on $\kappa$,
$d$, and $C_{\sigma}$. 
Note that by the same argument as before, we conclude
that
\[ \| u - v \|_{D_{T_1}^{T_{\star}}} \lesssim 1, \]
since the estimates \eqref{ineqhL-h}, \eqref{unormbyUnorm} are the same for
$u$ solution to \eqref{massivetransportequation}. Hence we obtain that
\begin{equation}
  \begin{array}{lll}
    \| u \|_{D_{T_1}^{T_{\star}}} & \leqslant & \| v \|_{D_{T_1}^{T_{\star}}}
    + \| u - v \|_{D_{T_1}^{T_{\star}}}\\
    & \leqslant & e^{- m^2 (T_{\star} - T_1)} {\| u \|_{D_0^{T_1}}} \\
    & + & C (\kappa, d, C_{\sigma}) \left[ \left( m^{\frac{1 + \kappa}{1 -
    \kappa}} \Cnoise{} \vee \Cdumb{}^{\frac{1}{1 - \kappa}} \right)
    C^{\betadumb}_{\star} \vee \Cnoise{}^{\frac{2}{1 - \kappa}}
    {C^{\betanoise}_{\star}}  \right]
  \end{array} \label{boundrepresentationwithkilling}
\end{equation}
since $\| v_{T_1} \| = \| u_{\tilde{L}} (T_1, \cdot) \| \leqslant {\| u
\|_{D_0^{T_1}}} $. 
\refchange{
\begin{remark}
    Note that \eqref{boundrepresentationwithkilling} is an
    improved version of the bound
    \eqref{controlawayfrom0} in
    Theorem \ref{theoremtopostprocessintopolynimial}. In fact,
    $m^2>0$ produces a
    contraction factor 
    $e^{- m^2 (T_{\star} - T_1)}<1$
    in front of the term $\| u \|_{D_0^{T_1}}$
    in \eqref{boundrepresentationwithkilling},
    which eventually leads to 
    \eqref{diferenceequationformassive}-\eqref{yesitgrows}.
\end{remark}}
Putting everything together gives
\begin{eqnarray*}
  \| u \|_{D_0^{T_{\star}}} & \leqslant & \max \left\{ \| u \|_{D_0^{T_1}}, \|
  u \|_{D_{T_1}^{T_{\star}}} \right\}\\
  & \leqslant & \| u \|_{D_0^{T_1}} + C (\kappa, d, C_{\sigma}) \left[ \left(
  m^{\frac{1 + \kappa}{1 - \kappa}} \Cnoise{} \vee \Cdumb{}^{\frac{1}{1 -
  \kappa}} \right) C^{\betadumb}_{\star} \vee \Cnoise{}^{\frac{2}{1 -
  \kappa}} {C^{\betanoise}_{\star}}  \right]\\
  & \leqslant & 2 \| u_0 \| + C (\kappa, d, C_{\sigma}) \left[ \left(
  m^{\frac{1 + \kappa}{1 - \kappa}} \Cnoise{} \vee \Cdumb{}^{\frac{1}{1 -
  \kappa}} \right) C^{\betadumb}_{\star} \vee \Cnoise{}^{\frac{2}{1 -
  \kappa}} {C^{\betanoise}_{\star}}  \right]
\end{eqnarray*}
since $m > 0$, which now leads to a choice
\[ C_{\star} \gtrsim \max \left\{ \| u_0 \|, \left( m^{\frac{1 + \kappa}{1 -
   \kappa}} \Cnoise{} \right)^{\frac{1}{1 - \betadumb}}, 
   \Cdumb{}^{\frac{1}{(1 - \kappa) (1 - \betadumb)}},
   \Cnoise{}^{\frac{2}{(1 - \kappa) (1 - \betanoise)}} \right\} \]
which, analogous to Theorem \ref{maintheorem}, gives
\[ \| u \|_{D_0^1} \lesssim \max \left\{ \| u_0 \|, \left( m^{\frac{1 +
   \kappa}{1 - \kappa}} \Cnoise{} \right)^{\frac{1}{1 - \betadumb}},
   \Cdumb{}^{\frac{1}{(1 - \kappa) (1 - \betadumb)}},
   \Cnoise{}^{\frac{2}{(1 - \kappa) (1 - \betanoise)}} \right\} . \]
The main point of this section is to adapt the proof of Theorem
\ref{theorempolynomialgrowth}. For that, consider the same definitions as
before, for every $n \in \mathbb{N}$, the constants $\Cnoise{,n}$ and
$\Cdumb{,n}$ as in \eqref{assumptiononstochasticobjects},
$\tCnoise{,n + 1}$ and $\tCdumb{,n + 1}$ as in
\eqref{definitionofCtildenoise}, $Y_n \assign \| u \|_{D_{n - 1}^n}$ and
$T_{\star, n}$ as in \eqref{definitionofTstarN}. The control for $Y_1$ is the
one in the previous display, with $\Cdumb{} = \Cdumb{,1}$ and
$\Cnoise{} = \Cnoise{,1}$. Now assume control for $Y_n$ and proceed by induction.
By the same argument, we obtain that by choosing
\[ C_{\star, n + 1} \gtrsim \max \left\{ \| u_{| t = n \nobracket} \|, \left(
   m^{\frac{1 + \kappa}{1 - \kappa}} \Cnoise{,n + 1} \right)^{\frac{1}{1 -
   \betadumb}}, \Cdumb{,n + 1}^{\frac{1}{(1 - \kappa) (1 -
   \betadumb)}}, \Cnoise{,n + 1}^{\frac{2}{(1 - \kappa) (1 -
   \betanoise)}} \right\} \]
we have $T_{\star, n + 1} = n + 1$ and the control
\[ \| u \|_{D_n^{n + 1}} \leqslant C_{\star, n + 1} \lesssim Y_n + \left(
   m^{\frac{1 + \kappa}{1 - \kappa}} \Cnoise{,n + 1} \right)^{\frac{1}{1 -
   \betadumb}} \vee \Cdumb{,n + 1}^{\frac{1}{(1 - \kappa) (1 -
   \betadumb)}} \vee \Cnoise{,n + 1}^{\frac{2}{(1 - \kappa) (1 -
   \betanoise)}} \]
since $\| u_{| t = n \nobracket} \| \leqslant \| u \|_{D_{n - 1}^n} = Y_n$.
Now writing the analog of \eqref{boundrepresentationwithkilling} over the
whole interval $D_n^{n + 1}$, like in \eqref{intermediatepolynomialgrowth}, we
get that
\begin{eqnarray*}
  \| u \|_{D_n^{n + 1}} & \leqslant & e^{- m^2} {\| u \|_{D_{n - 1}^n}} \\
  & + & C (\kappa, d, C_{\sigma}) \left[ \left( m^{\frac{1 + \kappa}{1 -
  \kappa}} \tCnoise{,n + 1} \vee \tCdumb{, n + 1}
  ^{\frac{1}{1 - \kappa}} \right) C^{\betadumb}_{\star} \vee
  \tCnoise{,n + 1}^{\frac{2}{1 - \kappa}} {C^{\betanoise}_{\star}} 
  \right] .
\end{eqnarray*}
Plugging one estimate into the other and performing the same as before in
\eqref{differenceequationpolynomial}, gives rise to
\begin{equation}
  Y_{n + 1} \leqslant a Y_n + r_n {Y_n^{\betanoise}} ,
  \label{diferenceequationformassive}
\end{equation}
where we use the simplified notation
\begin{equation}
  a \assign e^{- m^2} < 1 \quad, \quad r_n \assign C (\kappa, d, C_{\sigma})
  \left( m^{\frac{1 + \kappa}{1 - \kappa}} \tCnoise{,n + 1} \vee
  \tCdumb{,n + 1}^{\frac{1}{1 - \kappa}} \vee \tCnoise{,n + 1}
  ^{\frac{2}{1 - \kappa}} \right)^{\frac{1}{1 - \betanoise}} .
  \label{constantscandr}
\end{equation}
We refer to Appendix \ref{Appendixdifferenceequations} to see how
\eqref{diferenceequationformassive} can be estimated recursively to obtain,
for every $n < t \leqslant n + 1$
\begin{equation}
  \| u \|_{D_0^t} \leqslant \max_{1 \leqslant i \leqslant n + 1} Y_i \leqslant
  \max \left\{ Y_1, \left( \frac{R_{n + 1}}{(1 - a)} \right)^{\frac{1}{1 -
  \beta}} \right\}, \quad \tmop{for} \quad R_n \assign \max_{1 \leqslant i
  \leqslant n} r_i, \label{yesitgrows}
\end{equation}
which gives \eqref{corollarymassive}. The proof of \eqref{growthofmoments} can
also be found in the Appendix \ref{Appendixdifferenceequations}.

\section*{Acknowledgements}
AC gratefully acknowledges support by the EPSRC through the Standard Grant “Multi-Scale Stochastic Dynamics with Fractional Noise” EP/V026100/1 and also the “Mathematics of Random Systems” CDT EP/S023925/1. 
HW was funded by the European Union (ERC, GE4SPDE, 101045082). GF and HW are funded by the Deutsche Forschungsgemeinschaft (DFG, German Research Foundation) under Germany’s Excellence Strategy EXC 2044 -390685587, Mathematics Münster: Dynamics–Geometry–Structure. We thank Salvador Esquivel
for his careful reading of a draft of the manuscript
and for numerous discussions.

\appendix

\section{Reconstruction and Integration} \label{appendixreconstructionandintegration}

\refchange{
The precise version of 
the Reconstruction Theorem {\cite[Thm 3.10]{Hai14}}
we use here is taken from 
{\cite[Thm 2.8]{Moinatandweber2020cpam}},
which was itself based on \cite[Prop. 1]{OSSW18}. See also \cite[Thm 5.1]{caravennaandzambotti2021}.
}
We
write
its proof in our setting with the \refchange{interest of using details of the proof within Section~\ref{sectionmaximumprinciple}.}

In Section \ref{sectionstrategyofproof} we
mentioned the choice of a special test function $\varphi \in \mathfrak F$ for which
formula \eqref{multiscalereconstructionstrategy} holds. We now describe its construction.
For a fixed $\psi \in \mathfrak F$, let, for any $L > 0$,
\begin{equation}
   \varphi^L = \lim_{n \rightarrow \infty} \varphi^{L, n}, \quad \tmop{where}
   \quad \varphi^{L, n} \assign \psi^{\frac{L}{2}} \ast \psi^{\frac{L}{2^2}}
   \ast \cdots \ast \psi^{\frac{L}{2^n}},
   \label{choiceofmollifiersemigroup}
\end{equation}
with the convention that $\varphi^{L, 0} = \delta$. Here $\ast$ denotes the
convolution over $D$. Note that for every $L > 0$
\begin{equation}
  \varphi^L = \psi^{\frac{L}{2}} \ast \varphi^{\frac{L}{2}} = \varphi^{L, 1}
  \ast \varphi^{\frac{L}{2}} . \label{propertyofkernel}
\end{equation}
Moreover, it also holds that
\[ \varphi^{L, m + n} = \varphi^{\frac{L}{2^n}, m} \ast \varphi^{L, n}, \]
which implies that for a function $f : D \rightarrow \mathbb{R}$, we have
\begin{equation}
  \left\langle \left\langle f, \varphi_{\cdot}^{\frac{L}{2^n}, 1}
  \right\rangle, \varphi_{z_1}^{L, n} \right\rangle = \left\langle f, \int
  \varphi^{\frac{L}{2^n}, 1} (\cdot - w) \varphi^{L, n} (w - z_1) \mathd w
  \right\rangle = \langle f, \varphi_{z_1}^{L, n + 1} \rangle 
  \label{onescaletotheother} .
\end{equation}
\begin{theorem}[Reconstruction]
  \label{reconstructiontheorem}Let $\tilde{\theta} > 0$ and $\Theta$ a finite
  subset of $(- \infty, \tilde{\theta}]$. Let $L \in (0, 1)$ and $z \in D$.
  For a given family of distributions $B (z, L) \ni z_1 \mapsto G_{z_1} \in
  \mathcal{D} (\mathbb{T}^d)$, assume that for every $\theta \in \Theta$ there
  exist a constant $C_{\theta} > 0$ and an exponent $\gamma_{\theta} \geqslant
  \tilde{\theta}$, such that for all $\ell \in (0, L)$ and every $z_1, z_2 \in
  B (z, L - \ell)$, it holds that
  \begin{equation}
    | \langle G_{z_1} - G_{z_2}, \varphi_{z_1}^{\ell} \rangle | \leqslant
    \sum_{\theta \in \Theta} C_{\theta} d(z_2,z_1)^{\gamma_{\theta} -
    \theta} \ell^{\theta} . \label{conditionforreconstruction}
  \end{equation}
  As a consequence, there exists a constant $C_{\mathcal{R}} > 0$ depending
  only on $\gamma_{\theta}$, such that
  \begin{equation}
    | \langle \mathcal{R}G - G_z, \varphi_z^L \rangle | \leqslant
    C_{\mathcal{R}} \sum_{\theta \in \Theta} C_{\theta} L^{\gamma_{\theta}},
  \end{equation}
  where the distribution $\mathcal{R}G$ is called the reconstruction of $G$
  and can be obtained as
  \refchange
  {
  \begin{equation}
    \langle \mathcal{R}G, \varphi_z^L \rangle = \lim_{N \rightarrow \infty}
    \left\langle \left\langle G_{\cdot}, \varphi_{\cdot}^{\frac{L}{2^N}}
    \right\rangle, \varphi^{L, N}_z \right\rangle = \lim_{N \rightarrow
    \infty} \int \left\langle G_{\bar{z}}, \varphi_{\bar{z}}^{\frac{L}{2^N}}
    \right\rangle \varphi_z^{L, N} (\bar{z}) \mathd \bar{z} .
    \label{definitionofreconstruction}
  \end{equation}
  In particular, for any $N \in \N$ it holds that
  \begin{equation}
    \begin{array}{ll}
    & \left| \left\langle \left\langle G_{\cdot},
    \varphi_{\cdot}^{\frac{L}{2^N}} \right\rangle, \varphi^{L, N}_z
    \right\rangle - \nobracket \langle G_z \nobracket, \varphi^L_z \rangle
    \right| \\
    & \displaystyle
    \leqslant \sum_{n = 0}^{N - 1} \int \int \left| \left\langle G_{z_2}
    - G_{z_1}, \varphi_{z_2}^{\frac{L}{2^{n + 1}}} \right\rangle \right|
    \psi^{\frac{L}{2^{n + 1}}} (z_2 - z_1) \varphi^{L, n} (z_1 - z) \mathd z_2
    \mathd z_1 .
  \end{array}
  \label{proofofreconstruction}
  \end{equation}
  }
\end{theorem}

\begin{proof}
  The following bound holds uniformly over $z \in D$. For every $N \in
  \mathbb{N}$, use \eqref{onescaletotheother} with $f : D \rightarrow
  \mathbb{R}$ given by $f (z) = \left\langle G_z, \varphi_z^{\frac{L}{2^{n +
  1}}} \right\rangle$, then \eqref{propertyofkernel} and
  \eqref{conditionforreconstruction} to get
  \begin{eqnarray}
    &  & \left| \left\langle \left\langle G_{\cdot},
    \varphi_{\cdot}^{\frac{L}{2^N}} \right\rangle, \varphi^{L, N}_z
    \right\rangle - \nobracket \langle G_z \nobracket, \varphi^L_z \rangle
    \right| \nonumber\\
    & = & \left| \sum_{n = 0}^{N - 1} \left\langle \left\langle \left\langle
    G_{\cdot}, \varphi_{\cdot}^{\frac{L}{2^{n + 1}}} \right\rangle,
    \varphi^{\frac{L}{2^n}, 1}_{\cdot} \right\rangle - \left\langle G_{\cdot},
    \varphi^{\frac{L}{2^n}}_{\cdot} \right\rangle, \varphi^{L, n}_z
    \right\rangle \right| \nonumber\\
    & = & \left| \sum_{n = 0}^{N - 1} \left\langle \left\langle \left\langle
    G_{\cdot}, \varphi_{\cdot}^{\frac{L}{2^{n + 1}}} \right\rangle,
    \varphi^{\frac{L}{2^n}, 1}_{\cdot} \right\rangle - \left\langle G_{\cdot},
    \varphi^{\frac{L}{2^{n + 1}}} \ast \varphi^{\frac{L}{2^n}, 1}_{\cdot}
    \right\rangle, \varphi^{L, n}_z \right\rangle \right| \nonumber\\
    & = & \hspace{-1.7mm} \begin{array}{l}
      \left| \displaystyle \sum_{n = 0}^{N - 1} \int \left( \int \left\langle G_{z_2},
      \varphi_{z_2}^{\frac{L}{2^{n + 1}}} \right\rangle
      \varphi^{\frac{L}{2^n}, 1}_{z_1} (z_2) \mathd z_2 \right. \right.\\
      \left. \left. - \displaystyle \int \left\langle G_{z_1}, \varphi_{z_2}^{\frac{L}{2^{n
      + 1}}} \right\rangle \varphi_{z_1}^{\frac{L}{2^n}, 1} (z_2) \mathd z_2
      \right) \varphi^{L, n}_z (z_1) \mathd z_1 \right|
    \end{array} \nonumber\\
    & \leqslant & \sum_{n = 0}^{N - 1} \int \int \left| \left\langle G_{z_2}
    - G_{z_1}, \varphi_{z_2}^{\frac{L}{2^{n + 1}}} \right\rangle \right|
    \psi^{\frac{L}{2^{n + 1}}} (z_2 - z_1) \varphi^{L, n} (z_1 - z) \mathd z_2
    \mathd z_1  \label{proofofreconstructioninside}
    \\
    & \leqslant & \sum_{n = 0}^{N - 1} \sum_{\theta \in \Theta} C_{\theta}
    \left( \frac{L}{2^{n + 1}} \right)^{\theta} \int \int | z_2 - z_1
    |^{\gamma_{\theta} - \theta} \psi^{\frac{L}{2^{n + 1}}} (z_2 - z_1)
    \varphi^{L, n} (z_1 - z) \mathd z_2 \mathd z_1 \nonumber\\
    & \leqslant & \sum_{n = 0}^{N - 1} \sum_{\theta \in \Theta} C_{\theta}
    \left( \frac{L}{2^{n + 1}} \right)^{\gamma_{\theta}} \int \varphi^{L, n}
    (z_1 - z) \mathd z_1 \nonumber\\
    & \leqslant & C_{\mathcal{R}} \sum_{\theta \in \Theta} C_{\theta}
    L^{\gamma_{\theta}} \nonumber
  \end{eqnarray}
  where the constant $C_{\mathcal{R}} > 0$ only depends on $\gamma_{\theta}$,
  being in particular independent of $N$ and $z \in D$.
  \refchange{
  Expression \eqref{proofofreconstructioninside} gives
  exactly \eqref{proofofreconstruction}.
  }
\end{proof}


\refchange{
The following Schauder estimate is taken from {\cite[Lem.
2.11]{Moinatandweber2020cpam}}, which is a version of \cite[Prop. 2]{OSSW18} that accounts for the blow ups near the boundary
and can be seen as a variant of
\cite[Thm 5.12, Prop. 6.16]{Hai14}. see also the recent work \cite{BCZ23}.

In {\cite[Lem. 2.11]{Moinatandweber2020cpam}},
the authors introduce a blow up near the
space-time boundary to overcome the
lack of boundary conditions
in the space-time periodic setting of \cite[Prop. 2]{OSSW18}.
Since we work on the
torus, the blow up only appears at the time boundary,
so we restated the theorem in our setting for clarity.
}

Recall that for $[a, b]
\subset [0, 1]$, $D_a^b \assign [a, b] \times \mathbb{T}^d$.

\begin{lemma}
  \label{lemmaschauderestimate}Let $1 < \gamma < 2$ and $\Beta$ a finite
  subset of $(- \infty, \gamma]$. For $a, b > 0$ such that $[a, b] \subset [0,
  1]$, let U be a bounded function defined over $D_a^b \times D_a^b$ and such
  that $U_z (z) = 0$ for all $z \in D_a^b$. For any $\tau \in (a, b]$ let
  $d_{\tau} \assign \sqrt{\tau - a}$ and assume that for any $L \leqslant
  d_{\tau}/4$, there exists a constant $M^{(1)}_{D_{\tau}^b, L} > 0$
  such that for all base points $z \in D_{\tau + L}^b$ and scales $\ell
  \leqslant L$, the following holds
  \begin{equation}
    \| (\partial_t - \Delta) (U_z)_{\ell} \|_{B (z, L)} \leqslant
    M^{(1)}_{D_{\tau}^b, L} \sum_{\beta \in \Beta} \ell^{\beta - 2} L^{\gamma
    - \beta} . \label{orderboundcondition}
  \end{equation}
  Furthermore, assume that for $L_1 \leqslant d_{\tau}/2$ and $L_2
  \leqslant d_{\tau}/4$, there exists a constant $M^{(2)}_{D_{\tau}^b,
  L_1, L_2} > 0$ such that for every $z_0 \in D_{\tau + L_1}^b$, $z_1 \in B
  (z_0, L_1)$, and $z_2 \in B (z_1, L_2)$, the following ``three-point
  continuity'' holds:
  \begin{equation}
    | U_{z_0} (z_2) - U_{z_0} (z_1) - U_{z_1} (z_2) | \leqslant
    M^{(2)}_{D_{\tau}^b, L_1, L_2} \sum_{\beta \in \Beta} | z_1 - z_0
    |^{\beta} d(z_1,z_2)^{\gamma - \beta} . \label{3ptcontinuitycondition}
  \end{equation}
  Then, we have that there exists a constant $C_S > 0$ which only depends on
  $\gamma$ and $\Beta$ such that
  \begin{equation}
    [U]_{\gamma, 0, [a, b]} \leqslant C_S \left[ \sup_{\tau \in (a, b]}
    d_{\tau}^{\gamma} \left( M^{(1)}_{D_{\tau}^b, \frac{d_{\tau}}{2}} +
    M^{(2)}_{D_{\tau}^b, \frac{d_{\tau}}{2}, \frac{d_{\tau}}{4}} \right) +
    \sup_{\tau \in (a, b]} \| U \|_{D_{\tau}^b, d_{\tau}} \right],
    \label{outputofschauder}
  \end{equation}
  where $\| U \|_{B, d}$ denotes the $L^{\infty}$ norm of $U$ restricted to
  the set $\{ z, w \in B : d(w,z) \leqslant d \}$.
\end{lemma}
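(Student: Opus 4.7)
The goal is to bound $[U]_{\gamma,0,[a,b]} = \sup_{\tau \in (a,b]} d_\tau^\gamma [U]_{\gamma, D_\tau^b, d_\tau}$. Fix $\tau \in (a,b]$ and a base point $z \in D_\tau^b$; it suffices to construct $\nu(z) \in \mathbb{R}^d$ such that
\[
|U_z(w) - \nu(z) \cdot (w-z)_x| \leq C_S d_\tau^{-\gamma} \Bigl[ d_\tau^\gamma \bigl( M^{(1)}_{D_\tau^b, d_\tau/2} + M^{(2)}_{D_\tau^b, d_\tau/2, d_\tau/4} \bigr) + \|U\|_{D_\tau^b, d_\tau} \Bigr] d(z,w)^\gamma
\]
for every $w \in D_\tau^b$ with $d(z,w) \leq d_\tau$; \eqref{outputofschauder} then follows on taking suprema over $\tau$.

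The core device is a dyadic telescoping of the mollification scale, based on the semigroup property \eqref{propertyofkernel} of the specific test function $\varphi^L$ constructed in \eqref{choiceofmollifiersemigroup}. Setting $L := d_\tau/2$ and $\ell_n := 2^{-n} L$ for $n \geq 0$, the heat-semigroup structure of $\psi$ supplies a differential identity that re-expresses each difference $(U_z)_{\ell_n}(w) - (U_z)_{\ell_{n+1}}(w)$ as a time-integral of $(\partial_t - \Delta)(U_z)_{\sqrt s}(w)$ over $s \in [\ell_{n+1}^2, \ell_n^2]$, in the spirit of the computation in \eqref{proofofreconstruction}. The assumption \eqref{orderboundcondition} then yields
\[
\|(U_z)_{\ell_n} - (U_z)_{\ell_{n+1}}\|_{B(z, L/2)} \lesssim M^{(1)}_{D_\tau^b, L} \sum_{\beta \in \Beta} \ell_n^{\beta} L^{\gamma - \beta}.
\]
Since every $\beta \in \Beta$ satisfies $\beta < 2$, the geometric series in $n$ sums to a multiple of $M^{(1)}_{D_\tau^b, L} L^{\gamma}$; continuity of $U_z$ lets us pass $N \to \infty$ to conclude $|U_z(w) - (U_z)_L(w)| \lesssim M^{(1)}_{D_\tau^b, L} L^{\gamma}$ for $w \in B(z, L/2)$. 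The large-scale contribution $(U_z)_L(w)$ is in turn directly dominated by $\|U\|_{D_\tau^b, d_\tau}$, up to a correction linear in $(w-z)_x$ that will be absorbed into $\nu(z)$.

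The linear part $\nu(z)$ itself is pinned down by the three-point continuity \eqref{3ptcontinuitycondition}: comparing mollifications at nearby base points $z, z'$ extracts a generalised partial derivative in each spatial direction, whose consistency across $z$ is precisely what \eqref{3ptcontinuitycondition} guarantees, with error controlled by $M^{(2)}$. By construction $\nu(z)$ achieves the infimum in \eqref{finitegammanormofU}. Combining with the dyadic estimate and dividing through by $d(z,w)^\gamma$ produces the required H\"older bound, from which \eqref{outputofschauder} follows by multiplying through by $d_\tau^\gamma$ and taking $\sup_\tau$.

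The main technical obstacle is managing the three interacting scales --- the mollification scale $\ell$, the outer scale $L$, and the boundary distance $d_\tau$ --- so that the constants remain uniform as $d_\tau \downarrow 0$. The choice $L = d_\tau/2$, $\ell \leq d_\tau/4$ is dictated by requiring $B(z, L) \subset D_\tau^b$ and allows \eqref{orderboundcondition} to be applied at the geometrically correct scale, so that the weights $d_\tau^\gamma$ factor out cleanly. The argument is by now standard in the regularity-structures literature; we follow \cite[Lem.~2.11]{Moinatandweber2020cpam}, itself building on \cite[Prop.~2]{OSSW18}. The only non-routine input is the precise form of $M^{(1)}$ and $M^{(2)}$ in terms of $\Cnoise{}$ and $\Cdumb{}$, verified in Lemmas \ref{lemmacheckingorderboundschauder}--\ref{lemmachecking3ptcontinuityschauder} and tailored for the absorption performed in Subsection \ref{sectionproofofinteriorestimate}.
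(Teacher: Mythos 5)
The first thing to note is that the paper contains no proof of this lemma to compare against: it is imported verbatim from \cite[Lem.~2.11]{Moinatandweber2020cpam} (a boundary-weighted variant of \cite[Prop.~2]{OSSW18}), with only a remark that the blow-up is at the time boundary here. So your sketch has to stand on its own, and as written it has gaps at exactly the points where the real work in the cited proofs lies.

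Concretely: (i) your telescoping step rests on a claimed identity turning $(U_z)_{\ell_n}-(U_z)_{\ell_{n+1}}$ into $\int_{\ell_{n+1}^2}^{\ell_n^2}(\partial_t-\Delta)(U_z)_{\sqrt s}\,\mathd s$. No such identity is available for the kernel actually used: $\varphi^\ell$ is built in \eqref{choiceofmollifiersemigroup} by iterated convolution of a generic even, compactly supported $\psi\in\mathfrak F$ and satisfies only the dyadic convolution property \eqref{propertyofkernel}; it does not solve a heat-type equation in its scale parameter, so the ``differential identity'' has to be replaced by a genuine Duhamel argument with the heat semigroup (whose kernel is not in $\mathfrak F$) or by a specially adapted kernel as in the Otto--Weber line of work — an extra construction you have not supplied. (ii) Even granting that step, the summability claim is wrong: the per-scale contribution is $M^{(1)}\sum_{\beta\in\Beta}\ell_n^{\beta}L^{\gamma-\beta}$, and $\sum_n\ell_n^{\beta}$ converges only for $\beta>0$, not ``for $\beta<2$''. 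Since $\Beta\subset(-\infty,\gamma]$ and the application in Lemma~\ref{lemmacheckingorderboundschauder} uses $\beta=0$, the series you sum diverges; treating the non-positive homogeneities is precisely what makes these integration lemmas delicate. (iii) The scale bookkeeping does not produce a H\"older bound. Unwrapping \eqref{normofUblowup}, what is needed for $r=d(z,w)\leqslant d_\tau$ is
\[
|U_z(w)-\nu(z)\cdot(\Pi_z\X)(w)|\lesssim \bigl(M^{(1)}+M^{(2)}\bigr)\,r^{\gamma}+\|U\|_{D_\tau^b,d_\tau}\,(r/d_\tau)^{\gamma},
\]
whereas freezing the outer scale at $L=d_\tau/2$ and bounding the large-scale piece by $\|U\|$ gives an error of size $M^{(1)}d_\tau^{\gamma}+\|U\|$ uniformly in $w$; dividing by $r^{\gamma}$ then blows up as $r\downarrow 0$. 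The truncation scale must be tied to $r$, and the scales between $r$ and $d_\tau$ must be traversed by changing base points, which is where the three-point continuity \eqref{3ptcontinuitycondition} genuinely enters — not merely to ``pin down'' $\nu(z)$ (and nothing requires your $\nu(z)$ to attain the infimum in \eqref{finitegammanormofU}; any admissible choice gives an upper bound). If you want a self-contained argument, follow the scheme of \cite[Lem.~2.11]{Moinatandweber2020cpam}; otherwise citing it, as the paper does, is the correct move.
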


The following lemma is inspired by {\cite[Cor.
2.12]{Moinatandweber2020cpam}} and provides bounds on the generalised gradient
$u_X$. Its proof becomes simpler in our case and we include it here.

\begin{lemma}
  \label{lemmacontrolongradient}Let $1 < \gamma < 2$. Fix $\tau \in (a, b)$,
  where $[a, b] \subset [0, 1] .$ Then, for the function $u_X$ in
  \eqref{generalisedgradient}, we have for any $r \in \left[ 0,
  \frac{d_{\tau}}{2} \right]$
  \begin{equation}
    \| u_X \|_{D_{\tau}^b} \leqslant r^{\gamma - 1} [U]_{\gamma, D_{\tau}^b,
    \frac{d_{\tau}}{2}} + r^{- 1} \| U \|_{D_{\tau}^b, r} .
    \label{boundonlinfinitygradient}
  \end{equation}
  If also \eqref{3ptcontinuitycondition} holds for all $z_0, z_1, z_2 \in
  D_{\tau}^b$, we also have
  \begin{equation}
    [u_X]_{\gamma - 1, D_{\tau}^b} \leqslant [U]_{\gamma, D_{\tau}^b,
    d_{\tau}} + M_{D_{\tau}^b, \frac{d_{\tau}}{4}, \frac{d_{\tau}}{4}}^{(2)} .
    \label{boundongammanormgradient}
  \end{equation}
\end{lemma}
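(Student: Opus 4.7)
My plan for both estimates exploits the characterisation of $u_X(z)$ as the (unique) minimiser in \eqref{finitegammanormofU}: for every $z \in D_\tau^b$ and every $w$ such that the pair contributes to the restricted semi-norm,
\begin{equation*}
 | U_z(w) - u_X(z)\cdot (\Pi_z \X)(w)| \leqslant [U]_{\gamma, D_\tau^b, r}\, d(z,w)^\gamma,
\end{equation*}
where the scale $r$ will be adjusted to what is available. Recall $(\Pi_z\X)(w) = x_w - x_z$, so that testing against the spatial increment $w = z + h e_i$ (which keeps the time coordinate fixed and has $d(z,w)=h$) isolates the component $h\, u_{X_i}(z)$.

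For the first estimate \eqref{boundonlinfinitygradient}, fix any $z \in D_\tau^b$ and a spatial coordinate direction $e_i$. For $r \leqslant d_\tau/2$, both $z$ and $z + r e_i$ lie in $D_\tau^b$ and their parabolic distance equals $r$, so the pair is admissible for $[U]_{\gamma, D_\tau^b, d_\tau/2}$. The minimiser property then gives
\begin{equation*}
 | h\, u_{X_i}(z)| \leqslant |U_z(z + r e_i)| + [U]_{\gamma, D_\tau^b, d_\tau/2}\, r^\gamma \leqslant \|U\|_{D_\tau^b, r} + [U]_{\gamma, D_\tau^b, d_\tau/2}\, r^\gamma,
\end{equation*}
and dividing by $r$ and taking the supremum over $z$ and $i$ yields \eqref{boundonlinfinitygradient}.

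For the second estimate \eqref{boundongammanormgradient}, fix $z_0, z_1 \in D_\tau^b$ and set $d_0 := d(z_0,z_1)$. Assume first $d_0 \leqslant d_\tau/4$, choose $h = d_0$, and set $z_2 := z_1 + h e_i$, so $z_1 \in B(z_0, d_\tau/4)$ and $z_2 \in B(z_1, d_\tau/4)$ as required by \eqref{3ptcontinuitycondition}. Using the remainders $R_{z_k}(w) := U_{z_k}(w) - u_X(z_k)\cdot(\Pi_{z_k}\X)(w)$ ($k=0,1$), each bounded by $[U]_{\gamma, D_\tau^b, d_\tau}\, d(z_k,w)^\gamma$, a direct algebraic manipulation gives
\begin{equation*}
 h\,(u_{X_i}(z_0) - u_{X_i}(z_1)) = -\bigl( U_{z_0}(z_2) - U_{z_0}(z_1) - U_{z_1}(z_2)\bigr) + R_{z_0}(z_2) - R_{z_0}(z_1) - R_{z_1}(z_2).
\end{equation*}
The bracketed term is bounded by $M^{(2)}_{D_\tau^b, d_\tau/4, d_\tau/4} \sum_{\beta} d_0^\beta h^{\gamma-\beta}$, and the remainder combination by a constant times $[U]_{\gamma, D_\tau^b, d_\tau}\, d_0^\gamma$ since $d(z_0,z_2)\leqslant 2 d_0 \leqslant d_\tau$. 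Dividing by $h = d_0$ produces the $d_0^{\gamma-1}$ scaling on the right-hand side. Taking the supremum over $i$ and over pairs with $d_0 \leqslant d_\tau/4$ gives \eqref{boundongammanormgradient}; the complementary regime $d_0 > d_\tau/4$ is handled by the trivial bound $|u_X(z_0)-u_X(z_1)| \leqslant 2\|u_X\|_{D_\tau^b}$ combined with the already-established \eqref{boundonlinfinitygradient}, which gets absorbed into the same right-hand side.

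The main nuisance rather than a genuine obstacle is bookkeeping: making sure that for the chosen $h$ the triple $(z_0, z_1, z_2)$ always satisfies the ball inclusions in \eqref{3ptcontinuitycondition} and that the pairs appearing in the remainders stay within the restriction scale of the $[U]$-semi-norm, so that one obtains $[U]_{\gamma, D_\tau^b, d_\tau}$ (and not a larger restriction) on the right-hand side of \eqref{boundongammanormgradient}. This is resolved by picking $h=d_0$ and restricting attention to $d_0 \leqslant d_\tau/4$ as above.
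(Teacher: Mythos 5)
Your argument is essentially the paper's own proof: for \eqref{boundonlinfinitygradient} both use the minimiser characterisation of $u_X$ and test $U_z$ at a purely spatial point at parabolic distance $r$, and for \eqref{boundongammanormgradient} both combine the defining remainder bounds with the three-point continuity \eqref{3ptcontinuitycondition} at a third point placed at spatial distance $d(z_0,z_1)$ from $z_1$. The only deviations are minor: the paper aligns the spatial displacement with $u_X(z)$ (resp.\ with $u_X(z_1)-u_X(z_0)$) rather than using coordinate directions, which avoids the dimensional constant your componentwise bound incurs, and it confines itself to pairs with $d(z_0,z_1)<d_\tau/4$ instead of treating the far regime --- where, note, your ``absorption'' via \eqref{boundonlinfinitygradient} would bring $\|U\|_{D_\tau^b,r}$ onto the right-hand side, which is not of the stated form $[U]_{\gamma,D_\tau^b,d_\tau}+M^{(2)}_{D_\tau^b,\frac{d_\tau}{4},\frac{d_\tau}{4}}$.
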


\begin{proof}
  From the definition of $[U]_{\gamma, B}$ in \eqref{finitegammanormofU} and
  \eqref{generalisedgradient}, we get that for every $z, w \in D_{\tau}^b$
  with $d(z,w) < \frac{d_{\tau}}{2}$,
  \[ | u_X (z) \cdot X (w - z) | \leqslant [U]_{\gamma, D_{\tau}^b,
     \frac{d_{\tau}}{2}} d(z,w)^{\gamma} + \| U \|_{D_{\tau}^b, d(z,w)}
     . \]
 Since $D_{\tau}^b = [\tau, b] \times \mathbb{T}^d$, for every $r \in
  \left[ 0, \frac{d_{\tau}}{2} \right]$ and $z \in D_{\tau}^b$, we may find $w
  \in D_{\tau}^b$ such that $d(z,w) = r$ and
  \[ X (w - z) = \frac{u_X (z)}{| u_X (z) |} r, \]
  so that for this $w$ we get
  \begin{equation}
    | u_X (z) | r \leqslant r^{\gamma} [U]_{\gamma, D_{\tau}^b,
    \frac{d_{\tau}}{2}} + \| U \|_{D_{\tau}^b, r},
  \end{equation}
  which gives \eqref{boundonlinfinitygradient}.
  
  Again by \eqref{finitegammanormofU}, we obtain that for every $z_0 \in
  D_{\tau + \frac{d_{\tau}}{4}}^b$, $z_1 \in B \left( z_0, \frac{d_{\tau}}{4}
  \right)$, and $z_2 \in B \left( z_1, \frac{d_{\tau}}{4} \right)$,
  \begin{eqnarray*}
    &  & | U_{z_0} (z_2) - U_{z_0} (z_1) - U_{z_1} (z_2) - (u_X (z_0) - u_X
    (z_1)) \cdot X (z_2 - z_1) |\\
    & \leqslant & [U]_{\gamma, D_{\tau}^b, d_{\tau}} (d(z_0,z_2)^{\gamma}
    + d(z_0,z_1)^{\gamma} + d(z_1,z_2)^{\gamma}) .
  \end{eqnarray*}
  Now again since $D_{\tau}^b = [\tau, b] \times \mathbb{T}^d$, for every
  fixed $z_0 \neq z_1$, we may choose $z_2$ such that
  \[ X (z_2 - z_1) = \frac{u_X (z_1) - u_X (z_0)}{| u_X (z_1) - u_X (z_0) |}
    d(z_0,z_1)  \infixand d(z_1,z_2) = d(z_0,z_1), \]
  so by the three-point continuity \eqref{3ptcontinuitycondition}, we arrive
  at
  \begin{eqnarray*}
    | u_X (z_1) - u_X (z_0) | d(z_0,z_1) & = & | (u_X (z_1) - u_X (z_0))
    \cdot X (z_2 - z_1) |\\
    & \leqslant & \left( [U]_{\gamma, D_{\tau}^b, d_{\tau}} +
    M^{(2)}_{D_{\tau}^b, \frac{d_{\tau}}{4}, \frac{d_{\tau}}{4}} \right) 
    d(z_0,z_1)^{\gamma}
  \end{eqnarray*}
  since $d(z_0,z_2) \leqslant 2 d(z_0,z_1)$. This in turn gives
  \eqref{boundongammanormgradient}.
\end{proof}

\section{Auxiliary calculations}\label{AppendixB}

\subsection{Stochastic objects}

\refchange{For more details on the stochastic
estimates that are discussed below, see
\cite[Sec. 3.3]{dLF24}: Sections 3.3.1 and 3.3.2 provide full proofs in detail for the cases
of a spatial white noise and a
spatially correlated Wiener process, while
Section 3.3.3 expands the discussion below
based on \cite{HS16}.}

\subsubsection{2-d Sine-Gordon}
\label{AppendixstochasticobjectsSG}

We now discuss how the results in \cite{HS16}, where the case $\beta^2 \in (0,16\pi /3)$ 
for \eqref{equationforSG} is treated,
can be used to
obtain the corresponding order bounds for $\Pi \dumb_{a,b}$, $a,b \in \{c,s\}$, in our setting.
The corresponding objects in \cite{HS16} will be denoted with a tilde here.

The difference between our setting and the one in \cite{HS16} is the definition of
$\seqeps{\lollib}_a$, $a \in \{c,s\}$. While they use a truncated heat kernel
$K$ that is compactly supported and 
integrates polynomials to zero, we consider the full heat kernel. This is because we need
$\seqeps{\lollib}_a$ to solve $\heat \seqeps{\lollib}_a = \seqeps{\noiseb}_a$, while in their case
this is only true up to a smooth error. 

Let $\zeta$ be the $2\pi$-periodic in space
space-time white noise 
considered in Corollary \ref{corollaryglobalSG} and
$\seqeps{\zeta}(z) := \lb \zeta, \vphi_z^\veps \rb$, for $\vphi \in \Ftest$, 
be a regularisation of $\zeta$ at scale $\veps$. Recall that $\seqeps{\tilde Z}(z) := K \ast \seqeps{\zeta}(z)$
and the definition of the noise $\seqeps{\noiseb}$ in \eqref{defofnoisesSG2} as
\begin{equation*}
  \seqeps{\noiseb} := \veps^{-\frac{\beta^2}{4\pi}} e^{i \beta \seqeps{\tilde{Z}}} ,
  \quad
  \seqeps{\noiseb}_c := \veps^{-\frac{\beta^2}{4\pi}} \cos(\beta \seqeps{\tilde{Z}}) ,
  \quad
  \seqeps{\noiseb}_s := \veps^{-\frac{\beta^2}{4\pi}} \sin(\beta \seqeps{\tilde{Z}}) .
\end{equation*}
With the same post-processed heat kernel $K$, let $\seqeps{\tilde{\lollib}}_b := K \ast \seqeps{\noiseb}_b$ and
\begin{equation*}
  \seqeps{\tilde{\Pi}_z} \dumb_{a,b} := \left(\seqeps{\tilde{\lollib}}_b - \seqeps{\tilde{\lollib}}_b(z)\right)
  \seqeps{\noiseb}_a
  - \frac{1}{2} \seqeps{C} \delta_{a b} ,
  \quad \text{for} \quad
  a,b \in \{c,s\} ,
\end{equation*}
where 
the constant
$\seqeps{C} := \E[\seqeps{\tilde{\lollib}}(0) \seqeps{\bar \noiseb}(0)] \sim \veps^{-\beta^2/4\pi}$ 
(see \cite[Rem. 4.2]{HS16}).
Here, the point $z=0$ in 
\refchange{$\seqeps{C}$}
is arbitrary since the objects $\seqeps{\noiseb}$ and $\seqeps{\tilde \lollib}$
are stationary. 
This way, \cite[Thm 2.8]{HS16} gives convergence
of $\seqeps{\tilde{\Pi}}$ to a $\tilde{\Pi}$ which satisfies the OBs
in \eqref{orderbounds}-\eqref{orderboundlollipop} with
$\E[\Cnoise{,n}^p], \E[\Cdumb{,n}^p] < \infty$
uniformly in $\veps>0$, for any $\kappa > \beta^2/4\pi - 1$, $n \in \N$ and $p \geqslant 1$, where
\refchange{
\begin{equation*}
  \tCnoise{,n} := 
  \max_{\btau \in \{\noise_a, \lolli_a, \Xnoise_a \; : \; a=c,s\}} [\tilde\Pi;\btau]_{|\btau|,D_{n - 1}^n} ,
  \; \; 
  \tCdumb{,n} := \max_{a,b \in \{c,s\}} [\tilde\Pi;\dumb_{a,b}]_{- 2 \kappa, D_{n-1}^n}
\end{equation*}
}are as in \eqref{assumptiononstochasticobjects} in view of Remark \ref{remarkmultinoise},
\refchange{with $\Pi$ replaced by $\tilde \Pi$}.
For $b \in \{c,s\}$, define $\seqeps{\lollib}_b$ as
\begin{equation*}
  \seqeps{\lollib}_b(z) := \int_{0}^t \int_{\T^2} p_{t-s}(y - x) \seqeps{\noiseb}_b(s,y) \mathd s \mathd y,
  \quad
  z=(t,x) \in D , 
\end{equation*}
where $p_s(y) := (4 \pi s)^{-1} \sum_{k \in \Z^2} e^{-|y - 2\pi k|^2/4s}$ denotes the periodisation of
the standard heat kernel.
So $\seqeps{\lollib}_b$ solves $\heat \seqeps{\lollib}_b = \seqeps{\noiseb}_b$ starting from $0$ at $t=0$.
\refchange{
On the other hand, if we set
$\seqeps{R}_{\noiseb,b} := \heat \seqeps{\tilde{\lollib}}_b - \seqeps{\noiseb}_b$
and
the difference 
$\seqeps{\mathfrak R}_{\noiseb,b} := \seqeps{\tilde{\lollib}}_b - \seqeps{\lollib}_b$,
then it holds that
}
$\heat \seqeps{\mathfrak R}_{\noiseb,b} = \seqeps{R}_{\noiseb,b}$. 
\refchange{Furthermore, it can be shown that} 
the error
\refchange{
$\seqeps{R}_{\noiseb,b}$
converges to $R_{\noiseb,b}$}
in $C^\gamma$ for any $\gamma \geqslant 0$
in the limit $\veps \downarrow 0$. 
\refchange{Similarly, one can show that}
also
$\seqeps{\mathfrak R}_{\noiseb,b} \to \mathfrak R_{\noiseb,b}$
in $C^\gamma$ for any $\gamma \geqslant 0$ as $\veps \downarrow 0$. 
Now, 
let
\begin{equation*}
  \seqeps{\Pi_z} \dumb_{a,b} := \left(\seqeps{\lollib}_b - \seqeps{\lollib}_b(z)\right)
  \seqeps{\noiseb}_a
  - \frac{1}{2} \seqeps{C} \delta_{a b} ,
  \quad \text{for} \quad
  a,b \in \{c,s\} ,
\end{equation*}
for the same constant $\seqeps{C}$.
Therefore, we obtain the 
control on $\Cnoise{,n}, \Cdumb{,n}$ in \eqref{defofconstantsOBforSG}
\refchange{(now w.r.t. $\Pi$ itself) from the control
on $\tCnoise{,n}, \tCdumb{,n}$ defined above}, 
since
\begin{equation*}
   \seqeps{\tilde{\Pi}_z} \dumb_{a,b} - \seqeps{\Pi_z} \dumb_{a,b}
   = \left(\seqeps{\mathfrak R}_{\noiseb,b}
   - \seqeps{\mathfrak R}_{\noiseb,b}(z) \right) \seqeps{\noiseb}_a 
   ,
\end{equation*}
which can be defined uniquely using
Theorem \ref{reconstructiontheorem} (Reconstruction). 
In fact, we have that for any $b \in \{c,s\}$, $1 + \kappa < \gamma < 2$ and $B \subset D$
\begin{equation*}
  [\seqeps{\mathfrak R}_{\noiseb,b}]_{\gamma, B} := 
  \sup_{z \in B} \sup_{w \neq z \in B}
  \frac{| \seqeps{\mathfrak R}_{\noiseb,b}(w) - \seqeps{\mathfrak R}_{\noiseb,b}(z)
  - \nabla \seqeps{\mathfrak R}_{\noiseb,b} (z) \cdot (\Pi_z \X) (w) |}
  {d (z,w)^{\gamma}} < \infty
\end{equation*}
and $[\nabla \seqeps{\mathfrak R}_{\noiseb,b}]_{\gamma-1,B} < \infty$
uniformly in $\veps > 0$,
\refchange{
since $\seqeps{\mathfrak R}_{\noiseb,b}$ is smooth
uniformly in $\veps > 0$
(here $\nabla \seqeps{\mathfrak R}_{\noiseb,b}$
denotes the standard gradient of a $C^1$ function).}
So if we let $\seqeps{\mathcal G}_{a,b}(z) :=
\seqeps{\mathfrak R}_{\noiseb,b}(z) \seqeps{\Pi}_z \noise_a
+ \nabla \seqeps{\mathfrak R}_{\noiseb,b} (z) \cdot \seqeps{\Pi}_z \Xnoise_a$, then
\begin{eqnarray*}
  \seqeps{\mathcal G}_{a,b}(w) - \seqeps{\mathcal G}_{a,b}(z)
  & = & \left(\seqeps{\mathfrak R}_{\noiseb,b}(w) - \seqeps{\mathfrak R}_{\noiseb,b}(z)
  - \nabla \seqeps{\mathfrak R}_{\noiseb,b} (z) \cdot (\Pi_z \X) (w)\right) \seqeps{\Pi}_z \noise_a \\
  & + & \left( \nabla \seqeps{\mathfrak R}_{\noiseb,b} (w)
  - \nabla \seqeps{\mathfrak R}_{\noiseb,b} (z) \right) \seqeps{\Pi}_z \Xnoise_a ,
\end{eqnarray*}
which yields that for for any $a,b \in \{c,s\}$, $L>0$ and $z$ such that $B(z,L) \subset D$
\begin{eqnarray*}
  & & |\lb \seqeps{\Pi}_z \dumb_{a,b} , \vphi_z^L \rb | \\
  & \leqslant & [ \seqeps{\tilde \Pi} ; \dumb_{a,b} ]_{-2\kappa} L^{-2\kappa} 
  + \|\nabla \seqeps{\mathfrak R}_{\noiseb,b} \|_{B(z,L)} [\seqeps{\Pi} ; \Xnoise_a]_{-\kappa} L^{-\kappa}
  \\
  & + & 
  \left( [\seqeps{\mathfrak R}_{\noiseb,b}]_{\gamma, B(z,L)} [\seqeps{\Pi} ; \noise_a]_{-1-\kappa} 
  + [\seqeps{\nabla \mathfrak R}_{\noiseb,b}]_{\gamma - 1, B(z,L)} [\seqeps{\Pi} ; \Xnoise_a]_{-\kappa} \right)
  L^{\gamma - 1 - \kappa}
\end{eqnarray*}
uniformly in $\veps > 0$. Since $\seqeps{\mathfrak R}_{\noiseb,b} = (K - p) \ast \seqeps{\noiseb}_b$, 
where $(K - p)$ is a smooth kernel that agrees with the heat kernel outside a ball of radius $1$ around the
origin and it is zero inside this ball,
all the semi-norms and norms of $\seqeps{\mathfrak R}_{\noiseb,b}$ 
\refchange{can be shown to be}
bounded by the norm
$[\seqeps{\Pi} ; \noise_b]_{-1-\kappa}$, uniformly in $\veps > 0$.

In view of \eqref{decorateddumb}, since $\sin'(x)\cos(x) + \cos'(x)\sin(x) = 0$,
the renormalisation constants in $\seqeps{\Pi}_z \dumb{a,b}$ cancel out in
the renormalised equation
for $\ueps = \seqeps{\Phi} - \seqeps{\tilde{Z}}$, which becomes \eqref{renormalisedequationforSG}.

\subsubsection{Multiplicative SPDEs in the Da Prato-Zabczyk regime}
\label{AppendixstochasticobjectsDZSPDE}

Recall the definition of the noise we are considering in \eqref{cylindricalwienerRDS}
\begin{equation*}
  W = \sum_{k \in \Z^d} e_k c_k B_k ,
  \quad \text{where} \quad 
  c_k := \frac{1}{(2\pi)^\frac{d}{2}(1 + |k|^2)^{\frac{d}{4} - \frac{\delta}{2}}} ,
\end{equation*}
where $e_k(x) = e^{i k \cdot x}$ and $B_k$ are independent (up to $\bar{B}_k = B_{-k}$)
complex-valued Brownian motions.
For any $z=(t,x)\in D$, let $\lollib(z) := B_0(t) + \tilde{\lollib}(z)$, for
\begin{equation}
   \tilde{\lollib}(z) := \sum_{k \in \Z^d, k\neq0} 
   \left(e^{-|k|^2 t} \eta_k + c_k \int_0^t e^{-|k|^2(t-s)} \mathd B_k(s) \right) e_k(x)
   \;, 
   \label{fourierserieslolliRDS}
\end{equation}
where $\{\eta_k\}_{k \in \Z^d, k\neq 0}$ are independent (up to $\bar{\eta_k} = \eta_{-k}$) mean zero complex Gaussians
with variance $\E[\eta_k \eta_{-k}] = c_k^2 / 2|k|^2$,
which make $\tilde \lollib$ stationary. Thus $\lollib$
satisfies \eqref{definitionlollipop} starting from $\lollib_{|t=0} = \tilde{\lollib}$. Again despite $\lollib$ not being stationary in time,
its increments are. 
A bound like $\E[|\lollib(z) - \lollib(w)|^p] \lesssim C_{p,W} d(z,w)^{(1-\kappa)p}$ 
holds for any $\kappa > \delta$.
Therefore, we obtain that $[\Pi ; \lolli]_{1-\kappa,D_{n-1}^n} < \infty$ in \eqref{orderboundlollipop} a.s. for any 
$n \in \N$ and $\kappa>\delta$. 
Denote by $\noiseb^{(\veps)}$ and $\lollib^{(\veps)}$ the regularisation
of the objects in \eqref{cylindricalwienerRDS} and \eqref{fourierserieslolliRDS}
at scale
$\veps > 0$. In space we consider
ultraviolet cut-off and 
in time via replacing $B_k$ by
$B^{(\veps)}_k(t) := \int \rho^{\veps} (t-s) \mathd B_k(s)$,
where $\rho^{\veps} := \veps^{-2} \rho(t \veps^{-2})$
for a smooth, even, compactly supported $\rho$ such that $\int \rho = 1$. This way,
$\Pi_z \dumb$ can be constructed as
\begin{equation*}
  \Pi_z \dumb =
  \lim_{\veps \downarrow 0} 
  \left(\lollib^{(\veps)} - \lollib^{(\veps)}(z)\right)
  \noiseb^{(\veps)} - C^{(\veps)} ,
\end{equation*}
where 
$C^{(\veps)} :=
  \E[\tilde\lollib^{(\veps)}(1,0) \noiseb^{(\veps)}(1,0)]$
is such that
\begin{align*}
  C^{(\veps)} 
  & = \frac{1}{(2\pi)^d} \sum_{0<|k| \leqslant 1/\veps} 
  c_k^2 \int_0^t e^{-|k|^2 s} \rho^{\veps} \ast \rho^{\veps} (s) \mathd s \\
  & \sim \frac{1}{(2\pi)^d} \sum_{0<|k| \leqslant 1/\veps} c_k^2
  \sim \frac{\veps^{-2\delta}}{(2\pi)^d},
\end{align*}
i.e., it diverges as the
\refchange{$L^2$-}trace
of the operator $(1 - \Delta)^{- (d / 2 - \delta)}$ as the spatial regularisation
is removed.
%
For these objects, it can be shown that
the OBs
in \eqref{orderbounds}-\eqref{orderboundlollipop} satisfy
$\E[\Cnoise{,n}^p] < \infty$ , $\E[\Cdumb{,n}^p] < \infty$
  uniformly in $\veps> 0$, for every $p \geqslant 1$, where
 and $\Cnoise{,n}, \Cdumb{,n}$ are as in 
\eqref{assumptiononstochasticobjects} - see \cite[Sec. 4.5]{HP15}, where the more irregular case 
of $\kappa > 1/2$ is treated.

Furthermore, if we consider the solutions $u^{(\veps)}$
to \eqref{mainequationrenormalised} w.r.t. $\noiseb^{(\veps)}$,
$u^{(\veps)}$ converges 
to the It\^{o} solution $u$ 
to \eqref{mainequation} as $\veps \downarrow 0$ (see \cite{HP15}).
Since our estimates
in Theorems \ref{maintheorem} and \ref{theorempolynomialgrowth} are independent of
$C^{(\veps)}$, they pass to the limit $\varepsilon \downarrow 0$. 

\subsection{Difference inequalities in Sections
\ref{sectionmaximumprinciple} and \ref{sectionthemassiveequation}
}\label{Appendixdifferenceequations}

\subsubsection{Pathwise growth}

Here we shall address how to estimate a solution to the difference inequalities
in \eqref{differenceequationpolynomial} and
\eqref{diferenceequationformassive}. Starting with
\eqref{differenceequationpolynomial}, let us rewrite the inequality as
\begin{equation}
  \left\{\begin{array}{lll}
    Y_{n + 1} & \leqslant & Y_n + q_n {Y_n^{\beta}} \\
    Y_1 & = & \| u \|_{D_0^1}
  \end{array}\right., \label{differenceequationpolynomialsolving}
\end{equation}
where the control on $\| u \|_{D_0^1}$ is given by \eqref{mainresultbound} and
\[ q_n \assign C (\kappa, d, C_{\sigma}) \left( \tCdumb{,n + 1}
   ^{\frac{1}{1 - \kappa}} \vee \tCnoise{,n + 1}^{\frac{2}{1 -
   \kappa}} \right)^{\frac{1}{1 - \betanoise}}  \quad \tmop{and} \quad \beta
   \assign \betanoise < 1 . \]
Further, let us make one more definition, for every $n \in \mathbb{N}$, let
\begin{equation}
  Q_n \assign \max_{1 \leqslant i \leqslant n} q_i \leqslant C (\kappa, d,
  C_{\sigma}) \max_{1 \leqslant i \leqslant n + 1} \left\{ \Cdumb{,i}
  ^{\frac{1}{1 - \kappa}}, \Cnoise{,i}^{\frac{2}{1 - \kappa}}
  \right\}^{\frac{1}{1 - \betanoise}} . \label{auxiliarQn}
\end{equation}
With this at hand, let us make an ansatz for
\eqref{differenceequationpolynomialsolving} using the solution to the
corresponding ODE, which reads as
\begin{equation}
  \left\{\begin{array}{lllll}
    y'_t & = & q_t {y_t^{\beta}}  & , & t \in (1, \infty)\\
    y_1 & = & Y_1 & . & 
  \end{array}\right., \label{ODEforpolynomialgrowth}
\end{equation}
where
\begin{equation}
  q_t = \sum_{n = 1}^{\infty} q_n \mathbbm{1}_{\{ n < t \leqslant n + 1 \}} .
\end{equation}
The solution to \eqref{ODEforpolynomialgrowth} then reads as
\[ y_t = \left( y^{1 - \beta}_1 + (1 - \beta) \int_1^t q_s \mathd s
   \right)^{\frac{1}{1 - \beta}} \quad, \quad t \geqslant 1 . \]
Therefore, we make the ansatz that $Y_n$ solution to
\eqref{differenceequationpolynomialsolving} satisfies
\begin{equation}
  Y_n \leqslant (Y^{1 - \beta}_1 + (1 - \beta) Q_{n - 1} (n - 1))^{\frac{1}{1
  - \beta}} \label{theansatzforYn}
\end{equation}
and we prove this by induction using
\eqref{differenceequationpolynomialsolving}. For $n = 1$ we simply get $Y_1
\leqslant Y_1$. To show that
\begin{equation}
  Y_n \leqslant (Y^{1 - \beta}_1 + (1 - \beta) Q_{n - 1} (n - 1))^{\frac{1}{1
  - \beta}} \Longrightarrow Y_{n + 1} \leqslant (Y^{1 - \beta}_1 + (1 - \beta)
  Q_n n)^{\frac{1}{1 - \beta}}, \label{inductionstep}
\end{equation}
we use \eqref{differenceequationpolynomialsolving} to write
\[ Y_{n + 1} \leqslant (Y^{1 - \beta}_1 + (1 - \beta) Q_{n - 1} (n -
   1))^{\frac{1}{1 - \beta}} + q_n (Y^{1 - \beta}_1 + (1 - \beta) Q_{n - 1} (n
   - 1))^{\frac{\beta}{1 - \beta}} . \]
Now note that by the definition of $Q_n$ in \eqref{auxiliarQn},
\[ Q_{n - 1} (n - 1) + q_n \leqslant Q_n n, \]
Letting $\bar{Y} \assign Y^{1 - \beta}_1 + (1 - \beta) Q_{n - 1} (n - 1)$,
\eqref{inductionstep} becomes a consequence of
\[ \bar{Y}^{\frac{1}{1 - \beta}} + q_n \bar{Y}^{\frac{\beta}{1 - \beta}}
   \leqslant (\bar{Y} + (1 - \beta) q_n)^{\frac{1}{1 - \beta}} . \]
The above can be seen from the fact that, for every $s > 0$,
\[ \frac{\mathd}{\mathd s} (\bar{Y} + s)^{\frac{1}{1 - \beta}} = \frac{1}{1 -
   \beta} (\bar{Y} + s)^{\frac{\beta}{1 - \beta}}, \]
so we may write
\[ (\bar{Y} + (1 - \beta) q_n)^{\frac{1}{1 - \beta}} = \bar{Y}^{\frac{1}{1 -
   \beta}} + \int_0^{(1 - \beta) q_n} \frac{1}{1 - \beta} (\bar{Y} +
   s)^{\frac{\beta}{1 - \beta}} \mathd s \geqslant \bar{Y}^{\frac{1}{1 -
   \beta}} + q_n \bar{Y}^{\frac{\beta}{1 - \beta}}, \]
which concludes the proof of \eqref{theansatzforYn}. This together with
\eqref{auxiliarQn} gives \eqref{finalpolynomialgrowth}.

Now we move to \eqref{diferenceequationformassive}, which corresponds to
\begin{equation}
  \left\{\begin{array}{lll}
    Y_{n + 1} & \leqslant & a Y_n + r_n {Y_n^{\beta}} \\
    Y_1 & = & \| u \|_{D_0^1}
  \end{array}\right., \label{polynomialvsexponential}
\end{equation}
where $0 < a < 1$ and $r_n$ are defined in \eqref{constantscandr}, and $\beta
\assign \betanoise < 1$. Now, we make an ansatz in the following way. Assume
that for large $n \in \mathbb{N}$, $Y_n \leqslant Y_{n + 1}$. Since $Y_1 > 0$
we may divide \eqref{polynomialvsexponential} by $Y_{n + 1}$ on both sides to
obtain
\[ 1 \leqslant a \frac{Y_n}{Y_{n + 1}} + r_n \frac{{Y_n^{\beta}} }{Y_{n + 1}}
   \leqslant a + \frac{r_n }{Y_n^{1 - \beta}} \]
which implies that for large $n \in \mathbb{N}$,
\[ Y_n < \left( \frac{r_n}{(1 - a)} \right)^{\frac{1}{1 - \beta}} \]
otherwise we get a contradiction to \eqref{polynomialvsexponential}. As an
immediate consequence, we see that if $r_n$ is bounded, so it is $Y_n$. We
shall prove by induction that \eqref{polynomialvsexponential} implies
\[ Y_n \leqslant \max \left\{ Y_1, \left( \frac{R_n}{(1 - a)}
   \right)^{\frac{1}{1 - \beta}} \right\}, \quad \tmop{for} \quad R_n \assign
   \max_{1 \leqslant i \leqslant n} r_i, \]
which is clearly true for every $n \in \mathbb{N}$ such that $R_n \leqslant (1
- a) Y_1^{1 - \beta}$. Assume then that
\[ R_n > (1 - a) Y_1^{1 - \beta}, \]
and then
\begin{eqnarray*}
  Y_{n + 1} & \leqslant & a Y_n + r_n {Y_n^{\beta}} \\
  & = & \frac{a R_n^{\frac{1}{1 - \beta}}}{(1 - a)^{\frac{1}{1 - \beta}}} +
  \frac{R_n^{\frac{1}{1 - \beta}}}{(1 - a)^{\frac{\beta}{1 - \beta}}} = \left(
  \frac{R_n}{(1 - a)} \right)^{\frac{1}{1 - \beta}} \leqslant \left(
  \frac{R_{n + 1}}{(1 - a)} \right)^{\frac{1}{1 - \beta}} .
\end{eqnarray*}
which gives \eqref{yesitgrows}.

\subsubsection{Growth of moments}

Let us now come back to \eqref{polynomialvsexponential} with assumption
\eqref{assumptiononmoments} on the moments of $\Cnoise{,n}$ and $\Cdumb{,n}$
in force. Letting $Z_n \assign \mathbb{E} {[Y_n^p]^{1 / p}} $, for $p
\geqslant 1$, and using \eqref{polynomialvsexponential}, we get
\[ Z_{n + 1} \leqslant a Z_n +\mathbb{E} [r_n^p Y_n^{p \beta}]^{\frac{1}{p}}
   \leqslant a Z_n +\mathbb{E} \left[ r_n^{\frac{p}{1 - \beta}}
   \right]^{\frac{1 - \beta}{p}} \mathbb{E} [Y_n^p]^{\frac{\beta}{p}} \]
by H{\"o}lder's inequality. Recall 
that $\nu^{- 1} =
(1 - \kappa) (1 - \beta)^2$ and from \eqref{definitionofCtildenoise} and
\eqref{constantscandr} that
\begin{eqnarray*}
  r_n^{\frac{p}{1 - \beta}} & = & C (\kappa, d, C_{\sigma})^{\frac{p}{1 -
  \beta}} \left( m^{\frac{1 + \kappa}{1 - \kappa}} \tCnoise{,n + 1} \vee
  \tCdumb{,n + 1}^{\frac{1}{1 - \kappa}} \vee \tCnoise{,n + 1}
  ^{\frac{2}{1 - \kappa}} \right)^{\frac{p}{(1 - \beta)^2}}\\
  & \leqslant & C (m, \kappa, d, p, C_{\sigma}) (\Cdumb{,n}^{\nu p} +
  \Cdumb{,n + 1}^{\nu p} + \Cnoise{,n}^{2 \nu p} + \Cnoise{,n + 1}^{2 \nu
  p})
\end{eqnarray*}
where $C (m, \kappa, d, p, C_{\sigma}) > 0$ is a constant that only depends on
$m, \kappa, d, p$ and $C_{\sigma}$. Thus, we see that for every $p \geqslant
\nu$ and every $n \in \mathbb{N}$, by \eqref{assumptiononmoments}
\begin{eqnarray*}
  \mathbb{E} \left[ r_n^{\frac{p}{1 - \beta}} \right]^{\frac{1 - \beta}{p}} &
  \lesssim & \mathbb{E} [\Cdumb{,n}^{\nu p}]^{\frac{1 - \beta}{p}}
  +\mathbb{E} [\Cdumb{,n + 1}^{\nu p}]^{\frac{1 - \beta}{p}}
  +\mathbb{E} [\Cnoise{,n}^{2 \nu p}]^{\frac{1 - \beta}{p}} +\mathbb{E}
  [\Cnoise{,n + 1}^{2 \nu p}]^{\frac{1 - \beta}{p}}\\
  & \lesssim & \Mdumb^{\nu (1 - \beta)} + \Mnoise^{2
  \nu (1 - \beta)} \backassign M_p \quad,
\end{eqnarray*}
where $\lesssim$ hides a constant $C (m, \kappa, d, p, C_{\sigma}) > 0$. We
then arrive to a difference inequality for $Z_n$ which reads as
\begin{equation}
  \left\{\begin{array}{lll}
    Z_{n + 1} & \leqslant & a Z_n + M_p {Z_n^{\beta}} \\
    Z_1 & = & \mathbb{E} [\| u \|^p_{D_0^1}]^{\frac{1}{p}}
  \end{array}\right. . \label{differenceequationformoments}
\end{equation}
Since $M_p$ does not depend on $n$, if we consider $\tilde{Z}_n$ as the
solution to \eqref{differenceequationformoments} with an equal sign instead of
the inequality, $\tilde{Z}_n$ converges to a fix point independent of the
initial condition $Z_1$. The limit has to be
\[ \bar{Z} \assign \lim_{n \rightarrow \infty} \tilde{Z}_n = \left(
   \frac{M_p}{1 - a} \right)^{\frac{1}{1 - \beta}} . \]
Indeed, since $\bar{Z}$ is such that $a \bar{Z} + M_p {\bar{Z}^{\beta}}  =
\bar{Z}$, it is easy to see that if $Z_1 < \bar{Z}$, then $\tilde{Z}_n$ is
non-decreasing and $\tilde{Z}_n \leqslant \bar{Z}$ for every $n \in
\mathbb{N}$ and if $Z_1 > \bar{Z}$, then $\tilde{Z}_n$ is non-increasing and
$\tilde{Z}_n \geqslant \bar{Z}$ for every $n \in \mathbb{N}$. Therefore, we
conclude that
\[ Z_n \leqslant C (m, \kappa, d, p, C_{\sigma}) \max \left\{ \mathbb{E} [\| u
   \|^p_{D_0^1}]^{\frac{1}{p}}, \Mnoise^{2 \nu}, \Mdumb^{\nu} \right\} \]
which by \eqref{mainresultbound} gives \eqref{growthofmoments}.

\subsection{Proof of Proposition~\ref{regularisedequationwithtransport}} \label{appendixtomaindecomposition}

Here we discuss why the choice for $\tilde{L}_3$ in \eqref{choiceofL}
corresponding to the leading order term in \eqref{controlALdominant3} actually
controls all the other terms in
\eqref{controlALdominant1}-\eqref{controlALnoise}. 
In order to verify this, we
just need to plug in the choice of $\tilde{L}_3$ into each of the estimates in
\eqref{controlALdominant1}-\eqref{controlALnoise} and verify that the
corresponding upper bound is not worse than the one in \eqref{controlonfRHSofv}.
\refchange{
First, we argue that all the terms with prefactors of $\Cnoise{} L^{1 -
\kappa}$ or $(\Cdumb{} + \Cnoise{}^2) L^{1 - \kappa}$ and satisfy an upper bound no worse than \eqref{controlonfRHSofv}. 
Consider the term
\begin{equation}\label{eq:harmless_term}
 \Cnoise{} L^{1 - \kappa} \Cnoise{} L^{2 - 3 \kappa} C^2_{\star}
    (\Cdumb{} \vee \Cnoise{}^2 C_{\star}^{2 \kappa + 2 \delta})^{1 +
    \frac{1}{2 (1 - \kappa)}}
\end{equation}
appearing
\eqref{controlALdominant3}.
With our choice of $L = \tilde{L}_{3}$, the prefactor  $\Cnoise{} L^{1 - \kappa}$ can be written 
\[ \Cnoise{} \left( C^{- \frac{2}{3 - 2 \kappa}}_{\star} (\Cdumb{} \vee
   \Cnoise{}^2 C_{\star}^{2 \kappa + 2 \delta})^{- \frac{1}{2 (1 - \kappa)}}
   \right)^{1 - \kappa} = \Cnoise{} C^{- \frac{2 (1 - \kappa)}{3 - 2
   \kappa}}_{\star} (\Cdumb{} \vee \Cnoise{}^2 C_{\star}^{2 \kappa + 2
   \delta})^{- \frac{1}{2}} \]
which leads to the bound (recall \eqref{controlofRHSofvbeforeyoung})
\begin{eqnarray*}
  &  & \eqref{eq:harmless_term} \leqslant \Cnoise{} C^{- \frac{2 (1 - \kappa)}{3 - 2 \kappa}}_{\star} (\Cdumb{}
   \vee \Cnoise{}^2 C_{\star}^{2 \kappa + 2 \delta})^{- \frac{1}{2}} \Cnoise{}
  C^{\frac{2 (1 + \kappa)}{3 - 2 \kappa}}_{\star} (\Cdumb{} \vee
  \Cnoise{}^2 C_{\star}^{2 \kappa + 2 \delta})^{\frac{1 + \kappa}{2 (1 -
  \kappa)}}\\
  & = & \Cnoise{}^2 \Cdumb{}^{\frac{\kappa}{1 - \kappa}} C^{\frac{4
  \kappa}{3 - 2 \kappa}}_{\star} \vee \Cnoise{}^{\frac{2}{1 - \kappa}}
  C_{\star}^{\frac{4 \kappa}{3 - 2 \kappa} + \frac{2 \kappa (\kappa +
  \delta)}{1 - \kappa}},
\end{eqnarray*}
which is much smaller than \eqref{controlonfRHSofv}, since by Young's
inequality
\[ \Cnoise{}^2 \Cdumb{}^{\frac{\kappa}{1 - \kappa}} \leqslant \frac{1}{1 -
   \kappa} \Cnoise{}^{\frac{2}{1 - \kappa}} + \frac{1}{\kappa} \Cdumb{}^{\frac{1}{1 -
   \kappa}} . \]
A similar argument applies for all the terms with prefactor $\Cnoise{} L^{1 -
\kappa}$ or $(\Cdumb{} + \Cnoise{}^2) L^{1 - \kappa}$ in
\eqref{controlALdominant1}-\eqref{controlALnoise}.
This is why we may only consider the terms without these prefactors in the
sequel.}
Finally, we recall that it is enough to compare with
\eqref{controlofRHSofvbeforeyoung}, since it implies \eqref{controlonfRHSofv}.
We start with \eqref{controlALdominant1}, which gives a bound
\begin{eqnarray*}
  &  & \Cnoise{} \left( C^{- \frac{2}{3 - 2 \kappa}}_{\star} (\Cdumb{}
  \vee \Cnoise{}^2 C_{\star}^{2 \kappa + 2 \delta})^{- \frac{1}{2 (1 - \kappa)}}
  \right)^{1 - 3 \kappa} C_{\star} (\Cdumb{} \vee \Cnoise{}^2 C_{\star}^{2
  \kappa + 2 \delta})\\
  & = & \Cnoise{} \left( C^{1 - \frac{2 (1 - 3 \kappa)}{3 - 2 \kappa}}_{\star}
  (\Cdumb{} \vee \Cnoise{}^2 C_{\star}^{2 \kappa + 2 \delta})^{1 - \frac{1
  - 3 \kappa}{2 (1 - \kappa)}} \right)\\
  & = & \Cnoise{} C^{\frac{1 + 4 \kappa}{3 - 2 \kappa}}_{\star} (\Cdumb{}
  \vee \Cnoise{}^2 C_{\star}^{2 \kappa + 2 \delta})^{\frac{1 + \kappa}{2 (1 -
  \kappa)}},
\end{eqnarray*}
which is less than \eqref{controlofRHSofvbeforeyoung}, since $1 + 4 \kappa
\leqslant 2 + 2 \kappa \Longleftrightarrow \kappa \leqslant 1 / 2$. We move to
\eqref{controlALdominant2}, which leads to
\begin{eqnarray*}
  &  & \Cnoise{} \left( C^{- \frac{2}{3 - 2 \kappa}}_{\star} (\Cdumb{}
  \vee \Cnoise{}^2 C_{\star}^{2 \kappa + 2 \delta})^{- \frac{1}{2 (1 - \kappa)}}
  \right)^{3 - 5 \kappa} C^2_{\star} (\Cdumb{} \vee \Cnoise{}^2
  C_{\star}^{2 \kappa + 2 \delta})^2\\
  & = & \Cnoise{} \left( C^{2 - \frac{2 (3 - 5 \kappa)}{3 - 2 \kappa}}_{\star}
  (\Cdumb{} \vee \Cnoise{}^2 C_{\star}^{2 \kappa + 2 \delta})^{2 - \frac{3
  - 5 \kappa}{2 (1 - \kappa)}} \right)\\
  & = & \Cnoise{} C^{\frac{6 \kappa}{3 - 2 \kappa}}_{\star} (\Cdumb{}
  \vee \Cnoise{}^2 C_{\star}^{2 \kappa + 2 \delta})^{\frac{1 + \kappa}{2 (1 -
  \kappa)}}
\end{eqnarray*}
which is again bounded by \eqref{controlofRHSofvbeforeyoung} since $6 \kappa
\leqslant 2 + 2 \kappa \Longleftrightarrow \kappa \leqslant 1 / 2$. We jump to
\eqref{controlALmixed1}, which gives
\begin{eqnarray*}
  &  & \Cnoise{}^2 \left( C^{- \frac{2}{3 - 2 \kappa}}_{\star} (\Cdumb{}
  \vee \Cnoise{}^2 C_{\star}^{2 \kappa + 2 \delta})^{- \frac{1}{2 (1 - \kappa)}}
  \right)^{1 - 2 \kappa} C_{\star} (\Cdumb{} \vee \Cnoise{}^2 C_{\star}^{2
  \kappa + 2 \delta})^{\frac{1}{2 (1 - \kappa)}}\\
  & = & \Cnoise{}^2 \left( C^{1 - \frac{2 (1 - 2 \kappa)}{3 - 2
  \kappa}}_{\star} (\Cdumb{} \vee \Cnoise{}^2 C_{\star}^{2 \kappa + 2
  \delta})^{\frac{1}{2 (1 - \kappa)} - \frac{1 - 2 \kappa}{2 (1 - \kappa)}}
  \right)\\
  & = & \Cnoise{}^2 C^{\frac{1 + 2 \kappa}{3 - 2 \kappa}}_{\star} (\Cdumb{}
   \vee \Cnoise{}^2 C_{\star}^{2 \kappa + 2 \delta})^{\frac{\kappa}{1 -
  \kappa}}
\end{eqnarray*}
which is less than \eqref{controlofRHSofvbeforeyoung} since
$\Cnoise{} (\Cdumb{} \vee \Cnoise{}^2 C_{\star}^{2 \kappa + 2
   \delta})^{\frac{\kappa}{1 - \kappa}} \leqslant (\Cdumb{} \vee \Cnoise{}^2
   C_{\star}^{2 \kappa + 2 \delta})^{\frac{1 + \kappa}{2 - 2 \kappa}}$
is equivalent to $\Cnoise{} \leqslant (\Cdumb{} \vee \Cnoise{}^2
C_{\star}^{2 \kappa + 2 \delta})^{1 / 2}$. Next is \eqref{controlALmixed2},
giving
\begin{eqnarray*}
  &  &
    \Cnoise{}^{2 + \frac{1 - 2 \kappa}{2 (1 - \kappa)}} \left( C^{- \frac{2}{3 -
    2 \kappa}}_{\star} (\Cdumb{} \vee \Cnoise{}^2 C_{\star}^{2 \kappa + 2
    \delta})^{- \frac{1}{2 (1 - \kappa)}} \right)^{\frac{3}{2} - 3 \kappa} \\
    & & \times C^{\frac{1}{2 (1 - \kappa)}}_{\star} (\Cdumb{} \vee \Cnoise{}^2
    C_{\star}^{2 \kappa + 2 \delta})^{\frac{1}{2 (1 - \kappa)}} \\
  & = & \Cnoise{}^{2 + \frac{1 - 2 \kappa}{2 (1 - \kappa)}} \left(
  C^{\frac{1}{2 (1 - \kappa)} - \frac{3 - 6 k}{3 - 2 \kappa}}_{\star}
  (\Cdumb{} \vee \Cnoise{}^2 C_{\star}^{2 \kappa + 2 \delta})^{\frac{1}{2
  (1 - \kappa)} - \frac{3 - 6 \kappa}{4 (1 - \kappa)}} \right) \\
  & = & \Cnoise{}^{2 + \frac{1 - 2 \kappa}{2 (1 - \kappa)}} C^{\frac{1}{2 (1 -
  \kappa)} - \frac{3 - 6 k}{3 - 2 \kappa}}_{\star} (\Cdumb{} \vee
  \Cnoise{}^2 C_{\star}^{2 \kappa + 2 \delta})^{\frac{6 \kappa - 1}{4 (1 -
  \kappa)}}
\end{eqnarray*}
which is also less than \eqref{controlofRHSofvbeforeyoung} since $\kappa
\leqslant 1 / 2$ clearly implies
\[ \frac{1}{2 (1 - \kappa)} - \frac{3 - 6 k}{3 - 2 \kappa} \leqslant \frac{1 +
   \kappa}{2 (1 - \kappa)} \quad \Longleftrightarrow \quad 0 \leqslant
   \frac{\kappa}{2 (1 - \kappa)} + \frac{3 - 6 k}{3 - 2 \kappa} \]
and
\begin{eqnarray*}
  \Cnoise{}^{1 + \frac{1 - 2 \kappa}{2 (1 - \kappa)}} (\Cdumb{} \vee
  \Cnoise{}^2 C_{\star}^{2 \kappa + 2 \delta})^{\frac{6 \kappa - 1}{4 (1 -
  \kappa)}} & \leqslant & (\Cdumb{} \vee \Cnoise{}^2 C_{\star}^{2 \kappa +
  2 \delta})^{\frac{1 + \kappa}{2 (1 - \kappa)}}\\
  & \Updownarrow & \\
  \Cnoise{}^{1 + \frac{1 - 2 \kappa}{2 (1 - \kappa)}} & \leqslant & (\Cdumb{}
   \vee \Cnoise{}^2 C_{\star}^{2 \kappa + 2 \delta})^{\frac{3 - 4 \kappa}{4
  (1 - \kappa)}}\\
  & \Updownarrow & \\
  1 + \frac{1 - 2 \kappa}{2 (1 - \kappa)} & \leqslant & \frac{3 - 4 \kappa}{2
  (1 - \kappa)} .
\end{eqnarray*}
Moving to \eqref{controlALmixed4}, we find
\begin{eqnarray*}
  &  &
    \Cnoise{}^{1 + \frac{1 - 2 \kappa}{2 (1 - \kappa)}} \left( C^{- \frac{2}{3 -
    2 \kappa}}_{\star} (\Cdumb{} \vee \Cnoise{}^2 C_{\star}^{2 \kappa + 2
    \delta})^{- \frac{1}{2 (1 - \kappa)}} \right)^{\frac{5}{2} - 4 \kappa} \\
    & & \times C^{1 + \frac{1}{2 (1 - \kappa)}}_{\star} (\Cdumb{} \vee
    \Cnoise{}^2 C_{\star}^{2 \kappa + 2 \delta})^{1 + \frac{1}{2 (1 - \kappa)}} \\
  & = & \Cnoise{}^{1 + \frac{1 - 2 \kappa}{2 (1 - \kappa)}} \left( C^{\frac{3 -
  2 \kappa}{2 (1 - \kappa)} - \frac{5 - 8 \kappa}{3 - 2 \kappa}}_{\star}
  (\Cdumb{} \vee \Cnoise{}^2 C_{\star}^{2 \kappa + 2 \delta})^{\frac{3 - 2
  \kappa}{2 (1 - \kappa)} - \frac{5 - 8 \kappa}{4 (1 - \kappa)}} \right)\\
  & = & \Cnoise{}^{1 + \frac{1 - 2 \kappa}{2 (1 - \kappa)}} C^{\frac{3 - 2
  \kappa}{2 (1 - \kappa)} - \frac{5 - 8 \kappa}{3 - 2 \kappa}}_{\star}
  (\Cdumb{} \vee \Cnoise{}^2 C_{\star}^{2 \kappa + 2 \delta})^{\frac{1 + 4
  \kappa}{4 (1 - \kappa)}},
\end{eqnarray*}
which is dominated by \eqref{controlofRHSofvbeforeyoung} since $\kappa
\leqslant 1 / 3$ implies
\[ \frac{3 - 2 \kappa}{2 (1 - \kappa)} - \frac{5 - 8 \kappa}{3 - 2 \kappa}
   \leqslant \frac{1 + \kappa}{2 (1 - \kappa)} \quad \Longleftrightarrow \quad
   \frac{2 - 3 \kappa}{2 (1 - \kappa)} \leqslant 1 \leqslant \frac{5 - 8
   \kappa}{3 - 2 \kappa} \]
and
\begin{eqnarray*}
  \Cnoise{}^{\frac{1 - 2 \kappa}{2 (1 - \kappa)}} (\Cdumb{} \vee \Cnoise{}^2
  C_{\star}^{2 \kappa + 2 \delta})^{\frac{1 + 4 \kappa}{4 (1 - \kappa)}} &
  \leqslant & (\Cdumb{} \vee \Cnoise{}^2 C_{\star}^{2 \kappa + 2
  \delta})^{\frac{1 + \kappa}{2 (1 - \kappa)}}\\
  & \Updownarrow & \\
  \Cnoise{}^{\frac{1 - 2 \kappa}{2 (1 - \kappa)}} & \leqslant & (\Cdumb{}
  \vee \Cnoise{}^2 C_{\star}^{2 \kappa + 2 \delta})^{\frac{1 - 2 \kappa}{4 (1 -
  \kappa)}} .
\end{eqnarray*}
Following there is \eqref{controlALmixed5}, that create two terms. The first
one is
\begin{eqnarray*}
  &  & \Cnoise{}^2 \left( C^{- \frac{2}{3 - 2 \kappa}}_{\star} (\Cdumb{}
  \vee \Cnoise{}^2 C_{\star}^{2 \kappa + 2 \delta})^{- \frac{1}{2 (1 - \kappa)}}
  \right)^{2 - 2 \kappa} C^2_{\star} (\Cdumb{} \vee \Cnoise{}^2
  C_{\star}^{2 \kappa + 2 \delta})^{\frac{1}{1 - \kappa}}\\
  & = & \Cnoise{}^2 C^{2 - \frac{4 - 4 \kappa}{3 - 2 \kappa}}_{\star}
  (\Cdumb{} \vee \Cnoise{}^2 C_{\star}^{2 \kappa + 2 \delta})^{\frac{1}{1
  - \kappa} - 1}
   =  \Cnoise{}^2 C^{\frac{2}{3 - 2 \kappa}}_{\star} (\Cdumb{} \vee
  \Cnoise{}^2 C_{\star}^{2 \kappa + 2 \delta})^{\frac{\kappa}{1 - \kappa}},
\end{eqnarray*}
again less than \eqref{controlofRHSofvbeforeyoung}, where the term
$\Cnoise{} (\Cdumb{} \vee \Cnoise{}^2 C_{\star}^{2 \kappa + 2
\delta})^{\frac{\kappa}{1 - \kappa}}$ already appeared above in the treatment
of \eqref{controlALmixed1}. The second term coming from
\eqref{controlALmixed5} is
\begin{eqnarray*}
  &  &
    \Cnoise{}^{2 + \frac{1 - 2 \kappa}{2 (1 - \kappa)}} \left( C^{- \frac{2}{3 -
    2 \kappa}}_{\star} (\Cdumb{} \vee \Cnoise{}^2 C_{\star}^{2 \kappa + 2
    \delta})^{- \frac{1}{2 (1 - \kappa)}} \right)^{\frac{5}{2} - 3 \kappa} \\
    & & \times C^{1 + \frac{1}{2 (1 - \kappa)}}_{\star} (\Cdumb{} \vee
    \Cnoise{}^2 C_{\star}^{2 \kappa + 2 \delta})^{\frac{1}{1 - \kappa}} \\
  & = & \Cnoise{}^{2 + \frac{1 - 2 \kappa}{2 (1 - \kappa)}} \left( C^{\frac{3 -
  2 \kappa}{2 (1 - \kappa)} - \frac{5 - 6 \kappa}{3 - 2 \kappa}}_{\star}
  (\Cdumb{} \vee \Cnoise{}^2 C_{\star}^{2 \kappa + 2 \delta})^{\frac{1}{1
  - \kappa} - \frac{5 - 6 \kappa}{4 (1 - \kappa)}} \right)\\
  & = & \Cnoise{}^{2 + \frac{1 - 2 \kappa}{2 (1 - \kappa)}} C^{\frac{3 - 2
  \kappa}{2 (1 - \kappa)} - \frac{5 - 6 \kappa}{3 - 2 \kappa}}_{\star}
  (\Cdumb{} \vee \Cnoise{}^2 C_{\star}^{2 \kappa + 2 \delta})^{\frac{6
  \kappa - 1}{4 (1 - \kappa)}},
\end{eqnarray*}
also bounded by \eqref{controlofRHSofvbeforeyoung}, since again $\kappa
\leqslant 1 / 2$ implies
\[ \frac{3 - 2 \kappa}{2 (1 - \kappa)} - \frac{5 - 6 \kappa}{3 - 2 \kappa}
   \leqslant \frac{1 + \kappa}{2 (1 - \kappa)} \quad \Longleftrightarrow \quad
   \frac{2 - 3 \kappa}{2 (1 - \kappa)} \leqslant 1 \leqslant \frac{5 - 6
   \kappa}{3 - 2 \kappa} \]
and the term $\Cnoise{}^{2 + \frac{1 - 2 \kappa}{2 - 2 \kappa}} (\Cdumb{}
\vee \Cnoise{}^2 C_{\star}^{2 \kappa + 2 \delta})^{\frac{6 \kappa - 1}{4 - 4\kappa}}$
 has already appeared in the treatment of \eqref{controlALmixed2}.
Looking to \eqref{controlALmixed3}, we see that
\begin{eqnarray*}
  &  &
    \Cnoise{}^{2 + \frac{1 - 2 \kappa}{1 - \kappa}} \left( C^{- \frac{2}{3 - 2
    \kappa}}_{\star} (\Cdumb{} \vee \Cnoise{}^2 C_{\star}^{2 \kappa + 2
    \delta})^{- \frac{1}{2 (1 - \kappa)}} \right)^{3 - 4 \kappa} \\
    & & \times C^{\frac{1}{1 - \kappa}}_{\star} (\Cdumb{} \vee \Cnoise{}^2
    C_{\star}^{2 \kappa + 2 \delta})^{\frac{1}{1 - \kappa}} \\
  & = & \Cnoise{}^{2 + \frac{1 - 2 \kappa}{1 - \kappa}} \left( C^{\frac{1}{1 -
  \kappa} - \frac{6 - 8 \kappa}{3 - 2 \kappa}}_{\star} (\Cdumb{} \vee
  \Cnoise{}^2 C_{\star}^{2 \kappa + 2 \delta})^{\frac{1}{1 - \kappa} - \frac{3 -
  4 \kappa}{2 (1 - \kappa)}} \right)\\
  & = & \Cnoise{}^{2 + \frac{1 - 2 \kappa}{1 - \kappa}} C^{\frac{1}{1 - \kappa}
  - \frac{6 - 8 \kappa}{3 - 2 \kappa}}_{\star} (\Cdumb{} \vee \Cnoise{}^2
  C_{\star}^{2 \kappa + 2 \delta})^{\frac{4 \kappa - 1}{2 (1 - \kappa)}},
\end{eqnarray*}
which is once again bounded by \eqref{controlofRHSofvbeforeyoung}, since
\[ \frac{1}{1 - \kappa} - \frac{6 - 8 \kappa}{3 - 2 \kappa} \leqslant \frac{1
   + \kappa}{2 (1 - \kappa)} \quad \Longleftrightarrow \quad \frac{1}{2}
   \leqslant \frac{6 - 8 \kappa}{3 - 2 \kappa} \]
and
\begin{eqnarray*}
  \Cnoise{}^{1 + \frac{1 - 2 \kappa}{1 - \kappa}} (\Cdumb{} \vee \Cnoise{}^2
  C_{\star}^{2 \kappa + 2 \delta})^{\frac{4 \kappa - 1}{2 (1 - \kappa)}} &
  \leqslant & (\Cdumb{} \vee \Cnoise{}^2 C_{\star}^{2 \kappa + 2
  \delta})^{\frac{1 + \kappa}{2 (1 - \kappa)}}\\
  & \Updownarrow & \\
  \Cnoise{}^{\frac{2 - 3 \kappa}{1 - \kappa}} & \leqslant & (\Cdumb{} \vee
  \Cnoise{}^2 C_{\star}^{2 \kappa + 2 \delta})^{\frac{2 - 3 \kappa}{2 (1 -
  \kappa)}} .
\end{eqnarray*}
Finally, we see that \eqref{controlALnoise} is bounded by
\begin{eqnarray*}
  &  & (\Cnoise{}^2 + \Cdumb{}) \Cnoise{} \left( C^{- \frac{2}{3 - 2
  \kappa}}_{\star} (\Cdumb{} \vee \Cnoise{}^2 C_{\star}^{2 \kappa + 2
  \delta})^{- \frac{1}{2 (1 - \kappa)}} \right)^{1 - 3 \kappa}\\
  & \lesssim & (\Cdumb{} \vee \Cnoise{}^2) \Cnoise{} \left( C^{- \frac{2 -
  6 \kappa}{3 - 2 \kappa}}_{\star} (\Cdumb{} \vee \Cnoise{}^2 C_{\star}^{2
  \kappa + 2 \delta})^{- \frac{1 - 3 \kappa}{2 (1 - \kappa)}} \right)\\
  & \leqslant & \Cnoise{} C^{- \frac{2 - 6 \kappa}{3 - 2 \kappa}}_{\star}
  (\Cdumb{} \vee \Cnoise{}^2 C_{\star}^{2 \kappa + 2 \delta})^{\frac{1 +
  \kappa}{2 (1 - \kappa)}}
\end{eqnarray*}
which is bounded by \eqref{controlofRHSofvbeforeyoung} since $C_{\star}
\geqslant 1$. This finishes the argument.

\subsection{Proofs of Lemma~\ref{splittinglemma2} and Lemma~\ref{splittinglemma3}.}\label{proofofdecompositionlemmas}
\begin{proof*}{Proof of Lemma \ref{splittinglemma2}.}
  We start by noting that
  \begin{eqnarray*}
    A_{z_1}^{\dumb} (z_2) & = & \int_0^1 \frac{\mathd}{\mathd \lambda}
    \sigma' \sigma (u (z_2) \lambda + (1 - \lambda) u (z_1)) \mathd \lambda\\
    & = & (u (z_2) - u (z_1)) \int_0^1 (\sigma' \sigma)' (u (z_2) \lambda +
    (1 - \lambda) u (z_1)) \mathd \lambda\\
    & = : & I_1 ((\sigma' \sigma)', z_1, z_2) (u (z_2) - u (z_1)) .
  \end{eqnarray*}
  Then, we write $A_{z_1}^{\dumb} (z_2) = A_{z_1}^{\dumb, 1} (z_2) +
  \tilde{B}_{z_1}^{\dumb, 1} (z ; z_2) + A_{z_1}^{\dumb, 2} (z ;
  z_2) + A_{z_1}^{\dumb, 3} (z_2)$, where
  \begin{eqnarray}
    A_{z_1}^{\dumb, 1} (z_2) & = & I_1 ((\sigma' \sigma)', z_1, z_2)
    (U_{z_1} (z_2) - u_X (z_1) \cdot (\Pi_{z_1} \X) (z_2))  \label{reductionA2}\\
    \tilde{B}_{z_1}^{\dumb, 1} (z ; z_2) & = & I_1 ((\sigma' \sigma)',
    z_1, z_2) (\Pi_{z_1} \X) (z_2) \cdot u_X (z)  \label{reductionB1}\\
    A_{z_1}^{\dumb, 2} (z ; z_2) & = & I_1 ((\sigma' \sigma)', z_1, z_2)
    (\Pi_{z_1} \X) (z_2) \cdot \left( u_X \left( {z_1}  \right) - u_X (z )
    \right) \nonumber\\
    A_{z_1}^{\dumb, 3} (z_2) & = & I_1 ((\sigma' \sigma)', z_1, z_2)
    \sigma (u (z_1)) (\Pi_{z_1} \lolli) (z_2) . \nonumber
  \end{eqnarray}
  Now $a_L^{\dumb, 1} (z) = \Lambda_L [A^{\dumb, 1} \Pi \dumb
  ] (z)$ exists and satisfies \eqref{controlALdominant1}, since by
  \eqref{comparisonofnorms}
  \begin{eqnarray*}
    | a_L^{\dumb, 1} (z) | & \leqslant & \| (\sigma' \sigma)' \|
    [U]_{\gamma_2, B (z, L)} [\Pi ; \dumb]_{- 2 \kappa} L^{\gamma_2 - 2
    \kappa}\\
    & \lesssim & \Cdumb{} L^{2 - 4 \kappa} C_{\star} (\Cdumb{}
    \vee \Cnoise{}^2 C_{\star}^{2 \kappa + 2 \delta}) .
  \end{eqnarray*}
  $a_L^{\dumb, 2} (z) = \Lambda_L [A^{\dumb, 2} \Pi \dumb]
  (z)$ also exists and satisfies \eqref{controlALdominant1},
  \eqref{controlALmixed1}, \eqref{controlALmixed2} and \eqref{controlALnoise},
  since by \eqref{reductionofgammaofgradient}
  \begin{eqnarray*}
    &  & | a_L^{\dumb, 2} (z) |\\
    & \leqslant & \| (\sigma' \sigma)' \| [\Pi ; \dumb]_{- 2 \kappa}
    [u_X]_{\gamma_2 - 1, B (z, L)} L^{\gamma_2 - 2 \kappa}\\
    & \lesssim & (\Cdumb{} L^{2 - 4 \kappa} + \Cnoise{} \Cdumb{}
    L^{3 - 5 \kappa}) C_{\star} (\Cdumb{} \vee \Cnoise{}^2 C_{\star}^{2
    \kappa + 2 \delta}) + \Cnoise{}^2 \Cdumb{} L^{2 - 4 \kappa}\\
    & + & \left( \Cnoise{} \Cdumb{} L^{2 - 3 \kappa} C_{\star} +
    \Cnoise{}^{1 + \frac{1 - 2 \kappa}{2 (1 - \kappa)}} \Cdumb{}
    L^{\frac{5}{2} - 4 \kappa} C^{\frac{1}{2 (1 - \kappa)}}_{\star} \right)
    (\Cdumb{} \vee \Cnoise{}^2 C_{\star}^{2 \kappa + 2
    \delta})^{\frac{1}{2 (1 - \kappa)}} .
  \end{eqnarray*}
  The same is true for $a_L^{\dumb, 3} (z) = \Lambda_L [A^{\dumb, 3}
  \Pi \dumb] (z)$, which satisfies \eqref{controlALnoise}, since
  \[ | a_L^{\dumb, 3} (z) | \leqslant \| (\sigma' \sigma)' \| \| \sigma
     \| [\Pi ; \lolli]_{1 - \kappa} [\Pi ; \dumb]_{- 2 \kappa} L^{1 - 3 \kappa}
     \lesssim \Cdumb{} \Cnoise{} L^{1 - 3 \kappa} . \]
  In addition $b^{\dumb, 1}_L (z) \cdot u_X (z ) = \Lambda_L
  [\tilde{B}^{\dumb, 1} \Pi \dumb] (z)$ exists and satisfies
  \eqref{controlonBL}, since
  \[ | b^{\dumb, 1}_L (z) | \leqslant \| (\sigma' \sigma)' \| [\Pi ; \dumb]_{- 2 \kappa} 
  L^{1 - 2 \kappa} \lesssim \Cdumb{} L^{1 - 2 \kappa} . \]
  To conclude, set $a_L^{\dumb, 4} \assign b^{\dumb, 1}_L \cdot (u_X
  - \nabla u_L)$, which satisfies \eqref{controlALdominant1} and
  \eqref{controlALnoise}, since again by \eqref{formulaforgradient},
  \eqref{controlonremaindergradient} and \eqref{comparisonofnorms}
  \begin{eqnarray*}
    | a_L^{\dumb, 4} (z) | & \leqslant & | b^{\dumb, 1}_L (z) | (\|
    \sigma \| [\Pi ; \lolli]_{1 - \kappa} L^{- \kappa} + [U]_{\gamma_2, B (z, L)}
    L^{\gamma_2 - 1})\\
    & \lesssim & \Cdumb{} L^{1 - 2 \kappa} (\Cnoise{} L^{- \kappa} +
    C_{\star} (\Cdumb{} \vee \Cnoise{}^2 C_{\star}^{2 \kappa + 2 \delta})
    L^{1 - 2 \kappa})\\
    & = & \Cdumb{} L^{2 - 4 \kappa} C_{\star} (\Cdumb{} \vee
    \Cnoise{}^2 C_{\star}^{2 \kappa + 2 \delta}) + \Cnoise{} \Cdumb{} L^{1 -
    3 \kappa} .
  \end{eqnarray*}
  To conclude, set
  \[ a_L^{\dumb} (z) \assign \sum_{i = 1}^4 a_L^{\dumb, i} (z)
     \infixand b_L^{\dumb} (z) \assign b^{\dumb, 1}_L (z) . \]
\end{proof*}

\vspace{2mm}
\begin{proof*}{Proof of Lemma \ref{splittinglemma3}.}
  Recall that
  \[ A_{z_1}^{\Xnoise} (z_2) = \sigma' (u (z_2)) (u_X (z_2) - u_X (z_1)) +
     (\sigma' (u (z_2)) - \sigma' (u (z_1))) u_X (z_1), \]
  and we further expand $\sigma' (u (z_2)) - \sigma' (u (z_1))$ in the second
  term above as
  \begin{eqnarray*}
    \sigma' (u (z_2)) - \sigma' (u (z_1)) & = & \int_0^1 \frac{\mathd}{\mathd
    \lambda} \sigma' (\lambda u (z_2) + (1 - \lambda) u (z_1)) \mathd
    \lambda\\
    & = & (u (z_2) - u (z_1)) \int_0^1 \sigma'' (\lambda u (z_2) + (1 -
    \lambda) u (z_1)) \mathd \lambda\\
    & \backassign & I_1 (\sigma'', z_1, z_2) (u (z_2) - u (z_1)) .
  \end{eqnarray*}
  Now, we write $A_{z_1}^{\Xnoise} (z_2)$ as the sum of the following terms
  \begin{eqnarray*}
    A^{\Xnoise, 1}_{z_1} (z_2) & = & \sigma' (u (z_2)) (u_X (z_2) - u_X (z_1))\\
    \tilde{B}^{\Xnoise, 1}_{z_1} (z ; z_2) & = & I_1 (\sigma'', z_1, z_2)
    (U_{z_1} (z_2) - u_X (z_1) \cdot (\Pi_{z_1} \X) (z_2)) u_X (z)\\
    A^{\Xnoise, 2}_{z_1} (z ; z_2) & = & I_1 (\sigma'', z_1, z_2) (U_{z_1} (z_2)
    - u_X (z_1) \cdot (\Pi_{z_1} \X) (z_2)) (u_X (z_1) - u_X (z))\\
    \tilde{B}^{\Xnoise, 2}_{z_1} (z ; z_2) & = & I_1 (\sigma'', z_1, z_2) [u_X
    (z_1) \cdot (\Pi_{z_1} \X) (z_2)] u_X (z)\\
    A^{\Xnoise, 3}_{z_1} (z ; z_2) & = & I_1 (\sigma'', z_1, z_2) [u_X (z_1)
    \cdot (\Pi_{z_1} \X) (z_2)] (u_X (z_1) - u_X (z))\\
    \tilde{B}^{\Xnoise, 3}_{z_1} (z ; z_2) & = & I_1 (\sigma'', z_1, z_2)
    [\sigma (u (z_1)) (\Pi_{z_1} \lolli) (z_2)] u_X (z)\\
    A^{\Xnoise, 4}_{z_1} (z ; z_2) & = & I_1 (\sigma'', z_1, z_2) [\sigma (u
    (z_1)) (\Pi_{z_1} \lolli) (z_2)] (u_X (z_1) - u_X (z))
  \end{eqnarray*}
  and proceed to control each of them as done before. The term $a_L^{\Xnoise, 1}
  (z) = \Lambda_L [A^{\Xnoise, 1} \Pi \Xnoise] (z)$ exists and satisfies
  \eqref{controlALdominant1}, \eqref{controlALmixed1}, \eqref{controlALmixed2}
  and \eqref{controlALnoise}, since by \eqref{reductionofgammaofgradient}
  \begin{eqnarray*}
    &  & | a_L^{\Xnoise, 1} (z) |\\
    & \leqslant & \| \sigma' \| [u_X]_{\gamma_2 - 1, B (z, L)} [\Pi ; \Xnoise]_{-
    \kappa} L^{\gamma_2 - 1 - \kappa}\\
    & \lesssim & (\Cnoise{} L^{1 - 3 \kappa} + \Cnoise{}^2 L^{2 - 4 \kappa})
    C_{\star} (\Cdumb{} \vee \Cnoise{}^2 C_{\star}^{2 \kappa + 2 \delta})
    + \Cnoise{}^3 L^{1 - 3 \kappa}\\
    & + & \left( \Cnoise{}^2 L^{1 - 2 \kappa} C_{\star} + \Cnoise{}^{2 + \frac{1
    - 2 \kappa}{2 (1 - \kappa)}} L^{\frac{3}{2} - 3 \kappa} C^{\frac{1}{2 (1 -
    \kappa)}}_{\star} \right) (\Cdumb{} \vee \Cnoise{}^2 C_{\star}^{2
    \kappa + 2 \delta})^{\frac{1}{2 (1 - \kappa)}} ;
  \end{eqnarray*}
  $a_L^{\Xnoise, 2} (z) = \Lambda_L [A^{\Xnoise, 2} \Pi \Xnoise] (z)$ exists and
  satisfies \eqref{controlALdominant1}, \eqref{controlALdominant2},
  \eqref{controlALdominant3} and \eqref{controlALmixed4}, since by
  \eqref{comparisonofnorms} and \eqref{reductionofgammaofgradient}
  \begin{eqnarray*}
    &  & | a_L^{\Xnoise, 2} (z) |\\
    & \leqslant & \| \sigma'' \| [U]_{\gamma_2, B (z, L)} [u_X]_{\gamma_2, B
    (z, L)} [\Pi ; \Xnoise]_{- \kappa} L^{2 \gamma_2 - 1 - \kappa}\\
    & \lesssim & (\Cnoise{} L^{3 - 5 \kappa} + \Cnoise{}^2 L^{4 - 6 \kappa})
    C^2_{\star} (\Cdumb{} \vee \Cnoise{}^2 C_{\star}^{2 \kappa + 2
    \delta})^2\\
    & + & \Cnoise{}^3 L^{3 - 5 \kappa} C_{\star} (\Cdumb{} \vee \Cnoise{}^2
    C_{\star}^{2 \kappa + 2 \delta})\\
    & + & \left( \Cnoise{}^2 L^{3 - 4 \kappa} C_{\star} + \Cnoise{}^{2 + \frac{1
    - 2 \kappa}{2 (1 - \kappa)}} L^{\frac{7}{2} - 5 \kappa} C^{\frac{1}{2 (1 -
    \kappa)}}_{\star} \right) C_{\star} (\Cdumb{} \vee \Cnoise{}^2
    C_{\star}^{2 \kappa + 2 \delta})^{1 + \frac{1}{2 (1 - \kappa)}} ;
  \end{eqnarray*}
  $a_L^{\Xnoise, 3} (z) = \Lambda_L [A^{\Xnoise, 3} \Pi \Xnoise] (z)$ exists and
  satisfies \eqref{controlALdominant3}, \eqref{controlALmixed1},
  \eqref{controlALmixed2}, \eqref{controlALmixed4}, \eqref{controlALmixed5}
  and \eqref{controlALmixed3}, since by \eqref{reductionofLinfinityofgradient}
  and \eqref{reductionofgammaofgradient} (same treatment as for $A^{\noise, 4}_L
  (z)$ in Lemma \ref{splittinglemma1}.)
  \begin{eqnarray*}
    &  & | a_L^{\Xnoise, 3} (z) |\\
    & \leqslant & \| \sigma'' \| \| u_X \|_{B (z, L)} [u_X]_{\gamma_2 - 1, B
    (z, L)} [\Pi ; \Xnoise]_{- \kappa} L^{\gamma_2 - \kappa}\\
    & \lesssim &
      (1 + \Cnoise{} L^{1 - \kappa}) \left( \Cnoise{} L^{2 - 3 \kappa} C^2_{\star}
      + \Cnoise{}^{1 + \frac{1 - 2 \kappa}{2 (1 - \kappa)}} L^{\frac{5}{2} - 4
      \kappa} C^{1 + \frac{1}{2 (1 - \kappa)}}_{\star} \right) \\
      & & \times (\Cdumb{} \vee \Cnoise{}^2 C_{\star}^{2 \kappa + 2
      \delta})^{1 + \frac{1}{2 (1 - \kappa)}} \\
    & + & \left( \Cnoise{}^3 L^{2 - 3 \kappa} C_{\star} + \Cnoise{}^{3 + \frac{1
    - 2 \kappa}{2 (1 - \kappa)}} L^{\frac{5}{2} - 4 \kappa} C^{\frac{1}{2 (1 -
    \kappa)}}_{\star} \right) (\Cdumb{} \vee \Cnoise{}^2 C_{\star}^{2
    \kappa + 2 \delta})^{\frac{1}{2 (1 - \kappa)}}\\
    & + & \left( \Cnoise{}^2 L^{2 - 2 \kappa} C^2_{\star} + \Cnoise{}^{2 +
    \frac{1 - 2 \kappa}{2 (1 - \kappa)}} L^{\frac{5}{2} - 3 \kappa} C^{1 +
    \frac{1}{2 (1 - \kappa)}}_{\star} \right) (\Cdumb{} \vee \Cnoise{}^2
    C_{\star}^{2 \kappa + 2 \delta})^{\frac{1}{1 - \kappa}}\\
    & + &
      \left( C^{2 + \frac{1 - 2 \kappa}{2 (1 - \kappa)}}_{\xi} L^{\frac{5}{2}
      - 3 \kappa} C^{1 + \frac{1}{2 (1 - \kappa)}}_{\star} + \Cnoise{}^{2 +
      \frac{1 - 2 \kappa}{1 - \kappa}} L^{3 - 4 \kappa} C^{\frac{1}{1 -
      \kappa}}_{\star} \right) \\
      & & \times (\Cdumb{} \vee \Cnoise{}^2 C_{\star}^{2 \kappa + 2
      \delta})^{\frac{1}{1 - \kappa}} ;
  \end{eqnarray*}
  $a_L^{\Xnoise, 4} (z) = \Lambda_L [A^{\Xnoise, 4} \Pi \Xnoise] (z)$ exists and
  satisfies \eqref{controlALdominant1}, \eqref{controlALmixed1},
  \eqref{controlALmixed2} and \eqref{controlALnoise}, since by
  \eqref{reductionofgammaofgradient}
  \begin{eqnarray*}
    &  & | a_L^{\Xnoise, 4} (z) |\\
    & \leqslant & \| \sigma'' \| \| \sigma \| [\Pi ; \lolli]_{1 - \kappa}
    [u_X]_{\gamma_2 - 1, B (z, L)} [\Pi ; \Xnoise]_{- \kappa} L^{\gamma_2 - 2
    \kappa}\\
    & \lesssim & (\Cnoise{}^2 L^{2 - 4 \kappa} + \Cnoise{}^3 L^{3 - 5 \kappa})
    C_{\star} (\Cdumb{} \vee \Cnoise{}^2 C_{\star}^{2 \kappa + 2 \delta})
    + \Cnoise{}^4 L^{2 - 4 \kappa}\\
    & + & \left( \Cnoise{}^3 L^{2 - 3 \kappa} C_{\star} + \Cnoise{}^{3 + \frac{1
    - 2 \kappa}{2 (1 - \kappa)}} L^{\frac{5}{2} - 4 \kappa} C^{\frac{1}{2 (1 -
    \kappa)}}_{\star} \right) (\Cdumb{} \vee \Cnoise{}^2 C_{\star}^{2
    \kappa + 2 \delta})^{\frac{1}{2 (1 - \kappa)}} ;
  \end{eqnarray*}
  $b^{\Xnoise, 1}_L (z) \cdot u_X (z ) = \Lambda_L [\tilde{B}^{\Xnoise, 1}
  \Pi \Xnoise] (z)$ exists and satisfies \eqref{controlonBL3}, since by
  \eqref{comparisonofnorms}
  \[ | b^{\Xnoise, 1}_L (z) | \leqslant \| \sigma'' \| [U]_{\gamma_2, B (z, L)}
     [\Pi ; \Xnoise]_{- \kappa} L^{\gamma_2 - \kappa} \lesssim \Cnoise{} L^{2 - 3
     \kappa} C_{\star} (\Cdumb{} \vee \Cnoise{}^2 C_{\star}^{2 \kappa + 2
     \delta}), \]
  and setting $a_L^{\Xnoise, 5} \assign b^{\Xnoise, 1}_L \cdot (u_X - \nabla
  u_L)$, it satisfies \eqref{controlALdominant1} and
  \eqref{controlALdominant2}, since by \eqref{formulaforgradient},
  \eqref{controlonremaindergradient} and \eqref{comparisonofnorms}
  \begin{eqnarray*}
    &  & | a_L^{\Xnoise, 5} (z) |\\
    & \leqslant & | b^{\Xnoise, 1}_L (z) | (\| \sigma \| [\Pi ; \lolli]_{1 - \kappa}
    L^{- \kappa} + [U]_{\gamma_2, B (z, L)} L^{\gamma_2 - 1})\\
    & \lesssim & \Cnoise{} L^{3 - 5 \kappa} C^2_{\star} (\Cdumb{} \vee
    \Cnoise{}^2 C_{\star}^{2 \kappa + 2 \delta})^2 + \Cnoise{}^2 L^{2 - 4 \kappa}
    C_{\star} (\Cdumb{} \vee \Cnoise{}^2 C_{\star}^{2 \kappa + 2 \delta})
    ;
  \end{eqnarray*}
  $b^{\Xnoise, 2}_L (z) \cdot u_X (z ) = \Lambda_L [\tilde{B}^{\Xnoise, 2} \Pi \Xnoise
  ] (z)$ exists and satisfies \eqref{controlonBL2}, since by
  \eqref{reductionofLinfinityofgradient}
  \begin{eqnarray*}
    &  & | b^{\Xnoise, 2}_L (z) |\\
    & \leqslant & \| \sigma'' \| \| u_X \|_{B (z, L)} [\Pi ; \Xnoise]_{- \kappa} L^{1
    - \kappa}\\
    & \lesssim & \left( \Cnoise{} L^{1 - \kappa} C_{\star} + \Cnoise{}^{1 +
    \frac{1 - 2 \kappa}{2 (1 - \kappa)}} L^{\frac{3}{2} - 2 \kappa}
    C^{\frac{1}{2 (1 - \kappa)}}_{\star} \right) (\Cdumb{} \vee
    \Cnoise{}^2 C_{\star}^{2 \kappa + 2 \delta})^{\frac{1}{2 (1 - \kappa)}},
  \end{eqnarray*}
  and setting $a_L^{\Xnoise, 6} \assign b^{\Xnoise, 2}_L \cdot (u_X - \nabla
  u_L)$, it satisfies \eqref{controlALdominant3}, \eqref{controlALmixed1},
  \eqref{controlALmixed2} and \eqref{controlALmixed4} since by
  \eqref{formulaforgradient}, \eqref{controlonremaindergradient} and
  \eqref{comparisonofnorms}
  \begin{eqnarray*}
    &  & | a_L^{\Xnoise, 6} (z) |\\
    & \leqslant & | b^{\Xnoise, 2}_L (z) | (\| \sigma \| [\Pi ; \lolli]_{1 - \kappa}
    L^{- \kappa} + [U]_{\gamma_2, B (z, L)} L^{\gamma_2 - 1})\\
    & \lesssim & \left( \Cnoise{} L^{2 - 3 \kappa} C^2_{\star} + \Cnoise{}^{1 +
    \frac{1 - 2 \kappa}{2 (1 - \kappa)}} L^{\frac{5}{2} - 4 \kappa} C^{1 +
    \frac{1}{2 (1 - \kappa)}}_{\star} \right) (\Cdumb{} \vee \Cnoise{}^2
    C_{\star}^{2 \kappa + 2 \delta})^{1 + \frac{1}{2 (1 - \kappa)}}\\
    & + & \left( \Cnoise{}^2 L^{1 - 2 \kappa} C_{\star} + \Cnoise{}^{2 + \frac{1
    - 2 \kappa}{2 (1 - \kappa)}} L^{\frac{3}{2} - 3 \kappa} C^{\frac{1}{2 (1 -
    \kappa)}}_{\star} \right) (\Cdumb{} \vee \Cnoise{}^2 C_{\star}^{2
    \kappa + 2 \delta})^{\frac{1}{2 (1 - \kappa)}} ;
  \end{eqnarray*}
  $b^{\Xnoise, 3}_L (z) \cdot u_X (z ) = \Lambda_L [\tilde{B}^{\Xnoise, 3} \Pi \Xnoise
  ] (z)$ exists and satisfies \eqref{controlonBL}, since
  \[ | b^{\Xnoise, 3}_L (z) | \leqslant \| \sigma'' \| \| \sigma \| [\Pi ; \lolli]_{1
     - \kappa} [\Pi ; \Xnoise]_{- \kappa} L^{1 - 2 \kappa} \lesssim \Cnoise{}^2 L^{1 - 2
     \kappa}, \]
  and setting $a_L^{\Xnoise, 7} \assign b^{\Xnoise, 3}_L \cdot (u_X - \nabla
  u_L)$, it satisfies \eqref{controlALdominant1} and \eqref{controlALnoise}
  since by \eqref{formulaforgradient}, \eqref{controlonremaindergradient} and
  \eqref{comparisonofnorms}
  \begin{eqnarray*}
    | a_L^{\Xnoise, 7} (z) | & \leqslant & | b^{\Xnoise, 3}_L (z) | (\| \sigma \|
    [\Pi ; \lolli]_{1 - \kappa} L^{- \kappa} + [U]_{\gamma_2, B (z, L)} L^{\gamma_2
    - 1})\\
    & \lesssim & \Cnoise{}^2 L^{2 - 4 \kappa} C_{\star} (\Cdumb{} \vee
    \Cnoise{}^2 C_{\star}^{2 \kappa + 2 \delta}) + \Cnoise{}^3 L^{1 - 3 \kappa} ;
  \end{eqnarray*}
  we conclude the proof by setting
  \[ A^{\Xnoise}_L (z) \assign \sum^7_{i = 1} a_L^{\Xnoise, i} (z)  \infixand b^{\Xnoise
     }_L (z) \assign \sum_{i = 1}^3 b^{\Xnoise, i}_L (z) . \]
  
\end{proof*}

\section{Symbolic Index}
\label{app:notation}

Below we list various constants, norms and other objects used in this article along 
with their meanings and references to their definitions in the main text.

\begin{center}
\begin{longtable}{p{.12\textwidth}p{.65\textwidth}p{.12\textwidth}}
\toprule
Object & Meaning & Ref. \\
\midrule
\endhead
\bottomrule
\endfoot
$\kappa$ & $\xi \in C^{-1-\kappa}$, equation is singular for $\kappa > 0$ & p.~\pageref{kappaintro}\\
$d(z,\bar{z})$ & Parabolic distance between space-time points $z$, $\bar{z}$ & p.~\pageref{parabolicdistance}\\
$B(z,L)$ & Parabolic ball in the past, centered at $z$ of radius $L$ & p.~\pageref{balllookingtopast}\\
 $\varphi_z^L$ & Space-time test function centered at $z$ with scale $L$ & p.~\pageref{defofmollifierkernel}\\
 $[\Pi;\btau]_{|\btau|, B}$ & ``order bound'' or ``OB'' - regularity seminorm for $\btau$, localised to region $B$ & p.~\pageref{orderbounds},~\pageref{orderboundlollipop}\\
$D_a^b$ & space-time points with time $t \in [a,b]$, with $D = D_{0}^{\infty}$ & p.~\pageref{spacetimedomain}\\
$\|\cdot\|$ & supremum norm over $D$, $D \times D$, $\R$, or $\R^{d}$ & p.~\pageref{supnorm0},\pageref{supnorm2}\\
$\|\cdot\|_{B}$ & supremum norm over $B \subset \R^{d+1}$ or $B \times B \subset \R^{d+1}$ & p.~\pageref{supnorm1},\pageref{supnorm2}\\
$C_{\sigma}$ & Bound on non-linearity $\sigma$ and $\sigma'$, $\sigma''$ & p.~\pageref{supnorm1}\\
$\Cnoise{,n}$ & Order bound on objects linear in the noise over $[n-1,n] \times \T^{d}$, $\Cnoise{} \assign \Cnoise{,1}$ & p.~\pageref{assumptiononstochasticobjects}\\
$\Cdumb{,n}$ & Order bound on objects quadratic in the noise over $[n-1,n] \times \T^{d}$, $\Cdumb{} \assign \Cdumb{,1}$ & p.~\pageref{assumptiononstochasticobjects}\\
$\si(u) \diamond \xi$ & renormalised product between $\si(u)$ and $\xi$ & p.~\pageref{mainequationrenormalised}\\
$u_{X}$ & ``Modelledness'' gradient of $u$ & p.~\pageref{page_generalised_grad1},\pageref{page_generalised_grad2}\\
$u_{L}$  & Solution locally averaged at scale $L > 0$ &p.~\pageref{firstdefinitionofuL}\\
$U_{z}(w)$ & ``Taylor''/modelled remainder of solution $u$ with base point $z$ and evaluated at $w$ &p.~\pageref{defofU}\\
$\sigma (U)_z (w)$ & Same as above for $\sigma(u)$ &p.~\pageref{defsigmaofU}\\
$[U]_{\gamma,B}$ & Modelledness regularity seminorm of $U$, with exponent $\gamma \in (1,2)$ and localised to $B$  &p.~\pageref{finitegammanormofU}\\
$C_{\star}$, $T_{\star}$ &Respectively, bootstrap threshold and bootstrap time &p.~\pageref{definitionstoppingtime}\\
$[u]_{\alpha,B}$ & Classical H\"{o}lder seminorm with exponent $\alpha \in (0,1)$ and localised to $B$  
&p.~\pageref{finitegammanormofU}\\
$C^{\sigma,\tau}_{z,L}$ & H\"{o}lder seminorm over $B(z,L)$ of coefficient of $\tau$ in modelled expansion of $\sigma(u)$  
&p.~\pageref{page_modelled_coeff_bounds}\\
$G_{z}(w)$  & Local modelled expansion of $\sigma(u)\xi$ with base point $z$ and evaluated at $w$ &p.~\pageref{localapproxG} \\
$[U]_{\gamma, \eta, [a, b]}$  & Modelledness seminorm on $[a,b] \times \T^{d}$, with regularity exponent $\gamma$ and blow up exponent $\eta$ near $t=a$ &p.~\pageref{normofUblowup}\\
$d_{\tau}$ & $d_{\tau} = \sqrt{\tau - a}$ when working at time $\tau \in [a,b]$ on $D_{a}^{b}$ &p.~\pageref{dist_to_bdry}\\
$E_z^L$ & Inexplicit term in difference of $\nabla u_{L}(z)$ and $u_{X}(z)$ &p.~\pageref{page_gradientdifference} \\
$\mathcal{R}_{L}$ & Reconstruction error (at scale $L$) of $\sigma(u)\xi$ and $G$ &p.~\pageref{page_reconstructionterms}\\
$\tilde{B}_L(z)$ & Term with $u_{X}$ in $\langle G_z,\varphi_{z}^{L} \rangle$ &p.~\pageref{page_reconstructionterms}\\
 $\Xi_L(z)$ & Terms with $\Pi_z \dumb$ and $\Pi_z \noise$ in $\langle G_z,\varphi_{z}^{L} \rangle$ &p.~\pageref{page_reconstructionterms}\\
\end{longtable}
\end{center}

\bibliographystyle{alpha}
\bibliography{documents.bib}

\end{document}